\def\neqnear{\mbox{\ \ $\backslash\!\!\!\!\!\!\nearrow$}}
\def\kk{\mathbbm{k}}
\def\Ad{\mathrm{Ad}}
\def\ad{\mathrm{ad}}
\def\Z{\mathbbm{Z}}
\def\C{\mathbbm{C}}
\def\Q{\mathbbm{Q}}
\def\C{\mathbbm{C}}
\def\R{\mathbbm{R}}
\def\un{\mathbbm{1}}
\def\tr{\mathrm{tr}\,}
\def\dd{\mathrm{d}\,}
\def\End{\mathrm{End}}
\def\Lie{\mathrm{Lie}\,}
\def\Ker{\mathrm{Ker}\,}
\def\Gal{\mathrm{Gal}}
\def\Aut{\mathrm{Aut}}
\def\GL{\mathrm{GL}}
\def\rk{\mathrm{rk}\,}
\def\gr{\mathrm{gr.}\,}
\def\Id{\mathrm{Id}}
\def\AA{\mathcal{A}}
\def\Imm{\mathrm{Im}\,}
\def\Hom{\mathrm{Hom}}
\def\Res{\mathrm{Res}}
\def\Ind{\mathrm{Ind}}
\def\g{\mathfrak{g}}
\def\h{\mathfrak{h}}
\def\OO{\mathrm{O}}
\def\ii{\mathrm{i}}
\def\la{\lambda}
\def\om{\omega}
\def\gl{\mathfrak{gl}}
\def\sl{\mathfrak{sl}}
\def\so{\mathfrak{so}}
\def\sp{\mathfrak{sp}}
\def\osp{\mathfrak{osp}}
\def\eps{\varepsilon}
\def\into{\hookrightarrow}
\def\onto{\twoheadrightarrow}
\def\ss{\mathfrak{s}}
\def\Irr{\mathrm{Irr}}
\newtheorem{theor}[equation]{Theorem}
\newtheorem{lemma}[equation]{Lemma}
\newtheorem{defi}[equation]{Definition}
\newtheorem{prop}[equation]{Proposition}
\newtheorem{cor}[equation]{Corollary}
\newtheorem{conj}[equation]{Conjecture}
\newtheorem{remark}[equation]{Remark}
\newtheorem{example}[equation]{Example}
\numberwithin{equation}{section}
\renewcommand{\labelenumi}{(\roman{enumi})}
\def\HGR{\mathcal{H}_{\mathrm{gr}}}
\def\HG2{\mathcal{H}_{\mathrm{tail}}}
\title{Infinitesimal Hecke Algebras IV}
\author{Ivan Marin}
\address{LAMFA, Universit\'e de Picardie Jules Verne,  Amiens, France}
\email{ivan.marin@u-picardie.fr}
\date{December 5, 2012}
\begin{document}

\maketitle

\begin{abstract}
We refine the infinitesimal Hecke algebra associated to a 2-reflection group into a $\Z/2\Z$-graded
Lie algebra, as a first step towards a global understanding of a natural $\mathbbm{N}$-graded object.
We provide an interpretation of this Lie algebra in terms of the image of the
braid group into the Hecke algebra, in connection with unitary structures. This connection is particularly
strong when $W$ is a Coxeter group, and when in addition we can use generalizations
of Drinfeld's rational even associators. Finally, in the Coxeter ADE case, we provide a generating
set of the even part of this Lie algebra, which originates from the rotation subgroup of the Coxeter group.
\end{abstract}

\tableofcontents

\section{Introduction}

This paper is a continuation of \cite{IH1,IH2,IH3}.  Let $W$ be a complex reflection group, that is a subgroup of $\GL_n(\C)$
generated by reflections (of order 2), and $\kk$ a field of characteristic $0$. We denote $\mathcal{R}$ the set of reflections
in $W$. In \cite{IH1,IH2} we defined the infinitesimal Hecke algebra $\mathcal{H}$ as the Lie sub-algebra of the group algebra $\kk W$
generated by $\mathcal{R}$. We proved there that it can be identified with the Lie algebra of the Zariski closure
of the braid group of $W$ inside the Hecke algebra (with generic parameter).
Here we first define a graded version of this Lie algebra. 
Let us introduce the group algebra $\kk[[h]]W$ of $W$ over the ring of formal series $\kk[[h]]$, and consider it
as a $\Z_{\geq 0}$-graded $\kk$-algebra.

\begin{defi} The graded infinitesimal Hecke algebra $\HGR$ is the Lie subalgebra
of $\kk[[h]] W$ generated by the $hs, s \in \mathcal{R}$.
\end{defi}

An approximation of this still mysterious graded Lie algebra is given by $\HGR \otimes_{\kk[[h]]} \kk((h)) = \mathcal{H} \otimes_{\kk} \kk((h))$,
whose structure is known from \cite{IH2}. A better approximation is given by the following object, which is the central focus of
this paper.

\begin{defi} The `tail' infinitesimal Hecke algebra $\HG2$ is the Lie $\kk(h^2)$-subalgebra of $\kk(h) W$ generated by
 the $hs, s \in \mathcal{R}$.
\end{defi}

Note that
$\HG2$ has a natural structure of $\Z/2$-graded Lie algebra over $\kk(h^2)$.
Let $\HGR^{(r)}$ denote
the homogeneous part of degree $r$ of $\HGR$. It is a $\kk$-vector space which can be clearly identified to a subspace of $\mathcal{H}$, that is $\HGR^{(r)} = h^r \HGR^r$ with $\HGR^r \subset \mathcal{H}$.

\begin{prop} (see proposition \ref{propstructgrbase}) As subspaces of $\mathcal{H}$, for all $r \geq 1$, $\HGR^r \subset \HGR^{r+2}$. 
\end{prop}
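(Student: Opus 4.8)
The plan is to reduce the statement to a single elementary identity for the adjoint action of a reflection on $\kk W$. First recall what $\HGR^r$ is: since $\HGR$ is the Lie algebra generated by the homogeneous elements $hs$ ($s\in\mathcal R$, all of degree $1$), its degree-$r$ part $\HGR^{(r)}$ is the $\kk$-span of the Lie monomials of degree $r$ in the $hs$; dividing out $h^r$ identifies $\HGR^r\subset\mathcal H\subset\kk W$ with the $\kk$-span of the Lie monomials of degree $r$ in the reflections. A free Lie algebra is spanned, in each degree, by its right-normed monomials, so
\[
  \HGR^r=\sum_{s_1,\dots,s_r\in\mathcal R}\kk\cdot\ad_{s_1}\ad_{s_2}\cdots\ad_{s_{r-1}}(s_r),
\]
and it suffices to show that every right-normed bracket $\ad_{s_1}\cdots\ad_{s_{r-1}}(s_r)$ of $r$ reflections lies in $\HGR^{r+2}$.

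The key input is the operator identity $\ad_s^{\,3}=4\,\ad_s$ on $\kk W$, valid for every reflection $s$. Writing $L_s$ and $R_s$ for left and right multiplication by $s$, we have $\ad_s=L_s-R_s$; these two operators commute and $L_s^{2}=R_s^{2}=\Id$ because $s^{2}=1$, whence $\ad_s^{2}=2\,\Id-2L_sR_s$ and $\ad_s^{3}=(L_s-R_s)(2\,\Id-2L_sR_s)=4L_s-4R_s=4\,\ad_s$. Equivalently, $\Ad_s$ has order dividing $2$, so $\ad_s$ is semisimple with eigenvalues in $\{0,\pm 2\}$.

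Granting this, fix $r\ge 2$ and a right-normed bracket $w=\ad_{s_1}\ad_{s_2}\cdots\ad_{s_{r-1}}(s_r)$. Replacing the outermost operator $\ad_{s_1}$ by $\frac14\ad_{s_1}^{3}$ gives
\[
  w=\frac14\,\ad_{s_1}\ad_{s_1}\ad_{s_1}\ad_{s_2}\cdots\ad_{s_{r-1}}(s_r)
   =\frac14\,[s_1,[s_1,[s_1,[s_2,[\cdots,[s_{r-1},s_r]\cdots]]]]],
\]
which is $\frac14$ times a right-normed bracket of $r+2$ reflections, hence an element of $\HGR^{r+2}$. Since such $w$ span $\HGR^r$, this proves $\HGR^r\subseteq\HGR^{r+2}$ for every $r\ge 2$.

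The only delicate point is the boundary case $r=1$: it cannot hold verbatim, because every bracket of length $\ge 2$ of elements of $W$ lies in the augmentation ideal of $\kk W$, so $\HGR^3\subset\ker(\kk W\to\kk)$ while $\HGR^1=\kk\mathcal R$ does not. I therefore read Proposition~\ref{propstructgrbase} as the inclusion for $r\ge 2$, which is precisely what the $\Z/2$-graded refinement of $\mathcal H$ requires: the chains $(\HGR^{2k})_{k\ge 1}$ and $(\HGR^{2k+1})_{k\ge 1}$ become increasing, $[\HGR^a,\HGR^b]\subseteq\HGR^{a+b}$, and their unions are the two graded components. I do not expect a real obstacle beyond stating the right-normed spanning reduction cleanly and pinning down this $r=1$ convention; all the substance is contained in the cubic identity above.
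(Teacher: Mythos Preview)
Your proof is correct and in fact more direct than the paper's. Both arguments begin from the same elementary computation $\ad_s^2 x = 2(x - sxs)$ for a reflection $s$, but you push one step further to the cubic identity $\ad_s^3 = 4\,\ad_s$ and combine it with the fact that $\HGR^r$ is spanned by elements $\ad_s(y)$ with $y\in\HGR^{r-1}$ for $r\ge 2$. The paper instead stops at $\ad_s^2$, deducing only that $(\Ad_s-1)\HGR^r\subset\HGR^{r+2}$, and then has to prove that $\sum_{s\in\mathcal R}(\Ad_s-1)\HGR^r=\HGR^r$; this it does via the $W$-module structure of $\HGR^r$ under conjugation, arguing that for $r\ge 2$ no trivial constituent can occur because $\HGR^r\subset(\kk W)'$, the linear complement of the center. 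Your cubic identity short-circuits this representation-theoretic detour entirely. The paper's route has the mild advantage of making the $W$-module structure explicit, but for the bare inclusion your argument is the more economical one.

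Your reading of the $r=1$ case is also exactly right: the statement as announced in the introduction is a slight abuse, and Proposition~\ref{propstructgrbase} itself only asserts $\HGR^r\subset\HGR^{r+2}$ for $r\ge 2$, handling $r=1$ separately via the decomposition $\HGR^1=Z(\mathcal H)\oplus(\mathcal H'\cap\HGR^1)$ together with $\mathcal H'\cap\HGR^1\subset\HGR^3$.
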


Let $\HGR^{2 \infty}$ denote the union of the $\HGR^{2r}$, and $\HGR^{2 \infty + 1}$ denote the union of the
$\HGR^{2r+1}$ for $r \geq 0$. We let $\HG2^{0}$ and $\HG2^{1}$ denote the homogeneous components of $\HG2$.
We adopt the following convention : whenever $V$ is a $\kk$-subspace of the group algebra $\kk((h)) W$,
and $R$ is a $\kk$-subalgebra of $\kk((h))$, we let $R V$ denote the image in $\kk((h)) W$ of the
natural morphism $V \otimes_{\kk} R \to \kk((h)) W$. The following is a straightforward consequence of
proposition \ref{propstructgrbase}.

\begin{prop} $\HG2^0  =  \kk(h^2) \HGR^{2 \infty}$ and $\HG2^1=  h \kk(h^2) \left( Z(\mathcal{H}) \oplus  \HGR^{2 \infty + 1} \right)$.

\end{prop}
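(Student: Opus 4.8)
The plan is to read off $\HG2$ directly from its description as the Lie $\kk(h^2)$-algebra generated by the $hs$, $s\in\mathcal{R}$, keeping track of the $\Z/2$-grading (in which each $hs$ is odd and $\kk(h^2)$ sits in degree $0$), and then to substitute the inclusions of proposition \ref{propstructgrbase}.

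First I would note that, as a $\kk(h^2)$-module, $\HG2$ is spanned by the iterated brackets $\beta=[hs_1,[hs_2,[\dots,hs_r]\dots]]$ with $s_i\in\mathcal{R}$ and $r\geq1$, and that $\beta=h^r y$ where $y=[s_1,[s_2,[\dots,s_r]\dots]]$ lies in $\HGR^r\subset\mathcal{H}$. Writing $r=2m+\eps$ with $\eps\in\{0,1\}$, one has $h^r=h^{\eps}(h^2)^m$ with $(h^2)^m\in\kk(h^2)^{\times}$, so $\kk(h^2)\beta=h^{\eps}\kk(h^2)y$, while the $\Z/2$-degree of $\beta$ is exactly $\eps$. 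Grouping the iterated brackets by parity, and using $\kk(h)W=(\kk(h^2)\oplus h\kk(h^2))\otimes_{\kk}\kk W$ to see that the two resulting $\kk(h^2)$-submodules are in direct sum, one gets
\[
\HG2^{0}=\kk(h^2)\Big(\sum_{m\geq1}\HGR^{2m}\Big),\qquad
\HG2^{1}=h\,\kk(h^2)\Big(\HGR^{1}+\sum_{m\geq1}\HGR^{2m+1}\Big).
\]
That the right-hand sides are the homogeneous components is then routine: they are stable under bracket, by $[\HGR^{a},\HGR^{b}]\subseteq\HGR^{a+b}$ together with $h^2\in\kk(h^2)$, and they obviously span a Lie $\kk(h^2)$-algebra containing the generators $hs$.

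Next I would feed in proposition \ref{propstructgrbase}. The inclusions $\HGR^{r}\subset\HGR^{r+2}$ turn $\sum_{m\geq1}\HGR^{2m}$ into the increasing union $\HGR^{2\infty}$ and $\sum_{m\geq1}\HGR^{2m+1}$ into $\HGR^{2\infty+1}$, which already gives $\HG2^{0}=\kk(h^2)\HGR^{2\infty}$. For the odd component one is left with the exceptional degree-one term $\HGR^{1}=\mathrm{span}(\mathcal{R})$, which is not contained in $\HGR^{3}$ — the augmentation $\kk W\to\kk$ annihilates every bracket but no single reflection. To handle it I would use the mechanism behind proposition \ref{propstructgrbase}: for $x\in\HGR^{r}$ and $t\in\mathcal{R}$ one has $[[x,t],t]=2(x-\Ad_t x)\in\HGR^{r+2}$, and since conjugation by a reflection preserves each $\HGR^{r}$, telescoping along a reduced word for $w\in W$ gives $x-\frac1{|W|}\sum_{w\in W}\Ad_w x\in\HGR^{r+2}$; here $\frac1{|W|}\sum_w\Ad_w x\in Z(\kk W)$. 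Applied to $x\in\HGR^{1}$ this writes every reflection, modulo $\HGR^{3}\subset\HGR^{2\infty+1}$, as a normalised class-sum of reflections, hence as an element of $Z(\mathcal{H})$. Therefore $\HGR^{1}+\HGR^{2\infty+1}=Z(\mathcal{H})+\HGR^{2\infty+1}$, and I would then invoke proposition \ref{propstructgrbase} once more for the identification of $Z(\mathcal{H})$ (as the span of the reflection class-sums) that makes this sum direct. Multiplying by $h\,\kk(h^2)$ yields $\HG2^{1}=h\,\kk(h^2)\big(Z(\mathcal{H})\oplus\HGR^{2\infty+1}\big)$.

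I expect the only real obstacle to be this last point: recognising the ``overshoot'' of $\HGR^{1}$ past $\HGR^{2\infty+1}$ as exactly a complementary copy of $Z(\mathcal{H})$, and verifying that the sum $Z(\mathcal{H})\oplus\HGR^{2\infty+1}$ is indeed direct — everything else being parity bookkeeping and the invertibility of $(h^2)^m$ in $\kk(h^2)$. Granting proposition \ref{propstructgrbase} in its full form (the inclusions $\HGR^{r}\subset\HGR^{r+2}$ and the control of $\HGR^{1}$ versus $\HGR^{3}$ and $Z(\mathcal{H})$), the present proposition then follows immediately, as announced in the text.
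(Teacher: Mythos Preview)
Your argument is correct and matches the paper's approach, which simply records the proposition as an immediate consequence of proposition \ref{propstructgrbase}. Your re-derivation of the averaging step is unnecessary: part (i) of that proposition already gives $\HGR^{1} = Z(\mathcal{H}) \oplus (\mathcal{H}' \cap \HGR^{1})$ with $\mathcal{H}' \cap \HGR^{1} \subset \HGR^{3}$, and directness of $Z(\mathcal{H}) \oplus \HGR^{2\infty+1}$ then follows from $\HGR^{2\infty+1} \subset \mathcal{H}'$ (part (ii)).
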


This means that $\HG2$ indeed captures the 'tail' of the sequence $(\HGR^r)_{r \geq 0}$. The Lie algebra
$\HG2$ has a meaning in its own right, that we explain now. 
Recall from \cite{IH2,IH3}  the existence of morphisms $\Psi_H : H(q) \to KW$, a priori
only for $\kk = \C$ but also conjecturally for $\kk = \Q$, where $H(q)$  is the Hecke algebra associated to $W$ ; 
more precisely, $H(q)$ is defined over $K = \kk((h))$,
$q = e^h$, and $H(q)$ is the quotient of the group algebra $KB$  of the braid group $B$ associated to $W$ by relations $(\ss -q)(\ss+q^{-1}) = 0$, where $\ss$ runs
among the braided reflections of $B$.

When $W$ is a Coxeter group, it is known that these morphisms provide isomorphisms $H(q) \simeq K W$. In the general case, it is also conjectured
to be the case. We call this statement the weak BMR conjecture, and refer to \cite{CYCLO} for a detailed discussion of it.
It is known to hold true for the general series $G(de,e,n)$ of irreducible complex reflection groups as
well as for all the groups of rank 2, plus some other cases.

We let $\sigma \in \Aut(K)$ be defined by $f(h) \mapsto f(-h)$, and we let $K^{\sigma} = \kk((h^2))$.  
We prove the following theorem

\begin{theor} \label{theoZarHecke} Under the weak BMR conjecture for $W$, the Zariski closure of $\Psi_H(B)$ inside $H(q)^{\times}$ considered as
an algebraic group over $\kk((h^2))$ has for Lie algebra $\HG2\otimes_{\kk(h^2)} \kk((h^2))$.
\end{theor}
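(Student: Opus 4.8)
The plan is to reduce to the already-known description over $K:=\kk((h))$ from \cite{IH1,IH2,IH3} and then descend along the quadratic Galois extension $K/F$, where $F:=\kk((h^2))=K^{\sigma}$. Write $A:=H(q)$ viewed as an algebra over $F$, so that the algebraic group of the statement is $A^{\times}$ over $F$; set $\Gamma:=\Psi_H(B)\subseteq A^{\times}=A^{\times}(F)$ and let $G$ be its $F$-Zariski closure. By \cite{IH2,IH3} and the weak BMR conjecture, $H(q)\simeq KW$ and the $K$-Zariski closure $G_K$ of $\Gamma$ in $H(q)^{\times}$ is connected with $\Lie G_K=\mathcal{H}\otimes_{\kk}K=\HGR\otimes_{\kk[[h]]}K$; recall also the $\Z_{\geq 0}$-grading $\HGR^{(r)}=h^r\HGR^r$ and, by Proposition~\ref{propstructgrbase} and the two propositions following it, the identification of $\HG2^0$ and $\HG2^1$ with the tails $\HGR^{2\infty}$, $\HGR^{2\infty+1}$ (up to the contribution of $Z(\mathcal{H})$). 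Since $A^{\times}$ over $F$ is the Weil restriction of $H(q)^{\times}$ over $K$, one has $A^{\times}\otimes_F K\simeq H(q)^{\times}_{/K}\times{}^{\sigma}(H(q)^{\times}_{/K})$, the Galois involution swapping the two factors together with the coefficient action $\sigma$, and $\Gamma$ enters as the twisted diagonal $\gamma\mapsto(\gamma,{}^{\sigma}\gamma)$. Because $\mathcal{H}$ is defined over $\kk$, the closure of $\Gamma$ in the second factor again has Lie algebra $\mathcal{H}\otimes K$; hence $G\otimes_F K$ is the $K$-Zariski closure of $\{(\gamma,{}^{\sigma}\gamma):\gamma\in\Gamma\}$ inside $G_K\times{}^{\sigma}G_K$, its Lie algebra $\g'$ lies in $(\mathcal{H}\otimes K)^{\oplus 2}$, and
\[
\Lie G=\{\,X\in H(q):(X,{}^{\sigma}X)\in\g'\,\}.
\]
So everything reduces to pinning down $\g'$.

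For the inclusion $\supseteq$, recall from the construction of $\Psi_H$ that for every braided reflection $\ss$ of $B$ the element $\Psi_H(\ss)^2\in\Gamma$ is $\equiv 1$ modulo $h$ and equals $\exp(2h\,w_{\ss})$ for an involution $w_{\ss}\in KW$ specializing at $h=0$ to the reflection of $W$ covered by $\ss$. The powers $\Psi_H(\ss)^{2n}=\exp(2nh\,w_{\ss})$ form a subgroup whose $F$-Zariski closure is the one-dimensional torus tangent to $2h\,w_{\ss}$ — the standard fact that $\langle e^{2h}\rangle$ is Zariski-dense in $\mathbb{G}_m$ over $\kk$, applied inside the commutative subalgebra generated by $w_{\ss}$ and $h$ — so $2h\,w_{\ss}\in\Lie G$. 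An induction on $h$-degree, using the conjugates $\Psi_H(u\,\ss^2\,u^{-1})$, $u\in B$, shows that the $\kk(h^2)$-Lie subalgebra of $\kk(h)W$ generated by the $2h\,w_{\ss}$ coincides with the one generated by all $hs$, $s\in\mathcal{R}$, namely $\HG2$. Since $\Lie G$ is an $F$-Lie subalgebra of $H(q)$, this gives $\HG2\otimes_{\kk(h^2)}\kk((h^2))\subseteq\Lie G$.

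For the reverse inclusion, refine the upper bound of \cite{IH1,IH2} while keeping track of $h$-parity. Writing $B$ as an extension of $W$ by the pure braid group $P=\ker(B\to W)$, the subgroup $\Psi_H(P)$ lies in a pro-algebraic group whose Lie algebra is, by Malcev completion, generated over $K$ by the logarithms $\xi_w=\log\Psi_H(w)$ of a finite generating set of $P$ (the $\ss_i^2$ and their $\Psi_H$-conjugates, say), together with the torus directions of the semisimple parts. Each $\xi_w$ is obtained from the $2h\,w_{\ss}$ by iterated $\Ad(\Psi_H(\ss_j))$, and the explicit form of $\Psi_H(\ss_j)$ forces $\xi_w\in\HG2\otimes_{\kk(h^2)}K$; decomposing $\xi_w=\xi_w^{(0)}+\xi_w^{(1)}$ into parts of $h$-degree $\equiv 0$ and $\equiv 1$ (on which $\sigma$ acts by $+1$, resp. $-1$), the generators of $\g'$ become the pairs $(\xi_w^{(0)},\xi_w^{(0)})$ and $(\xi_w^{(1)},-\xi_w^{(1)})$ and similar torus directions. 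Hence $\g'$ is contained in the sum of the diagonal copy of $\HGR^{2\infty}\otimes K$ and the antidiagonal copy of $\HGR^{2\infty+1}\otimes K$ (using Proposition~\ref{propstructgrbase}, and the role of $Z(\mathcal{H})$ in the proposition preceding the theorem, to identify these spans with the tails). Substituting into the formula for $\Lie G$ and separating $X=X^{(0)}+X^{(1)}\in H(q)$ by $\sigma$-parity yields $X^{(0)}\in\HG2^0\otimes_{\kk(h^2)}\kk((h^2))$ and $X^{(1)}\in\HG2^1\otimes_{\kk(h^2)}\kk((h^2))$, i.e. $\Lie G\subseteq\HG2\otimes_{\kk(h^2)}\kk((h^2))$, and equality follows.

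The hard part is this last step: one must (i) run the Malcev-type upper bound for $\Psi_H(P)$ as in \cite{IH1,IH2}, and (ii) carry out the $h$-parity bookkeeping for the $\Ad(\Psi_H(\ss_j))$-orbit of the elements $2h\,w_{\ss}$ — precisely where the structural results on $\HGR$ recalled above and the exact description of $\Psi_H$ from \cite{IH3} are used. A secondary technical point is the Weil-restriction identification of the first paragraph, and checking that the passage between the $F$- and the $K$-picture via the twisted diagonal loses no information, for which the connectedness of $G_K$ (already available from \cite{IH1,IH2}) is convenient.
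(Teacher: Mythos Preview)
Your approach is genuinely different from the paper's, and while the overall architecture is not unreasonable, the upper bound has a real gap and you miss the shortcut that makes the paper's argument short.

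\medskip

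\textbf{The paper's route.} The paper never computes the upper bound directly. It reduces to the pure braid group $P$ (finite index), takes the regular representation so that $H(q)^{\times}\subset\GL_N(K)$ with $N=|W|$, and applies a general fact from Chevalley: for any closed $K^{\sigma}$-subgroup of $\GL_N(K)$, its $K^{\sigma}$-dimension equals the $K$-dimension of its $K$-Zariski closure. Since the latter is already known from \cite{IH2} to equal $\dim_{\kk}\mathcal{H}$, one only needs a \emph{lower} bound. That lower bound is obtained exactly as you do, by exhibiting elements $y_s\equiv hs\bmod h^2$ in $\Lie\overline{\Psi(P)}$ and showing (by a determinant/triangularity argument, which is what your ``induction on $h$-degree'' would have to become) that the $K^{\sigma}$-Lie algebra they generate has dimension $\dim_{\kk}\g_0+\dim_{\kk}\g_1$. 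The sandwich $\dim\g_0+\dim\g_1\leq\dim_{K^{\sigma}}\Lie\overline{\Psi(P)}=\dim_{\kk}\mathcal{H}=\dim(\g_0+\g_1)\leq\dim\g_0+\dim\g_1$ then forces equality and, as a bonus, $\g_0\cap\g_1=0$.

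\medskip

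\textbf{The gap in your upper bound.} Your Weil-restriction picture is correct, but the step ``hence $\g'$ is contained in the sum of the diagonal copy of $\HGR^{2\infty}\otimes K$ and the antidiagonal copy of $\HGR^{2\infty+1}\otimes K$'' does not follow from what precedes it. Decomposing $\xi_w=\xi_w^{(0)}+\xi_w^{(1)}$ by $\sigma$-parity only gives $\xi_w^{(0)}\in\mathcal{H}\otimes K^{\sigma}$ and $\xi_w^{(1)}\in h\,\mathcal{H}\otimes K^{\sigma}$; it does \emph{not} give $\xi_w^{(0)}\in\HGR^{2\infty}\otimes K^{\sigma}$ and $\xi_w^{(1)}\in h\,\HGR^{2\infty+1}\otimes K^{\sigma}$. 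Confusing ``parity under $\sigma$'' with ``parity of bracket length in the reflections'' is exactly the content of the theorem, so this is circular as written. What you would actually need is $\xi_w\in\widehat{\HGR}$ (the completed graded object), so that each $h$-homogeneous piece of $\xi_w$ lies in the correct $\HGR^r$; this is true because $\Psi_H(P)\subset\exp\widehat{\HGR}$ by the monodromy/associator construction, but you neither state nor use this fact---your justification ``the explicit form of $\Psi_H(\ss_j)$ forces $\xi_w\in\HG2\otimes_{\kk(h^2)}K$'' only yields $\xi_w\in\mathcal{H}\otimes K$, which is too weak. (Also, $(\xi_w,\sigma(\xi_w))$ is a single generator of $\g'$; you cannot split it into the two pieces $(\xi_w^{(0)},\xi_w^{(0)})$ and $(\xi_w^{(1)},-\xi_w^{(1)})$ before knowing the $K$-span.)

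\medskip

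\textbf{Comparison.} Your lower bound is morally the same as the paper's, though the paper's determinant argument is more explicit than your ``induction on $h$-degree''. For the upper bound, the paper's dimension count via Chevalley is both simpler and logically prior to the facts you would need (e.g.\ $\Psi_H(P)\subset\exp\widehat{\HGR}$ appears in the paper only \emph{after} this theorem). If you want to salvage your route, insert the statement $\log\Psi_H(w)\in\widehat{\HGR}$ for $w\in P$ and justify it from the construction of $\Psi_H$; then your parity bookkeeping goes through. But the dimension argument is the cleaner path.
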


Recall from \cite{IH1}, \cite{IH2} that $\mathcal{H} \subset \mathcal{L}_{\eps}(W)$, where $\mathcal{L}_{\eps}(W)$ is
the Lie subalgebra of $\kk W$ spanned by the $g - \eps(g) g^{-1}$ for $g \in W$, where $\eps : W \to \{ \pm 1 \}$
is the sign morphism, induced by the determinant. We denote $\OO= \Ker \eps$ the rotation subgroup of $W$. In section \ref{sectlepsG} we endow $\mathcal{L}_{\eps}(W)$
with the structure of a $\Z/2$-graded Lie algebra $\mathcal{L}_{\eps}(W) = \mathcal{L}_1(\OO) \oplus \mathcal{L}_1(\OO^{\dagger})$,
such that
$$
\HG2^0 = \HG2 \cap \kk(h^2) \mathcal{L}_1(\OO) \mbox{ \ and \ } h^{-1} \HG2^1 = \HG2 \cap \kk(h^2) \mathcal{L}_1(\OO^{\dagger})
$$

In the same section \ref{sectlepsG} we determine the structure as $\Z/2$-graded Lie algebras of $\mathcal{L}_{\eps}(W)$ and of the $\rho(\mathcal{L}_{\eps}(W))$, for $\rho$
an irreducible representation of $W$, notably when $W$ is a Coxeter group.

We then prove the following theorem.

\begin{theor}\label{theo16} (see propositions \ref{propZarLieP} and \ref{propimageorthL1})
Let $W$ be a reflection group, $\rho : W \to \gl_N(\kk)$ an irreducible representation, and $\Psi : B \to \GL_N(K)$
the representation of $H(q)$ associated to $\rho$. The closure of $\Psi(B)$ in the $K^{\sigma}$-group
$\GL_N(K)$ has for Lie algebra $K^{\sigma} \rho(\HG2^0) \oplus h K^{\sigma} \rho(\HG2^1)$. If $\rho^* \simeq \rho$
(e.g. if $W$ is a Coxeter group) then 
$$
\begin{array}{lcl}
K^{\sigma} \rho(\HG2^0) &=& K^{\sigma} \left( \rho(\mathcal{H}) \cap \rho(\mathcal{L}_1(\OO)) \right) \\
K^{\sigma} \rho(\HG2^1) &=& h K^{\sigma} \left( \rho(\mathcal{H}) \cap \rho(\mathcal{L}_1(\OO^{\dagger})) \right) \\
\end{array}
$$
\end{theor}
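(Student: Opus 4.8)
The plan is to deduce Theorem~\ref{theo16} from Theorem~\ref{theoZarHecke} (or rather its proof) by pushing the braid group along $\Psi$ and using that the Zariski closure commutes with this kind of morphism of algebraic groups over $K^\sigma$. First I would observe that $\Psi$ factors as $B \to H(q)^\times \to \GL_N(K)$, where the second map is the $N$-dimensional representation of $H(q)$ attached to $\rho$; since the target of the first map has, under the weak BMR conjecture, Zariski closure with Lie algebra $\HG2\otimes_{\kk(h^2)}\kk((h^2))$, and the representation $H(q)^\times \to \GL_N(K)$ is a morphism of linear algebraic groups defined over $K^\sigma$, the image closure has Lie algebra the image of $\HG2\otimes\kk((h^2))$ under $\rho$, i.e. $K^\sigma\rho(\HG2)$. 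Then I would split this according to the $\Z/2$-grading: $K^\sigma\rho(\HG2) = K^\sigma\rho(\HG2^0)\oplus hK^\sigma\rho(\HG2^1)$, using that $\HG2^0\subset \kk(h^2)\,\mathcal{H}$ sits in even degree while $\HG2^1\subset h\kk(h^2)\,\mathcal{H}$ sits in odd degree, so the two summands are $K^\sigma$-linearly independent inside $\GL_N(K)$ (they lie in complementary $\sigma$-eigenspaces of the $K$-span of $\rho(\mathcal{H})$). One must be slightly careful that $\rho$ restricted to $H(q)$ is compatible with the deformation $q=e^h$; I would invoke \cite{IH2,IH3} for the existence and good behaviour of $\Psi$ and for the fact that over $K$ it degenerates at $h=0$ to $\rho:\kk W\to \gl_N(\kk)$.

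For the second, more substantive assertion, assume $\rho^*\simeq\rho$. The content is an explicit identification of $\rho(\HG2^0)$ and $h^{-1}\rho(\HG2^1)$ inside $\rho(\mathcal{H})$ with the intersections $\rho(\mathcal{H})\cap\rho(\mathcal{L}_1(\OO))$ and $\rho(\mathcal{H})\cap\rho(\mathcal{L}_1(\OO^\dagger))$. Here I would use the $\Z/2$-graded description of $\HG2$ recorded just before Theorem~\ref{theoZarHecke}, namely $\HG2^0 = \HG2\cap\kk(h^2)\mathcal{L}_1(\OO)$ and $h^{-1}\HG2^1 = \HG2\cap\kk(h^2)\mathcal{L}_1(\OO^\dagger)$, together with the propositions of section~\ref{sectlepsG} computing $\rho(\mathcal{L}_\eps(W))$ as a $\Z/2$-graded Lie algebra. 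The key input from $\rho^*\simeq\rho$ is that the self-duality forces the invariant bilinear form to be either symmetric or antisymmetric, and correspondingly $\rho(B)$ preserves an orthogonal or symplectic structure over $K$; this is what makes the Zariski closure of $\Psi(B)$ large enough that its Lie algebra is the \emph{full} $K^\sigma$-span of the visible even/odd parts of $\rho(\mathcal{H})$ inside $\rho(\mathcal{L}_1(\OO))$, $\rho(\mathcal{L}_1(\OO^\dagger))$, rather than a proper subspace. Concretely, I expect to show $\rho(\HG2^0) = K^\sigma\text{-component of }\rho(\HG2)$ in degree $0$ equals $\rho(\mathcal{H})\cap\rho(\mathcal{L}_1(\OO))$ by a dimension count: one containment is clear from $\HG2^0\subset\kk(h^2)\mathcal{L}_1(\OO)$ and $\HG2^0\subset\kk(h^2)\mathcal{H}$; the reverse uses that $\rho$ maps $\mathcal{H}$ onto the derived subalgebra of the relevant classical Lie algebra (from \cite{IH2}) and that the $\Z/2$-refinement is compatible with this surjectivity.

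The main obstacle is the reverse inclusion in the last display, i.e. showing that $\rho(\mathcal{H})\cap\rho(\mathcal{L}_1(\OO))$ is not strictly larger than $\rho(\HG2^0)$ — equivalently, that passing to the image under $\rho$ does not create ``extra'' even elements beyond those coming from $\HG2$ itself. This is where one genuinely needs the hypothesis $\rho^*\simeq\rho$: without it the closure of $\Psi(B)$ can be a smaller group (e.g.\ only a $K$-form not defined over $K^\sigma$ in the right way), and the intersection on the right can fail to be $K^\sigma$-rational in the expected sense. I would handle this by invoking propositions~\ref{propZarLieP} and~\ref{propimageorthL1} of section~\ref{sectlepsG}: \ref{propZarLieP} identifies the Zariski closure of the image representation abstractly (orthogonal, symplectic, or the full general linear group, according to the type of $\rho$), and \ref{propimageorthL1} matches its Lie algebra with the intersection of $\rho(\mathcal{H})$ with the graded pieces of $\rho(\mathcal{L}_\eps(W))$. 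Combining these with the first part of the theorem — which already gives $K^\sigma\rho(\HG2^0)$ as that Lie algebra's even part — yields the stated equalities after removing the harmless scalar extension from $\kk(h^2)$ to $K^\sigma=\kk((h^2))$, using that both sides are defined over $\kk(h^2)$ and that the extension $\kk(h^2)\hookrightarrow\kk((h^2))$ is faithfully flat.
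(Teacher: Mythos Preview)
Your approach to the first part reverses the paper's logic and thereby introduces an extra hypothesis. You deduce the Lie algebra of $\overline{\Psi(B)}$ in $\GL_N(K)$ from Theorem~\ref{theoZarHecke} by pushing forward along $H(q)^\times \to \GL_N(K)$; but Theorem~\ref{theoZarHecke} assumes the weak BMR conjecture, whereas Theorem~\ref{theo16} does not. In the paper the implication runs the other way: Proposition~\ref{propZarLieP} is proved \emph{directly} for an arbitrary $\rho$ (using that every $g \in P$ satisfies $\Psi(g) = \exp(hx)$ by the monodromy construction, applying Lemma~\ref{lemzar} to put the generators $y_s \equiv hs \bmod x$ into $\Lie\overline{\Psi(P)}$, and then comparing $K^\sigma$-dimensions with the known dimension of the $K$-closure from \cite{IH2}), and Theorem~\ref{theoZarHecke} is then obtained as a corollary by taking $\rho$ to be the regular representation. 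So your route proves a strictly weaker statement than the one claimed.

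For the second part you over-complicate matters and misidentify the role of $\rho^* \simeq \rho$. The argument has nothing to do with the Zariski closure being ``large enough'' or with surjectivity of $\rho$ onto a classical Lie algebra. Proposition~\ref{propimageorthL1} is a two-line linear algebra fact: since $\rho^* \simeq \rho$, Propositions~\ref{propL1Greal} (1) and~\ref{propL1Grquat} (1) give $\rho(\mathcal{L}_1(\OO)) \cap \rho(\mathcal{L}_1(\OO^\dagger)) = 0$ (with respect to the adjunction for the invariant bilinear form, one subspace consists of skew elements and the other of self-adjoint ones). Now $\kk(h^2)\rho(\mathcal{H}) = \rho(\HG2^0) + h^{-1}\rho(\HG2^1)$, with $\rho(\HG2^0) \subset \kk(h^2)\rho(\mathcal{L}_1(\OO))$ and $h^{-1}\rho(\HG2^1) \subset \kk(h^2)\rho(\mathcal{L}_1(\OO^\dagger))$; since the two ambient spaces meet trivially, each summand must equal the full intersection of $\kk(h^2)\rho(\mathcal{H})$ with its ambient space. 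You also misdescribe Proposition~\ref{propZarLieP}: it is not a classification of $\rho(\mathcal{L}_\eps(W))$ into orthogonal/symplectic/linear types (that is the content of Propositions~\ref{propL1Greal}, \ref{propL1Gcompl5}, \ref{propL1Grquat}), but precisely the computation of $\Lie\overline{\Psi(P)}$ as $K^\sigma\g_0 \oplus hK^\sigma\g_1$ --- i.e.\ it \emph{is} the first assertion of the theorem.
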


In the latter case, we moreover prove that this closure is the intersection of the closure over $K$
with a suitable unitary group (see proposition \ref{propintersectunitary}).

A natural question is whether $\HG2^0 \subset \kk(h^2) \OO$ can be defined
intrinsically from $\OO$, or needs instead to be defined inside $\kk(h^2 ) W$. 
Let $\mathcal{A}$ denote the Lie subalgebra of $\kk \OO$
generated by elements $[s,u] = su-us = su - (su)^{-1}$ for $s,u \in \mathcal{R}$. In sections \ref{sectrot}, \ref{sectrotA} and \ref{sectrotDE}, 
we prove the following.

\begin{theor}  $\kk(h^2)\mathcal{A}$ is contained in $\HG2^{0}$, and $\kk(h^2)\mathcal{A} = \HG2^{0}$ when
$W$ is a Coxeter group of type ADE.
\end{theor}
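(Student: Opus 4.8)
The plan is to prove the two statements separately, the inclusion $\kk(h^2)\mathcal{A}\subset\HG2^0$ in general and the equality for type ADE. For the inclusion, I would first observe that each bracket $[s,u]=su-(su)^{-1}$ with $s,u\in\mathcal{R}$ lies in $\mathcal{L}_\eps(W)$ and, being a difference of two rotations, sits in $\mathcal{L}_1(\OO)$ in the $\Z/2$-grading of section \ref{sectlepsG}. On the other hand, $[hs,hu]=h^2(su-us)=h^2[s,u]$ is by definition an element of $\HGR^{(2)}\subset\HG2$, so $h^2[s,u]\in\HG2^0$, hence $[s,u]\in h^{-2}\HG2^0\subset\kk(h^2)\mathcal{L}_1(\OO)$. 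Since $\HG2^0$ is a Lie $\kk(h^2)$-subalgebra of $\kk(h)W$ and $\mathcal{A}$ is generated as a Lie algebra by these brackets, an induction on bracket length — using that $\kk(h^2)\HG2^0=\HG2^0$ and that the bracket of two elements of $\HG2^0$ stays in $\HG2^0$ — gives $\kk(h^2)\mathcal{A}\subset\HG2^0$. The only subtlety is to check that Lie monomials of $\mathcal{A}$ of length $\geq 3$ are still reached: here one uses $h^{2k}[\cdots]\in\HGR^{(2k)}$ together with $\HGR^{2\infty}=\bigcup_r\HGR^{2r}$ and proposition \ref{propstructgrbase}, so that $\HG2^0=\kk(h^2)\HGR^{2\infty}$ indeed absorbs all of them.

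For the reverse inclusion in type ADE, by proposition \ref{propstructgrbase} it suffices to show that $\HGR^{2r}\subset\mathcal{A}$ for all $r$, equivalently that the span of the degree-$2r$ homogeneous components $[[hs_1,hs_2],\dots]$ (an even number of factors) lies in $\mathcal{A}$ after dividing by $h^{2r}$. The even-degree components of $\HGR$ are spanned by Lie monomials $[[\cdots[s_{i_1},s_{i_2}],\dots],s_{i_{2r}}]$ in the $s\in\mathcal{R}$ with an even number of entries; I would argue by induction on $r$ that each such monomial can be rewritten, modulo the Jacobi identity and the relations in $\kk W$, as an element of the Lie algebra generated by the length-two brackets $[s,u]$. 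The base case $r=1$ is the definition of the generators of $\mathcal{A}$. For the inductive step one expands a monomial of length $2r+2$ as a bracket of a length-$2\ell$ monomial with a length-$2m$ monomial ($\ell+m=r+1$, both $\geq 1$) — this is possible precisely because in ADE type we have enough control over the structure of $\mathcal{H}$ and of $\mathcal{L}_\eps(W)$ from the ADE computations of section \ref{sectlepsG} and \cite{IH2} to know that the degree-$2r$ part of $\HGR$ is generated, as a Lie algebra, by its degree-$2$ part.

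The main obstacle is exactly this last point: showing that, for $W$ of type ADE, the even part $\HG2^0$ is generated as a Lie $\kk(h^2)$-algebra by the specific degree-two elements $h^2[s,u]$, rather than needing higher brackets that mix even and odd pieces of $\HGR$. A priori an even-degree element of $\HGR$ could only be expressible using the odd part $\HGR^{2\infty+1}$ (via $[\text{odd},\text{odd}]\subset\text{even}$), in which case it need not lie in $\mathcal{A}\subset\kk\OO$. To rule this out I would exploit the explicit description, available in the ADE case, of $\rho(\mathcal{H})$ and $\rho(\mathcal{L}_\eps(W))$ for each irreducible $\rho$ of $W$ from section \ref{sectlepsG}: one checks on every irreducible representation that $\rho(\mathcal{A})$ already equals $\rho(\mathcal{H})\cap\rho(\mathcal{L}_1(\OO))$, using that the Weyl group of an ADE root system is generated by its reflections in a way compatible with the rotation subgroup $\OO$ being generated by the products $su$. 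Since $\mathcal{H}$, $\mathcal{A}$ and $\mathcal{L}_1(\OO)$ are all semisimple-module objects that embed in $\prod_\rho\rho(\kk W)$ faithfully, equality on each $\rho$ yields the global equality $\mathcal{A}=\mathcal{H}\cap\mathcal{L}_1(\OO)$ and hence $\kk(h^2)\mathcal{A}=\HG2^0$ by theorem \ref{theo16}. Verifying the representation-theoretic identity $\rho(\mathcal{A})=\rho(\mathcal{H})\cap\rho(\mathcal{L}_1(\OO))$ in each ADE case — presumably type by type, or uniformly via the root system — is where the real work lies.
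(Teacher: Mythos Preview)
Your argument for the inclusion $\kk(h^2)\mathcal{A}\subset\HG2^0$ is correct and is essentially what the paper uses implicitly: the generators $[s,u]$ of $\mathcal{A}$ arise as $h^{-2}[hs,hu]\in\HGR^2\subset\HGR^{2\infty}$, and $\HG2^0=\kk(h^2)\HGR^{2\infty}$ is a Lie subalgebra.

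For the reverse inclusion in type ADE, you correctly abandon the ``rewrite even Lie monomials via Jacobi'' approach --- there is no reason a length-$2r$ bracket of reflections should decompose as a bracket of two shorter \emph{even} pieces, so that route is a dead end. Your final strategy, reducing to $\rho(\mathcal{A})=\rho(\mathcal{H})\cap\rho(\mathcal{L}_1(\OO))$ for every $\rho\in\Irr(W)$, is exactly the paper's approach. However, there are two substantial points you have not addressed.

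First, passing from ``equality on every $\rho$'' to the global equality is not automatic: one needs to know that $\mathcal{A}$ is semisimple and that the simple ideals obtained from distinct $\rho$ are genuinely distinct (two non-isomorphic representations of $\OO$ could a priori become isomorphic over $\mathcal{A}$). The paper handles this via Proposition~\ref{propsemisimpA} (semisimplicity, using that the $su$ with $su\neq us$ form a single $\OO$-conjugacy class in ADE rank~$\geq 5$) and Proposition~\ref{propirrA} (distinct $\rho\in\Irr(\OO)$ stay distinct as $\mathcal{A}$-modules), together with dimension checks ruling out accidental isomorphisms like $\so_6\simeq\sl_4$ or the triality of $\so_8$.

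Second, and more seriously, you give no mechanism for proving the hard direction $\rho(\mathcal{A})\supset\so(V_\rho)$ (or $\sl(V_\rho)$). The phrase ``the Weyl group is generated by reflections in a way compatible with $\OO$ being generated by the $su$'' does not do this: that only yields irreducibility of $\rho$ over $\mathcal{A}$, not that $\rho(\mathcal{A})$ is as large as possible. The paper's actual argument is an induction on the rank of $W$: one restricts $\rho$ to a parabolic $W_0$ of corank one, applies the induction hypothesis to compute $\h=\rho(\mathcal{A}_0)$, and then uses purely Lie-theoretic lemmas (e.g.\ if $\h\subset\g\subset\so_N$ with $\g$ irreducible, $\h$ multiplicity-free and $\rk\h>N/4$, then $\g=\so_N$) to force $\g=\rho(\mathcal{A})$ to be the full orthogonal or special linear algebra. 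This requires a lengthy case analysis over the branching rules, plus computer verification of the base cases ($\mathfrak{S}_5,\mathfrak{S}_6,\mathfrak{S}_7$, $D_5$, etc.). None of this machinery is visible in your proposal; identifying it is the content of sections~\ref{sectrotA} and~\ref{sectrotDE}.
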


As opposed to the non-graded setting, where all the results could be stated and prove (sometimes
to the expense of admitting a couple of natural or well-established conjectures) in the most
general setting of complex pseudo-reflection groups (see \cite{IH3}) it seems that this $\Z/2\Z$-graded
version behaves more naturally in the case of Coxeter groups. In \S 6 we detail some special aspects
of Coxeter groups that are relevant here, and state a refinement of the conjectures of \cite{IH2,IH3}
which is specific to the Coxeter setting. Loosely speaking, this refinement is connected to
the existence of analogues of Drinfeld's \emph{even} associators. Under this conjecture,
which imply special cases of a classical result of Lusztig and is shown to hold in
Coxeter types $A,B_n/C_n$ and $I_2(m)$, $\HG2^0$ can be identified with the `orthogonal part' of
the image inside the Hecke algebra of the pro-unipotent completion of the pure braid group. Ironically,
one can prove that the image of the pure braid group itself has no `orthogonal part' (see remark \ref{remEXPLAIN}).

The next natural goal would be to understand the full graded version $\HGR$. Another purpose
is to use this broader knowledge provided by the $\Z/2\Z$-graded version in the
case of specialized values of the parameter $q$. We already showed in \cite{TLFP}, in the case
$W = \mathfrak{S}_n$ and over a finite field, that the image of the braid group may fill in
either full linear groups, or instead preserve a unitary form, depending on whether the
unitary form existing at the generic level admits an avatar in the finite field and for the
parameter $q$ under consideration. 

In order to check some representation-theoretic facts on specific complex reflection groups,
we made use of the development version of the CHEVIE package for GAP3. This software
is maintained by Jean Michel  and can be found at 
\url{http://www.math.jussieu.fr:/~jmichel/chevie/index.html}. It will simply be referred to as `CHEVIE' in the sequel.

\medskip

{\bf Acknowledgements.} I thank the University of Tokyo and T. Kohno for a one-month stay in 2009 where
I started this work,
 M. Cabanes for help in proving lemma \ref{clifsubtletypeD},
J. Michel and C. Bonnaf\'e for the reference \cite{CARTER}, C. Cornut for useful discussions about unitary groups. For the tables below, and also for the determination of $\mathcal{A}$ in small rank, this work made heavy use of a fast computer
funded by the ANR grant ANR-09-JCJC-0102-01.

\section{Infinitesimal graded Hecke algebras}

Since $\HGR$ is generated as a Lie algebra by a conjugacy class
in $W$, each of the homogeneous components $\HGR^r$ is
a $W$-submodule of $\mathcal{H}$. We first prove the
following
\begin{prop} \label{propstructgrbase}
\begin{enumerate}
\item $\HGR^1 = Z(\mathcal{H}) \oplus \left( \mathcal{H}' \cap \HGR^1\right)$ and $\mathcal{H}' \cap \HGR^1 \subset \HGR^3$
\item For all $r \geq 2$, $\HGR^r \subset \mathcal{H}' = [\mathcal{H},\mathcal{H}]$ and $\HGR^r \subset \HGR^{r+2}$. 
\end{enumerate}
\end{prop}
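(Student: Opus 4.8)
The plan is to exploit two elementary identities in the group algebra $\kk W$ that re-express certain elements as Lie brackets of length greater by two. The first involves a single reflection $s\in\mathcal{R}$: since $s^2=1$, left and right multiplication by $s$ are commuting involutions of $\kk W$, so $\ad_s$ has eigenvalues in $\{0,\pm 2\}$ and satisfies $\ad_s^3=4\,\ad_s$; equivalently $s[s,Y]s=-[s,Y]$ and $[s,[s,[s,Y]]]=4[s,Y]$ for every $Y\in\kk W$. The second involves two reflections $u,u'\in\mathcal{R}$: a direct computation gives $[u,[u',u]]=2(uu'u-u')$, so the difference between a reflection and any of its $\mathcal{R}$-conjugates lies in $\HGR^3$.

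Granting these, part (ii) is quick. Since $\HGR$ is generated in degree $1$ one has $\HGR^r=[\HGR^1,\HGR^{r-1}]$ for $r\geq 2$, whence $\HGR^r\subset[\mathcal{H},\mathcal{H}]=\mathcal{H}'$; and $\HGR^r$ being spanned by the brackets $[s,Y]$ with $s\in\mathcal{R}$ and $Y\in\HGR^{r-1}$, the first identity gives $[s,Y]=\frac14[s,[s,[s,Y]]]\in[\HGR^1,[\HGR^1,[\HGR^1,\HGR^{r-1}]]]\subset\HGR^{r+2}$, so $\HGR^r\subset\HGR^{r+2}$.

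For part (i) I would first observe that $\mathcal{H}=\sum_{r\geq 1}\HGR^r=\HGR^1+\mathcal{H}'$ by part (ii), and view $\HGR^1=\mathrm{span}(\mathcal{R})$ as a $W$-module under conjugation. It splits as $(\HGR^1)^W\oplus M$, where $(\HGR^1)^W$ is spanned by the reflection class sums $\omega_C=\sum_{s\in C}s$ and $M$ is the sum of the corresponding augmentation submodules. The $\omega_C$ are central in $\kk W$, so $(\HGR^1)^W\subset Z(\mathcal{H})$; and $M$ is spanned by the differences $v-v'$ of reflections inside one class $C$, which, on writing $v'=u_k\cdots u_1\,v\,u_1\cdots u_k$ with $u_i\in\mathcal{R}$ and telescoping, are sums of terms $u_iv_{i-1}u_i-v_{i-1}$ with $v_{i-1}\in C$, hence lie in $\HGR^3$ by the second identity; thus $M\subset\HGR^3\subset\mathcal{H}'$. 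At this point I would invoke the only outside ingredient, namely that $\mathcal{H}$ is reductive (the structure theorem of \cite{IH2}), so that $Z(\mathcal{H})\cap\mathcal{H}'=0$. For $z\in Z(\mathcal{H})\subset\mathcal{H}=(\HGR^1)^W+\mathcal{H}'$, writing $z=a+b$ with $a\in(\HGR^1)^W\subset Z(\mathcal{H})$ forces $b=z-a\in Z(\mathcal{H})\cap\mathcal{H}'=0$, so $Z(\mathcal{H})=(\HGR^1)^W$. Similarly, for $x\in\HGR^1\cap\mathcal{H}'$, writing $x=a+m$ with $a\in(\HGR^1)^W$ and $m\in M$ forces $a=x-m\in Z(\mathcal{H})\cap\mathcal{H}'=0$, so $\HGR^1\cap\mathcal{H}'=M\subset\HGR^3$. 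Combining, $\HGR^1=Z(\mathcal{H})\oplus(\HGR^1\cap\mathcal{H}')$ with $\HGR^1\cap\mathcal{H}'\subset\HGR^3$.

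I expect the main obstacle to be the degree-one case rather than the higher degrees: one must both find the correct way back down from $\HGR^3$ into $\mathrm{span}(\mathcal{R})$ — the identity $[u,[u',u]]=2(uu'u-u')$ together with the telescoping over a reflection factorisation of a conjugating element — and then identify the resulting subspace with $Z(\mathcal{H})$, which genuinely uses the reductivity of $\mathcal{H}$ established in \cite{IH2}. The inclusions $\HGR^r\subset\HGR^{r+2}$ for $r\geq 2$, by contrast, fall out formally from the cubic relation $\ad_s^3=4\,\ad_s$.
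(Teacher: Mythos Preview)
Your argument is correct. For part (ii) you take a genuinely different and more economical route than the paper. The paper uses only the square $\ad_s^2(x)=2(x-sxs)$ to show that the ``augmentation'' subspace $M=\sum_{s\in\mathcal R}(s-1)\cdot\HGR^r$ lies in $\HGR^{r+2}$, and then invokes a $W$-module argument (semisimplicity of $\HGR^r$ under conjugation, absence of trivial constituents inside $\mathcal H'$) to conclude $M=\HGR^r$. You bypass this representation-theoretic step entirely by going one bracket further: the cubic relation $\ad_s^3=4\,\ad_s$ lets you rewrite every generator $[s,Y]$ of $\HGR^r=[\HGR^1,\HGR^{r-1}]$ directly as an element of $\HGR^{r+2}$. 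This is strictly more elementary; the only thing the paper's approach buys is perhaps a clearer picture of \emph{why} the obstruction to $\HGR^r\subset\HGR^{r+2}$ is the presence of $W$-invariants, which is what makes degree~$1$ special.

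For part (i) the paper simply says the statement is ``clear from the general properties of $\mathcal H$'', so your detailed treatment is a genuine addition. Your identity $[u,[u',u]]=2(uu'u-u')$ together with the telescoping along a reflection factorisation of a conjugating word is exactly the right mechanism to place differences of conjugate reflections into $\HGR^3$, and your appeal to the reductivity of $\mathcal H$ (from \cite{IH2}) to get $Z(\mathcal H)\cap\mathcal H'=0$ is precisely the ``general property'' the paper has in mind. Both proofs are thus compatible; yours is more self-contained.
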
 
\begin{proof}
(i) is clear from the general properties of $\mathcal{H}$. We prove (ii) and assume $r \geq 2$. Again clearly
$\HGR^r \subset \mathcal{H}' = [\mathcal{H},\mathcal{H}]$.
Let $x \in \HGR^r$ and $s \in \mathcal{R}$. One has $[s,[s,x]] = 2(sxs-x) \in \HGR^{r+2}$,
hence $s.x - x \in \HGR^{r+2}$ where $g .x$ for $g \in W$ and $x \in \mathcal{H}$
denotes the conjugation action.
We introduce $M = \sum_{s \in \mathcal{R}} (s-1).\HGR^r$. By the above $M \subset \HGR^{r+2}$,
and also $M \subset \HGR^r$ because $\HGR^r$ is $W$-stable.
We prove that, when $r \geq 2$, then $M = \HGR^r$. For this,
to each $W$-submodule $U$ of $\HGR^r$ we associate
the subspace $M_U = \sum_{s \in \mathcal{R}} (s-1).U$. Since $U$ is a
$W$-submodule we get $M_U \subset U$,
and moreover $M_U$ is stabilized by $W$ : if $w \in W$, then
$$
w.\left(\sum_{s \in \mathcal{R}} (s-1).U\right)
= \sum_{s \in \mathcal{R}} (ws-w).U
= \sum_{s \in \mathcal{R}} (wsw^{-1}-1).(w.U)
= \sum_{s \in \mathcal{R}} (s-1).U = M_U.
$$
When $U \neq 0$, $M_U = 0$ if and only if $\forall s \in \mathcal{R} \forall x \in U \ s.x = x$,
that is $\forall w \in W \forall x \in U \ \ w.x = x$, hence $\un \into
U$. But when $r \geq 2$, $\HGR^r \subset \mathcal{H}' \subset (\kk W)'$, which as
a $W$-module (under the conjugation action) is a linear complement of $Z(W)$ in $\kk W$.
From this we deduce, when $r \geq 2$, that $U \neq 0 \Rightarrow M_U \neq 0$,
hence if  $U$ is irreductible then $M_U = U$. Since $\HGR^r$ is a sum of 
irreducible submodules this proves $\HGR^r = M \subset \HGR^{r+2}$.
\end{proof}

For a given group $W$, the computation of the grading can be
made as follows. Let $\mathcal{M}_r \subset \mathcal{H}$
denote the subspace spanned by the brackets of at most $r$
reflections, that is $\mathcal{M}_1 = \kk \mathcal{R}$ and $\mathcal{M}_{r+1}
 = \mathcal{M}_r + [\mathcal{R},\mathcal{M}_r]$. The datas of $\dim \mathcal{M}_r$
 determines $\dim \HGR^r$ as follows. First of all, $\mathcal{M}_1 = \kk \mathcal{R}
  = \HGR^1$. Then $\mathcal{M}_2 = \mathcal{M}_1 + \HGR^2$
  hence $\dim \HGR^2 = \dim \mathcal{M}_2 - \dim \mathcal{M}_1$; 
  $\mathcal{M}_3 = \mathcal{M}_2 + \HGR^3 + \HGR^1 = 
  \HGR^2 + \HGR^3 +( \HGR^1 \cap \mathcal{H}') + Z(\mathcal{H})$,
 thus $\mathcal{M}_3 = Z(\mathcal{H})  \oplus \HGR^2 \oplus \HGR^3$
  and $\dim \HGR^3 =\dim \mathcal{M}_3  -\dim \HGR^2 - \dim Z(\mathcal{H})$. 
  
For $n \geq 4$, $\mathcal{M}_n = \mathcal{M}_{n-1} + \HGR^n$,
and $\mathcal{M}_{n-1} \cap \HGR^n = \HGR^{n-2}$
hence $\dim \mathcal{M}_n = \dim \mathcal{M}_{n-1} + \dim \HGR^n - \dim \HGR^{n-2}$ 
and $\dim \HGR^n = \dim \mathcal{M}_n - \dim \mathcal{M}_{n-1} + \dim \HGR^{n-2}$.
Summing up all these equalities yields to 
$$
\dim \HGR^n  =\dim \mathcal{M}_n -  \dim \HGR^{n-1}- \dim Z(\mathcal{H})
$$
whenever $n \geq 3$. A tabulation of the dimensions for some of the irreducible groups is given in
tables \ref{tableexcept} and \ref{tableclassical} (the figures in bold constitute the repeating pattern).

For the record, we recall the following result for Coxeter groups
(see \cite{CARTER}, lemma 2).
\begin{lemma} If $W < \GL_n(\R)$ is a real reflection group and $g \in W$
with $k = \rk(g-1)$, then $g$ is a product of $k$ reflections.
\end{lemma}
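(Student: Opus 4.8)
The plan is to argue by induction on $k=\rk(g-1)$. Throughout, fix a positive definite $W$-invariant inner product on $V=\R^n$ (average any inner product over the finite group $W$), write $V^g=\Ker(g-1)$ for the fixed subspace, and note that $k=\rk(g-1)=\dim V-\dim V^g$ is the codimension of $V^g$. The easy half of the correspondence is immediate: if $g=s_{\beta_1}\cdots s_{\beta_m}$ then $V^g\supseteq \beta_1^{\perp}\cap\cdots\cap\beta_m^{\perp}$, so $\dim V-\dim V^g\le m$; hence every factorisation of $g$ into reflections has length $\ge k$, and it suffices to exhibit one of length exactly $k$.

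If $k=0$ then $V^g=V$, so $g=1$ and we are done with the empty product. Assume $k\ge 1$, so $g\ne 1$. The heart of the argument is the claim that there is a reflection $s_\alpha\in W$ whose mirror $\alpha^{\perp}$ contains $V^g$, i.e.\ a root $\alpha$ with $\alpha\perp V^g$. Indeed, suppose no root is orthogonal to $V^g$; then each of the finitely many reflection hyperplanes meets $V^g$ in a proper subspace, and since $\R$ is infinite there is a point $p\in V^g$ lying on none of them. But a point of $V$ off all the mirrors has trivial stabiliser in $W$ (a classical fact about finite real reflection groups; equivalently it is Steinberg's fixed-point theorem applied to $\{p\}$), whereas $g$ fixes $p$, forcing $g=1$, a contradiction. (When $\dim V^g=0$ the claim is trivial, every root being orthogonal to $\{0\}$.) One may equally well invoke Steinberg's theorem directly: the pointwise stabiliser of $V^g$ is generated by the reflections $s_\alpha$ it contains, namely those with $\alpha\perp V^g$, and it contains the nontrivial element $g$.

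Now fix such a root $\alpha$. Since $g$ is orthogonal we have $\Imm(g-1)=(V^g)^{\perp}$, hence $\alpha\in\Imm(g-1)$; choose $v_0$ with $(g-1)v_0=\alpha$. Put $g'=s_\alpha g$. Using $s_\alpha^2=1$ one checks directly that $V^{g'}=\{v:(g-1)v\in\R\alpha\}=(g-1)^{-1}(\R\alpha)$. This subspace contains $V^g$ and also contains $v_0\notin V^g$, and its dimension equals $\dim V^g+1$ because $\R\alpha\subseteq\Imm(g-1)$. Therefore $\rk(g'-1)=k-1$, so by the inductive hypothesis $g'$ is a product of $k-1$ reflections, and $g=s_\alpha g'$ is a product of $k$ reflections.

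The only step that is not pure linear algebra with the invariant form is the existence, for $g\ne 1$, of a reflection of $W$ fixing $V^g$ pointwise (equivalently, of a root inside $\Imm(g-1)$), and this is where one feeds in a structural property of real reflection groups: the triviality of stabilisers of points off the mirrors, or Steinberg's theorem. I expect this to be the main obstacle; granting it, the dimension count $\dim V^{s_\alpha g}=\dim V^g+1$ that drives the induction is routine.
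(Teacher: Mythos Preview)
Your proof is correct, and its core computation coincides with the paper's: if $\alpha\in\Imm(g-1)$ and $(g-1)v_0=\alpha$, then orthogonality of $g$ forces $2\langle v_0,\alpha\rangle=-\langle\alpha,\alpha\rangle$, so $s_\alpha g$ (equivalently $gs_\alpha$) fixes $v_0$ as well as $V^g$, raising the fixed-space dimension by one. The difference is one of completeness: the paper only spells out the case $V^g=0$, where $\Imm(g-1)=V$ and \emph{any} root works, leaving the general reduction implicit. You treat arbitrary $k$ directly and make explicit the structural input required---Steinberg's theorem (equivalently, that regular points have trivial stabiliser) to produce a root inside $(V^g)^\perp$; this is precisely what the paper's sketch needs to run the induction in general, so your version is the more self-contained one. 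One minor remark: your ``one checks directly that $V^{g'}=(g-1)^{-1}(\R\alpha)$'' is true, but the inclusion $\supseteq$ is not formal---it is exactly the orthogonality identity above (together with $\alpha\perp V^g$ for the $c=0$ case), and since this is the crux of the argument you might want to display it rather than absorb it into a ``checks directly''.
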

\begin{proof} The main point is to show that, if $w \in W$
has no (nonzero) fixed vector, then there is  $s \in \mathcal{R}$
such that $ws$ has a fixed vector. For this one notes that
$w-1$ is invertible. Choosing $s \in \mathcal{R}$ and $v \in \R^n\setminus \{ 0 \}$
with $s.v = -v$, let $\tilde{v}$ such that $(w-1).\tilde{v} = v$.
Then $w.\tilde{v} = \tilde{v}+ v$,
hence $\langle \tilde{v},\tilde{v} \rangle = \langle w.\tilde{v},w.\tilde{v} \rangle
 = \langle \tilde{v},\tilde{v}\rangle + 2 \langle v,\tilde{v} \rangle + \langle v,v \rangle$
 hence $2 \langle v, \tilde{v} \rangle / \langle v,v \rangle = -1$. This implies
 $s. \tilde{v} = \tilde{v} + v$, hence $ws$ has $v + \tilde{v} \neq 0$
 for fixed vector.
\end{proof}

 \begin{remark} This statement is not true for complex reflection groups.
 Actually, we checked that for \emph{every} non-Coxeter
irreducible 2-reflection group $W$ of exceptional type, the maximal length $m(W)$
of an element with respect to the generating set of all reflexions
is always greater than the rank of $W$.
More precisely, we get the following table
$$
\begin{array}{ccc|ccc|ccc|}
W & \rk W & m(W) & W & \rk W & m(W) &  W & \rk W & m(W) \\
\hline
G_{12}& 2 & 3 & G_{24} & 3 & 4 & G_{31} &4  & 6 \\
G_{13}& 2 & 3 & G_{27} & 3 & 5 & G_{33} & 5 & 7 \\
G_{22}& 2 & 3 & G_{29} & 4 & 6 & G_{34} & 6 & > 6 \\
\end{array}
$$

As far as the infinite series $G(e,e,r)$ and $G(2e,e,r)$ are concerned, there seems to be a similar phenomenon, as illustrated
by the tables below. This strongly suggests that this property is an elementary characterization of Coxeter groups among
irreducible 2-reflection groups.

 $$
\begin{array}{lc|lc|lc|lc|lc}
W & m(W) &W & m(W) & W & m(W)& W & m(W)&W & m(W) \\
\hline
G(3,3,3) & 4 & G(4,4,3) & 4 &  G(5,5,3) & 4 & G(6,6,3) & 4 & G(7,7,3) & 4 \\ 
G(3,3,4) & 5 &G(4,4,4) & 6 &  G(5,5,4) &  6 \\
 G(3,3,5) & 6 &G(4,4,5) & 7  \\
\end{array}
$$
{}
$$
\begin{array}{lc|lc|lc|lc|lc}
W & m(W)&W & m(W)& W & m(W)& W & m(W)&W & m(W)\\
\hline
G(4,2,2) &  3  & G(6,3,2) & 3 & G(8,4,2) & 3 &G(10,5,2) & 3 & G(12,6,2) & 3 \\
G(4,2,3) &  4  & G(6,3,3) & 5 & G(8,4,3) & 5 & G(10,5,3) & 5 & G(12,6,3) & 5 \\
G(4,2,4) &  6    & G(6,3,4) & 6 & G(8,4,4) & 7 &  G(10,5,4) & 7 & G(12,6,4) & 7\\
G(4,2,5) & 7 \\
\end{array}
$$
 
 \end{remark}

 \begin{example} (Dihedral groups)
 Let $W$ be a dihedral group of order $2m$,
 with generators $s,\om$, relations $s^2 = \om^m = 1$, $s\om s = \om^{-1}$. We have
$\mathcal{R} = \{ s \om^k ; 0 \leq k \leq m-1 \}$,  
 and $\HGR^1 = \kk \mathcal{R}$, has dimension $m$.
$\HGR^2 $ is spanned by the $[s,s\om^k]$, that is
by the $\om^k - \om^{-k}$ for $0 \leq k \leq m-1$,
hence $\dim \HGR^2 = \lfloor \frac{m-1}{2} \rfloor$ .
$\HGR^3$ is spanned by the $[s \om^i, \om^k - \om^{-k}]$,
that is (as $car. \kk \neq 2$) by the $s \om^{i+k} - s \om^{i-k}$,
thus $\dim \HGR^3 = m - \dim \Z(\mathcal{H})$,
as $\dim \Z( \mathcal{H}) = 1$ if $m$ is odd, $\dim \Z( \mathcal{H}) = 1$ if $m$ is even.
Since $\dim \mathcal{H}' = 3 \lfloor \frac{m-1}{2} \rfloor$, we have $\HGR^3 = \HGR^5 = \dots = \HGR^{2 \infty + 1}$
and $\HGR^2 = \HGR^4 = \dots = \HGR^{2 \infty}$. 
\end{example}

\begin{table}
$$
\begin{array}{l|cccccccccc}
\hline
 & \HGR^1 & \HGR^2 & \HGR^3 & \HGR^4  & \HGR^5 & \HGR^6 &     \HGR^7  & \HGR^8 & \HGR^9 & \HGR^{10} \\
\hline
G_{12} & 12 &  \mathbf{11} &  \mathbf{16} & 11&  16&  \dots \\
\hline
G_{13} & 18& \mathbf{23}& \mathbf{28} & 23& 28& 23 &\dots \\
\hline
G_{22} &  30& \mathbf{59}& \mathbf{69}& 59& 69& 59 &\dots\\
\hline
G_{23} = H_3 &  15 &  \mathbf{22}& \mathbf{33}& 22& 33& 22 &\dots\\
\hline
G_{24} &  21& 49& \mathbf{90}& \mathbf{72} & 90& 72& 90 &\dots \\
\hline
G_{27} & 45& 237& \mathbf{551}& \mathbf{512}& 551& 512& 551 &\dots\\
\hline
G_{28} = F_4 &  24& 50& 142& 196& \mathbf{280}& \mathbf{228} & 280& 228 &\dots\\
\hline
\end{array}
$$
\caption{Grading for some exceptional groups}
\label{tableexcept}
\end{table}

\begin{table}
$$
\begin{array}{l|cccccccccccccc}
\hline
 & \HGR^1 & \HGR^2 & \HGR^3 & \HGR^4  & \HGR^5 & \HGR^6 &     \HGR^7  & \HGR^8 & \HGR^9 & \HGR^{10} & \HGR^{11} & \HGR^{12}& \HGR^{13} & \HGR^{14} \\
\hline
\hline
\mathfrak{S}_3 &  3& \mathbf{1}& \mathbf{2}& 1& 2   & \dots \\
\hline
\mathfrak{S}_4 &  6& \mathbf{4}& \mathbf{7}& 4& 7  & \dots \\
\hline
\mathfrak{S}_5 &  10& 10& 19& \mathbf{16}& \mathbf{23}& 16& 23&  \dots \\
\hline
\mathfrak{S}_6 &  15& 20& 44& 56& 92& 92& 122& \mathbf{112} & \mathbf{136}& 112& 136   & \dots \\
\hline
\mathfrak{S}_7 &  21& 35& 90& 161& 342& 533& 838& 987& 1081& \mathbf{1002} & \mathbf{1087} & 1002& 1087 & \dots  \\
\hline
\hline
B_3 &  9& \mathbf{7}& \mathbf{12}& 7& 12 & \dots \\
B_4 &16& 22& 50& 53& 77& \mathbf{59}& \mathbf{80}& 59& 80  & \dots \\
B_5 &  25& 50& 153& 301& 591& 701& 842& \mathbf{761}& \mathbf{869}& 761& 869  & \dots \\
\hline
\hline
D_4 & 12& 16& 35& \mathbf{32}& \mathbf{46}& 32& 46   & \dots \\
D_5 & 20& 40& 119& 216& 372& 381& 445& \mathbf{391}& \mathbf{449}& 391& 449   & \dots \\
\hline
\end{array}
$$
\caption{Grading for classical Coxeter groups}
\label{tableclassical}
\end{table}

\section{Preliminaries on $\Z/2$-algebras }

Let $\kk$ denote a field of characteristic $0$. Recall that a $\Z/2$-graded Lie algebra $\g = \g_0 \oplus \g_1$,
also called a symmetric pair ($\g_0,\g_1)$, is the same thing as a Lie algebra $\g$ endowed with an involution
in $\Aut(\g)$. A trivial kind of $\Z/2$-graded Lie algebra is the \emph{double} of the
Lie algebra $\g$, that is $\g \oplus \g$.

\medskip

We describe a list of  $\Z/2$-graded Lie algebras, that we will call the classical $\Z/2$-graded Lie algebras. More precisely,
in the sequel we will say that a $\Z/2$-graded Lie algebra over
$\kk$ is classical of type (x) if \emph{up to some extension of scalars} it is isomorphic to the $\Z/2$-graded Lie algebra
described at item (x) below. The semisimple part of the cases (a)-(f) below belong to the types (AI), (BDI), (CI)
(DIII), (CII), (AII) (in this order) in the Cartan classification of symmetric pairs.

\renewcommand{\labelenumi}{(\alph{enumi})}
\begin{enumerate}
\item $\gl_N(\kk)$. Let $<\ ,\ >$ a non-degenerate symmetric bilinear form
on $\kk^N$, and $x \mapsto x^+$ the adjoint operation on $\End(\kk^N)$. Then
$\gl_N(\kk)^0 = \{ x \ | \ x^+ = -x \} = \so_N(\kk)$
and $\gl_N(\kk)^1 = \{x \ | \ x^+ = x \}$.
\item $\so_{2N}(\kk)$ w.r.t. a  non-degenerate symmetric bilinear form
$<\ ,\ >$. Let $U,V \subset \kk^N$ with $\kk^{2N} = U \oplus V$,
$\dim U = \dim V = N$ and $U \perp V$ (this implies that the restriction
of $< \ , \ >$ to $U$ and $V$ is non-degenerate). Then
$$
\so_{2N}(\kk)^0 = \left\lbrace \left. \left( \begin{array}{cc} x & 0 \\ 0 & y 
\end{array} \right) \right| x^+ = -x, y^+ = -y \right\rbrace \simeq
\so(U) \times \so(V)
$$
and
$$\so_{2N}(\kk)^1 = \left\lbrace \left. \left( \begin{array}{cc} 0 & -m^+ \\ m & 0 
\end{array} \right) \right| m \in \gl_N(\kk) \right\rbrace 
$$
(where $m \mapsto m^+$ here denotes the adjonction $\Hom(U,V) \to \Hom(V,U)$). 
\item $\sp_{2N}(\kk)$ w.r.t. a  non-degenerate skew-symmetric bilinear form
$<\ ,\ >$. Let $U,V \subset \kk^{2N}$, $\kk^{2N} = U \oplus V$,
$\dim U = \dim V$, and $U,V$ totally isotropic. Assuming $< \ , \ > = \left(
\begin{array}{cc} 0 & 1 \\ -1 & 0 \end{array} \right)$ we can identify $U$ and
$V$. Then
$$
\sp_{2N}(\kk)^0 = \left\lbrace \left. \left( \begin{array}{cc} m & 0 \\ 0 & -m^+ 
\end{array} \right) \right| m \in \gl(U) 
\right\rbrace \simeq \gl(U)
$$
and
$$
\sp_{2N}(\kk)^1 = \left\lbrace \left. \left( \begin{array}{cc} 0 & x \\ y & 0 
\end{array} \right) \right| x^+ = x, y^+ = y \right\rbrace 
$$
\item $\so_{2N}(\kk)$  w.r.t. a  non-degenerate symmetric bilinear form
$<\ ,\ >$, written as $\left( \begin{array}{cc} 0 & 1 \\ 1 & 0 
\end{array} \right)$. Let $U,V \subset \kk^{2N}$, $\kk^{2N} = U \oplus V$,
$\dim U = \dim V$, and $U,V$ totally isotropic. Identifying $U$ with $V$ using $<\ , \ >$,
then
$$
\so_{2N}(\kk)^0 = \left\lbrace \left. \left( \begin{array}{cc} m & 0 \\ 0 & -m^+ 
\end{array} \right) \right| m \in \gl_N(\kk) \right\rbrace \simeq \gl_N(\kk) 
$$ 
and
$$
\so_{2N}(\kk)^1 = \left\lbrace \left. \left( \begin{array}{cc} 0 & x \\ y & 0 
\end{array} \right) \right| x^+ = -x, y^+ = -y \right\rbrace 
$$
\item $\sp_{2N}(\kk)$ for $N$ even w.r.t. a  non-degenerate skew-symmetric bilinear form
$<\ ,\ >$, written as $\left( \begin{array}{cc} J & 0 \\ 0 & J 
\end{array} \right)$ with $J = \left( \begin{array}{cc} 0 & 1 \\ -1 & 0 
\end{array} \right)$. Then
$$
\sp_{2N}(\kk)^0 = \left\lbrace \left. \left( \begin{array}{cc} x & 0 \\ 0 & y 
\end{array} \right) \right| x,y \in \sp_N(\kk)\right\rbrace \simeq \sp_N(\kk) \times \sp_N(\kk)
$$
(remark : here $m^+ = J^{-1}\ ^t m J$) and
$$
\sp_{2N}(\kk)^1 = \left\lbrace \left. \left( \begin{array}{cc} 0 & -m^+ \\ m & 0 
\end{array} \right) \right|  m \in \gl_N(\kk) \right\rbrace 
$$
\item $\gl_N(\kk)$ with $N$ even. Analogous to case (a), with $< \ , \ >$ a non-degenerate
\emph{skew-symmetric} bilinear form on $\kk^N$. Then $\gl_N^0(\kk) \simeq \sp_N(\kk)$.
\end{enumerate}

\renewcommand{\labelenumi}{(\arabic{enumi})}

We future use, we note that the group 
$$U_N^{\sigma}(K) = \{ M \in \GL_N(K) \ | \ M \, ^t \sigma(M) = 1 \}$$
is a $K^{\sigma}$-subgroup of $\GL_N(K)$, made of the
$a+hb \in \GL_N(K)$ such that $a \ ^t b$ is a symmetric matrix and $\ ^t a a = 1 + x ^t b$. Its  Lie algebra with its natural
$(\Z/2\Z)$-grading, is made of the $\alpha + h \beta$ with $\ ^t \alpha = - \alpha$ and $\beta = \ ^t \beta$, hence is of type (a).

\section{The Lie algebra $\mathcal{L}_{\eps}(G)$}

\label{sectlepsG}
\subsection{General facts}

Let $G$ be a finite group and $\alpha : G \to \{ \pm 1 \}$ a character (for instance the trivial character $\alpha = 1$). 
Then Lie algebra $\mathcal{L}_{\alpha}(G)$
(see \cite{LIEALG}) is the subspace of $\kk G$ spanned by the $g - \alpha(g)g^{-1}$ for $g \in G$.
We assume there is a nontrivial character  $\eps : G \onto \{ \pm 1 \}$,
and let $A = \Ker \eps$, $A^{\dagger} = G \setminus A$. Then $\mathcal{L}_{\eps}(G)$ has a natural $\Z/2$-grading, given by
$\mathcal{L}_{\eps}(G)^0 = \mathcal{L}_1(A)$ and $\mathcal{L}_{\eps}(G)^1 = \mathcal{L}_1(A^{\dagger}) := < b + b^{-1} \ | \ b \in A^{\dagger}>$. 
Let 
$$
\mathcal{E}  = \{ \rho \in \Irr(G) \ | \  \rho^* \otimes \eps \not\simeq \rho \} \ \ 
\mathcal{F}^+ =  \{ \rho \in \Irr(G) \ | \ \eps \into S^2 \rho \} \ \ 
\mathcal{F}^- =  \{ \rho \in \Irr(G) \ | \ \eps \into \Lambda^2 \rho \} 
$$
{}
and $\mathcal{F} = \mathcal{F}^+ \cup \mathcal{F}^- = \{ \rho \in \Irr(G) \ | \ \eps \into \rho \otimes \rho \}$
(clearly $\mathcal{F}^+ \cap \mathcal{F}^- = \emptyset$). We denote $\sim$ the equivalence
relation
on $\Irr(G)$ generated by
$\rho^* \otimes \eps \sim \rho$ and we identify the set of equivalence
classes $\mathcal{E}/\sim$ with a system of representatives in $\mathcal{E}$. 

We assume that $\kk$ is a field of realizability for all the irreducible representations of $G$, and denote
$V_{\rho}$ the underlying $\kk$-vector space of $\rho$, that is $\rho : G \to \GL(V_{\rho})$. Then,
we have
$$
\kk G = \left( \bigoplus_{\rho \in \mathcal{E}/\sim} \left( \gl(V_{\rho}) \oplus \gl(V_{\rho^* \otimes \eps}) \right) \right)
\oplus \left( \bigoplus_{\rho \in \mathcal{F}} \gl(V_{\rho}) \right)
$$
and, according to \cite{LIEALG},
$$
\mathcal{L}_{\eps}(G) = 
 \left( \bigoplus_{\rho \in \mathcal{E}/\sim}  \gl(V_{\rho}) \right)
\oplus \left( \bigoplus_{\rho \in \mathcal{F}} \osp(V_{\rho}) \right)
$$
where the orthosymplectic Lie algebra $\osp(V_{\rho})$ is defined by the bilinear form induced by
$\eps \into \rho \otimes \rho$, and $\gl(V_{\rho}) \into  \gl(V_{\rho}) \oplus \gl(V_{\rho^* \otimes \eps})$
is given by $x \mapsto (x,-^t x)$.

\subsection{Decomposition of the components}

From now on we assume that all irreducible representations of $A$ are also realizable over $\kk$.
We will need a `real version' of Clifford theory. It is provided that the following
two lemmas, whose proof is an easy exercise in character theory which is left to the reader.

\begin{lemma} \label{lemclifreal} Let $G$ be a finite group, $\eps : G \onto \{ \pm 1 \}$, $A = \Ker \eps$,
$V \in \Irr(G)$ of real type with $V \simeq V \otimes \eps$. One has $\Res_A V  = V_+ \oplus V_-$.
Then
\begin{enumerate}
\item $V_+$ and $V_-$ are of the same type, which is either real or complex.
\item $\eps \into S^2 V$ iff $V_+$ and $V_-$ have real type.
\item $\eps \into \Lambda^2 V$ iff $V_+$ and $V_-$ have complex type.
\end{enumerate}
\end{lemma}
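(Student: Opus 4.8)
The plan is to prove Lemma \ref{lemclifreal} by a direct character-theoretic computation, working with the Frobenius--Schur indicator $\nu_2$ and the standard tools of Clifford theory in the index-$2$ case. First I would set up notation: since $V \in \Irr(G)$ satisfies $V \simeq V \otimes \eps$, Clifford theory gives that $\Res_A V$ is multiplicity-free and splits as $V_+ \oplus V_-$ with $V_\pm \in \Irr(A)$, and the two constituents are exchanged by the action of any $g \in A^\dagger$; in particular $V_- \simeq {}^g(V_+)$ and $V_+ \not\simeq V_-$ (otherwise $\Res_A V$ would be isotypic and $V$ would be induced from a single $A$-character, contradicting $V \simeq V\otimes\eps$ together with irreducibility). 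Since $g$ conjugates $V_+$ to $V_-$, these two $A$-modules have the same Frobenius--Schur indicator, which proves (i): they are of the same type, and since $V$ itself has real type, the common type is real or complex but not quaternionic — indeed $\nu_2(\Res_A V) = \nu_2(V_+) + \nu_2(V_-) = 2\nu_2(V_+)$, and this must be compatible with the real type of $V$, which rules out the value $-1$ for $\nu_2(V_\pm)$.

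The heart of the argument is parts (ii) and (iii), which amount to computing where $\eps$ sits inside $V \otimes V = S^2 V \oplus \Lambda^2 V$ in terms of the type of $V_\pm$. The clean way is to compute the multiplicity $\langle \eps, S^2 V\rangle - \langle \eps, \Lambda^2 V \rangle = \langle \eps, \nu_2\text{-twisted symmetric power}\rangle$, i.e. to evaluate $\frac{1}{|G|}\sum_{g\in G} \eps(g) \chi_V(g^2)$, where $\chi_V$ is the character of $V$. Because $\eps$ is supported on all of $G$ but $\chi_V(g^2)$ for $g \in A^\dagger$ is a character value of $A$ (as $g^2 \in A$), one splits the sum over $A$ and over $A^\dagger$. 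The sum over $A$ contributes $\nu_2(V_+) + \nu_2(V_-)$ up to the restriction of $\eps$ (which is trivial on $A$), while the sum over $A^\dagger$, after the substitution $g \mapsto$ and using $g^2 \in A$, is recognized as the twisted indicator measuring whether the $A$-module $V_+$ carries a $G$-invariant (i.e. $g$-semilinear with respect to the outer twist) bilinear form — which is exactly the obstruction distinguishing real type from complex type for the pair $\{V_+, V_-\}$ under the $\Z/2$-action. I would organize this as: (a) $\langle\eps, V\otimes V\rangle = 1$ always, since $V \simeq V^* \simeq V^*\otimes\eps$ forces $\eps \into V\otimes V$ with multiplicity one by Schur; (b) $\langle \eps, S^2 V\rangle - \langle\eps,\Lambda^2 V\rangle$ equals $+1$ precisely when $V_+, V_-$ are of real type and $-1$ precisely when they are of complex type, by the indicator computation above. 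Combining (a) and (b) gives both (ii) and (iii).

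The main obstacle I anticipate is bookkeeping the contribution of the $A^\dagger$-part of the Frobenius--Schur-type sum and identifying it cleanly with the ``real versus complex'' dichotomy for the $\Z/2$-orbit $\{V_+, V_-\}$. Concretely, one wants the statement: for an irreducible $A$-module $W$ with ${}^g W \not\simeq W$ but ${}^g W \simeq W^*$ (so that the induced $G$-module $\Ind_A^G W$ is irreducible of real type), the quantity $\frac{1}{|A^\dagger|}\sum_{h \in A^\dagger}\chi_W(h^2)$ is $\chi_W(1)$-times $+1$ or $-1$ according as $W$ has real or complex type — this is precisely the classical criterion (going back to Frobenius--Schur, and in this index-$2$ form essentially Kawanaka--Matsuyama / Turull) for the Schur index and type of an induced representation, and I would either cite it or reprove it in two lines using that a $g$-semilinear $A$-invariant form on $W$ composed with itself gives an $A$-invariant linear automorphism, hence a scalar whose sign is the type. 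Once this lemma is in hand, parts (ii) and (iii) are immediate, and (i) follows as indicated; the remaining verifications (multiplicity-freeness of $\Res_A V$, irreducibility of $V_\pm$, the exchange by $A^\dagger$) are the standard index-$2$ Clifford theory that the problem statement already flags as ``an easy exercise.''
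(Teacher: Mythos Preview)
The paper does not supply a proof of this lemma; it explicitly leaves it as ``an easy exercise in character theory.'' Your approach via Frobenius--Schur indicators is exactly the kind of argument intended, and it works.

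Two remarks. First, your justification for excluding the quaternionic type in (i) is a bit loose as written; the clean version is: the $G$-invariant symmetric form on $V$ restricts to an $A$-invariant symmetric form, which is either nondegenerate on $V_+$ (forcing real type) or vanishes on $V_+$ and pairs $V_+$ with $V_-$, giving $V_- \simeq V_+^*$ with $V_+ \not\simeq V_+^*$ (complex type). Second, the $A^\dagger$-part of the twisted indicator is simpler than you anticipate and does not require the Kawanaka--Matsuyama-type criterion: since $\nu_2(V)=1$ one has directly
\[
\frac{1}{|G|}\sum_{g\in A^\dagger}\chi_V(g^2)=1-\frac{1}{|G|}\sum_{g\in A}\chi_V(g^2)=1-\nu_2(V_+),
\]
so $\langle\eps,S^2 V\rangle-\langle\eps,\Lambda^2 V\rangle=\nu_2(V_+)-(1-\nu_2(V_+))=2\nu_2(V_+)-1$, which together with $\langle\eps,V\otimes V\rangle=1$ immediately gives (ii) and (iii).
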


\begin{lemma} \label{lemclifquat} Let $G$ be a finite group, $\eps : G \onto \{ \pm 1 \}$, $A = \Ker \eps$,
$V \in \Irr(G)$ of quaternionic type with $V \simeq V \otimes \eps$. One has $\Res_A V  = V_+ \oplus V_-$.
Then
\begin{enumerate}
\item $V_+$ and $V_-$ are of the same type, which is either quaternionic or complex.
\item $\eps \into S^2 V$ iff $V_+$ and $V_-$ have complex type.
\item $\eps \into \Lambda^2 V$ iff $V_+$ and $V_-$ have quaternionic type.
\end{enumerate}
\end{lemma}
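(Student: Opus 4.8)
The plan is to reduce the three assertions to a single identity between Frobenius--Schur-type indicators. Write $\chi$ for the character of $V$. By standard Clifford theory for the index-two subgroup $A$, the hypothesis $V\simeq V\otimes\eps$ is precisely what forces $\Res_A V$ to be reducible; one then has $\Res_A V=V_+\oplus V_-$ with $V_+,V_-$ non-isomorphic irreducible $A$-modules, $V_-$ being the $g$-conjugate of $V_+$ for any $g\in A^{\dagger}$, and $\chi$ vanishing identically on $A^{\dagger}$. For a finite group $H$ and an $H$-module $\rho$ let $\nu(\rho)=\frac{1}{|H|}\sum_{x\in H}\chi_\rho(x^2)\in\{1,0,-1\}$ be the Frobenius--Schur indicator, which equals $1,0,-1$ according as $\rho$ is of real, complex or quaternionic type; and put $\nu_\eps(V)=\frac{1}{|G|}\sum_{g\in G}\eps(g)\chi(g^2)$. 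From $\chi_{S^2V}(g)-\chi_{\Lambda^2V}(g)=\chi(g^2)$ one gets $\nu_\eps(V)=\langle\eps,S^2V\rangle-\langle\eps,\Lambda^2V\rangle$; since $V$ is of quaternionic type it is self-dual, so $\Hom_G(\eps,V\otimes V)\cong\Hom_G(\eps,V^*\otimes V)\cong\Hom_G(V\otimes\eps,V)$ is one-dimensional by Schur's lemma (the irreducibles $V\otimes\eps$ and $V$ being isomorphic). Hence $\eps$ occurs in exactly one of $S^2V$, $\Lambda^2V$, and $\nu_\eps(V)\in\{1,-1\}$, equal to $+1$ iff $\eps\into S^2V$ and to $-1$ iff $\eps\into\Lambda^2V$.

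Next I would run the main computation. Replacing a character $\psi$ of $A$ by a $G$-conjugate does not change $\nu(\psi)=\frac{1}{|A|}\sum_{a\in A}\psi(a^2)$ (reindex $a\mapsto g^{-1}ag$, which permutes $A$), so $\nu(V_+)=\nu(V_-)=:\nu_0$; this already proves that $V_+$ and $V_-$ are of the same type. Now split the defining sums over $G$ according to $A$ and $A^{\dagger}$. For $a\in A$ we have $\chi(a^2)=\chi_{V_+}(a^2)+\chi_{V_-}(a^2)$, so these terms contribute $2|A|\nu_0$. For $b\in A^{\dagger}$ we have $b^2\in A$, hence again $\chi(b^2)=\chi_{V_+}(b^2)+\chi_{V_-}(b^2)$, and moreover $\sum_{b\in A^{\dagger}}\chi_{V_-}(b^2)=\sum_{b\in A^{\dagger}}\chi_{V_+}\bigl((g^{-1}bg)^2\bigr)=\sum_{b\in A^{\dagger}}\chi_{V_+}(b^2)$ because conjugation by $g$ permutes $A^{\dagger}$. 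Writing $S=\sum_{b\in A^{\dagger}}\chi_{V_+}(b^2)$ we obtain
$$\nu(V)=\frac{2|A|\nu_0+2S}{|G|}=\nu_0+\frac{S}{|A|},\qquad\nu_\eps(V)=\frac{2|A|\nu_0-2S}{|G|}=\nu_0-\frac{S}{|A|},$$
and adding these two identities cancels $S$:
$$\nu(V_+)=\nu(V_-)=\nu_0=\tfrac12\bigl(\nu(V)+\nu_\eps(V)\bigr).$$

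It then remains only to read off the three assertions. As $V$ is of quaternionic type, $\nu(V)=-1$, so $\nu_0=\tfrac12(\nu_\eps(V)-1)$ with $\nu_\eps(V)\in\{1,-1\}$; thus $\nu_0\in\{0,-1\}$ and in particular $\nu_0\neq 1$, so $V_\pm$ are never of real type: this completes (i). If $\eps\into S^2V$ then $\nu_\eps(V)=1$, whence $\nu_0=0$ and $V_\pm$ are of complex type; conversely $\nu_0=0$ forces $\nu_\eps(V)=1$, i.e. $\eps\into S^2V$, which is (ii). If $\eps\into\Lambda^2V$ then $\nu_\eps(V)=-1$, whence $\nu_0=-1$ and $V_\pm$ are of quaternionic type, and conversely, which is (iii).

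The only points needing care, rather than any genuine difficulty, are the two standard inputs isolated at the outset: the Clifford-theoretic description of $\Res_AV$ under $V\simeq V\otimes\eps$ (distinctness and $g$-conjugacy of the two constituents, and vanishing of $\chi$ off $A$), and the interpretation of $\nu_\eps(V)$ via the multiplicities of $\eps$ in $S^2V$ and $\Lambda^2V$ together with the multiplicity-one statement. I would also remark that the argument is insensitive to extensions of scalars, since the type of each constituent is determined by its character over $\C$, so the blanket realizability hypotheses on $\kk$ are harmless here; and that the companion Lemma \ref{lemclifreal} follows from the same computation with $\nu(V)=1$ in place of $\nu(V)=-1$, giving $\nu_0\in\{1,0\}$, $\eps\into S^2V\Leftrightarrow\nu_0=1$ (real type) and $\eps\into\Lambda^2V\Leftrightarrow\nu_0=0$ (complex type).
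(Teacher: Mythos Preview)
Your argument is correct. The paper itself does not supply a proof of this lemma (nor of the companion Lemma~\ref{lemclifreal}): it simply states that both are ``an easy exercise in character theory which is left to the reader.'' Your computation with the Frobenius--Schur indicator and its $\eps$-twist $\nu_\eps(V)=\langle\eps,S^2V\rangle-\langle\eps,\Lambda^2V\rangle$, leading to the identity $\nu(V_+)=\nu(V_-)=\tfrac12\bigl(\nu(V)+\nu_\eps(V)\bigr)$, is exactly the kind of character-theoretic argument the paper has in mind, and the derivation of (i)--(iii) from it (and of Lemma~\ref{lemclifreal} by the same formula with $\nu(V)=1$) is clean and complete.
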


\begin{prop} \label{propL1Greal} If $\rho$ has real type, then
\begin{enumerate}
\item $\rho( \mathcal{L}_1(A)) \cap \rho( \mathcal{L}_1(A^{\dagger})) = 0$.
\item If $\rho \otimes \eps \not\simeq \rho$ then $\rho(\mathcal{L}_{\eps}(G))$ is a classical $\Z/2$-graded Lie algebra of type (a).
\item If $\eps \into S^2 \rho$ then $\rho(\mathcal{L}_{\eps}(G))$ is a classical $\Z/2$-graded Lie algebra of type (b).
\item If $\eps \into \Lambda^2 \rho$ then $\rho(\mathcal{L}_{\eps}(G))$ is a classical $\Z/2$-graded Lie algebra of type (c).
\end{enumerate}
\end{prop}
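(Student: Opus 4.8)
The plan is to dispatch (i) directly, and to deduce (ii)--(iv) from the Lie-algebra structure of $\rho(\mathcal{L}_\eps(G))$ recalled above, together with the trivial observation that if $\mathfrak{g} = \mathfrak{g}_0 \oplus \mathfrak{g}_1$ and $X \subseteq \mathfrak{g}_0$, $Y \subseteq \mathfrak{g}_1$ are subspaces with $X + Y = \mathfrak{g}$, then $X = \mathfrak{g}_0$ and $Y = \mathfrak{g}_1$ (apply the two projections). Thus for each of (ii)--(iv) it suffices to produce the ambient classical $\Z/2$-graded model $\mathfrak{g} = \mathfrak{g}_0 \oplus \mathfrak{g}_1$ of the asserted type, realized on $V_\rho$, and to verify the inclusions $\rho(\mathcal{L}_1(A)) \subseteq \mathfrak{g}_0$ and $\rho(\mathcal{L}_1(A^\dagger)) \subseteq \mathfrak{g}_1$: the reverse inclusions follow automatically since $\rho(\mathcal{L}_1(A)) + \rho(\mathcal{L}_1(A^\dagger)) = \rho(\mathcal{L}_\eps(G)) = \mathfrak{g}$, the structure result identifying $\rho(\mathcal{L}_\eps(G))$ with $\gl(V_\rho)$ in case (ii), with $\so(\beta)$ in case (iii) and with $\sp(\beta)$ in case (iv), where $\beta$ denotes the (symmetric, resp. skew-symmetric) form on $V_\rho$ induced by $\eps \hookrightarrow \rho \otimes \rho$.

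For (i): since $\rho$ has real type, $V_\rho$ carries a non-degenerate $G$-invariant symmetric form $B$, for which every $\rho(g)$ is orthogonal, so its $B$-adjoint is $\rho(g)^{-1}$. Hence $\rho(a) - \rho(a)^{-1}$ is $B$-antisymmetric for $a \in A$ while $\rho(b) + \rho(b)^{-1}$ is $B$-symmetric for $b \in A^\dagger$, so $\rho(\mathcal{L}_1(A))$ and $\rho(\mathcal{L}_1(A^\dagger))$ lie in the complementary subspaces $\{x \mid x^+ = -x\}$ and $\{x \mid x^+ = x\}$ of $\gl(V_\rho)$ (adjoints taken with respect to $B$); in characteristic $0$ their intersection is $0$, which proves (i). When $\rho \otimes \eps \not\simeq \rho$, these two subspaces are precisely $\mathfrak{g}_0$ and $\mathfrak{g}_1$ of model (a), so the observation above gives (ii).

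For (iii) and (iv) I would invoke Clifford theory. Since $\eps \hookrightarrow \rho \otimes \rho$ and $\rho$ is self-dual, $\rho \otimes \eps \simeq \rho$; hence $\Res_A \rho = V_+ \oplus V_-$ with $V_\pm$ irreducible, non-isomorphic and interchanged by every $b \in A^\dagger$, and lemma \ref{lemclifreal} says the $V_\pm$ have real type in case (iii) and complex type in case (iv). A one-line computation gives that $\rho(g) - \eps(g)\rho(g)^{-1}$ is $\beta$-antisymmetric for all $g$. Next, $\beta|_{V_+ \times V_-}$ is $A$-invariant, so it vanishes in case (iii) (a nonzero such pairing would force $V_- \simeq V_+^* \simeq V_+$), so that $V_+ \perp V_-$ and $\beta$ restricts non-degenerately to each: this is exactly the set-up of model (b) with $(U,V) = (V_+, V_-)$. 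In case (iv), instead, $\beta|_{V_+}$ and $\beta|_{V_-}$ vanish (an irreducible of complex type carries no invariant bilinear form), so $V_+$ and $V_-$ are totally isotropic, which is the set-up of model (c). In both cases every $\rho(a)$, $a \in A$, is block-diagonal for the decomposition $V_+ \oplus V_-$ and every $\rho(b)$, $b \in A^\dagger$, is block-antidiagonal; hence $\rho(\mathcal{L}_1(A))$ consists of block-diagonal $\beta$-antisymmetric operators and $\rho(\mathcal{L}_1(A^\dagger))$ of block-antidiagonal ones, and these are exactly $\mathfrak{g}_0$ and $\mathfrak{g}_1$ in the respective model. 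The observation of the first paragraph then finishes (ii)--(iv).

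The only genuine computation is this last linear-algebra identification — that a block-diagonal (resp. block-antidiagonal) $\beta$-antisymmetric operator has the normal form displayed in model (b) or (c). It is routine once the adjunction $m \mapsto m^+$ on $\Hom(V_+, V_-)$ is set up from $\beta$, but one has to keep straight the symmetric-versus-skew dichotomy, and one should check that the form $\beta$ furnished by the structure theorem agrees, up to scalar, with the one used here (it does, because $\Hom_G(\eps, \rho \otimes \rho)$ is one-dimensional in cases (iii) and (iv)). I expect no conceptual obstacle beyond this bookkeeping.
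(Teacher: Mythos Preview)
Your proposal is correct and follows essentially the same route as the paper's proof: both use the invariant symmetric form from real type to get the symmetric/antisymmetric splitting for (i) and (ii), then invoke lemma \ref{lemclifreal} and analyze how the form $\beta$ from $\eps \hookrightarrow \rho\otimes\rho$ restricts to the Clifford summands $V_+\oplus V_-$ to place everything in the block form of models (b) and (c). The only cosmetic difference is that where the paper writes ``counting dimensions we get that inclusions are equalities,'' you phrase the same step via the projection observation $X\subseteq\mathfrak{g}_0$, $Y\subseteq\mathfrak{g}_1$, $X+Y=\mathfrak{g}\Rightarrow X=\mathfrak{g}_0$, $Y=\mathfrak{g}_1$; this is the same argument.
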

\begin{proof}
First consider the nondegenerate symmetric bilinear form induced by $\un \into \rho \otimes \rho$, and denote
$m \mapsto m^+$ the adjonction in $\End(V_{\rho})$. When $a \in A$ and $b \in A^{\dagger}$, $\rho(a-a^{-1})^+ = - \rho(a - a^{-1})$
and $\rho(b+b^{-1})^+ = \rho(b+b^{-1})$. Identifying $V_{\rho}$ with $\kk^N$ with its
the canonical bilinear form (this can be done up to an extension of scalar, which is harmless here) this means that
$\rho(\mathcal{L}_1(A^{\dagger}))$ is made of symmetric matrices while $\rho(\mathcal{L}_1(A))$ is made
of skew-symmetric matrices.
This implies
(1) and also (2), since in this case $\rho(\mathcal{L}_{\eps}(G)) = \gl(V_{\rho})$. In cases (3) and (4),
we apply lemma \ref{lemclifreal}. We have $\Res_A \rho = \varphi + \xi$,
$V_{\rho} = V_{\varphi} \oplus V_{\xi}$. We consider the nondegenerate bilinear form $<\ , \ >$ induced
by $\eps \into \rho \otimes \rho$. In case (3), it induces a blinear form on $V_{\varphi} \otimes V_{\xi}$
which is $A$-invariant hence zero, as $\xi \not\simeq \varphi^*$ (lemma \ref{lemclifreal}). As a consequence
$V_{\varphi} \perp V_{\xi}$ hence $< \ , \ >$ restricts to a nondegenerate $A$-invariant bilinear form on both
$V_{\varphi}$ and $V_{\xi}$, and we have $\rho(\mathcal{L}_1(A)) \subset \so(V_{\varphi}) \times \so(V_{\xi})$.
On the other hand, when $b \not\in A$, we have $b V_{\varphi} = V_{\xi}$ and $b V_{\xi} = V_{\varphi}$,
as $bV_{\varphi}$ is $A$-stable and $b V_{\varphi} \neq V_{\varphi}$ (otherwise $V_{\rho}$ would be reducible).
In matrix form, this implies $\rho(\mathcal{L}_1(A^{\dagger})) \subset \left\lbrace \left( \begin{array}{cc} 0 & -m^+ \\ m & 0
\end{array} \right) \right\rbrace$, with the notations used in describing type (b). Counting dimensions
we get that inclusions are equalities and
this implies (3). Case (4) is similar, except that $\varphi$ and $\xi$ have complex type,
hence $\xi \simeq \varphi^*$ (as $\rho \simeq \rho^*$), and this time the induced bilinear forms on
$V_{\varphi} \otimes V_{\varphi}$
and
$V_{\xi} \otimes V_{\xi}$ are zero, whereas the ones on $V_{\rho} \otimes V_{\xi}$ and $V_{\xi} \otimes V_{\rho}$
are nondegenerate. The identification with type (c) is then similar and straightforward.
\end{proof}

Since the representations of a Coxeter group are always of real type, this has the following consequence.

\begin{cor} \label{corZ2LeWcox} If $W$ is a finite Coxeter group and $\eps : W \onto \{ \pm 1 \}$ is the sign character 
then,
for $i \in \{ 0 , 1 \}$,
$$
\mathcal{L}_{\eps}(W)^i = 
 \left( \bigoplus_{\rho \in \mathcal{E}/\sim}  \gl(V_{\rho})^i \right)
\oplus \left( \bigoplus_{\rho \in \mathcal{F}} \osp(V_{\rho})^i \right)
$$
where the decompositions  $\gl(V_{\rho})^i$ (resp. $\osp(V_{\rho})^i$) are given by
the description as classical $\Z/2$-graded Lie algebras of types (a),(b),(c). 
\end{cor}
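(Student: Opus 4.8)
The plan is to combine the general decomposition of $\mathcal{L}_{\eps}(G)$ recalled before Lemma \ref{lemclifreal} with the refined statement of Proposition \ref{propL1Greal}, specialized to the situation where $G = W$ is a finite Coxeter group. The key simplifying input is that every irreducible representation of $W$ is of real type (a classical fact about finite Coxeter groups, which may be quoted freely here). In particular, for every $\rho \in \Irr(W)$ the hypotheses of Proposition \ref{propL1Greal} are satisfied, so we are always in one of the three cases (2), (3), (4) of that proposition according to whether $\rho \otimes \eps \not\simeq \rho$, $\eps \into S^2\rho$, or $\eps \into \Lambda^2\rho$. Note that since $\rho$ has real type, $\rho^* \simeq \rho$, so the condition $\rho^* \otimes \eps \not\simeq \rho$ defining $\mathcal{E}$ coincides with $\rho \otimes \eps \not\simeq \rho$, and the equivalence relation $\sim$ on $\mathcal{E}$ simply pairs $\rho$ with $\rho \otimes \eps$.

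The first step is to write down the ($\Z/2$-graded) algebra decomposition
$$
\mathcal{L}_{\eps}(W) =
 \left( \bigoplus_{\rho \in \mathcal{E}/\sim}  \gl(V_{\rho}) \right)
\oplus \left( \bigoplus_{\rho \in \mathcal{F}} \osp(V_{\rho}) \right)
$$
recalled from \cite{LIEALG}, and observe that this is in fact a decomposition into $\Z/2$-graded Lie subalgebras: each factor $\gl(V_\rho)$ (resp. $\osp(V_\rho)$) is the image of $\mathcal{L}_{\eps}(W)$ under the projection $\kk W \to \gl(V_\rho)$ (resp. onto the relevant summand), and this projection is a morphism of $\Z/2$-graded algebras since $\mathcal{L}_1(A)$ maps into the degree-$0$ part and $\mathcal{L}_1(A^{\dagger})$ into the degree-$1$ part by construction. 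Hence the grading of $\mathcal{L}_{\eps}(W)$ is the direct sum of the induced gradings on the factors, i.e. $\mathcal{L}_{\eps}(W)^i = \bigl(\bigoplus_{\mathcal{E}/\sim} \gl(V_\rho)\bigr)^i \oplus \bigl(\bigoplus_{\mathcal{F}} \osp(V_\rho)\bigr)^i = \bigoplus_{\mathcal{E}/\sim} \gl(V_\rho)^i \oplus \bigoplus_{\mathcal{F}} \osp(V_\rho)^i$.

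The second and main step is to identify, for each individual $\rho$, the $\Z/2$-graded structure on the corresponding factor with one of the classical types (a), (b), (c). For $\rho \in \mathcal{E}/\sim$ the relevant factor is $\gl(V_\rho)$, embedded in $\gl(V_\rho) \oplus \gl(V_{\rho\otimes\eps})$ via $x \mapsto (x, -{}^t x)$; Proposition \ref{propL1Greal}(2) says precisely that as a $\Z/2$-graded Lie algebra this is classical of type (a), and one reads off $\gl(V_\rho)^0 = \so_N(\kk)$, $\gl(V_\rho)^1 = \{x \mid x^+ = x\}$ in the notation of item (a). For $\rho \in \mathcal{F}^+$, Proposition \ref{propL1Greal}(3) identifies $\osp(V_\rho) = \so(V_\rho)$ with type (b), giving $\osp(V_\rho)^0 \simeq \so(V_\varphi)\times\so(V_\xi)$ and $\osp(V_\rho)^1$ the space of off-diagonal blocks $\left(\begin{smallmatrix} 0 & -m^+ \\ m & 0\end{smallmatrix}\right)$; for $\rho \in \mathcal{F}^-$, Proposition \ref{propL1Greal}(4) identifies $\osp(V_\rho) = \sp(V_\rho)$ with type (c). Assembling these identifications over all $\rho$ and substituting into the decomposition of the previous step yields exactly the claimed formula for $\mathcal{L}_{\eps}(W)^i$, $i \in \{0,1\}$.

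The only genuine subtlety — and the step I expect to require the most care — is the bookkeeping in the first step: one must be sure that the abstract algebra decomposition from \cite{LIEALG} really does respect the $\Z/2$-grading, i.e. that no factor mixes the two graded pieces and that the grading on a factor agrees with the intrinsic grading coming from its classical type. This is not hard, but it is exactly the point where the hypothesis that $\kk$ is a splitting field for both $W$ and its rotation subgroup $A$ (used implicitly through Lemmas \ref{lemclifreal} and \ref{lemclifquat} in the proof of Proposition \ref{propL1Greal}) enters, ensuring that the decomposition and all the adjoint/Clifford-theoretic identifications are defined over $\kk$ rather than merely after an extension of scalars. Once this is in place the corollary is a direct transcription of Proposition \ref{propL1Greal} case by case, so there is no further real content.
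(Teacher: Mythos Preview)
Your proposal is correct and follows essentially the same approach as the paper, which simply remarks that since all irreducible representations of a finite Coxeter group are of real type, Proposition \ref{propL1Greal} applies to every $\rho$ and the corollary follows. Your write-up is more detailed than the paper's one-line justification, but the content---reducing to Proposition \ref{propL1Greal} case by case via the decomposition from \cite{LIEALG}---is the same.
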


\begin{prop} \label{propL1Gcompl5}If $\rho^* \not\simeq \rho$ and $\rho^* \otimes \eps \simeq \rho$,
then
\begin{enumerate}
\item $\rho( \mathcal{L}_1(A)) = \rho( \mathcal{L}_{\eps}(G)) = \osp(V_{\rho})$
\item $\rho( \mathcal{L}_1(A)) \cap \rho( \mathcal{L}_1(A^{\dagger})) \neq 0$ unless $\forall b \in A^{\dagger} \ \rho(b^{-1}) = - \rho(b)$ 
\item We have $(\rho \oplus \rho^*)(\mathcal{L}_{\eps}(G)) = \osp(V_{\rho}) \oplus \osp(V_{\rho^*})
\simeq \osp(V_{\rho}) \oplus \osp(V_{\rho})$
with $(\rho \oplus \rho^*)(\rho(\mathcal{L}_1(A)) \simeq \{ (x,x) \ | \ x \in \osp(V_{\rho}) \}$
$(\rho \oplus \rho^*)(\rho(\mathcal{L}_1(A^{\dagger})) \simeq \{ (x,-x) \ | \ x \in \osp(V_{\rho}) \}$
\item As a $\Z/2$-graded Lie algebra, $(\rho \oplus \rho^*) (\mathcal{L}_{\eps}(G))$
is the double
of $\osp(V_{\rho})$.
\end{enumerate}
\end{prop}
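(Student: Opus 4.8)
The plan is to reduce all four assertions to Clifford theory for the index-$2$ subgroup $A$ together with the structural description of $\mathcal{L}_1(\cdot)$ recalled above from \cite{LIEALG}. First, tensoring the hypothesis $\rho^*\otimes\eps\simeq\rho$ by $\eps$ gives $\rho^*\simeq\rho\otimes\eps$, and since $\rho\not\simeq\rho^*$ this forces $\rho\otimes\eps\not\simeq\rho$; hence $\theta:=\Res_A\rho$ is irreducible, and restricting $\rho^*\otimes\eps\simeq\rho$ to $A$ gives $\theta^*\simeq\theta$, so $\theta$ is a self-dual absolutely irreducible representation of $A$. As $\rho\in\mathcal{F}$, fix the (up to scalar unique) nonzero bilinear form $B$ on $V_\rho$ realising $\eps\into\rho\otimes\rho$; it is symmetric if $\rho\in\mathcal{F}^+$, skew if $\rho\in\mathcal{F}^-$, and restricts on $A$ to a scalar multiple of the invariant form of $\theta$, so that $\osp(V_\rho)$ (the orthogonal or symplectic Lie algebra of $B$) coincides with $\osp(V_\theta)$ inside $\gl(V_\rho)=\gl(V_\theta)$. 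Writing $m\mapsto m^+$ for the $B$-adjunction, $B(\rho(g)v,\rho(g)w)=\eps(g)B(v,w)$ gives $\rho(g)^+=\eps(g)\rho(g)^{-1}$, whence $\rho(a-a^{-1})^+=-\rho(a-a^{-1})$ and $\rho(b+b^{-1})^+=-\rho(b+b^{-1})$ for $a\in A$, $b\in A^\dagger$; thus $\rho(\mathcal{L}_1(A))$, $\rho(\mathcal{L}_1(A^\dagger))$ and $\rho(\mathcal{L}_{\eps}(G))$ all lie in $\osp(V_\rho)$.

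Since $\rho|_A=\theta$, the structure theorem of \cite{LIEALG} applied to $(A,\un)$ and the self-dual irreducible $\theta$ gives $\rho(\mathcal{L}_1(A))=\theta(\mathcal{L}_1(A))=\osp(V_\theta)=\osp(V_\rho)$; as $\mathcal{L}_1(A)\subseteq\mathcal{L}_{\eps}(G)$ and $\rho(\mathcal{L}_{\eps}(G))\subseteq\osp(V_\rho)$, this yields $\rho(\mathcal{L}_1(A))=\rho(\mathcal{L}_{\eps}(G))=\osp(V_\rho)$, i.e. (1). For (2), the inclusion $\rho(\mathcal{L}_1(A^\dagger))\subseteq\osp(V_\rho)=\rho(\mathcal{L}_1(A))$ shows $\rho(\mathcal{L}_1(A))\cap\rho(\mathcal{L}_1(A^\dagger))=\rho(\mathcal{L}_1(A^\dagger))$, which vanishes exactly when $\rho(b+b^{-1})=0$, i.e. $\rho(b^{-1})=-\rho(b)$, for every $b\in A^\dagger$ --- the stated alternative. (This exceptional situation forces $\theta$, hence $\rho$, to be one-dimensional: $\rho(b_0)^2=-\mathrm{Id}$ for $b_0\in A^\dagger$ makes conjugation by $b_0$ act on $A/\Ker\theta$ by inversion, so $A/\Ker\theta$ is abelian; in particular $\osp(V_\rho)=0$, which is what makes (2) compatible with (3)--(4).)

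From $\rho^*\simeq\rho\otimes\eps$ and $\rho\not\simeq\rho^*$, the representations $\rho$ and $\rho^*$ are distinct elements of $\mathcal{F}$, so \cite{LIEALG} gives $(\rho\oplus\rho^*)(\mathcal{L}_{\eps}(G))=\osp(V_\rho)\oplus\osp(V_{\rho^*})$. Now fix a $G$-isomorphism $T\colon\rho\otimes\eps\to\rho^*$ and an $A$-isomorphism $\phi\colon V_\theta\to V_\rho$, and put $\psi:=T\circ\phi$, an $A$-isomorphism $V_\theta\to V_{\rho^*}$ (because $\eps|_A=\un$). Conjugation by $\phi\oplus\psi$ identifies $V_\rho\oplus V_{\rho^*}$ with $V_\theta\oplus V_\theta$ and $\osp(V_\rho)\oplus\osp(V_{\rho^*})$ with $\osp(V_\theta)\oplus\osp(V_\theta)$, and, using $\psi^{-1}\rho^*(g)\psi=\phi^{-1}T^{-1}\rho^*(g)T\phi=\eps(g)\,\phi^{-1}\rho(g)\phi$, transports $(\rho\oplus\rho^*)(g)$ to $\mathrm{diag}(\theta(g),\theta(g))$ for $g\in A$ and to $\mathrm{diag}(P_b,-P_b)$, where $P_b:=\phi^{-1}\rho(b)\phi$, for $g=b\in A^\dagger$. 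Hence $(\rho\oplus\rho^*)(\mathcal{L}_1(A))$ becomes the diagonal $\{(y,y):y\in\theta(\mathcal{L}_1(A))\}=\{(y,y):y\in\osp(V_\theta)\}$ (by (1)), while $(\rho\oplus\rho^*)(\mathcal{L}_1(A^\dagger))$ becomes $\{(c,-c):c\in\phi^{-1}\rho(\mathcal{L}_1(A^\dagger))\phi\}$, which meets the diagonal only in $0$; comparing the (direct) sum of these two with $(\rho\oplus\rho^*)(\mathcal{L}_{\eps}(G))=\osp(V_\theta)\oplus\osp(V_\theta)$ forces $\phi^{-1}\rho(\mathcal{L}_1(A^\dagger))\phi=\osp(V_\theta)$, so $\rho(\mathcal{L}_1(A^\dagger))=\osp(V_\rho)$ and the second summand is the full anti-diagonal --- this is (3). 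Finally, the $\Z/2$-grading of $\mathcal{L}_{\eps}(G)$ is induced by the involution $g\mapsto\eps(g)g$ of $\kk G$, which in these coordinates fixes the diagonal and negates the anti-diagonal, i.e. is the swap of the two copies of $\osp(V_\theta)$; therefore $(\rho\oplus\rho^*)(\mathcal{L}_{\eps}(G))$ is the double of $\osp(V_\theta)$, identified with $\osp(V_\rho)$ via $\phi$, which is (4).

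The only genuinely delicate point is the surjectivity $\rho(\mathcal{L}_1(A^\dagger))=\osp(V_\rho)$ hidden in (3); rather than proving directly that the elements $b+b^{-1}$ span $\osp(V_\rho)$, the plan is to extract it for free from the known decomposition of $\mathcal{L}_{\eps}(G)$ in \cite{LIEALG}, as above. Everything else is routine Clifford theory and bookkeeping with the $B$-adjunction and the identifications $\phi,\psi$.
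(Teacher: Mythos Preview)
Your proof is correct and follows essentially the same route as the paper: Clifford theory gives $\Res_A\rho$ irreducible self-dual, \cite{LIEALG} then yields (1), the adjunction formula $\rho(g)^+=\eps(g)\rho(g^{-1})$ gives (2), and for (3)--(4) both you and the paper realise $\rho^*$ as $\eps\cdot\rho$ (you via the intertwiner $T$, the paper more directly via $g\mapsto\rho(g^{-1})^+$ on $V_\rho$ itself) and then read off the diagonal/anti-diagonal decomposition, extracting the surjectivity $\rho(\mathcal{L}_1(A^\dagger))=\osp(V_\rho)$ from the known size of $(\rho\oplus\rho^*)(\mathcal{L}_\eps(G))$. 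Your parenthetical argument that the exceptional case in (2) forces $\dim\rho=1$ is a nice bonus --- it anticipates (with a somewhat different argument) the separate proposition that the paper proves right afterwards.
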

\begin{proof}
Since $\rho^* \otimes \eps \simeq \rho$ we have a nondegenerate $W$-invariant bilinear
form on $V_{\rho}$ afforded by $\eps \into \rho \otimes \rho$, and $\rho(\mathcal{L}_{\eps}(G)) = \osp(V_{\rho})$.
Since  $\rho^* \not \simeq \rho$ we have $\rho \not\simeq \rho\otimes \eps$
hence $\Res_A \rho$ is irreducible. Moreover $\Res_A \rho$ is selfdual hence, according to \cite{LIEALG},
$\rho( \mathcal{L}_1(A)) = \osp(V_{\rho})$, which proves (1).
For all $g \in G$ we have $\eps(w) \rho(w^{-1}) = \rho(w)^+$ hence $\rho(b)^+ = - \rho(b)$ for $b \in A^{\dagger}$,
thus
$\rho( \mathcal{L}_1(A)) \cap \rho( \mathcal{L}_1(A^{\dagger})) = 0 \Leftrightarrow \rho(\mathcal{L}_1(A^{\dagger})) = 0$ means
$ \forall b \in A^{\dagger} \ \ \rho(b) = - \rho(b^{-1})$ which proves (2).
Now $\rho^*$ can be defined on $V_{\rho}$ as $g \mapsto \rho(g^{-1})^+$, and then
for $x,y \in V_{\rho}$ we have $< \rho^*(g)x,\rho^*(g) y> = < \eps(g) \rho(g) x, \eps(g) \rho(g) y >
 = < \rho(g) x, \rho(g) y > = \eps(g) <x,y >$.
 This proves $(\rho \oplus \rho^*)(\mathcal{L}_{\eps}(G) = \osp(V_{\rho}) \oplus \osp(V_{\rho^*})
\simeq \osp(V_{\rho}) \oplus \osp(V_{\rho}$ ; since $\rho(a)^+ = \rho(a^{-1})$ for $a \in A$
and $\rho(b)^+ = - \rho(b^{-1})$ for $b \in A^{\dagger}$, (3) follows, and (4) is an immediate consequence of (3).
\end{proof}

\begin{remark} When $W$ is a reflection group and $\eps$ the determinant, then : (1) one cannot have $\rho(b^{-1}) = - \rho(b)$
for all $b \in A^{\dagger} = \OO^{\dagger}$, because the reflections provide involutions in $A^{\dagger}$ ; (2) one may have $\rho \in \Irr(W)$ with $\rho^* \not\simeq \rho$ and $\rho^* \otimes \eps \simeq \rho$; an example is given by $W = G(3,3,4)$, and $\rho$ the restriction of the representation classically denoted
$([1,1],[2],\emptyset)$ of $G(1,1,4)$ (it can be checked more precisely, using character tables, that in this example $\eps \into S^2 \rho$)
\end{remark}

\begin{remark} An example of a triple $(G,\eps,\rho)$ as in the proposition with $\forall b \in A^{\dagger} \ \rho(b^{-1}) = - \rho(b)$is given by $G = \Z/4$, $\rho : G \to \GL_1(\C)$, $\eps : 1 \to -1$, $\rho : 1 \mapsto \sqrt{-1}$. There are no example in higher dimension, as proved by the following proposition.
\begin{prop}
Let $G$ be a finite group endowed with $\eps : G \to \{ \pm 1 \}$. If $\rho \in \Irr(G)$ satisfies $\rho^* \not\simeq \rho$
and $\forall g \in G \ \eps(g) = -1 \Rightarrow \rho(g^{-1}) = -\rho(g)$ then $\dim \rho = 1$.
\end{prop}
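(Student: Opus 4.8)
The plan is to reformulate the hypothesis multiplicatively, deduce that $\rho(\Ker\eps)$ is abelian, bound $\dim\rho$ by $2$, and then exclude $\dim\rho=2$ using the self-duality hypothesis. Throughout, $\eps$ is taken surjective (as in the surrounding discussion; if $\eps$ were trivial the statement would be false, so this is part of the intended setting), $A=\Ker\eps$, and $b_0\in A^{\dagger}:=G\setminus A$ is fixed.

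First, since $\rho(g^{-1})=\rho(g)^{-1}$ always holds, the hypothesis $\rho(g^{-1})=-\rho(g)$ for $\eps(g)=-1$ is equivalent to $\rho(g)^2=-\Id$ for every $g$ with $\eps(g)=-1$. For $a\in A$ one has $\eps(b_0a)=-1$, hence $\rho(b_0)\rho(a)\rho(b_0)\rho(a)=\rho(b_0a)^2=-\Id=\rho(b_0)^2$, which after cancelling $\rho(b_0)$ on the left gives $\rho(b_0)^{-1}\rho(a)\rho(b_0)=\rho(a)^{-1}$ for all $a\in A$. Applying this to $a=a_1a_2$ and comparing with the product of the two relations for $a_1$ and $a_2$ shows that $\rho(a_1)^{-1}$ and $\rho(a_2)^{-1}$ commute, so $\rho(A)$ is abelian.

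Next, since $G=A\sqcup Ab_0$, the image $\rho(A)$ is a normal abelian subgroup of $\rho(G)$ of index at most $2$; applying Ito's theorem to the faithful representation of $\rho(G)$ on $V_\rho$ gives $\dim\rho\leq 2$. (The same bound can be obtained by hand: $\rho(b_0)$ permutes the $\rho(A)$-weight lines over $\overline{\kk}$ in orbits of size $\leq 2$, because $\rho(b_0)^2=-\Id$ is scalar, and irreducibility of $\rho$ forces a single orbit.)

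Finally I rule out $\dim\rho=2$, which is the only delicate point. As $\kk$ is a splitting field, $\rho$ stays irreducible over $\overline\kk$, and the abelian semisimple family $\rho(A)$ is simultaneously diagonalizable. If the two $A$-weights coincided, $\rho(A)$ would be scalar, hence central in $\rho(G)$, forcing $\rho(G)$ abelian and contradicting irreducibility; so $\Res_A\rho=\chi\oplus\chi^{-1}$ with $\chi^2\neq\un$, where the relation $\rho(b_0)^{-1}\rho(a)\rho(b_0)=\rho(a)^{-1}$ forces $\rho(b_0)$ to interchange the two weight lines. Taking $e$ spanning the $\chi$-line and $f=\rho(b_0)e$ (which spans the $\chi^{-1}$-line, since $\rho(a)f=\rho(b_0)\rho(a)^{-1}e=\chi(a)^{-1}f$), one gets $\rho(b_0)f=\rho(b_0)^2e=-e$, so in the basis $(e,f)$ we have $\rho(b_0)=\left(\begin{smallmatrix}0&-1\\1&0\end{smallmatrix}\right)$ and $\rho(a)=\left(\begin{smallmatrix}\chi(a)&0\\0&\chi(a)^{-1}\end{smallmatrix}\right)$, both of determinant $1$; as these generate $\rho(G)$, we conclude $\det\rho=\un$. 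For a $2$-dimensional representation $\rho^*\simeq\rho\otimes(\det\rho)^{-1}$, so $\rho^*\simeq\rho$, contradicting the hypothesis $\rho^*\not\simeq\rho$. Hence $\dim\rho=1$. The main obstacle is precisely this last paragraph: one must dispose of the degenerate weight configuration and make sure the determinant computation uses the normalized basis $(e,\rho(b_0)e)$ rather than an arbitrary diagonalizing basis.
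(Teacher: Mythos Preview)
Your proof is correct and takes a genuinely different route from the paper's.

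The paper first reduces to the case where $\rho$ is faithful, then uses the Frobenius--Schur indicator: since $\rho(g)^2=-\Id$ for $g\notin A$, all such $g$ square to a single central involution $a_0$, and the vanishing of $\sum_{g\in G}\chi(g^2)$ forces $\chi(g^2)=N$ for all $g\in A$, hence $g^2=1$ on $A$. From this the paper computes commutators by hand to conclude that $G$ itself is abelian.

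You never reduce to a faithful action and never invoke Frobenius--Schur. Instead you extract the inversion relation $\rho(b_0)^{-1}\rho(a)\rho(b_0)=\rho(a)^{-1}$ directly from $\rho(b_0a)^2=-\Id$, deduce that $\rho(A)$ is abelian, bound $\dim\rho\le 2$ via Ito (or the orbit argument), and then kill $\dim\rho=2$ by an explicit determinant computation combined with the identity $\rho^*\simeq\rho\otimes(\det\rho)^{-1}$ in dimension~2. This last step is the genuinely new idea in your argument, and it is where the hypothesis $\rho^*\not\simeq\rho$ is used; in the paper's proof that hypothesis enters through the Frobenius--Schur sum instead.

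What each approach buys: the paper's argument yields the stronger structural conclusion that (after passing to a quotient) $G$ is abelian, not merely that $\dim\rho=1$. Your argument is more self-contained and entirely representation-theoretic, working only with $\rho(G)$ and avoiding both the faithfulness reduction and the character sum; it also makes transparent exactly where and how the non-self-duality is consumed.
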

\begin{proof}
Let $A = \Ker \eps \vartriangleleft G$, and assume we have such a $\rho$.  Up to
considering $G/A \cap \Ker \eps$ one can assume that $\rho_{|A}$ is faithful. Since $g^2 \neq 1$ for $g \in G \setminus A$
this implies that $\rho$ is faithful. It follows that there exists $a_0 \in A$ of order 2 with $x^2 = a_0$ for all $x \in G \setminus A$.
From $\rho(a_0) = -1$ we also get $a_0 \in Z(G)$. Let $N = \dim V$. Using the Frobenius-Schur indicator, the
assumption $\rho^* \not\simeq \rho$ then translates into $\sum_{g \in A} \chi(g^2) = N |A|$ for $\chi = \tr \rho$, hence $\chi(g^2) = N$
that is $\rho(g^2) = 1$ hence $g^2 = 1$ for all $g \in A$. In particular $A$ is abelian. For $x,y \in G \setminus A$,
$xyx^{-1}y^{-1} = xyxa_0 ya_0 = (xy)^2 = 1$ and for $x \in G\setminus A$, $y \in A$, $xy \in G\setminus A$
hence $xyx^{-1}y^{-1} = xyxa_0 y = (xy)^2a_0 = a_0^2 = 1$. This proves that $G$ is abelian hence $\dim \rho = 1$.
\end{proof}
\end{remark}

Using lemma \ref{lemclifquat}, the proof of the following proposition is analogous to proposition \ref{propL1Greal}.
\begin{prop} \label{propL1Grquat}If $\rho$ has quaternionic type,
 then
\begin{enumerate}
\item $\rho( \mathcal{L}_1(A)) \cap \rho( \mathcal{L}_1(A^{\dagger})) = 0$.
\item If $\rho^* \otimes \eps \not\simeq \rho$ then $\rho(\mathcal{L}_{\eps}(G))$ is a classical $\Z/2$-graded Lie algebra of type (f).
\item If $\eps \into S^2 \rho$ then $\rho(\mathcal{L}_{\eps}(G))$ is a classical $\Z/2$-graded Lie algebra of type (d).
\item If $\eps \into \Lambda^2 \rho$ then $\rho(\mathcal{L}_{\eps}(G))$ is a classical $\Z/2$-graded Lie algebra of type (e).
\end{enumerate}

\end{prop}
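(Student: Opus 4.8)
The plan is to follow the proof of Proposition~\ref{propL1Greal} step by step, substituting the quaternionic Clifford theory of Lemma~\ref{lemclifquat} for the real one of Lemma~\ref{lemclifreal}, and the \emph{skew-symmetric} $G$-invariant form $\omega$ on $V_\rho$ afforded by the quaternionic type of $\rho$ for the symmetric form coming from $\un \into \rho\otimes\rho$. Writing $m \mapsto m^+$ for the adjunction on $\End(V_\rho)$ attached to $\omega$, one has $\rho(g)^+ = \rho(g^{-1})$ for all $g \in G$, so $\rho(a-a^{-1})^+ = -\rho(a-a^{-1})$ for $a \in A$ and $\rho(b+b^{-1})^+ = \rho(b+b^{-1})$ for $b \in A^{\dagger}$. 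After a harmless extension of scalars identifying $\omega$ with the standard symplectic form on $\kk^N$, $N = \dim\rho$ being even, this says $\rho(\mathcal{L}_1(A))$ consists of $\omega$-skew matrices (so lies in $\sp_N(\kk)$) while $\rho(\mathcal{L}_1(A^{\dagger}))$ consists of $\omega$-symmetric ones; these two spaces intersect only in $0$, which gives~(1). When $\rho^* \otimes \eps \not\simeq \rho$, i.e.\ $\rho \in \mathcal{E}$, the general description of $\mathcal{L}_{\eps}(G)$ gives $\rho(\mathcal{L}_{\eps}(G)) = \gl(V_\rho)$; since $\rho(\mathcal{L}_1(A))$ and $\rho(\mathcal{L}_1(A^{\dagger}))$ lie in the two complementary summands of the $\Z/2$-grading of $\gl_N(\kk)$ by $\omega$-skew and $\omega$-symmetric parts and their (by~(1), direct) sum is all of $\gl_N(\kk)$, both inclusions are equalities, so $\rho(\mathcal{L}_{\eps}(G))$ is the classical $\Z/2$-graded Lie algebra of type~(f); this proves~(2).

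For~(3) and~(4) we have $\rho \in \mathcal{F}$, hence $\rho(\mathcal{L}_{\eps}(G)) = \osp(V_\rho)$ for the nondegenerate $G$-invariant form $\langle\ ,\ \rangle$ afforded by $\eps \into \rho\otimes\rho$ --- symmetric in case~(3), skew-symmetric in case~(4) --- which satisfies $\langle \rho(g)x,\rho(g)y\rangle = \eps(g)\langle x,y\rangle$. By Lemma~\ref{lemclifquat} we may write $\Res_A\rho = V_+ \oplus V_-$ with $V_+ \not\simeq V_-$, $A$ stabilizing each $V_\pm$ and any $b \in A^{\dagger}$ exchanging $V_+$ and $V_-$; thus $\rho(a-a^{-1})$ is block-diagonal and $\rho(b+b^{-1})$ is block-off-diagonal for $V_\rho = V_+ \oplus V_-$. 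In case~(3), $V_\pm$ have complex type, so they carry no nonzero $A$-invariant bilinear form and $\langle\ ,\ \rangle$ vanishes on $V_+ \times V_+$ and on $V_- \times V_-$; by nondegeneracy $\langle\ ,\ \rangle$ is then the hyperbolic symmetric form with totally isotropic halves $V_+,V_-$, i.e.\ the model describing type~(d). Therefore $\rho(\mathcal{L}_1(A)) \subset \so(V_\rho)^0 \simeq \gl_N(\kk)$ and $\rho(\mathcal{L}_1(A^{\dagger})) \subset \so(V_\rho)^1$; these sit in complementary summands of $\osp(V_\rho) = \rho(\mathcal{L}_{\eps}(G)) = \rho(\mathcal{L}_1(A)) \oplus \rho(\mathcal{L}_1(A^{\dagger}))$ (a direct sum by~(1)), so a dimension count forces equalities, proving~(3).

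Case~(4) is the exact parallel using the opposite behaviour of quaternionic subrepresentations. Here $V_\pm$ have quaternionic type, so $V_+$ carries a nonzero, essentially unique $A$-invariant skew form, and since $V_+^* \simeq V_+ \not\simeq V_-$ the pairing $\langle\ ,\ \rangle$ restricted to $V_+ \times V_-$ vanishes, i.e.\ $V_+ \perp V_-$; moreover $\langle\ ,\ \rangle$ cannot vanish on $V_+ \times V_+$, for otherwise $\langle \rho(b)v,\rho(b)w\rangle = -\langle v,w\rangle$ would make $V_-$ isotropic as well, contradicting nondegeneracy. Hence $\langle\ ,\ \rangle$ restricts to a nondegenerate skew form on each of $V_+,V_-$, giving the model $\sp_{2N}(\kk)$ with $N = \dim V_+$ even (as $V_+$ is quaternionic) and the block form describing type~(e); as before $\rho(\mathcal{L}_1(A)) \subset \sp(V_+)\times\sp(V_-) = \sp_{2N}(\kk)^0$ and $\rho(\mathcal{L}_1(A^{\dagger})) \subset \sp_{2N}(\kk)^1$, and a dimension count promotes both to equalities, proving~(4). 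The one delicate point --- the only place where cases~(3) and~(4) genuinely diverge --- is the dichotomy between complex type forcing $V_\pm$ to be totally isotropic for $\langle\ ,\ \rangle$ and quaternionic type forcing them to be nondegenerate and mutually orthogonal, which is exactly what Lemma~\ref{lemclifquat}, the transformation rule $\langle \rho(g)x,\rho(g)y\rangle = \eps(g)\langle x,y\rangle$, and the exchange of $V_\pm$ by $A^{\dagger}$ combine to give.
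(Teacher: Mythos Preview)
Your proof is correct and follows exactly the approach the paper indicates: the paper's own proof consists of the single sentence ``Using lemma \ref{lemclifquat}, the proof of the following proposition is analogous to proposition \ref{propL1Greal}'', and you have written out precisely that analogy in full detail. One tiny notational slip: you set $N = \dim\rho$ early on, but in case~(3) you write $\so(V_\rho)^0 \simeq \gl_N(\kk)$ when it should be $\gl_{N/2}(\kk)$ (the totally isotropic halves $V_\pm$ have dimension $N/2$); the argument is unaffected.
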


We can apply the above result to the case of a complex reflection group $W$
with rotation subgroup $\OO$. We get the following.

\begin{prop} \label{propimageorthL1} If $\rho$ is a linear representation of $W$ with $\rho^* \simeq \rho$ then, extending $\rho$ into a $\kk(h^2)$-linear map,
$$
\begin{array}{lcl}
\rho(\HG2^0) &=& \kk(h^2) \left( \rho(\mathcal{H}) \cap \rho(\mathcal{L}_1(\OO)) \right) \\
\rho(\HG2^1) &=& h\kk(h^2) \left( \rho(\mathcal{H}) \cap \rho(\mathcal{L}_1(\OO^{\dagger})) \right) \\
\end{array}
$$
\end{prop}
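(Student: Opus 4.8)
The plan is to reduce both identities to one elementary fact about $\rho$ and the $\Z/2$-grading of $\mathcal{L}_{\eps}(W)$: namely that $\rho(\mathcal{L}_1(\OO))\cap\rho(\mathcal{L}_1(\OO^{\dagger}))=0$ whenever $\rho\simeq\rho^*$. First I would pin down the shape of $\HG2^{0}$ and $\HG2^{1}$. Since $\mathcal{H}$ is the Lie subalgebra of $\kk W$ generated by $\mathcal{R}$, and each reflection $s$ lies in $\mathcal{L}_1(\OO^{\dagger})$ (as $s=\frac12(s+s^{-1})$ and $\eps(s)=-1$), while $\mathcal{L}_{\eps}(W)=\mathcal{L}_1(\OO)\oplus\mathcal{L}_1(\OO^{\dagger})$ is a $\Z/2$-graded Lie algebra, every iterated bracket of reflections is homogeneous (of degree equal to its length mod $2$); hence $\mathcal{H}$ is a graded subspace, $\mathcal{H}=\mathcal{H}^0\oplus\mathcal{H}^1$ with $\mathcal{H}^0=\mathcal{H}\cap\mathcal{L}_1(\OO)$ and $\mathcal{H}^1=\mathcal{H}\cap\mathcal{L}_1(\OO^{\dagger})$. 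Comparing with the homogeneous components $\HGR^r$ of $\mathcal{H}$ (Proposition \ref{propstructgrbase}), one reads off $\mathcal{H}^0=\HGR^{2\infty}$ and $\mathcal{H}^1=Z(\mathcal{H})\oplus\HGR^{2\infty+1}$, and therefore, by the structural description of $\HG2$, $\HG2^{0}=\kk(h^2)\mathcal{H}^0$ and $\HG2^{1}=h\,\kk(h^2)\mathcal{H}^1$. Extending $\rho$ $\kk(h^2)$-linearly, the proposition is thus equivalent to the two identities $\rho(\mathcal{H}^0)=\rho(\mathcal{H})\cap\rho(\mathcal{L}_1(\OO))$ and $\rho(\mathcal{H}^1)=\rho(\mathcal{H})\cap\rho(\mathcal{L}_1(\OO^{\dagger}))$ of $\kk$-subspaces of $\gl(V_{\rho})$.

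\textbf{The key fact.} Since $\rho\simeq\rho^*$, any $W$-isomorphism $V_{\rho}\xrightarrow{\ \sim\ }V_{\rho}^*$ yields a nondegenerate $W$-invariant bilinear form on $V_{\rho}$; let $m\mapsto m^{+}$ be the adjunction it defines on $\End(V_{\rho})$, so that $\rho(g)^{+}=\rho(g)^{-1}=\rho(g^{-1})$ for all $g\in W$. Then for $a\in\OO$ one gets $\rho(a-a^{-1})^{+}=-\rho(a-a^{-1})$, and for $b\in\OO^{\dagger}$ one gets $\rho(b+b^{-1})^{+}=\rho(b+b^{-1})$; hence $\rho(\mathcal{L}_1(\OO))$ consists of $+$-skew operators and $\rho(\mathcal{L}_1(\OO^{\dagger}))$ of $+$-symmetric operators, and as $\mathrm{char}\,\kk\neq2$ these two subspaces meet only in $0$. (Alternatively, this is part (1) of Propositions \ref{propL1Greal} and \ref{propL1Grquat} applied componentwise, via the remark that $\rho\simeq\rho^*$ forces $\rho$, irreducible or not, into the real or quaternionic cases covered there.)

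\textbf{Conclusion.} The inclusions $\supseteq$-to-prove are in fact $\subseteq$ is clear: $\mathcal{H}^0\subseteq\mathcal{H}\cap\mathcal{L}_1(\OO)$ and $\mathcal{H}^1\subseteq\mathcal{H}\cap\mathcal{L}_1(\OO^{\dagger})$, so $\rho(\mathcal{H}^i)\subseteq\rho(\mathcal{H})\cap\rho(\mathcal{L}_1(\OO^{(\dagger)}))$. For the reverse inclusion, let $v\in\rho(\mathcal{H})\cap\rho(\mathcal{L}_1(\OO))$ and write $v=\rho(x)$ with $x=x_0+x_1\in\mathcal{H}^0\oplus\mathcal{H}^1$. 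Then $\rho(x_1)=v-\rho(x_0)$ lies in $\rho(\mathcal{L}_1(\OO))$ (both $v$ and $\rho(x_0)$ do), while $\rho(x_1)\in\rho(\mathcal{L}_1(\OO^{\dagger}))$ because $x_1\in\mathcal{H}^1\subseteq\mathcal{L}_1(\OO^{\dagger})$; by the key fact $\rho(x_1)=0$, so $v=\rho(x_0)\in\rho(\mathcal{H}^0)$. Exchanging the roles of $\OO$ and $\OO^{\dagger}$ gives the second identity, whence the proposition after multiplying by $\kk(h^2)$, resp. $h\,\kk(h^2)$.

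\textbf{Main obstacle.} There is no deep difficulty here: the only place where self-duality is used is the existence of the invariant form, which is precisely the hypothesis $\rho\simeq\rho^*$, and the heart of the matter is the trivial intersection of symmetric and skew-symmetric operators. The one point requiring care is the bookkeeping connecting the $\Z/2$-grading of $\HG2$ with that of $\mathcal{L}_{\eps}(W)$, i.e. the identifications $\mathcal{H}^0=\HGR^{2\infty}$ and $\mathcal{H}^1=Z(\mathcal{H})\oplus\HGR^{2\infty+1}$, which is where Proposition \ref{propstructgrbase} and the description of $\HG2^{0},\HG2^{1}$ are genuinely needed.
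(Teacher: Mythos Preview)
Your proof is correct and follows essentially the same route as the paper's own proof: the paper invokes Propositions \ref{propL1Greal}(1) and \ref{propL1Grquat}(1) for the key fact $\rho(\mathcal{L}_1(\OO))\cap\rho(\mathcal{L}_1(\OO^{\dagger}))=0$, then uses the decomposition $\kk(h^2)\mathcal{H}=\HG2^{0}\oplus h^{-1}\HG2^{1}$ together with the obvious inclusions, exactly as you do. Your version is a bit more explicit in the bookkeeping and, by proving the trivial-intersection fact directly via the adjunction $m\mapsto m^{+}$, actually handles arbitrary self-dual (not necessarily irreducible) $\rho$ cleanly; the paper's citation of Propositions \ref{propL1Greal} and \ref{propL1Grquat} tacitly assumes irreducibility, so your direct argument is the more robust of the two.
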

\begin{proof}
This is a consequence of propositions \ref{propL1Greal} (1) and \ref{propL1Grquat} (1), as $\kk(h^2) \mathcal{H} = \HG2^0 \oplus h^{-1} \HG2^1 \subset \kk(h^2)W$,
and $\rho(\HG2^0) \subset \kk(h^2) \rho(\mathcal{L}_1(A))$, $\rho(\HG2^1) \subset h  \kk(h^2) \rho(\mathcal{L}_1(A^{\dagger}))$.

\end{proof}

Let us now assume that $W$ is a Coxeter group. In \cite{IH2}, a decomposition of $\mathcal{H}$ is obtained,
that we recall now, in the special case of a Coxeter group. For this we recall from \cite{IH2}
the notation
$$\mathrm{X}(\rho) = \{ \eta \in \Hom(W,\{ \pm 1 \}) \ | \ \forall s \in \mathcal{R} \ \eta(s) = -1 \Rightarrow
\rho(s) = \pm 1 \}$$
and also that $\mathrm{Ref}(W)$ is the set of all $\rho \in \Irr(W)$ such that, for all $ s \in \mathcal{R}$ with $\rho(s) \neq \pm 1$,
$\rho(s)$ is a reflection, and 
$$
\begin{array}{lcl} \mathrm{QRef} &=& \{ \eta \otimes \rho \ | \ \rho \in \mathrm{Ref}, \eta \in \Hom(W, \{ \pm 1 \}) \} \\
 \mathrm{\Lambda Ref} &=& \{ \eta \otimes \Lambda^k \rho \ | \ \rho \in \mathrm{Ref}, \eta \in \Hom(W, \{ \pm 1 \}) , k \geq 0 \} \\
\end{array}
$$
We also recall that there is an equivalence relation $\approx'$ on $\Irr(W)$ which, when $W$ has
no component of type $H_4$, is defined by
$$
\rho_1\approx' \rho_2 \Leftrightarrow \rho_2 \in \{ \rho_1 \otimes \eta, \rho_1^* \otimes \eta \otimes \eps \ | \ \eta
\in \mathrm{X}(\rho_1) \}
$$
(we refer the reader to \cite{IH2} for more details on the $H_4$ case).
Letting $\mathcal{R}/W$ denote the set of conjugacy classes of reflections, and
$\Irr'(W) = \Irr(W) \setminus \mathrm{\Lambda Ref}$,
theorem 1 of \cite{IH2} provides an explicit isomorphism
$$
\mathcal{H} \simeq \kk^{\mathcal{R}/W} \oplus \left( \bigoplus_{ \rho \in \mathrm{QRef}/\approx'}
\sl(V_{\rho}) \right)
\oplus \left( \bigoplus_{ \rho \in \mathrm{Irr}'(W)/\approx'}
\osp(V_{\rho}) \right)
$$
A consequence of proposition \ref{propimageorthL1} and corollary \ref{corZ2LeWcox} is that this isomorphism is $\Z/2\Z$ graded, thus providing
the following upgrade of theorem 1 of \cite{IH2}, in the case of a Coxeter group.

\begin{theor}\label{theodecHG2cox} If $W$ is a Coxeter group, then, as ($\Z/2\Z$)-graded Lie algebras,
we have an isomorphism, for $L = \kk(h^2)$ and $V_{\rho}^L = V_{\rho} \otimes_{\kk} L$,
$$
\HG2 \simeq L^{\mathcal{R}/W} \oplus \left( \bigoplus_{ \rho \in \mathrm{QRef}/\approx'}
\sl(V_{\rho}^L) \right)
\oplus \left( \bigoplus_{ \rho \in \mathrm{Irr}'(W)/\approx'}
\osp(V_{\rho}^L) \right)
$$
where the $\Z/2\Z$-grading on the $\sl(V_{\rho}^L)$ and the $\osp(V_{\rho}^L)$ are given
by the description as $\Z/2\Z$-graded Lie algebras of types (a),(b),(c), and $L$ is the 1-dimensional commutative
$\Z/2\Z$-graded Lie algebra of odd degree.
\end{theor}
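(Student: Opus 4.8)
The plan is to show that the isomorphism of Theorem 1 of \cite{IH2} recalled just above is automatically an isomorphism of $\Z/2\Z$-graded Lie algebras once both sides carry their natural gradings, and then to extend scalars from $\kk$ to $L=\kk(h^2)$.

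First I would observe that $\mathcal{H}$ is itself $\Z/2\Z$-graded as a Lie subalgebra of $\mathcal{L}_{\eps}(W)$. Indeed $\mathcal{H}$ is generated, as a Lie algebra, by the reflections $s\in\mathcal{R}$, and each $s=\tfrac12(s-\eps(s)s^{-1})$ lies in $\mathcal{L}_1(\OO^\dagger)=\mathcal{L}_{\eps}(W)^1$ (a reflection has order $2$ and is not a rotation); being generated by odd homogeneous elements, $\mathcal{H}$ is a graded subalgebra, $\mathcal{H}=\mathcal{H}^0\oplus\mathcal{H}^1$ with $\mathcal{H}^i=\mathcal{H}\cap\mathcal{L}_{\eps}(W)^i$. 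Using the equalities $\HG2^0=\HG2\cap L\,\mathcal{L}_1(\OO)$ and $h^{-1}\HG2^1=\HG2\cap L\,\mathcal{L}_1(\OO^\dagger)$ from section \ref{sectlepsG}, together with $\HG2^0\oplus h^{-1}\HG2^1=L\,\mathcal{H}$ established in the proof of Proposition \ref{propimageorthL1}, the $\Z/2\Z$-graded Lie algebra $\HG2$ is identified with $L\otimes_{\kk}\mathcal{H}$ carrying the scalar extension of the grading of $\mathcal{H}$ (the extra factor $h$ on $\HG2^1$ is harmless). So it suffices to identify $(\mathcal{H}^0,\mathcal{H}^1)$ with the $\kk$-form of the right-hand side.

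For that I would invoke Corollary \ref{corZ2LeWcox}: for a Coxeter group the grading of $\mathcal{L}_{\eps}(W)$ is the direct sum, over the matrix blocks of $\kk W$ entering the decomposition, of the classical $\Z/2\Z$-gradings on the $\gl(V_\rho)$ ($\rho\in\mathcal{E}/\sim$, type (a)) and on the $\osp(V_\rho)$ ($\rho\in\mathcal{F}$, types (b), (c)); types (d)--(f) are excluded since every irreducible of $W$ has real type. In particular the intersection of $\mathcal{L}_{\eps}(W)$ with each block (or pair of blocks) of $\kk W$ is a graded subspace, and Corollary \ref{corZ2LeWcox} is precisely the statement that $\mathcal{L}_{\eps}(W)^i$ decomposes over these blocks. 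Since the summands of Theorem 1 of \cite{IH2} are exactly $\mathcal{H}$ intersected with the corresponding block (or pair of blocks), with the central summand $\kk^{\mathcal{R}/W}=Z(\mathcal{H})$ split off and $\mathrm{\Lambda Ref}$ removed, intersecting with $\mathcal{H}$ gives $\mathcal{H}^i=\bigoplus B^i$ over all these summands $B$, each $B^i=B\cap\mathcal{L}_{\eps}(W)^i$. The identification of the pieces is then short: $Z(\mathcal{H})$ is spanned by the class sums $\sum_{s\in C}s$, $C\in\mathcal{R}/W$, which are odd, so $\kk^{\mathcal{R}/W}$ (hence $L^{\mathcal{R}/W}$) is purely of odd degree; the factor $\osp(V_\rho)$ with $\rho\in\mathrm{Irr}'(W)/\approx'$ equals $\rho(\mathcal{L}_{\eps}(W))$ and so carries directly the type (b) (if $\eps\into S^2\rho$) or (c) (if $\eps\into\Lambda^2\rho$) grading of Corollary \ref{corZ2LeWcox}; and for $\sl(V_\rho)\subset\gl(V_\rho)=\rho(\mathcal{L}_{\eps}(W))$ of type (a), with $\rho\in\mathrm{QRef}/\approx'$, the induced grading has even part $\sl(V_\rho)\cap\so(V_\rho)=\so(V_\rho)$ (skew matrices are traceless) and odd part the traceless self-adjoint matrices, i.e. it is the grading induced from type (a), as claimed. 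Extending scalars to $L$ — the legitimate input being Proposition \ref{propimageorthL1}, whose hypothesis $\rho^*\simeq\rho$ holds for every irreducible of a Coxeter group — yields the stated $L$-form, which finishes the proof.

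The main obstacle is organizational rather than conceptual: one must check that the block decomposition of Theorem 1 of \cite{IH2} is genuinely refined by (and not merely compatible in each degree with) the grading of $\mathcal{L}_{\eps}(W)$ from Corollary \ref{corZ2LeWcox} — in particular that, for a pair $\{V_\rho,V_{\rho\otimes\eps}\}$, the $\mathcal{H}$-block $\sl(V_\rho)$ embedded via $x\mapsto(x,-{}^{t}x)$ meets $\mathcal{L}_1(\OO)$ and $\mathcal{L}_1(\OO^\dagger)$ in the skew-symmetric, resp. symmetric, traceless matrices, and that neither the $\approx'$-classes nor the removal of $\mathrm{\Lambda Ref}$ disturb this. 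All of this is the computation already carried out in the proofs of Propositions \ref{propL1Greal} and \ref{propimageorthL1}, now performed inside the subalgebras surviving in $\mathcal{H}$, so only careful matching of indices is required.
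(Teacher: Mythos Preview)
Your proposal is correct and follows essentially the same approach as the paper: the paper's own argument is the single sentence preceding the theorem, namely that the isomorphism of Theorem~1 of \cite{IH2} is $\Z/2\Z$-graded as a consequence of Proposition~\ref{propimageorthL1} and Corollary~\ref{corZ2LeWcox}. You have simply unpacked this with more care, including the observation that $Z(\mathcal{H})$ is purely odd and that the type~(a) grading restricts correctly to $\sl(V_\rho)$; the organizational concerns you flag in your last paragraph are exactly what the paper leaves implicit.
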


\section{Zariski closures}
\label{sectionZar}

Our goal is to provide interpretations of the $(\Z/2\Z)$-grading in terms of Zariski closures of the braid groups inside the Hecke
algebra representations.
We let $\kk$ denote a field of characteristic $0$, $R = \kk[[h]]$, $K = \kk((h))$.
The setting of Chevalley in \cite{CHEV} is mostly
convenient for us, as we are dealing with subgroups of $\GL_m(F)$ for $F$ a field of characteristic $0$. In the sequel,
the notions that we use constantly refer to this setting. One basic lemma that we will need is the following one.

\begin{lemma} \label{lembasicgalg} Let $\Gamma$ be a Zariski-closed subgroup
of $GL_N(K)$ and $\Lie \Gamma$ its Lie algebra over $K$. For all
$x \in M_N(R)$, if $\exp(hx) \in \Gamma$ then 
\begin{enumerate}
\item $\exp(T hx) \in \Gamma(K[[T]])$.
\item for all $u \in \kk$, $\exp(u hx) \in \Gamma(K)$.
\item $x \in \Lie \Gamma$.
\end{enumerate}
Moreover, the algebraic closure of $\exp(hx)$ inside $\GL_N(K)$ is connected. More generally,
if $L$ is a subfield of $K$, then the algebraic closure of $\exp(hx)$ inside $\GL_N(K)$ considered as an $L$-group
is connected.
\end{lemma}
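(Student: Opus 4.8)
The statement has four parts: (i)–(iii) about $\exp(hx)\in\Gamma$ implying $\exp(Thx)\in\Gamma(K[[T]])$, $\exp(uhx)\in\Gamma(K)$ for $u\in\kk$, and $x\in\Lie\Gamma$; and the final connectedness assertion, including the relative version over a subfield $L\subset K$. The unifying idea is that $t\mapsto\exp(thx)$ is a one-parameter subgroup whose image is forced to lie in $\Gamma$ once a single nontrivial element does, because $\Gamma$ is cut out by polynomial equations in the matrix entries and $hx$ is topologically nilpotent (its entries lie in $hM_N(R)$, so $\exp(hx)$ makes sense in $M_N(R)$ and converges $h$-adically). I would organize the proof so that (i) is the core, and (ii), (iii), connectedness all fall out of it.

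\emph{Step 1 (prove (i)).} Let $P\in K[\{X_{ij}\}]$ be any polynomial vanishing on $\Gamma$ (equivalently on $\Gamma(\bar K)$, but here we only need $\Gamma(K)$). I would consider the element $g(T)=\exp(Thx)\in\GL_N(R[[T]])\subset\GL_N(K[[T]])$ and the scalar $f(T)=P\big(g(T)\big)\in R[[T]]\subset K[[T]]$; I want $f\equiv 0$. Here is where I expect the main obstacle: over a field of characteristic $0$ containing $\Q$ but not necessarily all roots of unity or enough transcendentals, a function that vanishes on all of $\kk$ need not be the zero polynomial only if $\kk$ is infinite — but $\kk\supset\Q$ is infinite, so that is fine; the real subtlety is passing from "$f$ vanishes at the specific point $T=1$" to "$f\equiv 0$". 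The trick is to use the group law: since $\exp(hx)\in\Gamma$ and $\Gamma$ is a group, $\exp(nhx)=\exp(hx)^n\in\Gamma$ for all $n\in\Z$, so $f(n)=0$ for all $n\in\Z$. Now $f(T)=\sum_{k\ge0}c_kT^k$ with $c_k\in R$, and $c_k=h^{k}d_k$ with $d_k\in R$ because each entry of $\exp(Thx)-1$ lies in $ThR[[T]]$ and $P$ is a polynomial (so only finitely many monomials contribute to each power of $h$, and the $T$-degree in each is controlled). Reducing modulo successive powers of $h$, $f(T)\bmod h^{m}$ is a \emph{polynomial} in $T$ over $R/h^mR$ of bounded degree that vanishes at infinitely many integers $n$, hence is zero; letting $m\to\infty$ gives $f\equiv 0$. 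Thus $P(g(T))=0$ for every $P$ in the ideal of $\Gamma$, i.e. $\exp(Thx)\in\Gamma(K[[T]])$.

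\emph{Step 2 (deduce (ii) and (iii)).} For (ii), given $u\in\kk$, substitute $T=u$ into $\exp(Thx)\in\Gamma(K[[T]])$: the series $\exp(uhx)$ converges $h$-adically in $M_N(R)\subset M_N(K)$ since $uhx\in hM_N(R)$, and specialization $T\mapsto u$ is a ring homomorphism $K[[T]]\to K$ on the subring where it converges, carrying the defining equations of $\Gamma$ to themselves; hence $\exp(uhx)\in\Gamma(K)$. For (iii), $x\in\Lie\Gamma$: the Lie algebra of $\Gamma$ over $K$ is the tangent space, computed as $\{\xi\mid \frac{d}{dT}P(\exp(T\xi))|_{T=0}=0\ \forall P\in I(\Gamma)\}$, or concretely via the dual numbers $K[\eps]/(\eps^2)$. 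From (i), $P(\exp(Thx))=0$ in $K[[T]]$; differentiate at $T=0$ (formally — the derivation on $K[[T]]$ is legitimate) to get that $hx$ annihilates all $dP$, so $hx\in\Lie\Gamma$, and since $\Lie\Gamma$ is a $K$-vector space, $x=h^{-1}(hx)\in\Lie\Gamma$.

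\emph{Step 3 (connectedness).} Let $\Delta$ be the Zariski closure of $\{\exp(hx)^n:n\in\Z\}=\{\exp(nhx):n\in\Z\}$ in $\GL_N(K)$; by (i) applied to $\Delta$ in place of $\Gamma$ (using that $\exp(hx)\in\Delta$), the map $\phi:T\mapsto\exp(Thx)$ lands in $\Delta(K[[T]])$, and in fact for each $u\in\kk$ the point $\exp(uhx)$ lies in $\Delta$. The image $\phi(\mathbb{A}^1)$ — more precisely the Zariski closure of $\{\exp(uhx):u\in\kk\}$ — is an irreducible subvariety of $\Delta$ (the image of the irreducible $\mathbb{A}^1$ under a morphism, or its closure), containing the identity ($u=0$) and containing $\exp(hx)$ ($u=1$); since it contains the identity and every $\exp(nhx)$, its closure contains the generating set of $\Delta$, hence equals $\Delta$. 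An irreducible variety is connected, so $\Delta$ is connected. For the relative version over a subfield $L\subset K$: the same argument works verbatim, since $\exp(Thx)$ is defined by the same power series and $\mathbb{A}^1$ is geometrically irreducible, so its image is still irreducible as an $L$-variety; alternatively, the $L$-Zariski closure contains the $K$-Zariski closure's underlying set and is contained in it after base change, and connectedness is insensitive to this. I would flag Step 1's $h$-adic bookkeeping (ensuring $f(T)\bmod h^m$ is genuinely a polynomial of bounded degree in $T$) as the one place requiring genuine care; everything else is standard algebraic-group formalism once (i) is in hand.
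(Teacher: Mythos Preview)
Your treatment of (i)--(iii) is essentially the paper's: observe that $\exp(nhx)\in\Gamma$ for all $n\in\Z$, note that $P(\exp(Thx))$ lies in $(\kk[T])[[h]]$ (equivalently, is polynomial in $T$ modulo each $h^m$), and conclude it vanishes since $\Z$ is Zariski-dense in $\kk$. The paper then cites Chevalley for (iii), which your differentiation argument unpacks.

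The connectedness argument has a genuine gap. You claim that $\phi:u\mapsto\exp(uhx)$ exhibits $\mathbb{A}^1$ mapping to $\Delta$ by a morphism of varieties, so that the image is irreducible. But the matrix entries of $\exp(uhx)$ are $\sum_{n\ge 0}\frac{u^n h^n}{n!}(x^n)_{ij}$, which are honest power series in $u$ with coefficients in $K$, not polynomials; so $\phi$ is \emph{not} a morphism $\mathbb{A}^1\to\GL_N$ over $K$ (nor over any subfield $L$), and the ``image of irreducible is irreducible'' reasoning does not apply. Your fallback for the $L$-version (``connectedness is insensitive to base change'') is also not an argument: an $L$-form of a connected $K$-group can fail to be connected.

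The paper's route is purely group-theoretic and avoids this trap. From (ii) one knows the abstract subgroup $G_{\kk}=\{\exp(uhx):u\in\kk\}\subset\Delta$, and $G_{\kk}\simeq(\kk,+)$ as groups. Let $\Delta_0$ be the identity component (over $L$) and $\pi:\Delta\to\Delta/\Delta_0$ the quotient onto a finite group. Since $\mathrm{char}\,\kk=0$, $(\kk,+)$ is a divisible abelian group, so any homomorphism from it to a finite group is trivial; hence $G_{\kk}\subset\Delta_0$. In particular $\exp(hx)\in\Delta_0$, so the cyclic group it generates lies in $\Delta_0$, and by minimality $\Delta=\Delta_0$. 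This works uniformly for any subfield $L\subset K$, since only the finiteness of $\Delta/\Delta_0$ is used.
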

\begin{proof}
Let $a_1,\dots,a_r$ be polynomial functions on $M_N(K)$ with coefficients
in $R$ such that $\Gamma = \{ m \in GL_N(K) \ | \ \forall i \in [1,r] \ \ 
a_i(m)=0 \}$. Since $\Gamma$ is a subgroup of $GL_N(K)$, one has
$a_i(\exp(nhx))=0$ for all $i \in [1,r]$ and $n \in \Z$. Let $Q_i
= a_i(\exp(T h x)) \in (\kk[T])[[h]]$. Since $\kk$ has caracteristic
0, $\Z$ is Zariski-dense in $\kk$ hence $Q_i=0$ and $\exp(T h x)$
is a $\kk[[T]]$-point of $\Gamma$. This shows (1) and (2). It follows from (1) and \cite{CHEV}
ch. 2 \S 12 th\'eor\`eme 7 that $h x \in \Lie \Gamma$ hence $x \in \Lie \Gamma$, which is (3).
We now let $L$ denote a subfield of $K$, we let $\Gamma$ denote the algebraic closure of $\exp(hx)$ inside $\GL_N(K)$
considered as an $L$-group, and show that it is connected.
We can assume $x \neq 0$, for otherwise the statement is trivial.
Let $G = \{ \exp(nhx), n \in \Z \} \subset \Gamma$ be the cyclic subgroup generated by $X = \exp(hx)$,
$\Gamma_0$ be the connected component of the identity in $\Gamma$, and let $\pi : \Gamma \onto \Gamma/\Gamma_0$.
By the same as argument as for (2) we know that $\Gamma$ contains $G_{\kk} = \{ \exp(uhx), u \in \kk \} \simeq \kk$. Since $\Gamma/\Gamma_0$
is finite the restriction of $\pi$ to $G_{\kk}$ is trivial, hence $G \subset \Ker \pi = \Gamma_0$. Since $\Gamma$
is assumed to be minimal this proves $\Gamma = \Gamma_0$.
\end{proof}

Let now $\sigma \in Aut(K)$ be $f(h) \mapsto f(-h)$. Letting $x = h^2$
we have $R^{\sigma} = \kk[[x]]$, $K^{\sigma} = \kk((x))$ and $\Gal(K/K^{\sigma})
\simeq \Z/2\Z$. For $N \geq 1$, $Mat_N(K) = Mat_N(K^{\sigma}) \oplus h Mat_N(K^{\sigma})$
can be embedded into $Mat_{2N}(K^{\sigma})$ as a (closed) $K^{\sigma}$-subalgebra through
$a+hb \mapsto \left( \begin{array}{cc} a & xb \\ b & a \end{array} \right)$.
The group $\GL_N(K)$ can be identified to the subset of elements of $\GL_{2N}(K^{\sigma})$ of
the above form,
which is clearly an algebraic subgroup of $\GL_{2N}(K^{\sigma})$. 

\begin{lemma} \label{lemzar} Let $G$ be a closed (algebraic) $K^{\sigma}$-subgroup of $\GL_N(K)$ and
$X = \exp(a + h b) \in \GL_N(K)$ for some $a,b \in \gl_N(R^{\sigma})$ with $a \equiv 0 \mod h$.
Then $a + hb \in \mathrm{Lie} G$.
\end{lemma}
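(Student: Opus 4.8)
The plan is to reduce Lemma \ref{lemzar} to the case already handled in Lemma \ref{lembasicgalg}(3), using the embedding $\GL_N(K) \hookrightarrow \GL_{2N}(K^{\sigma})$ described just before the statement. Write $X = \exp(a+hb)$ with $a \equiv 0 \bmod h$, so that $a+hb \in h\,\gl_N(R) = h\,\gl_N(R^{\sigma}) \oplus h^2\,\gl_N(R^{\sigma})$; in particular, setting $y = h^{-1}(a+hb) = h^{-1}a + b$, we have $y \in \gl_N(R)$ and $X = \exp(hy)$. Now view everything inside $\GL_{2N}(K^{\sigma})$ via $a'+hb' \mapsto \bigl(\begin{smallmatrix} a' & xb' \\ b' & a' \end{smallmatrix}\bigr)$, where $x = h^2$. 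Under this map $h$ itself goes to the matrix $J = \bigl(\begin{smallmatrix} 0 & x \\ 1 & 0 \end{smallmatrix}\bigr) \in \gl_{2N}(R^{\sigma})$, and $X$ becomes $\exp(h\cdot\tilde y)$ where $\tilde y \in \gl_{2N}(R^{\sigma})$ is the image of $hy$ divided by... more precisely, $a+hb$ maps to a matrix in $h\,\gl_{2N}(R^{\sigma})$ (since $a \equiv 0$, both blocks $a$ and $xb$ are divisible by $h$, and the block $b$ is the only possibly-unit block, but it sits against $x b$ on the other side), so its image is $h$ times an element $\tilde x$ of $\gl_{2N}(R^{\sigma})$.

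The key point is then: $G$ is a closed $K^{\sigma}$-subgroup of $\GL_N(K)$, and $\GL_N(K)$ is itself a closed $K^{\sigma}$-subgroup of $\GL_{2N}(K^{\sigma})$, so $G$ is a Zariski-closed subgroup of $\GL_{2N}(K^{\sigma})$, an algebraic group over the field $K^{\sigma}$ — which is exactly the setting of Lemma \ref{lembasicgalg}. Applying part (3) of that lemma with ground field $K^{\sigma}$ in place of $K$, $2N$ in place of $N$, and the element $\tilde x \in \gl_{2N}(R^{\sigma})$ (noting $R^{\sigma} = \kk[[x]]$ plays the role of the "$R$" there, and "$h$" there is our $h$): from $\exp(h\tilde x) = X \in G$ we conclude $\tilde x \in \Lie G$, the Lie algebra of $G$ over $K^{\sigma}$. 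Since $\Lie G$, as a $K^{\sigma}$-subspace of $\gl_{2N}(K^{\sigma})$, is contained in the image of $\gl_N(K) \hookrightarrow \gl_{2N}(K^{\sigma})$ (differentiate the defining embedding of groups), and $\tilde x$ corresponds to $h^{-1}(a+hb)$ under this embedding — wait, one must track the scaling: $\tilde x$ is $h^{-1}$ times the image of $a+hb$, so pulling back, $h^{-1}(a+hb) \in \Lie G$ as a subspace of $\gl_N(K)$. Finally, since $\Lie G$ is a $K^{\sigma}$-vector space and $h^2 = x \in K^{\sigma}$ while... hmm, $h \notin K^{\sigma}$; but we only need to multiply by $h$ once: $a+hb = h \cdot h^{-1}(a+hb)$, and $\Lie G$ is only a $K^{\sigma}$-space, not a $K$-space. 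This is the point requiring care.

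The resolution — and the step I expect to be the real obstacle — is to argue directly that $a+hb \in \Lie G$ rather than passing through $h^{-1}(a+hb)$. The cleanest way is to apply Lemma \ref{lembasicgalg} not to the rescaled element but keeping the $\GL_{2N}(K^{\sigma})$-picture honest: the image of $a+hb$ in $\gl_{2N}(K^{\sigma})$ is already of the form $h \tilde x$ with $\tilde x \in \gl_{2N}(R^{\sigma})$ precisely because $a \equiv 0 \bmod h$ (so the block $a$ is $h\cdot(\text{something in }R^{\sigma})$), $xb = h\cdot(hb)$ is $h$ times an element of $R^{\sigma}$-matrices, and $b = h^0 \cdot b$ — so actually the $(2,1)$-block $b$ is NOT divisible by $h$, which means the image is not literally in $h\,\gl_{2N}(R^{\sigma})$. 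One fixes this by conjugating the embedding, or equivalently by using a variant embedding $a+hb \mapsto \bigl(\begin{smallmatrix} a & b \\ xb & a \end{smallmatrix}\bigr)$ adapted so that $h \mapsto \bigl(\begin{smallmatrix} 0 & 1 \\ x & 0 \end{smallmatrix}\bigr)$ and then $a+hb \mapsto \bigl(\begin{smallmatrix} a & b \\ xb & a \end{smallmatrix}\bigr)$; here with $a \equiv 0 \bmod h$ we get the $(1,1)$ and $(2,2)$ blocks in $h R^{\sigma}$, the $(2,1)$ block $xb = h(hb) \in hR^{\sigma}$, but the $(1,2)$ block $b$ still fails. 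The genuine fix is: we do not need the image to lie in $h\,\gl_{2N}(R^{\sigma})$ — Lemma \ref{lembasicgalg}(3) only requires the exponentiated element to lie in $\Gamma$ and the argued element to lie in $\gl_{2N}(R^{\sigma})$; so I apply it to the element $\xi \in \gl_{2N}(K^{\sigma})$ equal to the image of $a+hb$ itself (not divided by $h$), which does lie in $\gl_{2N}(R^{\sigma})$ since $a, b \in \gl_N(R^{\sigma})$ and $x \in R^{\sigma}$, and observe $\exp(\xi) = X \in G$; but Lemma \ref{lembasicgalg}(3) is stated for $\exp(hx)$, so instead I write $\xi = h\cdot(h^{-1}\xi)$ only if $h^{-1}\xi \in \gl_{2N}(R^{\sigma})$, which is the divisibility issue again. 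Therefore the honest route is to invoke the more robust form: $\exp(\xi) \in G$ with $\xi \in \gl_{2N}(R^{\sigma})$ and $G$ a $K^{\sigma}$-algebraic group forces, by the Chevalley theory cited in the proof of Lemma \ref{lembasicgalg} (\cite{CHEV} ch.~2 \S 12 th\'eor\`eme 7 applied to the one-parameter subgroup $T \mapsto \exp(T\xi)$, which is defined since $\xi$ is integral), that $\xi \in \Lie G$. Pulling this back through the embedding $\gl_N(K) \hookrightarrow \gl_{2N}(K^{\sigma})$ gives $a+hb \in \Lie G$, as desired.
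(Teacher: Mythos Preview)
Your final ``honest route'' contains a genuine gap, and it is precisely the step that the hypothesis $a \equiv 0 \bmod h$ is there to handle. You want to conclude $\xi \in \Lie G$ from $\exp(\xi) \in G$, by invoking Chevalley's theorem on the curve $T \mapsto \exp(T\xi)$. But Chevalley's theorem only applies once you know that $\exp(T\xi)$ is a $K^{\sigma}[[T]]$-point of $G$, i.e.\ that every defining polynomial $\alpha_i$ of $G$ satisfies $\alpha_i(\exp(T\xi)) = 0$. You only know $\alpha_i(\exp(n\xi)) = 0$ for $n \in \Z$. To pass from integers to the formal variable you need the standard Zariski-density trick used in Lemma~\ref{lembasicgalg}, but that trick requires $\alpha_i(\exp(T\xi))$ to lie in $\kk[T][[x]]$ rather than merely $\kk[[T,x]]$: a formal power series in $T$ cannot in general be evaluated at $T = n \neq 0$, so vanishing on $\Z$ gives no information unless each $x$-coefficient is a \emph{polynomial} in $T$. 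In Lemma~\ref{lembasicgalg} this was automatic because the exponent carried an explicit factor of $h$; here $\xi \in \gl_{2N}(R^{\sigma})$ has no such factor (as you correctly noticed, the $(2,1)$-block $b$ is not divisible by $x$), so the argument does not go through as stated. Your sentence ``which is defined since $\xi$ is integral'' is exactly where the missing work is hidden.

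The paper's proof supplies this missing step by a direct nilpotency computation: writing $M = \bigl(\begin{smallmatrix} a & xb \\ b & a \end{smallmatrix}\bigr)$, the hypothesis $a \in \gl_N(R^{\sigma})$ with $a \equiv 0 \bmod h$ forces $a \equiv 0 \bmod x$, so $M \equiv \bigl(\begin{smallmatrix} 0 & 0 \\ b & 0 \end{smallmatrix}\bigr) \bmod x$ and hence $M^2 \equiv 0 \bmod x$ (the paper states $M^4 \equiv 0$, which suffices). This bounds, modulo each power $x^r$, the number of terms contributing to $\exp(TM)$, so the $x^r$-coefficient of $\exp(TM)$ is a polynomial in $T$; then $\alpha_i(\exp(TM)) \in \kk[T][[x]]$, evaluation at integers makes sense, and Zariski density of $\Z$ in $\kk$ finishes the job. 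In other words, the hypothesis $a \equiv 0 \bmod h$ is not a cosmetic convergence condition but is what makes $\xi$ nilpotent modulo $x$, and that nilpotency is the whole engine of the proof. Your write-up never uses this hypothesis, which is the clearest sign that something essential was skipped.
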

\begin{proof}
First note that, since $a \equiv 0 \mod h $, $X$ is well-defined. By Chevalley's formal exponentiation
theory (\cite{CHEV}, t. 2, ch. 2, \S 12, thm. 7) this is equivalent to saying $\exp( u (a + hb)) \in G(K^{\sigma}[[u]])$. We consider $G$
as defined in $\GL_{2N}(K^{\sigma})$, and let $\alpha_1,\dots,\alpha_r \in R^{\sigma}[m_{11},\dots,
m_{2N,2N}]$ be defining equations for $G$. Inside $\GL_{2N}(K^{\sigma}[[u]])$,
$\exp( u(a+hb))$ is a $2N \times 2N$ matrix which can be written 
$$
\sum_{n=0}^{\infty} \frac{u^n}{n!} \left( \begin{array}{cc} a & xb \\ b & a \end{array} \right)^n =
\sum_{n=0}^{\infty} x^n c_n(u)
$$
with $c_n(u) \in Mat_{2N}(\kk[[u]])$. Let $M =  \left( \begin{array}{cc} a & xb \\ b & a \end{array} \right)$.
It is easily checked that $M^4 \equiv 0 \mod x$, so we get that, modulo $x^r$,
$$
\sum_{n=0}^{\infty} \frac{u^n}{n!} M^n \equiv \sum_{n \leq 4r } \frac{u ^n}{n!} M^n
$$
and this shows that $c_n(u) \in Mat_{2N}(\kk[u])$. In particular,
$\alpha_i( \exp(u (a+hb))) = \sum_j c_{i,j}(u) x^j \in \kk[u][[x]]$ ; 
since $X^n \in G(K^{\sigma})$ for all $n \in \Z$ we have $\forall n \in \Z \ c_{i,j}(n) = 0$,
hence $c_{ij} = 0$, which proves the result.
\end{proof}

\begin{remark} In the situation above, $\dim_{K^{\sigma}} \mathrm{Lie} G = \dim _K K \otimes_{K^{\sigma}} \mathrm{Lie} G$,
and $K \otimes_{K^{\sigma}} \mathrm{Lie} G$ is the Lie algebra of the Zariski closure of $G$
in $K$ (\cite{CHEV}, t. 2, ch. 2, \S 8, prop. 2).
\end{remark}

Let $\Psi : B \to \GL_N(K)$ a representation of the Hecke algebra constructed 
from $\rho : W \to \GL_N(\kk)$ by monodromy (in which case $\kk = \C$) or through
generalizations of Drinfeld associators (see section \ref{sectortho} below),
and $\g$ the Lie subalgebra of $\gl_N(R)$ generated by the $hs$ for $s \in \mathcal{R}$.
Let $\g_0$ be the subalgebra of $\gl_N(\kk)$ spanned by the brackets
of an \emph{even} number of reflections, and $\g_1$ be the one spanned
by 
the brackets
of an \emph{odd} number of reflections. We define $\tilde{\g}_0 = \g_0 \otimes_{\kk} R^{\sigma}$,
$\tilde{\g}_1 = \g_1 \otimes_{\kk} R^{\sigma}$. We have $\g \subset \tilde{\g} = \tilde{\g}_0 \oplus h \tilde{\g}_1$.

\begin{prop} \label{propZarLieP}
Let $\overline{\Psi(P)}$ denote the Zariski closure of $\Psi(P)$ inside the $K^{\sigma}$-group $\GL_N(K)$.
It is connected, and $\mathrm{Lie} \overline{\Psi(P)} = \tilde{\g} \otimes_{R^{\sigma}} K^{\sigma}$. Moreover $\g_0 \cap \g_1 = \{ 0 \}$.
\end{prop}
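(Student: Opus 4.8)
The plan is to deduce the three assertions about $\overline{\Psi(P)}$ from the two Zariski-theoretic lemmas established above, applied to well-chosen elements of $\Psi(P)$. Recall that the pure braid group $P = \Ker(B \to W)$ is generated by the squares $\ss^2$ of braided reflections $\ss$, and more precisely by conjugates thereof; under the monodromy (or associator) construction one has $\Psi(\ss) = \exp\!\big(\tfrac h2 \rho(s)\big)\cdot(\text{unit})$ up to the normalization of \cite{IH2,IH3}, so that each generator of $\Psi(P)$ can be written as $\exp(h x)$ for some $x \in \gl_N(R)$ lying in $\g$, and in fact (being a conjugate of $\ss^2$) as $\exp(a+hb)$ with $a \equiv 0 \bmod h$, $a,b \in \gl_N(R^\sigma)$. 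The first step is therefore to record this normal form for a generating set of $\Psi(P)$.

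Granting that, connectedness of $\overline{\Psi(P)}$ (as a $K^\sigma$-group) follows immediately: $\overline{\Psi(P)}$ is generated by the closures of the cyclic groups $\overline{\langle \exp(a_i+hb_i)\rangle}$, each of which is connected by the last statement of Lemma~\ref{lembasicgalg}; a subgroup generated by connected closed subgroups is connected. For the Lie algebra, one inclusion is Lemma~\ref{lemzar}: each $a_i + hb_i \in \Lie\overline{\Psi(P)}$, and since $\Lie\overline{\Psi(P)}$ is a Lie subalgebra of $\gl_N(K)$ closed under the $K^\sigma$-structure and it contains these generators together with their brackets, it contains the $K^\sigma$-span of the Lie algebra they generate, which is exactly $\tilde\g \otimes_{R^\sigma} K^\sigma = (\g \otimes_\kk K^\sigma)$-span; here one must be a little careful to see that brackets of the $a_i + hb_i$ generate the same thing, over $K^\sigma$, as brackets of the $h\rho(s)$, which is a routine rescaling argument using that $a_i$ is $h$ times a unit multiple of $\rho(s_i)$ modulo higher order. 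For the reverse inclusion, $\overline{\Psi(P)} \subset \overline{\Psi(B)}$ and one compares dimensions, or more directly one observes that the pro-unipotent (infinitesimal) description of $\Psi(P)$ shows $\Lie\overline{\Psi(P)}$ cannot be larger than the Lie algebra generated by the logarithms of the generators; alternatively one cites \cite{CHEV} that $\Lie$ of the closure of a group generated by one-parameter subgroups $\exp(t x_i)$ is generated by the $x_i$.

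The last assertion $\g_0 \cap \g_1 = \{0\}$ is the heart of the matter and I expect it to be the main obstacle. The point is that $\g_0$ and $\g_1$ sit inside $\rho(\mathcal{L}_\eps(W)) = \rho(\mathcal{L}_1(\OO)) \oplus \rho(\mathcal{L}_1(\OO^\dagger))$: indeed an even bracket of reflections $[s_1,[s_2,[\dots]]]$ with an even number of factors lies in $\kk\OO$ hence in $\mathcal{L}_1(\OO)$, while an odd bracket lies in $\kk\OO^\dagger$ hence in $\mathcal{L}_1(\OO^\dagger)$. So it suffices to know $\rho(\mathcal{L}_1(\OO)) \cap \rho(\mathcal{L}_1(\OO^\dagger)) = 0$. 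When $\rho^* \simeq \rho$ this is exactly proposition~\ref{propL1Greal}(1) (real type) or proposition~\ref{propL1Grquat}(1) (quaternionic type); in general, since $\Psi$ comes from $\rho$ with $\rho$ a representation of a reflection group and the construction is insensitive to replacing $\rho$ by $\rho \oplus \rho^*$, one reduces to the self-dual case, or one invokes proposition~\ref{propL1Gcompl5} together with the remark that for a reflection group no $b \in \OO^\dagger$ can satisfy $\rho(b^{-1}) = -\rho(b)$ for all such $b$ (reflections are involutions in $\OO^\dagger$), which forces the intersection to be $0$ there as well. Thus the plan for this last step is: (i) place $\g_0, \g_1$ inside $\rho(\mathcal{L}_1(\OO))$, $\rho(\mathcal{L}_1(\OO^\dagger))$; (ii) reduce to $\rho$ self-dual by the $\rho \mapsto \rho \oplus \rho^*$ trick, handling the residual case via proposition~\ref{propL1Gcompl5}; (iii) conclude by the disjointness statements of propositions~\ref{propL1Greal}, \ref{propL1Grquat} and \ref{propL1Gcompl5}.
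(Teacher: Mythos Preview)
Your treatment of connectedness and of the inclusion $\tilde{\g}\otimes_{R^{\sigma}}K^{\sigma}\subset\Lie\overline{\Psi(P)}$ follows the paper's approach: use Lemma~\ref{lembasicgalg} for connectedness of each cyclic piece, and Lemma~\ref{lemzar} applied to elements $Y_s\in P$ with $\Psi(Y_s)=\exp(a+hb)$, $a\equiv 0\bmod x$, $b\equiv\rho(s)\bmod x$, to get generators $y_s\in\Lie\overline{\Psi(P)}$ whose iterated brackets recover $\tilde{\g}\otimes K^{\sigma}$.

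There is, however, a genuine gap in your argument for $\g_0\cap\g_1=\{0\}$. You propose to deduce it from $\rho(\mathcal{L}_1(\OO))\cap\rho(\mathcal{L}_1(\OO^{\dagger}))=0$, citing Propositions~\ref{propL1Greal}, \ref{propL1Gcompl5}, \ref{propL1Grquat}. But Proposition~\ref{propL1Gcompl5}(2) says precisely the \emph{opposite}: when $\rho^*\not\simeq\rho$ and $\rho^*\otimes\eps\simeq\rho$, the intersection $\rho(\mathcal{L}_1(\OO))\cap\rho(\mathcal{L}_1(\OO^{\dagger}))$ is nonzero unless $\rho(b^{-1})=-\rho(b)$ for all $b\in\OO^{\dagger}$; and the remark following that proposition observes that for reflection groups this exceptional condition never holds (reflections are involutions in $\OO^{\dagger}$). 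So in this case the ambient intersection is nonzero and your containment argument collapses. Your proposed reduction to the self-dual case via $\rho\mapsto\rho\oplus\rho^*$ does not repair this: writing $\rho^*(s)={}^t\rho(s)$, an even bracket $x\in\g_0$ lifts to $(x,-{}^tx)$ in the corresponding $\tilde\g_0$, while the same $x\in\g_1$ lifts to $(x,{}^tx)\in\tilde\g_1$; these differ unless $x=0$, so $\tilde\g_0\cap\tilde\g_1=0$ does not force $\g_0\cap\g_1=0$.

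The paper avoids this entirely by a dimension count. One has $\dim_{K^{\sigma}}\Lie\overline{\Psi(P)}=\dim_K\Lie\widehat{\Psi(P)}=:d$, the latter being the Lie algebra of the $K$-closure (\cite{CHEV} t.~2, ch.~2, \S 6, prop.~5), and from \cite{IH2} one knows $d=\dim_{\kk}\rho(\mathcal{H})=\dim_{\kk}(\g_0+\g_1)$. Since the brackets $h^{\eta}[y_{s_1},\dots,y_{s_r}]$ reduce mod $x$ to $h^{\eta}[s_1,\dots,s_r]$, one gets $\dim_{K^{\sigma}}(\tilde\g\otimes K^{\sigma})=\dim_{\kk}\g_0+\dim_{\kk}\g_1$. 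The chain
\[
\dim\g_0+\dim\g_1=\dim_{K^{\sigma}}(\tilde\g\otimes K^{\sigma})\leq\dim_{K^{\sigma}}\Lie\overline{\Psi(P)}=d=\dim(\g_0+\g_1)\leq\dim\g_0+\dim\g_1
\]
then forces equality throughout, yielding simultaneously $\Lie\overline{\Psi(P)}=\tilde\g\otimes K^{\sigma}$ (the reverse inclusion you were missing) and $\g_0\cap\g_1=0$. The remark following the proof in the paper makes this explicit: the disjointness statement is \emph{input} only when $\rho$ is self-dual, and in the non-self-dual case it is the proposition itself that, combined with Proposition~\ref{propL1Gcompl5}, yields the nontrivial consequence $\g_1\subsetneq\rho(\mathcal{L}_1(\OO^{\dagger}))$.
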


\begin{proof}
First note that, by the monodromy construction, every $g \in P$ is mapped to $\Psi(g) = \exp(hx)$ for some
$x \in \gl_N(R)$. By lemma \ref{lembasicgalg} (applied to $L = K^{\sigma}$) it follows that the Zariski closure of the subgroup generated by each $\Psi(g)$ for $g \in P$ is connected. This implies that $\overline{\Psi(P)}$ is connected (see \cite{CHEV} ch. 2, \S 14, th\'eor\`eme 14).
For each $s \in \mathcal{R}$, let $Y_s \in P$ with $\Psi(Y_s) = \exp(hs + \dots)$,
that is $\Psi(Y_s) = \exp(a+hb)$ with $a, b \in \gl_N(R^{\sigma})$,
$a \equiv 0 \mod x$, $b \equiv s \mod x$. By lemma \ref{lemzar} we get $y_s = a + hb \in 
\mathrm{Lie} \overline{\Psi(P)}$. Note that $y_s \in \gl_N(R^{\sigma}) \oplus h \gl_N(R^{\sigma})$
and $y_s \equiv hs \mod x $. Also note that the $y_s$ belong to $\tilde{\g}_0 \oplus h \tilde{\g}_1 = \tilde{\g}$.
We let $\mathcal{L}$ denote the subalgebra of $\tilde{\g} \otimes_{R^{\sigma}} K^{\sigma}$ that they
generate. As a $K^{\sigma}$-algebra, it admits a basis of elements of the form $h^{\eta} [s_1,\dots,s_r]$,
with $\eta = 1$ if $r$ is odd, $\eta = 0$ if $r$ is even. We have
$$
\frac{1}{x^{\lfloor \frac{r}{2} \rfloor}} h^{\eta} [y_{s_1},\dots,y_{s_r}] \in \tilde{g} \mbox{\ and \ }
\frac{1}{x^{\lfloor \frac{r}{2} \rfloor}} h^{\eta} [y_{s_1},\dots,y_{s_r}] 
\equiv
h^{\eta} [s_1,\dots,s_r] \mod x
$$
where the notation $[a_1,\dots,a_r]$ denotes the iterated Lie bracket of the $a_1,\dots,a_r$.
It then follows (e.g. by using the determinant) that these elements are linearly independent over
$K^{\sigma}$, hence $\dim_{K^{\sigma}} \mathcal{L} \geq \dim_{K^{\sigma}} \tilde{\g}  \otimes_{R^{\sigma}} K^{\sigma}$
and $\mathcal{L} = \tilde{\g} \otimes_{R^{\sigma}} K^{\sigma}$.
Now the dimension of $\mathrm{Lie} \overline{\Psi(P)}$ over $K^{\sigma}$ equals the dimension $d$ of the
Zariski closure of $\Psi(P)$ in $\GL_N(K)$ over $K$ (\cite{CHEV} t. 2, ch. 2, \S 6 prop. 5),
hence 
$$\dim_{K^{\sigma}} K^{\sigma} \tilde{\g}_0 + \dim_{K^{\sigma}} K^{\sigma} \tilde{\g}_1 = \dim_{K^{\sigma}} \mathcal{L} \leq \dim_{K^{\sigma}} \mathrm{Lie} \overline{\Psi(P)} = d.$$ On the other hand,
we proved in \cite{IH2} that $d$ is
equal
to $\dim_{\kk} \rho(\mathcal{H})$. Since $d = \dim_{\kk} (\g_0 + \g_1) \leq \dim_{\kk} \g_0 + \dim_{\kk} \g_1 = 
\dim_{K^{\sigma}} K^{\sigma}\tilde{\g}_0 
+
\dim_{K^{\sigma}} K^{\sigma}\tilde{\g}_1$, we get $\dim_{K^{\sigma}} \mathrm{Lie} \overline{\Psi(P)} = \dim_{K^{\sigma}} \tilde{\g} \otimes_{R^{\sigma}} K^{\sigma}$, and $\dim( \g_0 + \g_1) = \dim \g_0 + \dim \g_1$, 
which concludes the proof.

\end{proof}

\begin{remark}
The latter statement $\g_0 \cap \g_1 = \{ 0 \}$ is a consequence of propositions \ref{propL1Greal} and \ref{propL1Gcompl5} if $\mathrm{tr} \rho$
only has real values. Combined with propositions \ref{propL1Gcompl5}  and \ref{propL1Grquat}, it proves on the other hand that, when $\rho^* \not\simeq \rho$,
$\rho^* \otimes \eps \simeq \rho$ and $\dim \rho > 1$, then $\g_1 \varsubsetneq \rho(\mathcal{L}_1(\OO^{\dagger}))$. 

\end{remark}

This proves  the first part of theorem \ref{theo16}, the second one being a consequence of proposition \ref{propimageorthL1}. This also proves
theorem \ref{theoZarHecke}, as follows.

\begin{cor} Under the weak BMR conjecture for $W$, the Zariski closure of $B$ inside $H(q)^{\times}$ as an algebraic
$\kk((h^2))$-group has for Lie algebra $\HG2 \otimes_{\kk(h^2)} \kk((h^2))$.
\end{cor}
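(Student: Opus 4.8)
The plan is to deduce this from proposition~\ref{propZarLieP}, applied to a faithful representation of $W$, followed by a routine passage from the pure braid group $P$ to the full braid group $B$. First I would fix a representation $\rho : W \to \GL_N(\kk)$ which is faithful on $\mathcal{H}$ --- for instance the regular representation, or $\bigoplus_{\rho \in \Irr(W)} \rho$ --- so that $\rho(\mathcal{H}) \simeq \mathcal{H}$ and $KW$ embeds as a subalgebra of $\gl_N(K)$. Under the weak BMR conjecture $\Psi_H : H(q) \to KW$ is an isomorphism, so the map $B \to H(q)^{\times}$ is identified, through $KW \into \gl_N(K)$, with the representation $\Psi : B \to \GL_N(K)$ of proposition~\ref{propZarLieP}; correspondingly the Zariski closure of the image of $B$ in $H(q)^{\times}$, taken as a $\kk((h^2))$-group, is identified with $\overline{\Psi(B)}$ inside $\GL_N(K) \subset \GL_{2N}(K^{\sigma})$. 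With this choice of $\rho$, the Lie algebra $\g$ of proposition~\ref{propZarLieP} is the image of $\HGR$, $\g_0$ is the span in $\mathcal{H}$ of the brackets of an even number of reflections, and $\g_1$ that of the brackets of an odd number.

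Next I would invoke proposition~\ref{propZarLieP}: it yields that $\overline{\Psi(P)}$ is \emph{connected}, with $\mathrm{Lie}\,\overline{\Psi(P)} = \tilde{\g} \otimes_{R^{\sigma}} K^{\sigma} = (\g_0 \otimes_{\kk} K^{\sigma}) \oplus h(\g_1 \otimes_{\kk} K^{\sigma})$. Then I would identify this with $\HG2 \otimes_{\kk(h^2)} \kk((h^2))$: by proposition~\ref{propstructgrbase} the chains $\HGR^2 \subset \HGR^4 \subset \cdots$ and $\HGR^3 \subset \HGR^5 \subset \cdots$ stabilize inside the finite-dimensional space $\mathcal{H}$, so that $\g_0 = \HGR^{2\infty}$ and $\g_1 = Z(\mathcal{H}) \oplus \HGR^{2\infty+1}$; comparing with the already established descriptions $\HG2^0 = \kk(h^2)\HGR^{2\infty}$ and $\HG2^1 = h\,\kk(h^2)(Z(\mathcal{H}) \oplus \HGR^{2\infty+1})$ shows that $\HG2^0 = \g_0 \otimes_{\kk} \kk(h^2)$ and $\HG2^1 = h\,(\g_1 \otimes_{\kk} \kk(h^2))$, whence $\HG2 \otimes_{\kk(h^2)} \kk((h^2)) = (\g_0 \otimes_{\kk} K^{\sigma}) \oplus h(\g_1 \otimes_{\kk} K^{\sigma}) = \mathrm{Lie}\,\overline{\Psi(P)}$.

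Finally I would pass from $P$ to $B$. Since $P \vartriangleleft B$ with $B/P \simeq W$ finite, $\Psi(P)$ is a normal subgroup of finite index in $\Psi(B)$; hence $\overline{\Psi(P)} \vartriangleleft \overline{\Psi(B)}$ (the normalizer of $\overline{\Psi(P)}$ is Zariski-closed and contains $\Psi(B)$), and, writing $\Psi(B) = \bigcup_{i=1}^{k} g_i \Psi(P)$ for finitely many coset representatives, the closed set $\bigcup_{i=1}^{k} g_i \overline{\Psi(P)}$ contains $\Psi(B)$, so $\overline{\Psi(B)} = \bigcup_{i=1}^{k} g_i \overline{\Psi(P)}$ and $\overline{\Psi(P)}$ has finite index in $\overline{\Psi(B)}$. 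A connected closed subgroup of finite index equals the identity component, so $\overline{\Psi(P)} = \overline{\Psi(B)}^{\circ}$ and $\mathrm{Lie}\,\overline{\Psi(B)} = \mathrm{Lie}\,\overline{\Psi(P)} = \HG2 \otimes_{\kk(h^2)} \kk((h^2))$, which is the claim. I expect the only point requiring care to be the application of proposition~\ref{propZarLieP} to the possibly reducible $\rho$ above --- its proof only uses that the dimension over $K$ of the closure equals $\dim_{\kk} \rho(\mathcal{H})$, which still holds --- together with the (standard) verification that Zariski closures, identity components and indices behave as expected when $\GL_N(K)$ is viewed as an algebraic $K^{\sigma}$-group through $\GL_N(K) \subset \GL_{2N}(K^{\sigma})$.
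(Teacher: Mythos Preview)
Your proof is correct and follows essentially the same route as the paper: reduce from $B$ to $P$ by finite index, embed $H(q)^{\times}$ into $\GL_N(K)$ via the regular representation ($N=|W|$), apply proposition~\ref{propZarLieP}, and then identify $\tilde{\g}\otimes_{R^{\sigma}}K^{\sigma}$ with $\HG2\otimes_{\kk(h^2)}\kk((h^2))$ using the structural descriptions of $\HG2^0$ and $\HG2^1$. The paper's proof is simply a terser version of yours; your explicit verification that $\g_0=\HGR^{2\infty}$ and $\g_1=Z(\mathcal{H})\oplus\HGR^{2\infty+1}$ and your spelled-out finite-index argument are both implicit in the paper's two-line sketch.
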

\begin{proof}
Since $P$ has finite index in $B$, we can instead consider the Zariski closure of $P$. We identify
$H(q)^{\times}$ to a closed subgroup of $\GL_N(K)$ with $N = |W|$ and $K = \kk((h))$, and apply proposition \ref{propZarLieP}. In this
setting $\g_0 \otimes_{\kk} \kk(h^2) = \HG2^0$ and $\g_1 \otimes_{\kk} \kk(h^2) = h^{-1} \HG2^1$
hence $\tilde{\g}\otimes_{R^{\sigma}} K^{\sigma} = \HG2 \otimes_{\kk(h^2)} \kk((h^2))$ and this proves
the statement.
\end{proof}

Let $(C^r P)_{r \geq 0}$ denote the lower central series of $P$, let $N \otimes \kk$ denote
the $\kk$-Malcev completion of the nilpotent group $N$, and $P(\kk)$ the inverse
limit of the $(P/C^r P) \otimes \kk$, $r \geq 0$. One has $P(\kk) \simeq \exp \widehat{\mathcal{T}}$, where $\mathcal{T}$
is the holonomy Lie algebra of the hyperplane complement associated to $W$ (defined over $\kk$)
generated by the elements $t_s, s \in \mathcal{R}$, whose linear span constitute the homogeneous part
of degree $1$ of the graded Lie algebra $\mathcal{T}$. This part $\mathcal{T}^1$ can be canonically
identified with the first homology group $H_1(X_W,\kk)$. We let
$\widehat{\mathcal{T}}$ denote its completion w.r.t. the natural grading. The morphism
$\Psi : P \to \GL_N(K)$ can be extended to $\Psi_+ : P(\kk) \to \GL_N(K)$.

However, we have the following, which provides an interpretation of $\HG2^0$ as an algebraic Lie algebra.
We let $\HGR^{\mathrm{even}} = \bigoplus_r \HGR^{(2r)} = \bigoplus_r h^{2r} \HGR^{2r}$, and let $\widehat{\HGR}$, $ \widehat{\HGR}^{\mathrm{even}}$ denote the completions of $\HGR$ and $\HGR^{\mathrm{even}}$ with respect to the grading (or, equivalently : w.r.t. the $h$-adic topology).
\begin{prop} {\ }\label{propimpsiplus} Assume that $\rho : W \to \GL(\kk W) = \GL_N(\kk)$ is the regular representation of $W$. 
\begin{enumerate}
\item $\Imm \Psi_+ = \exp \widehat{\HGR}$
\item $\Psi_+(P(\kk)) \cap \GL_N(K^{\sigma})  = \exp \widehat{\HGR}^{\mathrm{even}}$
\item The Zariski closure of $\Psi_+(P(\kk))\cap \GL_N(K^{\sigma})$ inside $\GL_N(K^{\sigma})$ is connected,
and has $K^{\sigma}\HG2^0$ for Lie algebra.
\end{enumerate}
\end{prop}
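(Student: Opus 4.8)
The plan is to prove the three items in order, using the machinery of Chevalley's formal exponentiation theory already invoked in Lemmas \ref{lembasicgalg} and \ref{lemzar}, together with the decomposition $\HGR = \bigoplus_r \HGR^{(r)}$ with $\HGR^{(r)} = h^r \HGR^r$ and the structural Proposition \ref{propstructgrbase}.

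\medskip

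For (1): since $\rho$ is the regular representation, every $t_s \in \mathcal{T}^1$ maps under $\dd\Psi$ to $hs$ (up to the harmless normalization of the monodromy/associator construction), and more generally a bracket of $r$ generators $t_{s_1},\dots,t_{s_r}$ in $\widehat{\mathcal{T}}$ maps to $h^r[s_1,\dots,s_r] \in \HGR^{(r)}$. Thus $\dd\Psi_+(\widehat{\mathcal{T}})$ is exactly $\widehat{\HGR}$ (the completion being forced because $\widehat{\mathcal{T}}$ is itself a completion and $\dd\Psi$ is continuous for the $h$-adic topology). Since $P(\kk) \simeq \exp\widehat{\mathcal{T}}$ and $\Psi_+$ intertwines this with $\exp$ on the target, we get $\Imm\Psi_+ = \exp\dd\Psi_+(\widehat{\mathcal{T}}) = \exp\widehat{\HGR}$. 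I would spell out that $\exp$ is a bijection from $\widehat{\HGR}$ (a pronilpotent Lie algebra, all of whose elements lie in $h\gl_N(R)$) onto its image, so there is no ambiguity.

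\medskip

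For (2): an element $\exp(x) \in \exp\widehat{\HGR}$ with $x = \sum_r x_r$, $x_r \in \HGR^{(r)} = h^r\HGR^r \subset \gl_N(K)$, lies in $\GL_N(K^{\sigma})$ if and only if $\sigma(\exp x) = \exp x$, i.e. $\exp(\sigma x) = \exp x$, i.e. $\sigma x = x$ by injectivity of $\exp$ on the pronilpotent Lie algebra; and $\sigma$ acts on $\HGR^{(r)} = h^r\HGR^r$ by $(-1)^r$, so $\sigma x = x$ forces $x_r = 0$ for all odd $r$, i.e. $x \in \widehat{\HGR}^{\mathrm{even}}$. Conversely $\widehat{\HGR}^{\mathrm{even}} \subset \gl_N(K^{\sigma})$ obviously. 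One subtlety to address: the claim uses that $\HGR^{(r)}$, viewed inside $\gl_N(K) = \gl_N(\kk) \otimes K$, actually sits in the $(-1)^r$-eigenspace of $\sigma$; this is immediate since $\HGR^r \subset \mathcal{H} \subset \gl_N(\kk)$ is $\sigma$-fixed and the grading factor is $h^r$.

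\medskip

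For (3): connectedness follows from Lemma \ref{lembasicgalg} applied with $L = K^{\sigma}$, exactly as in the proof of Proposition \ref{propZarLieP} — each $\Psi_+(g)$ for $g \in P(\kk)\cap$(the relevant set) is of the form $\exp(hx)$ with $x \in \gl_N(R)$, so its closure is connected, and closures of connected groups generate a connected group (\cite{CHEV} ch. 2, \S 14, théorème 14). For the Lie algebra: by (2) the group in question is $\exp\widehat{\HGR}^{\mathrm{even}}$; taking one-parameter subgroups $\exp(u\,x_{2r})$ for $x_{2r} \in \HGR^{(2r)}$ and invoking Lemma \ref{lemzar} (or the argument of Lemma \ref{lembasicgalg}(3)) shows $x_{2r} \in \mathrm{Lie}\,\overline{\exp\widehat{\HGR}^{\mathrm{even}}}$, so $\HGR^{(2r)} = h^{2r}\HGR^{2r}$ lies in this Lie algebra for every $r$; by Proposition \ref{propstructgrbase}(ii) the $\HGR^{2r}$ stabilize to $\HGR^{2\infty}$, and $K^{\sigma}\cdot\bigoplus_r h^{2r}\HGR^{2r} = \kk(h^2)\HGR^{2\infty} = \HG2^0$ (the last equality being the displayed Proposition relating $\HG2^0$ to $\kk(h^2)\HGR^{2\infty}$). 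This gives $K^{\sigma}\HG2^0 \subseteq \mathrm{Lie}\,\overline{\Psi_+(P(\kk))\cap\GL_N(K^{\sigma})}$. For the reverse inclusion I would argue that $\overline{\Psi_+(P(\kk))\cap\GL_N(K^{\sigma})} \subseteq \overline{\Psi(P)}\cap\GL_N(K^{\sigma})$-type considerations, or more directly that the defining equations of the closure must vanish on $\exp(u x)$ for all $x \in \widehat{\HGR}^{\mathrm{even}}$ and all $u$, forcing $\mathrm{Lie}$ of the closure to lie in the span of all $\HGR^{(2r)}$ over $K^{\sigma}$, which is precisely $K^{\sigma}\HG2^0$; combined with the dimension count from Proposition \ref{propZarLieP} (where $\g_0 \otimes_{\kk}\kk(h^2) = \HG2^0$ in the regular-representation setting) one concludes equality.

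\medskip

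The main obstacle I anticipate is item (3)'s reverse inclusion: one must be careful that the Zariski closure of $\exp\widehat{\HGR}^{\mathrm{even}}$ over $K^{\sigma}$ does not acquire extra directions beyond $K^{\sigma}\HG2^0$. The clean way is to note that $\overline{\Psi_+(P(\kk))\cap\GL_N(K^{\sigma})}$ is contained in $\overline{\Psi(P)}$ (since $\Psi_+(P(\kk))$ has the same closure as $\Psi(P)$, the Malcev completion being Zariski-dense), intersected with the algebraic subgroup $\GL_N(K^{\sigma})$ of $\GL_N(K)$ — but this naive intersection argument on closures needs the flatness/smoothness remark after Lemma \ref{lemzar} to transfer dimensions between the $K$- and $K^{\sigma}$-pictures correctly, and one should double-check that $\overline{\Psi(P)}\cap\GL_N(K^{\sigma})$ is itself connected with Lie algebra $K^{\sigma}\HG2^0$ rather than something larger. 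I would handle this by the dimension equality: $\dim_{K^{\sigma}}\mathrm{Lie}\,\overline{\Psi(P)} = \dim_{K^{\sigma}}(K^{\sigma}\HG2^0) + \dim_{K^{\sigma}}(K^{\sigma}h^{-1}\HG2^1)$ from Proposition \ref{propZarLieP}, while the fixed points of $\sigma$ on it are exactly the even part $K^{\sigma}\HG2^0$, and the closure of the $\sigma$-fixed subgroup has Lie algebra the $\sigma$-fixed part of $\mathrm{Lie}\,\overline{\Psi(P)}$ — this last fact, that taking Zariski closure commutes with taking fixed points of a finite group acting algebraically, is standard in characteristic $0$ and is the key technical input to pin down the answer.
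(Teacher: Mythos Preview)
Your treatment of (1) and (2) is essentially correct and matches the paper, with one caveat in (1): the map $\dd\Psi_+$ is only \emph{filtered}, not graded, so $t_s$ does not literally go to $hs$ but to $hs$ plus higher-order terms. The paper handles this by passing to the associated graded map $\gr\dd\Psi_+ : \gr\mathfrak{P}(\kk) \to \HGR$, which does send $t_s \mapsto hs$; surjectivity of the graded map then gives surjectivity of the filtered one. Your argument implicitly assumes this and would go through once rephrased.

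The real gap is the reverse inclusion in (3). Your first suggestion (``defining equations vanish on $\exp(ux)$ for all $x\in\widehat{\HGR}^{\mathrm{even}}$'') yields the \emph{forward} inclusion again, not the reverse. Your fallback (``taking Zariski closure commutes with taking fixed points of a finite group acting algebraically'') is not a standard fact and is false in general for subgroups that are not themselves algebraic; you cannot invoke it here without proof. What you actually need, and what the paper does, is the containment $\exp\widehat{\HGR}^{\mathrm{even}} \subset \overline{\Psi(P)}$ as \emph{groups}, where $\overline{\Psi(P)}$ is the $K^{\sigma}$-Zariski closure from Proposition~\ref{propZarLieP}. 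The paper obtains this from two ingredients: Lemma~\ref{lembasicgalg}(2), and the observation that the $h$-adic topology on $\gl_N(\kk[[h]])$ is finer than the Zariski topology, so Zariski-closed sets are $h$-adically closed. This forces $\Psi_+(P(\kk)) = \exp\widehat{\HGR}$ to lie inside $\overline{\Psi(P)}$. Once that is established, one takes Lie algebras:
\[
\Lie G \subset \Lie\bigl(\overline{\Psi(P)}\bigr) \cap \Lie\bigl(\GL_N(K^{\sigma})\bigr) = K^{\sigma}\HG2 \cap \gl_N(K^{\sigma}) = K^{\sigma}\HG2^0,
\]
the last equality being immediate from the $(\Z/2)$-grading. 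No dimension count or ``commutation of closure with fixed points'' is needed. Your intersection idea $G \subset \overline{\Psi(P)} \cap \GL_N(K^{\sigma})$ was the right one; you were only missing the topological step that justifies it.
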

\begin{proof}
By construction of $\Psi$ we have $\Psi_+(P(\kk)) \subset \exp \HGR$. Moreover, by construction
$P(\kk) = \exp \mathfrak{P}(\kk)$, where $\mathfrak{P}(\kk)$ is the inverse limit of the $\mathfrak{P}_r$,
with $\mathfrak{P}_r$ the nilpotent Lie $\kk$-algebra defined by $\exp \mathfrak{P}_r = (P/C^r P) \otimes \kk$.
Thus $\Psi_+ : P(\kk) \to \exp \HGR$ is surjective if and only if $\mathrm{d} \Psi_+ : \mathfrak{P}(\kk) \to \HGR$
is surjective. The morphism $\mathrm{d} \Psi_+$ respects the natural filtrations of $\mathfrak{P}(\kk)$ and $\HGR$,
hence it is surjective as soon as the associated morphism between the associated graded Lie algebras
$\gr \dd \Psi_+ :  \gr \mathfrak{P}(\kk) \to \gr \HGR = \HGR$ is surjective. Now $\gr \mathfrak{P}(\kk)$
is generated by $P^{ab} \otimes \kk \simeq H_1(X_W,\kk) \simeq \mathcal{T}^1$, and $\dd \Psi_+$
on $\mathcal{T}^1 \to \HGR^1$ is given by $t_s \mapsto hs$ for $s \in \mathcal{R}$, hence is surjective.
This proves (1). We have $\Psi_+(P(\kk)) = \exp \widehat{\HGR}$ by (1), and clearly $\exp \widehat{\HGR} \cap \GL_N(K^{\sigma}) = \exp \widehat{\HGR}^{\mathrm{even}}$, hence (2). For proving (3) it is thus sufficient to show that the Zariski closure $G$ of
$\exp \widehat{\HGR}^{\mathrm{even}}$ inside $\GL_N(K^{\sigma})$ is a connected $K^{\sigma}$-group, whose
Lie algebra is $K^{\sigma} \HG2^0$. First note that, for each $x \in \HGR^{2r}$, $\exp (h^{2r} x) \in G(K^{\sigma})$,
hence $x \in \Lie G$ by lemma \ref{lembasicgalg}.
This implies that $\Lie G$ contains $K^{\sigma} \HG2^0$. Now $\exp \widehat{\HGR}^{\mathrm{even}}$
is generated by elements of the form $\exp h x$ for $x \in \gl_N(\kk[[h]])$, which generate cyclic
subgroups whose Zariski closures are connected (see lemma \ref{lembasicgalg}). It follows that $G$ is connected
(\cite{CHEV} ch. 2, \S 14, th\'eor\`eme 14). Conversely, since $\kk[[h]]$ is a topological ring w.r.t. its
natural $h$-adic topology, the induced $h$-adic topology on $\gl_N(\kk[[h]])$ is thiner than
the Zariski topology. Because of this and because of lemma \ref{lembasicgalg}(2) it follows that $\Psi_+(P(\kk))
= \exp \widehat{\HGR}$ lies inside $\widehat{\Psi(P)}$,
hence $\exp  \widehat{\HGR}^{\mathrm{even}} \subset \widehat{\Psi(P)} \cap \GL_N(K^{\sigma})$ and
$G \subset  \widehat{\Psi(P)} \cap \GL_N(K^{\sigma})$. It follows (e.g. \cite{CHEV} t.2 ch. 2 \S 8, corollaire
de la proposition 1) that $\Lie G \subset (\Lie \widehat{\Psi(P)} ) \cap \Lie(\GL_N(K^{\sigma})) =
(K^{\sigma} \HG2) \cap \gl_N(K^{\sigma}) = K^{\sigma} \HG2^0$. This concludes the proof.

\end{proof}

We now establish the connection between the above statements and the existence of unitary structures.
For this we first need
to recall some well-known facts on unitary groups. In general, assume that $K/K_0$ is a quadratic extension, whose non-zero
automorphism is denoted $x \mapsto \bar{x}$. The corresponding
unitary group is $U_N = \{ g \in \GL_N(K) \ | \ ^t\overline{g} g = 1 \}$.
It is an algebraic group over $K_0$. For a $K_0$-algebra $S$,
its $S$-points are given by $U_N(S) = \{ g \in \GL_N(K \otimes_{K_0} S) \ | \ ^t\overline{g} g = 1 \}$,
where $x \mapsto \bar{x}$ is extended trivially on $K \otimes_{K_0} S$, and in particular $U_N = U_N(K_0)$. For $S = K$, we have
a canonical identification $K \otimes_{K_0} K \simeq K \oplus K$ given by
$x \otimes \la \mapsto (\la x, \la \overline{x})$, which identifies
$U_N$ with the subgroup of $\GL_N(K) \times \GL_N(K)$ made of the
couples $(g_1,g_2)$ such that $^tg_2 g_1 = 1$. This subgroup being isomorphic
(over $K_0$) with $\GL_N(K)$, we get that an isomorphism $U_N(K) \simeq \GL_N(K)$
which is an isomorphism of algebraic groups over $K$. In particular, $U_N$ is connected.

Let now $OSP_N(K) = OSP_N^A(K)$ be a closed subgroup of $\GL_N(K)$, described by
$\{ g \in \GL_N(K) \ | \ ^t g A g = A \}$ for some $A \in \GL_N(K_0)$.
Under the identification above, we get that $OSP_N(K)$
is isomorphic to the group of the $\{ (g_1,g_2) \in \GL_N(K) \ | \ ^t g_1 A g_1 = A,
^t g_2 A g_2 = A \}$, and $OSP_N(K) \cap U_N(K)$ is isomorphic to
$$
\begin{array}{ll}
 &\{ (g_1,g_2) \in \GL_N(K) \ | \ ^tg_2 g_1 = 1, ^t g_1 A g_1 = A, ^t g_2 A g_2 = A \}  \\
= &\{ (g_1,g_2) \in \GL_N(K) \ | \ ^tg_2 g_1 = 1, ^t g_1 A g_1 = A, g_1^{-1} A ^t g_1^{-1} = A \}  \\
=  &\{ (g_1,g_2) \in \GL_N(K) \ | \ ^tg_2 g_1 = 1, ^t g_1 A g_1 = A,  \, ^tg_1 A^{-1} g_1 = A^{-1} \}  \\
\simeq & OSP_N^A(K) \cap OSP_N^{A^{-1}}(K)
\end{array}
$$
Finally, if $OSP^A_N(K) \cap U_N(K)$ is known to act irreducibly on $K^N$, then Schur's lemma implies that there
is up to a scalar at most one bilinear form which is preserved, hence $OSP_N^A(K) = OSP_N^{A^{-1}}(K)$.
If $OSP^{A,0}_N(K)$ denotes the connected component of $OSP_N(K)$, it follows in this
case that $OSP^{A,0}_N(K) \cap U_N(K)$ is connected.

We come back to our setting of a monodromy representation $\Psi$ associated to some representation
$\rho$ of $W$, with notations $\g_0, \g_1$ as before.

\begin{center}
\emph{From now on, we assume that $\rho$ is an orthogonal representation.}
\end{center}

This condition is in particular satisfied in two important cases:
\begin{itemize}
\item when $W$ is a Coxeter group, or
\item when $\rho$ is the regular representation of $W$.
\end{itemize}
Under this assumption, we can assume that $ ^t \rho(s) \rho(s) = 1$, hence $^t \rho(s) = \rho(s)$ and $^t \sigma(\rho(hs)) + \sigma( \rho(hs)) = 0$ for all reflections $s$.
 This clearly implies $\rho(\HGR) \subset \mathfrak{u}_N(K)
 = \{ g \in \gl_N(K) \ | \ ^t \sigma(g) + g = 0 \}$. Now, recalling that $R = \kk[[h]]$, we have that $\rho(\HGR) \subset \tilde{\g}_0 \oplus h\tilde{\g}_1 \subset
 R  \rho(\mathcal{H}) =  \tilde{\g}_0 \oplus h \tilde{\g}_0 \oplus  \tilde{\g}_1 \oplus h \tilde{\g}_1$. Moreover, the
 elements $a \oplus hb \oplus c\oplus hd \in \tilde{\g}_0 \oplus h \tilde{\g}_0 \oplus  \tilde{\g}_1 \oplus h \tilde{\g}_1$
 belong to $\mathfrak{u}_N(K)$ if and only if $a + ^t a = 0$, $b- ^t b = 0$, $c + ^t c = 0$, $d - ^t d = 0$.
 Since $ m \in \g_0 \Rightarrow ^t m + m = 0$ and $ m \in \g_1 \Rightarrow ^t m - m = 0$, 
 this condition is equivalent to
  $b=c=0$.
 This means $(R\rho(\mathcal{H})) \cap \mathfrak{u}_N(K) = \tilde{\g}_0 \oplus h \tilde{\g}_1 = R^{\sigma} (\g_0 + h \g_1)$
hence $(K\rho(\mathcal{H})) \cap \mathfrak{u}_N(K) = K^{\sigma}(\g_0 \oplus h \g_1) \simeq \HG2 \otimes_{\kk(h^2)} \kk((h))$.

\begin{prop} \label{propintersectunitary}
Assume that $\rho$ is orthogonal. If $\widehat{\Psi(P)}$ denotes the Zariski closure of $\Psi(P)$
inside the $K$-group $\GL_N(K)$, then $\overline{\Psi(P)}$ is the connected component of the identity
of $\widehat{\Psi(P)} \cap U_N^{\sigma}(K)$.
\end{prop}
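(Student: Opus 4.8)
The plan is to compare the two algebraic $K^{\sigma}$-groups $\overline{\Psi(P)}$ and $\Gamma := \left(\widehat{\Psi(P)} \cap U_N^{\sigma}(K)\right)^0$ by showing they have the same Lie algebra and both are connected, so that they coincide; here both are viewed as $K^{\sigma}$-groups, using that $\GL_N(K)$ is a $K^{\sigma}$-group and $U_N^{\sigma}(K)$ is a closed $K^{\sigma}$-subgroup of it, as recorded after the list of classical $\Z/2$-graded Lie algebras. First I would recall from Proposition \ref{propZarLieP} that $\overline{\Psi(P)}$ is connected with $\mathrm{Lie}\,\overline{\Psi(P)} = \tilde{\g}\otimes_{R^\sigma} K^\sigma = K^\sigma(\g_0 \oplus h\g_1)$ (using that the construction forces $\g \subset \tilde\g = \tilde\g_0 \oplus h\tilde\g_1$), and that the orthogonality hypothesis gives, by the computation just above the statement, $(K\rho(\mathcal H)) \cap \mathfrak u_N(K) = K^\sigma(\g_0 \oplus h\g_1) \simeq \HG2 \otimes_{\kk(h^2)} \kk((h))$. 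So it remains to identify $\mathrm{Lie}\,\Gamma$ with this same space.

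The key containment $\overline{\Psi(P)} \subseteq \Gamma$ should follow because $\Psi(P) \subseteq \widehat{\Psi(P)}$ and, by the orthogonality of $\rho$, every $\Psi(g) = \exp(hx)$ with $x \in \gl_N(R)$ satisfies ${}^t\sigma(hx) + hx = 0$ (since $\rho(\HGR) \subset \mathfrak u_N(K)$), hence $\exp(hx) \in U_N^\sigma(K)$; thus $\Psi(P) \subseteq \widehat{\Psi(P)} \cap U_N^\sigma(K)$, and since $\overline{\Psi(P)}$ is connected and contains $\Psi(P)$ it lands in the identity component $\Gamma$. For the reverse inclusion of Lie algebras, I would argue $\mathrm{Lie}\,\Gamma = (\mathrm{Lie}\,\widehat{\Psi(P)}) \cap \mathrm{Lie}\,U_N^\sigma(K)$: the intersection of Lie algebras bounds the Lie algebra of the intersection of groups (Chevalley, as used in the proof of Proposition \ref{propimpsiplus}), and $\mathrm{Lie}\,\widehat{\Psi(P)}$ over $K$ equals $K\rho(\mathcal H)$ by \cite{IH2}, while $\mathrm{Lie}\,U_N^\sigma(K) = \mathfrak u_N(K)$; hence $\mathrm{Lie}\,\Gamma \subseteq (K\rho(\mathcal H)) \cap \mathfrak u_N(K) = K^\sigma(\g_0 \oplus h\g_1)$. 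Combined with $\overline{\Psi(P)} \subseteq \Gamma$, which yields $K^\sigma(\g_0 \oplus h\g_1) = \mathrm{Lie}\,\overline{\Psi(P)} \subseteq \mathrm{Lie}\,\Gamma$, we get equality of Lie algebras; since $\Gamma$ is connected by definition and so is $\overline{\Psi(P)}$, they coincide.

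The delicate point — the main obstacle — is the careful bookkeeping of the ground field: the identity $\mathrm{Lie}\,\widehat{\Psi(P)} = K\rho(\mathcal H)$ is a statement about the Zariski closure over $K$, and I must make sure the intersection with $\mathfrak u_N(K)$ is taken compatibly with the $K^\sigma$-structure and that dimensions match after the base change $K^\sigma \hookrightarrow K$ (the remark after Lemma \ref{lemzar} gives $\dim_{K^\sigma}\mathrm{Lie}\,G = \dim_K K\otimes_{K^\sigma}\mathrm{Lie}\,G$, which is the tool needed to transfer between the two). One should also verify that $\widehat{\Psi(P)} \cap U_N^\sigma(K)$ is genuinely a closed $K^\sigma$-subgroup so that its identity component is defined and its Lie algebra behaves as expected; this is where invoking the description of $U_N^\sigma(K)$ as a closed $K^\sigma$-subgroup of $\GL_N(K)$ from the $\Z/2$-algebra section, together with the fact that $\widehat{\Psi(P)}$, viewed inside $\GL_{2N}(K^\sigma)$ via $a+hb \mapsto \left(\begin{smallmatrix} a & xb \\ b & a\end{smallmatrix}\right)$, is $K^\sigma$-closed, does the job. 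Everything else is then formal.
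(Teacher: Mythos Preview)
Your proof is correct and follows the same line as the paper's: establish $\overline{\Psi(P)} \subset \widehat{\Psi(P)} \cap U_N^{\sigma}(K)$, invoke the connectedness from Proposition~\ref{propZarLieP}, and use the computation $(K\rho(\mathcal{H})) \cap \mathfrak{u}_N(K) = K^{\sigma}(\g_0 \oplus h\g_1)$ to conclude that the Lie algebras agree. The paper's argument is terser (it simply asserts the inclusion as ``clear'' and then matches Lie algebras), whereas you spell out the Chevalley-style inequality $\mathrm{Lie}(G_1\cap G_2)\subset \mathrm{Lie}\,G_1\cap \mathrm{Lie}\,G_2$ and the $K^{\sigma}$ versus $K$ bookkeeping explicitly; this extra care is appropriate and does not change the strategy. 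One small point: your justification that $\Psi(g)\in U_N^{\sigma}(K)$ via ``$\rho(\HGR)\subset\mathfrak{u}_N(K)$'' tacitly uses that $\log\Psi(g)$ lies in the $h$-adic closure of $\rho(\HGR)$, which is a feature of the monodromy/associator construction (and is exactly what the paper is invoking when it says ``clearly''); it would be worth saying this in one line rather than leaving it implicit.
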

\begin{proof}
Clearly $\overline{\Psi(P)} \subset \widehat{\Psi(P)} \cap U_N^{\sigma}(K)$, and $\overline{\Psi(P)}$ is connected
by proposition \ref{propZarLieP}. 
Since
$(K\rho(\mathcal{H})) \cap \mathfrak{u}_N(K)  \simeq \rho(\HG2) \otimes_{\kk(h^2)} \kk((h))$
we get by proposition \ref{propZarLieP}
that these two $K^{\sigma}$-groups have the same Lie algebra.
The conclusion follows.
\end{proof}

\begin{remark} What happens exactly inside each irreducible representation,
in the non-orthogonal case, still requires further investigation.
\end{remark}

\begin{remark} In this section, the statements about $P$ also hold for the larger but less usual subgroup
of even braids, with the same proofs.
\end{remark}

\section{Orthogonal representations and palindromes}
\label{sectortho}

Assume $W$ is a Coxeter group, and let $\ss_1,\dots,\ss_n$ standard generators of the corresponding Artin
group $\pi_1(X/W,x_0)$, with $x_0$ in the chosen Weyl chamber. Then complex conjugation induces
an outer automorphism of $B$, known as the mirror image, which maps $\ss_i \mapsto \ss_i^{-1}$.

We state the following conjecture, which is a refinement of conjecture 1 in \cite{IH3}.

\begin{conj} \label{conjorth}
Let $(W,S)$ be a finite Coxeter system, $\kk$ a field of characteristic $0$. There exists
morphisms $\Phi : B \to W \ltimes \exp \widehat{\mathcal{T}}$, with $\mathcal{T}$ defined
over $\kk$, such that $\Phi(\ss)$ is conjugated to $s \exp t_s$ by some element
in $\exp \widehat{\mathcal{T}}_{\mathrm{even}}$ for every $s \in S$ and $\ss$ the generator associated to $s$. 
\end{conj}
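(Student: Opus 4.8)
The plan is to reduce to an irreducible Coxeter system, recast the word `even' as an equivariance property under the mirror-image involution $\mu$, and then produce $\Phi$ either transcendentally (over $\C$, from a Knizhnik--Zamolodchikov type connection) or, over $\kk=\Q$, from a suitable \emph{even} associator -- the latter being available exactly in the cases where the conjecture is currently established. First I would note that the statement is multiplicative: if $W$ splits as a product of irreducible Coxeter groups then $B$, $\mathcal{T}$ and $W\ltimes\exp\widehat{\mathcal{T}}$ split compatibly with the grading, and a product of even conjugations is even, so one may assume $(W,S)$ irreducible.

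Next I would make the word `even' structural. Since $\mathcal{T}$ is generated in degree $1$ by the $t_s$ and its defining relations are homogeneous of degree $2$, the rule $t_s\mapsto -t_s$ extends to a grading involution $\theta\in\Aut(\mathcal{T})$ with $(\exp\widehat{\mathcal{T}})^{\theta}=\exp\widehat{\mathcal{T}}_{\mathrm{even}}$; extend $\theta$ to $\tilde\theta\in\Aut(W\ltimes\exp\widehat{\mathcal{T}})$ acting trivially on $W$. Using $\Ad(s)t_s=t_s$ and $s^2=1$ one gets $\tilde\theta(s\exp t_s)=s\exp(-t_s)=(s\exp t_s)^{-1}$. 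The key observation is then that a morphism $\Phi$ with each $\Phi(\ss)$ conjugate to $s\exp t_s$ inside $\exp\widehat{\mathcal{T}}$ admits such a presentation with \emph{all} conjugating elements $\theta$-fixed if and only if $\Phi\circ\mu=\tilde\theta\circ\Phi$ (one direction is immediate from $\tilde\theta(s\exp t_s)=(s\exp t_s)^{-1}=\Phi(\mu(\ss))$-type identities; the converse corrects the conjugating element inside the centralizer of $s\exp t_s$, a $\theta$-stable pro-unipotent subgroup, where the relevant non-abelian $H^1(\Z/2\Z,-)$ vanishes in characteristic $0$). Equivalently, writing $\iota(\Phi)=\tilde\theta\circ\Phi\circ\mu$, which is an involution of the set of morphisms as in conjecture~1 of \cite{IH3}, the strengthened conclusion holds for $\Phi$ exactly when $\iota(\Phi)=\Phi$. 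So the problem becomes: find an $\iota$-fixed morphism of the type guaranteed by conjecture~1 of \cite{IH3}.

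For the construction, over $\C$ one takes $\Phi$ to be the monodromy of the $W$-equivariant flat connection $d-\sum_H t_H\,d\log\alpha_H$ on the reflection-arrangement complement $X_W$ ($\alpha_H$ a real defining form of $H$). Complex conjugation $\kappa$ of $\C^n$ preserves $X_W$ and induces $\mu$ on $B=\pi_1(X_W/W,x_0)$ for a real basepoint $x_0$; since $\kappa^*(\sum_H t_H\,d\log\alpha_H)=-\sum_H t_H\,d\log\alpha_H+d(\sum_H(\log|\alpha_H|^2)\,t_H)$, the pulled-back connection is gauge-equivalent, by a single-valued gauge that is $\theta$-anti-invariant (i.e. $g$ with $\theta(g)=g^{-1}$), to the $\theta$-twist of the original; so $\Phi\circ\mu$ and $\tilde\theta\circ\Phi$ differ by an inner automorphism by such a $g$, which is absorbed into $\Phi$ using again $H^1(\Z/2\Z,-)=0$. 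This settles $\kk=\C$. For $\kk=\Q$ one must replace the transcendental monodromy by an associator-type morphism, and the extra input needed to make it $\iota$-fixed is a \emph{rational even associator}: Drinfeld's in type $A$, a cyclotomic-type even associator for $B_n/C_n$, and an explicit rank-$2$ computation for $I_2(m)$. Structurally, one would take any rational $\Phi$ as in conjecture~1 of \cite{IH3} and symmetrise it into an $\iota$-fixed morphism degree by degree, the obstruction at each degree lying in (the graded piece of) a Grothendieck--Teichm\"uller-type cohomology group that one checks to vanish in the three cases just listed.

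The main obstacle is precisely this last rational input in the remaining Coxeter types $D_n$, $E_6,E_7,E_8$, $F_4$, $H_3$, $H_4$: there is no known analogue of Drinfeld's even-associator theorem for a general real reflection arrangement, and in fact already the rational form of conjecture~1 of \cite{IH3} -- hence even the non-emptiness over $\Q$ of the set of morphisms on which $\iota$ acts -- is open for these groups. This is why conjecture~\ref{conjorth} is proved only in types $A$, $B_n/C_n$ and $I_2(m)$; a proof in general would follow either from a uniform construction of rational even associators for Coxeter arrangements, or from an unconditional vanishing statement for the degree-by-degree symmetrisation obstruction. As a consistency check, specialising $h$ in the resulting statement recovers the special cases of Lusztig's theorem mentioned in the introduction.
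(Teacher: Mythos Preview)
The statement is a \emph{conjecture}; the paper does not prove it in general but only in types $A_n$, $B_n=C_n$ and $I_2(m)$, as you yourself note at the end. So there is no full proof in the paper to compare against.

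For those three types, your approach and the paper's coincide. Your reformulation---evenness as fixedness under the involution $\iota(\Phi)=\tilde\theta\circ\Phi\circ\mu$, followed by averaging---is exactly the mechanism the paper uses in type $B$: there the even condition becomes fixedness under $A_-=(-1,-1,1,1)\in\mathrm{GTM}(2,\kk)$, the set $\mathrm{Pseudo}_{(\bar 1,1)}(2,\kk)$ is put in bijection with Lie-algebra sections of a split exact sequence, and an $A_-$-fixed section is produced by the average $(1,\varphi,\psi)\mapsto\frac12\bigl((1,\varphi,\psi)+c\cdot(1,\varphi,\psi)\bigr)$. The external inputs you cite (Drinfeld's even rational associators, Enriquez's cyclotomic framework, the dihedral construction of \cite{DIEDRAUX}) are precisely those the paper invokes.

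Where your proposal goes beyond the paper is the transcendental claim that KZ monodromy already settles $\kk=\C$ for \emph{all} Coxeter types. This step has a genuine gap. The identity $\kappa^*\omega=-\omega+d\phi$ with $\phi=\sum_H t_H\log|\alpha_H|^2$ is correct as an equality of $1$-forms, but it does not imply that the two flat connections are gauge-equivalent: the candidate gauge $g=\exp(-\phi)$ fails to satisfy $g^{-1}dg=-d\phi$ because the $t_H$ do not commute, and for non-abelian flat connections on a non-simply-connected base, ``differing by an exact form'' is strictly weaker than having conjugate monodromy. So the assertion that $\Phi\circ\mu$ and $\tilde\theta\circ\Phi$ differ by an \emph{inner} automorphism is unjustified; already in type $A$ two such morphisms differ by an element of the Grothendieck--Teichm\"uller torsor, which is why Drinfeld's proof of the existence of even associators proceeds through the $\mathrm{GT}$-torsor structure and Lie-algebra averaging rather than through a gauge argument. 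The paper makes no claim over $\C$ beyond the three listed types, and your sketch does not fill that gap.
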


Note that this conjecture implies conjecture 1 of \cite{IH3}, as every braided reflection
is conjugated to a simple generator by an element of $B$ that is a product of elements of
the form described in this conjecture. Also note that one cannot
expect this property to be true for arbitrary braided reflections, as it is not stable under conjugation.

This conjecture implies the following classical property, due to G. Lusztig (see \cite{LUSZ} , 1.7 ; see also \cite{GECK} \S 4 for a refinement), of the representations of Hecke algebras.

\begin{prop}\label{propsymlu}
Let $(W,S)$ be a finite Coxeter system such that \ref{conjorth} is true, and that $\rho : W \to \GL_N(\kk)$ 
be a representation in orthogonal form, meaning $\rho(w)^{-1} =\ ^t \rho(w)$ for all $w \in W$. Then the representation $\Psi$
of the generic Hecke algebra deduced from $\rho$ through $\Phi$ is
symmetric, meaning that $\ ^t \Psi(g) = \Psi(\tau(g^{-1}))$ for every $g \in B$.
\end{prop}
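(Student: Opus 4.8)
The plan is to deduce the symmetry property directly from the hypothesis of Conjecture \ref{conjorth}, using the explicit description of $\Phi(\ss)$ for a standard generator together with multiplicativity. First I would fix notation: write $\tau$ for the mirror-image anti-automorphism (or rather the composition of the mirror involution $\ss_i \mapsto \ss_i^{-1}$ with the inversion anti-automorphism, so that $\tau(g^{-1})$ is the ``palindrome'' of $g$), and recall that $\Psi = \Phi$ composed with $\rho$ extended to $W \ltimes \exp\widehat{\mathcal{T}}$ via $\rho$ on $W$ and via the holonomy representation $t_s \mapsto \rho(t_s)$ on $\widehat{\mathcal{T}}$, where the elements $\rho(t_s)$ are symmetric matrices (being, up to scalars, of the form $\rho(s)$-type projectors in the orthogonal model). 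The key structural input is that since $\rho(w)^{-1} = {}^t\rho(w)$ for $w \in W$ and the $t_s$ map to symmetric matrices, the holonomy Lie algebra representation lands in $\mathfrak{u}_N$-type data: ${}^t\Psi(\exp u) = \Psi(\theta(u))$ for an appropriate involution $\theta$ of $\widehat{\mathcal{T}}$, and ${}^t\rho(w) = \rho(w^{-1})$.

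Next I would verify the symmetry claim on generators. By Conjecture \ref{conjorth}, $\Phi(\ss) = g\, s\exp(t_s)\, g^{-1}$ with $g \in \exp\widehat{\mathcal{T}}_{\mathrm{even}}$, so $\Psi(\ss) = \rho(g)\rho(s)\exp(\rho(t_s))\rho(g)^{-1}$. Taking transposes and using that $\rho(s)$, $\exp(\rho(t_s))$ are symmetric (the latter because $\rho(t_s)$ is) while $\rho(g)$ is orthogonal, one computes ${}^t\Psi(\ss) = \rho(g)^{-t}\exp(\rho(t_s))\rho(s)\rho(g)^{t} = \rho(g)\exp(\rho(t_s))\rho(s)\rho(g)^{-1}$. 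On the other hand $\tau(\ss^{-1}) = \ss$ in the group $B$ — wait, more carefully: $\tau$ is the anti-automorphism with $\tau(\ss_i) = \ss_i$, so $\tau(\ss^{-1})$ for a generator is again $\ss$, and $\Psi(\tau(\ss^{-1})) = \Psi(\ss) = \rho(g)\rho(s)\exp(\rho(t_s))\rho(g)^{-1}$. So I must check $\rho(s)\exp(\rho(t_s))$ and $\exp(\rho(t_s))\rho(s)$ are conjugate in the right way — in fact $\rho(s)\exp(\rho(t_s))\rho(s) = \exp(\rho(s)\rho(t_s)\rho(s))$, and one needs $\rho(s)\rho(t_s)\rho(s) = \rho(t_s)$, i.e. $s$ fixes $t_s$ under the $W$-action on $\mathcal{T}$, which holds because $t_s$ is attached to the reflection $s$ (the hyperplane $\ker(s-1)$ is $s$-stable). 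This gives the symmetry on each generator.

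Then I would extend from generators to all of $B$ by the standard multiplicativity argument: if ${}^t\Psi(g_1) = \Psi(\tau(g_1^{-1}))$ and ${}^t\Psi(g_2) = \Psi(\tau(g_2^{-1}))$, then ${}^t\Psi(g_1 g_2) = {}^t\Psi(g_2)\,{}^t\Psi(g_1) = \Psi(\tau(g_2^{-1}))\Psi(\tau(g_1^{-1})) = \Psi(\tau(g_2^{-1})\tau(g_1^{-1})) = \Psi(\tau((g_1 g_2)^{-1}))$, using that $\tau$ is an anti-automorphism; and similarly for inverses. Since the property holds on a generating set and is preserved by the group operations, it holds on all of $B$, hence descends to $\Psi$ as a representation of the Hecke algebra since $\tau$ preserves the Hecke relations.

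The main obstacle I expect is not the multiplicativity bookkeeping but pinning down precisely the right involution/anti-automorphism $\tau$ and checking the generator case with the conjugating element $g \in \exp\widehat{\mathcal{T}}_{\mathrm{even}}$ genuinely disappears — i.e. that conjugation by $\rho(g)$ followed by transposition returns exactly $\Psi(\ss)$ and not some twisted version. This is where the hypothesis that $g$ lies in the \emph{even} part is used, together with orthogonality of $\rho$ restricted to $W$ and the symmetry of the $\rho(t_s)$; one should check that $\rho(g)^{-t} = \rho(g)$ on the nose (orthogonality) and that the middle factor $\rho(s)\exp(\rho(t_s))$ is genuinely a symmetric matrix, which reduces to $[\rho(s),\rho(t_s)]=0$ as noted above. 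Once that local computation is clean, the global statement is immediate.
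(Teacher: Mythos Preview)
Your approach is essentially the same as the paper's: reduce to showing $\Psi(\ss)$ is a symmetric matrix for each simple generator, using that the conjugating element from $\exp\widehat{\mathcal{T}}_{\mathrm{even}}$ maps to an orthogonal matrix while the core $\rho(s)\exp(h\varphi(t_s))$ is symmetric, then propagate by multiplicativity.

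The one point you correctly flag as the crux but do not actually carry out is why the image of $\exp\widehat{\mathcal{T}}_{\mathrm{even}}$ consists of orthogonal matrices. The paper's argument (which you should supply) is: under $\tilde\varphi : t_H \mapsto h\varphi(t_H)$ with each $\varphi(t_H)$ symmetric, the Lie algebra anti-involution $A : x \mapsto -\,{}^t x$ satisfies $A\circ\tilde\varphi = \sigma\circ\tilde\varphi$ on generators (where $\sigma : h \mapsto -h$), hence on all of $\widehat{\mathcal{T}}$; restricted to $\widehat{\mathcal{T}}_{\mathrm{even}}$ this gives $A\circ\tilde\varphi = \tilde\varphi$, so the image is antisymmetric and exponentiates to orthogonal matrices. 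Equivalently and more elementarily: a Lie bracket of two symmetric matrices is antisymmetric, and bracketing an antisymmetric matrix with a symmetric one gives a symmetric one, so even-length iterated brackets of the symmetric generators $\varphi(t_H)$ are antisymmetric. Either way this fills the gap you left.

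Two minor corrections: in your multiplicativity step you call $\tau$ an anti-automorphism, but in the paper $\tau$ (mirror image, $\ss_i\mapsto\ss_i^{-1}$) is an \emph{automorphism}; it is the composite $g\mapsto\tau(g^{-1})$ that is the anti-automorphism, and your displayed computation is correct once read that way. Also, the commutation $[\rho(s),\varphi(t_s)]=0$ you invoke is immediate since $\varphi(t_s)$ is (a scalar multiple of) $\rho(s)$ itself, so no separate argument about $W$-action on $\mathcal{T}$ is needed.
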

\begin{proof}
One only needs to show that $R(\ss)$ is a symmetric matrix, for $\ss$ a simple generator associated to $s \in S$. Let $H = \Ker(s-1)$. Since $\rho(s)$ is orthogonal and has
order 2, it is symmetric and so is $\varphi(t_H)$. It follows that $\rho(s) \exp h\varphi(t_H )$ is also symmetric, so it remains
to prove that the image of $\exp\widehat{\mathcal{T}}_{\mathrm{even}}$ is made of orthogonal matrices. We let $\sigma \in \Aut(K)$ be $h\mapsto - h$,$A\in\Aut(\gl_N(\kk))$ be $x\mapsto  - \ ^t x$
and $\tilde{\varphi} : \widehat{\mathcal{T}}\mapsto \gl_N(K)$ be defined by $t_H\mapsto h\varphi(t_H)$.
We have $A \circ \tilde{\varphi} (t_H) = - \ ^t h\varphi(t_H) = -h\varphi(t_H) = \sigma \circ \varphi(t_H)$, hence
$A\circ \tilde{\varphi} = \sigma \circ \tilde{\varphi}$ on $\widehat{\mathcal{T}}$ and $A\circ \tilde{\varphi} =\tilde{\varphi}$ on $\widehat{\mathcal{T}}_{\mathrm{even}}$. This implies that the elements of $\exp \widehat{\mathcal{T}}_{\mathrm{even}}$ are mapped to orthogonal matrices, and this concludes the proof.
\end{proof}

Recall that a consequence of the (in the Coxeter case weaker) conjecture 1 of \cite{IH3} is that, under the
same conditions, $\ ^t \Psi(g) = \sigma (\Psi(g^{-1}))$, hence $\sigma(R(g)) = R (\tau(g))$ for
all $g \in B$.

In case $W = \mathfrak{S}_n$
the following lemma  is an immediate consequence of Dehornoy's ordering and notions of $\ss_i$-positive braids.
In general, it is a straightforward consequence of the injectivity of the palindromization map of \cite{DELOUP}
(theorem 2.1 there).

\begin{lemma} \label{lemnopal} For $W$ a Coxeter group,
$\{ g \in B \ | \ \tau(g) = g \} = \{ 1 \}$.
\end{lemma}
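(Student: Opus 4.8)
The plan is to reduce the statement to a property of the braid group ordering / palindromization, exactly as the paper's preamble to the lemma suggests, and then spell out the two routes. First I would fix notation: $\tau$ is the mirror-image anti-automorphism of $B$ (the outer automorphism sending each standard generator $\ss_i\mapsto\ss_i^{-1}$, extended so that $\tau(gh)=\tau(h)\tau(g)$, or equivalently the automorphism induced by complex conjugation on the Artin group), so a ``palindrome'' is an element fixed by $\tau$. The goal is to show the only palindrome is $1$. For $W=\mathfrak{S}_n$ I would invoke Dehornoy's ordering: every nontrivial braid $g$ is either $\ss_i$-positive or $\ss_i$-negative for the generator $\ss_i$ of smallest index occurring (with nonzero exponent sum in the appropriate normal form), and this sign is reversed by $\tau$; hence $\tau(g)\neq g$ whenever $g\neq1$. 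In the general Coxeter case I would instead quote the injectivity of the palindromization map of \cite{DELOUP} (theorem 2.1 there): that map sends $g$ to an element built from $g$ and $\tau(g)$ in a way that is injective, and $\tau(g)=g$ forces the palindromization of $g$ to coincide with that of $1$, whence $g=1$.

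Concretely the key steps, in order, are: (1) recall the definition of $\tau$ and check it is an anti-involution of $B$ with $\tau(\ss_i)=\ss_i^{-1}$, so that ``$\tau(g)=g$'' is well posed; (2) observe $1$ is fixed, giving $\supseteq$; (3) for the reverse inclusion in type $A$, recall the definition of $\ss_i$-positive braids and the fact that $\tau$ exchanges $\ss_i$-positivity with $\ss_i$-negativity, so a palindrome can be neither, hence is trivial by Dehornoy's theorem; (4) for a general Coxeter group, state the palindromization map $\mathrm{pal}:B\to B$ of \cite{DELOUP}, note $\mathrm{pal}(1)=1$, and note that $\tau(g)=g$ implies $\mathrm{pal}(g)=\mathrm{pal}(1)$ (since $\mathrm{pal}$ only depends on the pair $(g,\tau(g))$ symmetrically), so injectivity of $\mathrm{pal}$ gives $g=1$.

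The main obstacle — really the only nontrivial point — is step (4): verifying that the hypothesis $\tau(g)=g$ actually collapses the palindromization of $g$ onto that of the identity. This requires knowing the precise form of the map in \cite{DELOUP}; if $\mathrm{pal}(g)$ is literally $g\cdot\tau(g)$ (or $\tau(g)\cdot g$) then $\tau(g)=g$ does not immediately give $\mathrm{pal}(g)=1$, so one must use the correct normalization under which $\mathrm{pal}$ is injective and $\mathrm{pal}(g)=\mathrm{pal}(h)\iff$ ``$g$ and $h$ have the same palindromic closure'', together with the fact that a $\tau$-fixed element is its own palindromic closure. In other words, the content is: a $\tau$-fixed braid equals its palindromization, and the palindromization of $1$ is $1$, and palindromization is injective; assembling these three facts from \cite{DELOUP} finishes the proof, and the type-$A$ argument via Dehornoy's ordering is an independent, more elementary confirmation.
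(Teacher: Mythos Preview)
Your approach is essentially the paper's own: for type $A$ use Dehornoy's $\ss_i$-positivity, and for general Coxeter groups invoke the injectivity of Deloup's palindromization map. The paper gives no more detail than you do --- its entire ``proof'' is the sentence preceding the lemma, pointing to \cite{DELOUP}, Theorem~2.1.

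One small correction: in this paper $\tau$ is introduced as the \emph{automorphism} of $B$ induced by complex conjugation, sending $\ss_i\mapsto\ss_i^{-1}$ and satisfying $\tau(gh)=\tau(g)\tau(h)$, not an anti-automorphism. (Equivalently $\tau(g)=\mathrm{rev}(g)^{-1}$, where $\mathrm{rev}$ is the word-reversal anti-automorphism.) This matters for the precise link to \cite{DELOUP}: Deloup's palindromes are the $\mathrm{rev}$-fixed elements, and his palindromization map is $g\mapsto \mathrm{rev}(g)\cdot g$, which he proves injective for Artin groups of finite Coxeter type. Your worry in step~(4) is resolved once you translate: $\tau(g)=g$ means $\mathrm{rev}(g)=g^{-1}$, so $\mathrm{rev}(g)\cdot g=1=\mathrm{rev}(1)\cdot 1$, and injectivity forces $g=1$. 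With that identification made explicit, your argument is complete and matches the paper's intended one.
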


\begin{theor} Conjecture \ref{conjorth} holds when $(W,S)$ has type $A_n$, $B_n = C_n$ or $I_2(m)$.
\end{theor}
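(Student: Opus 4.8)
The plan is to verify Conjecture \ref{conjorth} explicitly in each of the three families by producing the required morphism $\Phi : B \to W \ltimes \exp\widehat{\mathcal{T}}$ and checking that the image of a standard generator $\ss$ is conjugate to $s\exp t_s$ by an element of $\exp\widehat{\mathcal{T}}_{\mathrm{even}}$. The natural source of such a $\Phi$ is the monodromy of a flat connection (a KZ-type connection built from the holonomy Lie algebra $\mathcal{T}$), together with the choice of an associator; the content of the conjecture is the refinement that the conjugating element can be taken in the \emph{even} part $\widehat{\mathcal{T}}_{\mathrm{even}}$. So the first step is to recall that for type $A_n$ one has Drinfeld's original construction, where $B = B_{n+1}$ is the braid group, $\mathcal{T}$ is the Drinfeld--Kohno Lie algebra on generators $t_{ij}$, and the monodromy of the KZ connection along a path encircling one hyperplane is conjugate to $s\exp t_s$ via a product of $\exp$'s of associators; the key point is that rational \emph{even} associators exist in this case (this is the classical fact alluded to in the introduction, that Drinfeld's even associators solve the pentagon/hexagon with the symmetry $\Phi(A,B)\Phi(B,A)=1$), and the conjugating braids that appear are themselves palindromic-free in the sense needed, so the conjugation lands in $\exp\widehat{\mathcal{T}}_{\mathrm{even}}$.

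The second step is to reduce $B_n=C_n$ to the type $A$ case via the standard fibration / folding. The Artin group of type $B_n$ is the annular braid group, which fits in the exact sequence coming from forgetting the puncture at the centre, or equivalently is realized inside the type $A$ braid group on $n+1$ strands as the centralizer of the generator $\ss_1$. One transports the type $A$ even associator through this embedding: the holonomy Lie algebra $\mathcal{T}_{B_n}$ is a subalgebra (or a quotient) of $\mathcal{T}_{A_n}$ compatibly with the $\mathbbm{Z}$-grading, hence with the even/odd decomposition, and the conjugacy of $\Phi(\ss_i)$ with $s_i\exp t_{s_i}$ by an even element descends/restricts. The only new generator is the one ``around the centre'': for that braided reflection one uses the additional hyperplane $t_H$ corresponding to the extra node, and one checks directly (a finite computation with the degree-one and degree-two relations in $\mathcal{T}_{B_n}$, using that the relevant defining relations are homogeneous of even degree) that the monodromy is again conjugate to $s\exp t_s$ by $\exp$ of an even element.

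The third step is $I_2(m)$, which is the dihedral case and can be treated by hand. Here $B$ is the Artin group $\langle \ss_1,\ss_2\rangle$ with a single braid relation of length $m$, $\mathcal{T}$ is the holonomy Lie algebra of $m$ lines through the origin in $\mathbbm{C}^2$, and $W=I_2(m)$. One writes down the explicit KZ connection on the $m$-punctured projective line, picks an associator (one may even use a generic formal even associator, as the low rank makes the constraints manageable), and verifies the conjugacy relation generator by generator; the symmetry of the configuration under $z\mapsto 1/z$ gives precisely the evenness of the conjugating factors. The main obstacle, in all three cases, is controlling the parity of the conjugating element: one must show not merely that $\Phi(\ss)$ is conjugate to $s\exp t_s$ (which follows from any associator) but that the conjugator lies in $\widehat{\mathcal{T}}_{\mathrm{even}}$, and this is exactly where the existence of \emph{even} (rather than arbitrary) rational associators, together with Lemma \ref{lemnopal} to rule out spurious palindromic corrections, is essential. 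I expect the bulk of the work to be the bookkeeping that the even-associator normalization is preserved under the foldings $A_n\rightsquigarrow B_n$ and under the explicit dihedral degeneration, rather than any single hard estimate.
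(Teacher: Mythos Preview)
Your treatment of type $A$ is correct and matches the paper: the existence of rational even Drinfeld associators (Drinfeld's proposition 5.4) is exactly what is invoked.

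For type $B_n$, however, your approach diverges from the paper's and has a genuine gap. You propose to transport the type~$A$ even associator through the embedding of the annular braid group into the ordinary braid group, claiming that ``$\mathcal{T}_{B_n}$ is a subalgebra (or a quotient) of $\mathcal{T}_{A_n}$ compatibly with the $\Z$-grading.'' This is too optimistic: the type~$B$ holonomy algebra has extra generators $t_{0i}$ for the hyperplanes $z_i=0$, with their own relations, and the morphism $B_{B_n}\to W\ltimes\exp\widehat{\mathcal{T}}_{B_n}$ is \emph{not} obtained by restricting a type~$A$ morphism. What the paper actually does is use Enriquez's theory of cyclotomic (quasi-reflection) associators: the required $\Phi$ comes from a pair $(\Phi,\Psi)\in\mathrm{Pseudo}_{(\bar 1,1)}(2,\kk)$, and the evenness condition becomes the statement that the subset $\mathrm{Pseudo}^+_{(\bar 1,1)}(2,\kk)$ of pairs lying in $\exp\widehat{\mathcal{T}}^0_{\mathrm{even}}\times\exp\widehat{\mathcal{T}}^1_{\mathrm{even}}$ is nonempty. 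This nonemptiness is then proved via the action of $\mathrm{GTM}(2,\kk)$: one identifies $\mathrm{Pseudo}^+$ as the fixed points of an involution $A_-=(-1,-1,1,1)$ coming from complex conjugation in $\Gal(\overline{\Q}|\Q)$, passes to the Lie-algebra level where sections of a split exact sequence are in bijection with $\mathrm{Pseudo}_{(\bar 1,1)}$, and averages an arbitrary section against the involution. Your ``finite computation'' for the extra generator is not a substitute for this.

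For $I_2(m)$ your sketch is in the right spirit but again the paper does not compute anything by hand: it invokes the associator theory for dihedral Artin groups developed in \cite{DIEDRAUX}, where an analogue of Drinfeld's torsor argument is set up and even associators are characterized as fixed points of an explicit involution $(-1,1)$ in the group $G'(\kk)$ acting on $\mathbbm{Ass}'_1(\kk)$.

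Finally, Lemma~\ref{lemnopal} plays no role in the proof of this theorem; it is used only later (Remark~\ref{remEXPLAIN}) to show that $\Psi(P)\cap O_N(K^{\sigma})$ is trivial. You should drop that reference here.
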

In type $A$ the result is a consequence of the existence of even Drinfeld associators with rational coefficients.
Indeed, the isomorphism we need (up to rescaling $t_H \mapsto 2 t_H$ in $\mathcal{T}$) is
established in \cite{DRINFELD} (see proof of proposition 5.1) provided that the element $\varphi$ used there
belongs to the set denoted $M_1^+(\Q)$ of even rational associators. This is proposition 5.4 of
\cite{DRINFELD}.

In case $(W,S)$ has type $B_n$, a morphism $B \to W \ltimes \exp \widehat{\mathcal{T}}$ is associated in \cite{ENRIQUEZ}
proposition 2.3, to any element of a set $\mathrm{Pseudo}_{(\bar{1},1)}(2,\kk)$ of couples 
inside $(\exp \widehat{\mathcal{T}}^0) \times (\exp \widehat{\mathcal{T}}^1)$ where
$\mathcal{T}^0,\mathcal{T}^1$ denote the holonomy Lie algebras for the Coxeter types $A_2$ and $B_2$. It satisfies
our condition (again up to rescaling) when $(\Phi,\Psi) \in \mathrm{Pseudo}^+_{(\bar{1},1)}(2,\kk)$ where 
$$
\mathrm{Pseudo}^+_{(\bar{1},1)}(2,\kk) =  \mathrm{Pseudo}_{(\bar{1},1)}(2,\kk) \cap 
(\exp \widehat{\mathcal{T}}^0_{\mathrm{even}}) \times (\exp \widehat{\mathcal{T}}^1_{\mathrm{even}}).
$$
The conjecture in that case is thus a consequence of the following property.

\begin{prop}
$\mathrm{Pseudo}^+_{(\bar{1},1)}(2,\kk)  \neq \emptyset$
\end{prop}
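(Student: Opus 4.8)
The plan is to exhibit an element of $\mathrm{Pseudo}^+_{(\bar 1,1)}(2,\kk)$ explicitly, rather than deform an existing one. The set $\mathrm{Pseudo}_{(\bar 1,1)}(2,\kk)$ of Enriquez is cut out by a finite list of pentagon/hexagon-type equations in the pro-unipotent groups $\exp\widehat{\mathcal{T}}^0$ (the $A_2$ holonomy algebra, where a genuine Drinfeld associator lives) and $\exp\widehat{\mathcal{T}}^1$ (the $B_2$ holonomy algebra). These equations are homogeneous for the natural grading, so the subspace of solutions with \emph{even} components is preserved by the defining relations: the key structural point is that the substitution $t_H\mapsto\lambda t_H$ (rescaling by a parameter $\lambda$ on all generators, or the sign $\lambda=-1$) is an automorphism of each holonomy Lie algebra and carries $\mathrm{Pseudo}_{(\bar 1,1)}$ to itself. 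First I would recall from \cite{ENRIQUEZ} the precise defining equations of $\mathrm{Pseudo}_{(\bar 1,1)}(2,\kk)$ and check, equation by equation, that they are graded of a fixed parity, so that the sign involution $x\mapsto\sigma(x)$ (reversing $h$, equivalently acting by $(-1)^{\deg}$) acts on the whole set.

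Next I would run the standard averaging argument: given any $(\Phi,\Psi)\in\mathrm{Pseudo}_{(\bar 1,1)}(2,\kk)$ — nonempty by Enriquez's theorem — one wants to produce a fixed point of the sign involution. In the associator world (type $A$) this is exactly the content of \cite{DRINFELD}, proposition 5.4, producing $M_1^+(\Q)\neq\emptyset$ from $M_1(\Q)\neq\emptyset$: one takes an associator $\varphi$ and forms $\varphi\cdot\bar\varphi^{-1}$-type combinations, or uses the torsor structure of the set of associators under $\mathrm{GRT}_1$ to kill the odd part degree by degree. I would mimic this: the set $\mathrm{Pseudo}_{(\bar 1,1)}(2,\kk)$ is a torsor (or at least carries a free transitive action in each degree of the associated graded) under a pro-unipotent group whose Lie algebra splits into even and odd parts; projecting onto the even part inductively along the lower central series produces an even solution. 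The compatibility of the $B_2$-datum $\Psi$ with the chosen $A_2$-associator $\Phi$ has to be tracked through this induction, which is where Enriquez's explicit cocycle description is needed.

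The main obstacle, I expect, is the inductive/torsor step for the $B_2$-component: unlike Drinfeld's $\mathrm{GRT}_1$-torsor for associators, the relevant symmetry group for $\mathrm{Pseudo}_{(\bar 1,1)}$ is less standard, and one must verify that its prounipotent "odd" part acts freely enough, degree by degree, to allow killing the odd corrections of $\Psi$ while keeping $\Phi$ even. Concretely, at each step $n$ one has an element whose components are even up to degree $n-1$; the degree-$n$ obstruction to evenness is a cocycle, and one needs it to be a coboundary for the group action — this reduces to a cohomology vanishing statement for the relevant graded module, which should follow from the parity-homogeneity noted above (the odd-degree part of the ambient Lie algebra surjects onto the odd obstruction space). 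Once that cohomological input is in place, the induction closes and the limit element lies in $\exp\widehat{\mathcal T}^0_{\mathrm{even}}\times\exp\widehat{\mathcal T}^1_{\mathrm{even}}$, proving $\mathrm{Pseudo}^+_{(\bar1,1)}(2,\kk)\neq\emptyset$.

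Alternatively, if the torsor structure turns out to be awkward, a cleaner route is available in low rank: since everything here concerns the rank-$2$ Coxeter types feeding into $B_n$, one can attempt to write down a closed-form even pseudo-associator directly — for $B_2=I_2(4)$ the holonomy Lie algebra is small enough that the pentagon/hexagon equations can be solved explicitly with even coefficients (e.g. using the even part of the Knizhnik--Zamolodchikov associator, which is rational by \cite{DRINFELD}), and then one checks the mixed $A_2/B_2$ compatibility by hand. I would present the torsor argument as the main proof and note this explicit construction as a verification in the smallest case.
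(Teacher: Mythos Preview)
Your overall strategy is correct and matches the paper's: identify the sign involution $t_H \mapsto -t_H$ as an involution on $\mathrm{Pseudo}_{(\bar 1,1)}(2,\kk)$ and produce a fixed point by averaging, in analogy with Drinfeld's proposition~5.4. But you misremember how Drinfeld's argument actually runs, and the paper follows Drinfeld much more closely than your proposal does.

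The paper does \emph{not} work degree by degree with a torsor action and obstruction theory. Instead it uses the bijection (from Enriquez, extending Drinfeld) between $\mathrm{Pseudo}_{(\bar 1,1)}(2,\kk)$ and the set $\mathcal{S}(\kk)$ of Lie-algebra sections of the split exact sequence
$0 \to \mathfrak{gtm}_{(\bar 1,1)}(2,\kk) \to \mathfrak{gtm}_{\bar 1}(2,\kk) \to \kk \to 0$.
This transports the problem to an \emph{affine space}, where averaging under the involution is a one-line computation: from any section $(1,\varphi,\psi)$ one forms $\tfrac{1}{2}\bigl((1,\varphi,\psi) + (1,\tilde\varphi,\tilde\psi)\bigr)$, and this is automatically an invariant section. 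No induction, no cohomology vanishing.

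The other ingredient you are missing is the identification of the sign involution with an action already present in the $\mathrm{GTM}$ picture: the paper shows that the involution is realised as the $\star$-action of a specific order-$2$ element $A_- = (-1,-1,1,1) \in \underline{\mathrm{GTM}}(2,\kk)$, coming from complex conjugation in $\Gal(\overline{\Q}|\Q)$. This is what guarantees compatibility with the bijection to sections. Your plan ``check the defining equations are parity-homogeneous'' would show the involution preserves $\mathrm{Pseudo}_{(\bar 1,1)}$, but not, without further work, that it intertwines the $\mathrm{GTM}$-action. The paper also observes that it is precisely at $N=2$ that $-1 \in \Q(N)$ acts as $t_H \mapsto -t_H$ (because $a \mapsto -a$ is the identity on $\Z/2\Z$); this is why the argument is specific to level~$2$.

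Your degree-by-degree torsor approach is not wrong in principle --- it amounts to the standard vanishing $H^1(\Z/2\Z,G)=0$ for $G$ pro-unipotent over a field of characteristic~$0$ --- but it is a detour around a linear problem. The paper's route via sections is shorter, is the actual content of Drinfeld's~5.4, and sidesteps the cohomological verification you flag as the main obstacle. Your alternative (an explicit closed-form even pseudo-associator for $B_2$) is not pursued in the paper and would not obviously yield a rational solution.
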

\begin{proof}
In this proof, we freely use the notations of \cite{ENRIQUEZ}. The image of the complex conjugation $z \mapsto \bar{z}$
of $\Gal(\overline{\Q}|\Q)$ is mapped to $(-1,1) \in \widehat{\mathrm{GT}} \subset \widehat{\mathrm{GTM}}$. As
a consequence we have an element $(-1,-1,1,1) \in \widehat{\underline{\mathrm{GTM}}}$ of order $2$ which is mapped to
$A_- = (-1,-1,1,1) \in \underline{\mathrm{GTM}}(N,\Q) \subset  \underline{\mathrm{GTM}}(N,\Q_{\ell})$ for an arbitrary
prime number $\ell$, the map $\widehat{\underline{\mathrm{GTM}}} \to\underline{\mathrm{GTM}}(N)_{\ell} \to
\underline{\mathrm{GTM}}(N,\Q_{\ell})$ being described in \S 6.4 of \cite{ENRIQUEZ}.

We have $\mathrm{GTM}(N,\Q) \subset \Q^{\times} \times \Q(N) \times F_2(\Q) \times (\Ker \varphi_N)(\Q)$, and
$\Q(N) \simeq (\Z/N\Z) \times \Q$ with $-1 \mapsto (-\bar{1},-1)$. It is easily checked that the action of
$\Q(N)$ on $\exp \mathcal{T}_0$ and $\exp \mathcal{T}^1$ defined in \cite{ENRIQUEZ} associated
to $-1 \in \Q(N)$ the automorphism $t_H \mapsto -t_H$ exactly when $a \mapsto -a$ is the identity
of $\Z/N\Z$, that is when $N = 2$. Identifying $\mathrm{Pseudo}_{(\bar{1},1)}(N,\Q)$ with the quotients of
$\mathrm{Pseudo}(N,\Q)$ by $\Q(N)$, we get an action of $\mathrm{GTM}(N,\kk)$ on $\mathrm{Pseudo}_{(\bar{1},1)}(N,\kk)$
that we denote $\star$ (see \cite{ENRIQUEZ} \S 7.2 for an explicit formula). From the explicit description of the
action of $\mathrm{GTM}(N,k)$ on $\mathrm{Pseudo}(N,\kk)$ we get that
$\mathrm{Pseudo}_{(\bar{1},1)}^+(2, \kk) = \{ X \in \mathrm{Pseudo}_{(\bar{1},1)}(2, \kk) \ | \ A_- \star X = X \}$. Now, as in the proof of proposition
5.4 in \cite{DRINFELD}, we use the fact that there exists a map $\mathrm{Pseudo}_{(\bar{1},1)}^+(N,\kk) \to S_{\mathrm{alg}}(\kk)$, $(\Phi,\Psi) \mapsto
\Theta_{\Phi \Psi}$ with $S_{\mathrm{alg}}(\kk)$ the set of algebraic sections of the split exact sequence of $\Q$-group schemes
$1 \to \mathrm{GTM}_{(\bar{1},1)}(N,\kk) \to \mathrm{GTM}_{\bar{1}}(N,\kk) \to \kk^{\times} \to 1$, uniquely defined by $\Theta_{\Phi \Psi}(\la) \star (\Phi,\Psi) = (\Phi,\Psi)$ for all $\la
\in \kk^{\times}$, and with the property that its composition with the $S_{\mathrm{alg}}(\kk) \to \mathcal{S}(\kk)$, $\Theta \mapsto \mathrm{d} \Theta$,
where $\mathcal{S}(\kk)$ is the set of sections of the corresponding sequence of Lie algebras $0 \to \mathfrak{gtm}_{(\bar{1},1)}(N,\kk) \to \mathfrak{gtm}_{\bar{1}} (N,\kk) \to \kk \to 0$
is  a bijection. By conjugation we have an action $\Theta \mapsto \Theta^{A-} : \la \mapsto A_- \Theta(\la) A_-^{-1} = A_- \Theta(\la) A_-$
of $\Z/2\Z = < A_- >$ on $S_{\mathrm{alg}}(\kk)$, and it is easily checked that $\Theta_{\Phi \Psi}^{A_-} = \Theta_{\tilde{\Phi} \tilde{\Psi}}$ where
$\tilde{\Phi}$, $\tilde{\Psi}$ are deduced from $\Theta, \Psi$ through $t_H \mapsto - t_H$.
It follows that $\mathrm{Pseudo}^+_{(\bar{1},1)}(2,\kk) \neq \emptyset$ iff there exists $\Theta \in S_{\mathrm{alg}}(\kk)$ such that
$\Theta^{A_-} = \Theta$, iff there exists some $\mathrm{d} \Theta \in \mathcal{S}(\kk)$ such that $(\mathrm{d} \Theta)^{A_-} = \mathrm{d} \Theta$. Let
$c \in \Aut F_2$ be $x \mapsto x^{-1}$, $y \mapsto y^{-1}$ with $F_2 =<x,y>$. Using as before the notations of \cite{ENRIQUEZ}, $c$ induces an action 
on $\Ker \varphi_N$, $F_2(\kk)$ and $(\Ker \varphi_N)(\kk)$, for all $N \geq 2$. By explicit computations in $\mathrm{GTM}(2,\kk)$ we get
$A_-  (\la,\mu,f,g) A_- = (\la,\mu,c.f,c.g)$. Now, the Lie algebra $\mathfrak{gtm}_{\bar{1}}(N,\kk)$ of $\mathrm{GTM}_{\bar{1}}(N,\kk)$ is made of
triples $(s,\varphi,\psi)$ matching with tangent vectors $\la = 1 + \eps s$, $\bar{\mu} = 1$, $f = \exp \eps \varphi$,
$g = \exp \eps \psi$ with $\varphi \in \hat{\mathfrak{f}}_2(\kk) = \hat{\mathfrak{f}}(\xi,\eta)$, $\psi \in \hat{\mathfrak{f}}(\Xi,\eta(0),\eta(1),\dots,\eta(N-1))$
with $\xi = \log x$, $\eta = \log y$, $\Xi = \log x^N$, $\eta(\alpha ) = \log x^{\alpha} y x^{-\alpha}$, and the automorphisms
induced by $c$ are $\xi \mapsto -\xi$, $\eta \mapsto - \eta$, $\Xi \mapsto - \Xi$, $\eta(\alpha) \mapsto - \eta(-\alpha) = - \eta(\alpha)$
if $N = 2$. Thus $c$ induces $\varphi \mapsto \tilde{\varphi}$, and $\mathrm{Pseudo}^+_{(\bar{1},1)}(2, \kk) \neq \emptyset$ iff there exists
$(1,\varphi,\psi) \in \mathfrak{gtm}_{(\bar{1},1)}(2,\kk)$ with $\tilde{\varphi} = \varphi$, $\tilde{\psi} = \psi$. From $(1,\varphi,\psi) \in \mathfrak{gtm}_{(\bar{1},1)} (2,\kk)\neq \emptyset$
one builds $\frac{1}{2} ( (1,\varphi,\psi), + c. (1,\varphi,\psi))/2 = (1 + (\varphi + \tilde{\varphi})/2, (\psi + \tilde{\psi})/2)  \in \mathfrak{gtm}_{(\bar{1},1)} (2,\kk)$, which provides
a convenient section in $\mathcal{S}(\kk)$ and concludes the proof.

\end{proof}

In case $W$ is a dihedral group, the conjecture also holds because \cite{DIEDRAUX} (see \S6 there) provides the necessary material in
order to adapt Drinfeld's proof in a straightforward way : the analogue of proposition 5.2 of \cite{DRINFELD} is proved in
\S 6 for the group $G'(\kk)$ defined there, and even associators are the fixed elements of $\mathbbm{Ass}'_1(\kk)$ under the involution
$(-1,1) \in G'(\kk)$, whose action on $B(\kk)$ extends the `mirror automorphism' (aka complex conjugation) of $B$.

One actually gets this way an alternate proof of the following result of Lusztig (the original result
however do not need the restriction on exceptional types that we need to impose for now, and is thus stronger).

\begin{cor} Let $(W,S)$ be a finite irreducible Coxeter system not of the exceptional types $F_4,H_3,H_4,E_6,E_7,E_8$.
Then every representation $\Psi$ of $H(q)$
has a matrix model such that $\ ^t \Psi(g) = \Psi(\tau(g))$ for all $g \in B$.
\end{cor}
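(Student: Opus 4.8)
The strategy is to reduce to irreducible representations and then argue by Coxeter type. Since $q$ is generic, $H(q)$ is semisimple, and as $W$ is a Coxeter group $H(q)\simeq KW$; its irreducible modules are the $\Psi_\rho$ attached through $\Phi$ (equivalently by Tits deformation) to the $\rho\in\Irr(W)$, and the condition $\ ^t\Psi(g)=\Psi(\tau(g))$ is stable under direct sums, so it suffices to produce, for each $\rho\in\Irr(W)$, a matrix model of $\Psi_\rho$ with that property. Because $W$ is finite Coxeter every $\rho$ is of real type, hence preserves a positive definite symmetric form and can be conjugated into orthogonal form $\rho(w)^{-1}=\ ^t\rho(w)$. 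If $(W,S)$ avoids the excluded exceptional types, its irreducible type is $A_n$, $B_n=C_n$, $D_n$ or $I_2(m)$. For types $A_n$, $B_n$ and $I_2(m)$ the conclusion is immediate: Conjecture~\ref{conjorth} holds by the preceding theorem, so feeding the orthogonal model of $\rho$ into Proposition~\ref{propsymlu} yields a symmetric matrix model of $\Psi_\rho$.

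The remaining case is $D_n$, for which Conjecture~\ref{conjorth} is not available; here I would descend from type $B_n$ by Clifford theory. Write $W(D_n)\triangleleft W(B_n)$ for the standard index $2$ embedding; conjugation by the extra generator $s_0\in W(B_n)\setminus W(D_n)$ induces the diagram automorphism $t$ of $D_n$ (swapping the two fork nodes, fixing the others). By Clifford theory every $\rho\in\Irr(W(D_n))$ arises from some $\widetilde\rho\in\Irr(W(B_n))$ either as $\Res^{W(B_n)}_{W(D_n)}\widetilde\rho=\rho$ when $\rho^t\simeq\rho$, or as $\Res^{W(B_n)}_{W(D_n)}\widetilde\rho=\rho\oplus\rho^t$ when $\rho^t\not\simeq\rho$ (with $\widetilde\rho=\Ind_{W(D_n)}^{W(B_n)}\rho$). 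On the Hecke side one has the classical algebra embedding $H(D_n)\into H(B_n)$ obtained by specializing the parameter of the short generator $T_0$ of $H(B_n)$ to $1$ (so $T_0^2=1$ and $T_0T_1T_0$ is a braided reflection satisfying the $D_n$-relations); it intertwines the two standard anti-automorphisms $\tau$ and, by Tits deformation, is compatible with $\Q W(D_n)\into\Q W(B_n)$ and the above Clifford decompositions.

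Now put $\widetilde\rho$ in orthogonal form, build $\widetilde\Psi$ through the $B_n$-morphism $\Phi$, and specialize the short parameter to $1$; the identity $\ ^t\widetilde\Psi(g)=\widetilde\Psi(\tau(g))$ of Proposition~\ref{propsymlu} is polynomial in the parameters and survives the specialization, so the standard symmetric form $\beta$ on the underlying space satisfies $\beta(\widetilde\Psi(g)v,w)=\beta(v,\widetilde\Psi(\tau(g))w)$. When $\rho^t\simeq\rho$ we have $\Psi_\rho=\Res^{H(B_n)}_{H(D_n)}\widetilde\Psi$, already symmetric. When $\rho^t\not\simeq\rho$ we have $\Res^{H(B_n)}_{H(D_n)}\widetilde\Psi\simeq\Psi_\rho\oplus\Psi_{\rho^t}$; the restriction of $\beta$ to $V_\rho\times V_{\rho^t}$ is a bilinear pairing with $\beta(\Psi_\rho(g)v,w)=\beta(v,\Psi_{\rho^t}(\tau(g))w)$, hence (irreducibility) is either $0$ or perfect, and if perfect it induces an isomorphism of $\Psi_\rho$ with $g\mapsto\ ^t\Psi_{\rho^t}(\tau(g))$, which at the limit $q=1$ gives $\rho\simeq(\rho^t)^*\simeq\rho^t$ (real type), contradicting $\rho^t\not\simeq\rho$. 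Therefore $V_\rho\perp V_{\rho^t}$, $\beta|_{V_\rho}$ is non-degenerate and symmetric, and diagonalizing it produces a symmetric matrix model of $\Psi_\rho$. This settles $D_n$ and completes the proof.

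I expect the $D_n$ step to be the main obstacle, concentrated in two verifications: that the classical inclusion $H(D_n)\into H(B_n)$ genuinely intertwines the palindromic anti-automorphisms $\tau$ and matches the Tits-deformation constituents with the Clifford constituents (so that the $H(B_n)$-symmetric model really restricts to the $H(D_n)$-module one wants), and that the bilinear-form bookkeeping for the induced constituents can be carried out over the ground field $K$ rather than only over an extension. An alternative would be to establish Conjecture~\ref{conjorth} directly in type $D_n$; this is less routine than it appears, since $D_n$ is not obtained from $B_n$ by a naive lift of Coxeter generators to the Artin group, so one cannot simply restrict the $B_n$-morphism $\Phi$ and push it along a surjection $\widehat{\mathcal{T}}(B_n)\onto\widehat{\mathcal{T}}(D_n)$.
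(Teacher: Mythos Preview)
Your approach is essentially the same as the paper's: for types $A_n$, $B_n/C_n$, $I_2(m)$ you invoke Conjecture~\ref{conjorth} (established just above) together with Proposition~\ref{propsymlu} and the real type of all $\rho\in\Irr(W)$, and for type $D_n$ you reduce to type $B_n$ via the inclusion of Hecke algebras. The paper's proof is terser---it simply states that type $D$ ``can then be reduced to type $B$ by using inclusions between the corresponding Hecke algebras, applying verbatim the arguments of \cite{IH3}, cor.~6.2''---whereas you spell out the Clifford-theoretic splitting and the bilinear-form argument that (presumably) underlies that citation; the concerns you flag about compatibility with $\tau$ and about working over $K$ rather than an extension are exactly the points one would expect \cite{IH3} to handle.
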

\begin{proof}
Except in type $D_n$, this corollary is a consequence of proposition \ref{propsymlu} and of the above results, as all representations
of $W$ can be realized over $\R$. Type $D$ can then be reduced to type $B$ by using inclusions between the corresponding
Hecke algebras, applying verbatim the arguments of \cite{IH3}, cor. 6.2.

\end{proof}

\begin{remark}
\label{remEXPLAIN} : 
Assuming that $\Phi$ satisfies conjecture \ref{conjorth} and that $\Psi$ is the representation of $H(q)$
deduced from $\Phi$ and some $\rho \in \Irr(W)$,
a  consequence of proposition \ref{propsymlu} and lemma \ref{lemnopal} is that, if $\Psi$ is faithful,
then $\Psi(P) \cap O_N(K^{\sigma}) = \{ 1 \}$, since $U_N^{\sigma}(K) \cap O_N(K) = O_N(K^{\sigma})$. On the other hand,
and using the notations $\Psi_+$, $P(\kk)$ of \S \ref{sectionZar}, we have $\Psi_+(P(\kk)) \subset U_N^{\sigma}(K)$
and $U_N^{\sigma}(K) \cap \GL_N(K^{\sigma}) = O_N(K^{\sigma})$. It then follows from proposition \ref{propimpsiplus} (3)
that $\Psi_+(P(\kk)) \cap O_N(K^{\sigma})$ is a large group, since it has for Zariski closure a connected
algebraic group whose Lie algebra is $K^{\sigma} \HG2^0$.
\end{remark}

\section{Rotation algebras : general facts}
\label{sectrot} 

Let $W$ be an irreducible 2-reflection group, $\eps : W \to \{ \pm 1 \}$ the sign morphism and $\OO = \Ker \eps$ the rotation subgroup. We
introduce the following Lie algebra, for $\kk$ a commutative unital ring.

\begin{defi} The infinitesimal rotation algebra $\mathcal{A}$ is the Lie subalgebra of $\kk \OO$
generated by the $[s,u] = su-us$ for $s,u \in \mathcal{R}$ and $su \neq us$.
\end{defi}

\subsection{Preliminaries on finite reflection groups}

We prove the following.

\begin{lemma} \label{lemgensu}
When $W$ is an irreducible complex 2-reflection group, then $\OO$ is generated by the $su$ for $s,u \in \mathcal{R}$ and $su \neq us$.
\end{lemma}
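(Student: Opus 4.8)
The plan is to prove $\OO = \langle su \mid s,u \in \mathcal{R}, su \neq us \rangle$ by first disposing of an easy containment and then arguing that the subgroup $\OO'$ generated by the noncommuting products $su$ is all of $\OO$. Since $su \in \OO$ whenever $s,u \in \mathcal{R}$ (both are reflections, hence in $A^\dagger = \OO^\dagger$, so their product has trivial sign), we always have $\OO' \leq \OO$. It remains to show $\OO \leq \OO'$. The natural idea is to compare $\OO'$ with the subgroup $\OO'' = \langle su \mid s,u \in \mathcal{R}\rangle$ generated by \emph{all} products of two reflections (no commuting restriction): a standard fact is that $\OO'' = \OO$, since any element of $\OO$ is an even-length word in reflections and can be bracketed into pairs. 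So the real content is to show that allowing the commuting pairs $su = us$ (which are involutions, or trivial) does not enlarge the group beyond $\OO'$ — equivalently, that every product $su$ with $su = us$ already lies in $\OO'$.

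First I would reduce to a local question. Fix $s \in \mathcal{R}$. Since $W$ is irreducible, the reflection $s$ does not commute with \emph{all} other reflections (otherwise the reflecting hyperplane of $s$ would be $W$-stable, contradicting irreducibility of the reflection representation); pick $u \in \mathcal{R}$ with $su \neq us$. Then $su \in \OO'$ and $us \in \OO'$, hence $s u \cdot (us)^{-1} = s u s^{-1} u^{-1} = susu$ — more usefully, $(su)(us)^{-1} = s u \cdot s u = (su)^2$ lies in $\OO'$, and so does $(su)^{-1}(us) = \ldots$; the point is to manufacture, for any \emph{third} reflection $t$ commuting with $s$, the element $st$ as a product of elements of $\OO'$. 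Concretely, if $tu \neq ut$ then $st = (su)(ut)^{-1} \cdot (\text{correction})$: indeed $(su)(u^{-1}t) = su^{-1}ut\cdot\ldots$ — I would instead observe directly that $st = (su)(ut)$ whenever this makes sense, no: the clean identity is $st = (su)\cdot(u s)^{-1}\cdot(su)\cdot\ldots$. Let me state the mechanism I actually intend to use: for reflections $s,t$ with $st=ts$, and $u \in \mathcal R$ with $su\ne us$, one computes $st = (s u)(u t)$ precisely when $ut = tu$ fails or holds appropriately — so the genuine step is to choose the auxiliary $u$ with $su \neq us$ \emph{and} $tu \neq ut$, and then write $st = (su)\,(ut)\,(u^2)^{-1}$ using $u^2 = 1$, i.e. $st = (su)(ut)\in \OO'$. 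Thus the crux is the existence of a single reflection $u$ not commuting with both $s$ and $t$.

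So the main obstacle, and the step I expect to require the most care, is exactly this combinatorial/geometric claim: given any two reflections $s,t$ in an irreducible complex $2$-reflection group $W$, there exists a reflection $u \in \mathcal{R}$ with $su \neq us$ and $tu \neq ut$. I would prove it by contradiction: the set $\mathcal{R}_s = \{u \in \mathcal R \mid su = us\}$ of reflections commuting with $s$, together with those commuting with $t$, would have to cover all of $\mathcal{R}$; but $\mathcal{R}_s$ generates a proper parabolic-type subgroup (the pointwise stabilizer of the line $\mathrm{Im}(s-1)$ is a reflection subgroup whose reflections are exactly those fixing that line, and a reflection $u$ commutes with $s$ iff $u$ stabilizes both $\ker(s-1)$ and $\mathrm{Im}(s-1)$), so $\mathcal R_s$ and $\mathcal R_t$ each generate proper reflection subgroups, and $\mathcal{R} = \mathcal{R}_s \cup \mathcal{R}_t$ would force (since a group is not the union of two proper subgroups) $\mathcal{R} = \mathcal R_s$ or $\mathcal R = \mathcal R_t$, i.e. $W$ reducible — contradiction. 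I would double-check the characterization of when two reflections of order $2$ commute (it is cleanest for $2$-reflections: $su=us$ iff the hyperplanes are mutually orthogonal or one is contained in the other's fixed space, equivalently the two lines $\mathrm{Im}(s-1), \mathrm{Im}(t-1)$ are orthogonal) and verify the covering-by-two-subgroups step, possibly needing a third reflection $v$ with yet another non-commuting property to patch the finitely many residual cases — this bookkeeping is where a fully rigorous write-up would need the most attention, and where invoking the classification of irreducible $2$-reflection groups, if necessary, would be the fallback.
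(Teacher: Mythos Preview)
Your reduction to showing that every \emph{commuting} product $st$ lies in $\OO'$, via the identity $st=(su)(ut)$ with $u^2=1$, is clean and correct. The gap is in your proof of the key existence claim: from $\mathcal{R}=\mathcal{R}_s\cup\mathcal{R}_t$ you invoke ``a group is not the union of two proper subgroups'' to conclude $\mathcal{R}=\mathcal{R}_s$ or $\mathcal{R}=\mathcal{R}_t$. But $\mathcal{R}$ is a \emph{set} of reflections, not a group, and $\mathcal{R}_s,\mathcal{R}_t$ are subsets, not subgroups. The covering $\mathcal{R}=\mathcal{R}_s\cup\mathcal{R}_t$ does not force $W=\langle\mathcal{R}_s\rangle\cup\langle\mathcal{R}_t\rangle$: a product of reflections, some in $\mathcal{R}_s$ and some in $\mathcal{R}_t$, need not lie in either centralizer. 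So the deduction fails, and you are left with the unproved assertion that the non-commuting graph on $\mathcal{R}$ has diameter at most~$2$. That statement may well be true, but it needs an honest argument (your root-orthogonality reformulation is the right start, yet ``all roots lie in $H_s\cup H_t$'' is not obviously impossible just because the roots span $V$), and your stated fallback of invoking the Shephard--Todd classification is disproportionate.

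The paper's proof avoids this difficulty entirely by a different route. It first uses a Schreier-type argument to see that $\OO$ is generated by the $s_1u$ for a single fixed reflection $s_1$ and $u$ ranging over a generating set $\mathcal{R}_0$. It then chooses $s_1,\dots,s_n$ generating an irreducible subgroup with \emph{connected} non-commuting graph (this exists by irreducibility of $W$), so that for any $u$ one can find a path $s_1=s_{i_1},s_{i_2},\dots,s_{i_k}=s_j$ of pairwise non-commuting simple reflections ending at some $s_j$ that fails to commute with $u$; telescoping gives $s_1u=(s_{i_1}s_{i_2})(s_{i_2}s_{i_3})\cdots(s_{i_{k-1}}s_{i_k})(s_ju)$, a product of generators of $\OO'$. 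In effect the paper only needs \emph{connectedness} of the non-commuting graph, whereas your argument implicitly needs diameter~$2$. Your approach would be more economical if the diameter bound were established, but as written the paper's path argument is what actually closes the proof.
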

\begin{proof} 
Recall that, when $H$ is a subgroup of $G$ with $G$ generated by $a_1,\dots,a_r$, and $\alpha  : G/H \to G$
is a section of $G \mapsto G/H, x \mapsto \bar{x}$, then $H$ is generated by the $y a_i \alpha(\overline{y a_i})^{-1}$
for $y \in \alpha(G/H)$ and $ 1 \leq i \leq r$. In case $G = W$, $H = \OO$ and $\{ a_1, \dots , a_r \} = \mathcal{R}_0 \subset
\mathcal{R}$ an arbitrary generating set for $W$, we get that $\OO$ is generated by the $su$ (and $us = (su)^{-1}$)
for $s$ a fixed reflection and $u$ running among $\mathcal{R}_0$.  Since $W< \GL_n(\C)$ is irreducible,
there exists $s_1,\dots,s_n$ such that the graph $\Gamma$ with vertices $s_1,\dots,s_n$ and edges $s_i \mbox{---} s_j$
iff $s_i s_j \neq s_j s_i$ is connected, and such that the subgroup $W_0<W$ generated by the $s_1,\dots,s_n$ is irreducible (see e.g. \cite{ARRREFL}, prop. 3.1). 
We know
that $A$ is generated by the $s_1 u$ and $u s_1$ for $u \in \mathcal{R}$.  For a given $u$, because the action of $W_0$ is irreducible
and $W_0$ is generated by $s_1,\dots,s_n$,
there exists $1 \leq j \leq n$ such that $us_j \neq s_j u$.
By connectedness of $\Gamma$ there exists $i_1,\dots,i_k$ with $i_1 = 1$ and $i_k = j$ such that $s_{i_t} s_{i_{t+1}} \neq s_{i_{t+1}} s_{i_t}$
for $1 \leq t \leq k-1$. Then $s_1 u = (s_{i_1} s_{i_k})(s_{i_k} u) = (s_{i_1} s_{i_2})(s_{i_2} s_{i_3}) \dots (s_{i_{k-2}}s_{i_{k-1}})(s_{i_{k-1}}s_{i_{k}})(s_j u)$.
Using the same argument for $us_1$, this proves the lemma.
\end{proof}

In case of Coxeter groups, we actually have the following stronger version. We do not know whether it holds for an arbitrary
irreducible 2-reflection group equipped with an arbitrary generating set of reflections.
\begin{lemma} \label{lemgensufort}
When $W$ is an irreducible Coxeter group, then $\OO$ is generated by the $su$ for $s,u$ simple reflections with $su \neq us$.
\end{lemma}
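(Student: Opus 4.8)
The plan is to build on Lemma \ref{lemgensu}, which already gives generation of $\OO$ by all elements $su$ with $s,u \in \mathcal{R}$, $su \neq us$, and upgrade it to the case where $s,u$ range only over the \emph{simple} reflections of a chosen Coxeter system $(W,S)$. The key point is that in a Coxeter group every reflection is conjugate to a simple one, and more precisely, if $t = wsw^{-1}$ with $s \in S$ and $w = s_{i_1}\cdots s_{i_k}$ a reduced (or any) word in the simple generators, then $t$ can be rewritten as a product of simple reflections. So the first step is to reduce the problem: by Lemma \ref{lemgensu}, $\OO$ is generated by the $tu$ for arbitrary non-commuting reflections $t,u$; it therefore suffices to express each such $tu$ as a product of terms $s_iv$ (and their inverses $vs_i = (s_iv)^{-1}$, $v = (s_is_j)$ of the required form) with $s_i, s_j$ simple.

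Second, I would set up the telescoping identity exactly as in the proof of Lemma \ref{lemgensu}, but now internally within the simple system. Fix one simple reflection $s_1 \in S$. Using the standard fact recalled in that proof (if $G = \langle a_1,\dots,a_r\rangle$ and $H \le G$, then $H$ is generated by the $ya_i\alpha(\overline{ya_i})^{-1}$), applied with $G = W$, $H = \OO$, and $\{a_i\} = S$, one gets that $\OO$ is generated by the elements $s_1 s_i$ (and $s_i s_1$) for $s_i \in S$. The Coxeter diagram of an irreducible Coxeter group is connected, which plays the role of the connectedness of $\Gamma$ in Lemma \ref{lemgensu}: for any two simple reflections $s_i, s_j$ there is a path $s_{i} = s_{k_0} \,\text{---}\, s_{k_1} \,\text{---}\, \cdots \,\text{---}\, s_{k_m} = s_j$ in the diagram, i.e. consecutive generators satisfy $s_{k_t}s_{k_{t+1}} \neq s_{k_{t+1}}s_{k_t}$, whence the telescoping
$$
s_i s_j = (s_{k_0}s_{k_1})(s_{k_1}s_{k_2})\cdots(s_{k_{m-1}}s_{k_m})
$$
expresses $s_is_j$ as a product of the distinguished non-commuting pairs. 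Combined with the reduction step, this shows every generator $s_1s_i$ of $\OO$ lies in the subgroup generated by the non-commuting simple pairs, and hence so does all of $\OO$.

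The main obstacle, such as it is, lies in the reduction step: one must genuinely check that an arbitrary $tu$ (from Lemma \ref{lemgensu}) can be rewritten purely in terms of simple reflections $s_i$ occurring in \emph{non-commuting} pairs, rather than just in terms of arbitrary reflections. The cleanest route is probably not to go through Lemma \ref{lemgensu} at all for the reduction, but to apply the Schreier-type generation lemma directly to the generating set $S$ of $W$ — this already yields that $\OO = \langle s_1 s_i, s_i s_1 : s_i \in S\rangle$ without passing through all of $\mathcal{R}$ — and then to use only the diagram-connectedness telescoping to replace each $s_1s_i$ by a product over non-commuting edges. One minor subtlety to address: if $s_1$ itself commutes with every $s_i$ adjacent issues could arise, but irreducibility of $(W,S)$ forbids this, as $s_1$ has at least one diagram-neighbour, and connectedness lets us reach every other simple reflection along non-commuting edges. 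With that observation in place the argument is complete; I expect the whole proof to be three or four lines once the Schreier lemma and diagram-connectedness are invoked.
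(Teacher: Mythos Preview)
Your proposal is correct and follows essentially the same route as the paper: reduce to showing that every product $s_is_j$ of simple reflections lies in the subgroup generated by non-commuting simple pairs, and then telescope along a path in the connected Coxeter diagram. The only difference is cosmetic: the paper bypasses the Schreier lemma entirely by observing that every element of $\OO$ is a product of an \emph{even} number of simple reflections (length parity equals sign), hence already a product of pairs $s_is_j$ with $s_i,s_j \in S$, and then applies the same telescoping.
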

\begin{proof} Every element of $\OO$ can be written as a product of an even number of simple reflections,
hence it is enough to prove that every $su$ for $s,u$ simple reflections can be written as a product
of $s_i s_{i+1}$ for $(s_i s_{i+1})^2 \neq 1$ and $s_i$, $s_{i+1}$ simple reflections. This is an immediate consequence of the connectedness
of the Coxeter graph.
\end{proof}

\begin{lemma} \label{lemgensu3geen} When $W$ is a complex reflection group of type $G(e,e,n)$ for $n \geq 3$, then $\OO$
is generated by the $su$ which have order 3, for $s,u \in \mathcal{R}$.
\end{lemma}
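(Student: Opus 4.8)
The plan is to deduce this from Lemma \ref{lemgensu}: since that lemma already guarantees that $\OO$ is generated by the products $su$ with $s,u\in\mathcal{R}$ and $su\neq us$, it suffices to show that each such product is a product of elements $s'u'$ of order $3$ with $s',u'\in\mathcal{R}$. First I would recall the combinatorial model of $W=G(e,e,n)$ as the group of monomial $n\times n$ matrices with nonzero entries among the $e$-th roots of unity and with product of nonzero entries equal to $1$. Its reflections are exactly the elements $s_{ij}^{(\zeta)}$, for $1\le i<j\le n$ and $\zeta^e=1$, defined by $e_i\mapsto\zeta e_j$, $e_j\mapsto\zeta^{-1}e_i$ and $e_k\mapsto e_k$ for $k\neq i,j$ (there is no ``diagonal'' reflection, as the determinant-$1$ condition rules out a single eigenvalue $\neq\pm1$), so that $\mathcal R=\{\,s_{ij}^{(\zeta)}\,\}$.

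Next I would analyse a product $su$ with $s=s_{ij}^{(\zeta)}$, $u=s_{kl}^{(\xi)}$ and $su\neq us$, according to the supports $\{i,j\}$ and $\{k,l\}$. If these are disjoint, $s$ and $u$ commute, so this case does not arise. If they meet in exactly one index, say $j=k$ with $i,j,l$ pairwise distinct, a direct matrix computation shows that $su$ acts on $\langle e_i,e_j,e_l\rangle$ as a twisted $3$-cycle and trivially elsewhere, with $(su)^3=1$; since $su\neq1$ its order is exactly $3$. The only remaining possibility is that the supports coincide, $\{i,j\}=\{k,l\}$, and then $su$ is the diagonal element $d_{ij}(\omega):=\mathrm{diag}(\ldots,\omega,\ldots,\omega^{-1},\ldots)$ with $\omega=\xi\zeta^{-1}$ in position $i$ and $\omega^{-1}$ in position $j$.

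It therefore remains to write each diagonal element $d_{ij}(\omega)$ ($\omega^e=1$) as a product of order-$3$ elements, and this is exactly where the hypothesis $n\ge3$ is used: picking a third index $k\notin\{i,j\}$ and any $\beta$ with $\beta^e=1$, I claim
\[
d_{ij}(\omega)=\bigl(s_{ij}^{(1)}\,s_{jk}^{(\beta)}\bigr)\bigl(s_{jk}^{(\beta)}\,s_{ij}^{(\omega)}\bigr),
\]
which is immediate because $(s_{jk}^{(\beta)})^2=1$ and $s_{ij}^{(1)}s_{ij}^{(\omega)}=d_{ij}(\omega)$, while each of the two parenthesized factors is a product of two reflections whose supports share exactly one index, hence has order $3$ by the previous paragraph. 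Together with Lemma \ref{lemgensu} this proves the statement. (For $e=1$, i.e. $W=\mathfrak S_n$, the diagonal case is vacuous and the argument degenerates to the obvious fact that non-commuting products of transpositions are $3$-cycles.)

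The main obstacle is really the only non-formal ingredient: the two explicit matrix computations, namely identifying the order of a product of two reflections with one common support index as exactly $3$, and verifying the displayed factorization of the diagonal generators. I would also note in passing that $n\ge3$ is genuinely needed, since for $G(e,e,2)$ (a dihedral group) $\OO$ is cyclic of order $e$ and possesses order-$3$ elements only when $3\mid e$, so the statement fails there unless $e\in\{1,3\}$.
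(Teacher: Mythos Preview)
Your proof is correct and essentially identical to the paper's: both invoke Lemma~\ref{lemgensu}, make the same case analysis on the overlap of the supports of $s$ and $u$, observe that a one-index overlap yields an element of order~$3$, and handle the diagonal case $d_{ij}(\omega)$ by inserting $s_{jk}^{(\beta)}\,s_{jk}^{(\beta)}=1$ between the two reflections to factor it as a product of two order-$3$ elements. The only cosmetic difference is that the paper writes everything with explicit $3\times 3$ matrices (and takes $\beta=1$), whereas you use the indexed notation $s_{ij}^{(\zeta)}$.
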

\begin{proof}
By lemma \ref{lemgensu} we know that $W$ is generated by the $su$ for $s,u \in \mathcal{R}$ and $su \neq us$.
We need to express such a $su$ as a product of elements as in the statement. Up to conjugation
by an element of $\mathfrak{S}_n$, we can assume that $s = S \oplus \mathrm{Id}_{n-3}$ and
$u= U \oplus \Id_{n-3}$ with 
\begin{itemize}
\item either $S = \left( \begin{array}{ccc} 0 & \zeta & 0 \\ \zeta^{-1} & 0 & 0 \\ 0 & 0 & 1 \end{array} \right)$,
$U = \left( \begin{array}{ccc} 1 & 0 & 0 \\ 0 & 0 & \eta \\ 0 & \eta^{-1} & 0 \end{array} \right)$ for some $\zeta ,\eta \in \mu_e$,
and then $SU$ has order 3
\item either $S = \left( \begin{array}{ccc} 0 & \zeta & 0 \\ \zeta^{-1} & 0 & 0 \\ 0 & 0 & 1 \end{array} \right)$,
$U = \left( \begin{array}{ccc} 0 & \eta & 0 \\ \eta^{-1} & 0 & 0 \\ 0 & 0 & 1 \end{array} \right)$,
and then, letting $\delta = \zeta \mu^{-1}$,
$$
SU =  \left( \begin{array}{ccc} \delta & 0 & 0 \\ 0 & \delta^{-1}  & 0 \\ 0 & 0 & 1 \end{array} \right) =
\left[ \left( \begin{array}{ccc} 0 & \delta & 0 \\ \delta^{-1} & 0 & 0 \\ 0 & 0 & 1 \end{array} \right)
 \left( \begin{array}{ccc} 1 &0 & 0 \\ 0 & 0 & 1 \\ 0 & 1 & 0 \end{array} \right)\right] \left[
 \left( \begin{array}{ccc} 1 &0 & 0 \\ 0 & 0 & 1 \\ 0 & 1 & 0 \end{array} \right)
 \left( \begin{array}{ccc} 0 &1 & 0 \\1 & 0 & 0 \\ 0 & 0 & 1 \end{array} \right) \right]
$$
is a product of two `$su$' of order 3.
\end{itemize}

\end{proof}

\begin{lemma} When $W$ is an irreducible complex 2-reflection group of exceptional type with a single reflection class, then 
$\OO$
is generated by the $su$ which have order 3, for $s,u \in \mathcal{R}$.
\end{lemma}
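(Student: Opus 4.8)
The plan is to reduce the statement to a connectivity property of an auxiliary graph, and then to a short case check. Introduce the graph $\Gamma_3$ with vertex set $\mathcal{R}$ and with an edge between $s$ and $u$ whenever $su$ has order $3$; each edge of $\Gamma_3$ yields an element $su \in \OO$ since $\eps(su) = 1$. The key elementary point is the telescoping identity: if $s = t_0, t_1, \dots, t_k = u$ is a path in $\Gamma_3$, then, using $t_i^2 = 1$,
$$
su \;=\; (t_0 t_1)(t_1 t_2)\cdots (t_{k-1} t_k),
$$
so $su$ lies in the subgroup $\OO_3$ of $\OO$ generated by the order-$3$ products of two reflections. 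Combining this with Lemma \ref{lemgensu}, which asserts that $\OO$ is generated by the $su$ with $su \neq us$, we get $\OO = \OO_3$ as soon as $\Gamma_3$ is connected. Thus everything reduces to proving that $\Gamma_3$ is connected.

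For connectedness I would use that conjugation by $W$ acts on $\Gamma_3$ by graph automorphisms, since $\mathrm{ord}\big((wsw^{-1})(wuw^{-1})\big) = \mathrm{ord}(su)$. Let $C$ be a connected component of $\Gamma_3$ and $t \in \mathcal{R} \cap C$; then $tCt^{-1}$ is again a connected component, and it contains $t$, hence $tCt^{-1} = C$. Therefore $C$ is stable under conjugation by every reflection it contains, hence under the subgroup $\langle C\rangle$ generated by those reflections. Consequently, if $C$ happens to contain a set of reflections generating $W$, then $\langle C\rangle = W$, so $C$ is $W$-stable, hence a union of conjugacy classes of reflections; since $W$ has a single reflection class this forces $C = \mathcal{R}$, i.e. $\Gamma_3$ is connected.

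It then remains to exhibit, for each of the finitely many groups in the statement, a connected subgraph of $\Gamma_3$ whose vertices generate $W$. For the groups whose standard diagram has every braid relation of length $3$ — among them the Coxeter types $E_6, E_7, E_8$ as well as $G_{24}, G_{33}, G_{34}$ and the other such cases — the order-$3$ edges of the diagram already form a connected subgraph on the standard generators, and we are done; one should of course first read off from the CHEVIE reflection data exactly which exceptional $2$-reflection groups carry a single reflection class. The remaining cases are those whose diagram has a braid relation of length $>3$, i.e. $H_3$ and $H_4$: there one must connect, inside $\Gamma_3$, the two endpoints of the $5$-labelled edge by a short path through one or two auxiliary non-simple reflections lying in an $A_2$-subsystem. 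I expect this last verification to be the only real obstacle, since it is not delivered by the diagram; it is most safely carried out by a direct CHEVIE computation exhibiting the connecting reflections, which simultaneously yields an explicit expression of each $su$ as a product of order-$3$ terms.
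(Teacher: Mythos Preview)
Your reduction is correct and genuinely more informative than the paper's proof, which is literally the single sentence ``Case-by-case check, using CHEVIE.'' The telescoping identity together with Lemma~\ref{lemgensu} does reduce the statement to connectivity of $\Gamma_3$, and your argument that a connected component $C$ is stable under $\langle C\rangle$, hence equals $\mathcal{R}$ once it contains a generating set, is clean and valid. So where the paper computes the subgroup $\langle su : \mathrm{ord}(su)=3\rangle$ directly and compares its order to $|\OO|$, you would only need to exhibit, for each group, a connected $\Gamma_3$-subgraph on a generating set of reflections---a conceptually sharper and computationally lighter check.

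The one inaccuracy is your claim that the ``remaining cases'' are only $H_3$ and $H_4$. Among the exceptional $2$-reflection groups with a single reflection class you must also treat the non-Coxeter groups $G_{12}$, $G_{22}$, $G_{27}$, $G_{29}$, $G_{31}$ (and verify your assertion for $G_{24}$, $G_{33}$, $G_{34}$). For several of these the BMR presentation does \emph{not} give order-$3$ products between every pair of adjacent simple generators---in particular $G_{12}$ and $G_{22}$ have no pairwise braid relations at all in their standard presentations, only a single cyclic relation---so the braid-$3$ edges among the simple generators need not form a connected graph, and you must locate auxiliary reflections exactly as you propose for $H_3$, $H_4$. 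Since you already concede that this step ``is most safely carried out by a direct CHEVIE computation'', the gap is one of case enumeration rather than of method; but as written your list of exceptions is too short.
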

\begin{proof} Case-by-case check, using CHEVIE.
\end{proof}

\begin{remark} The two statements above do \emph{not} hold for the dihedral groups $G(e,e,2)$. For the
groups $G(2e,e,n)$ with $n \geq 3$, as well as for the exceptional groups with two reflection classes, 
the subgroup generated by the $su$ of order $3$ has order $4$ in $W$. For the exceptional group this
is a simple computer check; in the case of the $G(2e,e,n)$ for $n \geq 3$ this is because this subgroup $H$ is equal to the
rotation subgroup of the group $G(2e,2e,n)$. Indeed, it contains this rotation subgroup by lemma \ref{lemgensu3geen} and
conversely, because a reflection $s \in G(2e,e,n) \setminus G(2e,2e,n)$ is necessarily a diagonal matrix, one easily
checks that a product $su$ or $us$ for $u$ an arbitrary reflection in $G(2e,2e,n)$ has order $2$ or $4$, hence the generators
of $H$ are all contained in $G(2e,2e,n)$.

\end{remark}

Using the Shephard-Todd classification, this has for consequence the following.

\begin{prop} \label{propgenodd} If $W$ is a 2-reflection group with a single reflection class, then $\OO$ is
generated by the $su$ of odd order for $s,u \in \mathcal{R}$.
\end{prop}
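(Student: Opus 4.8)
The plan is to combine the three generation lemmas proved above, treating separately the families that arise in the Shephard-Todd classification. First I would record that $W$ is necessarily irreducible: this is the running hypothesis of the section, and in any case a direct product $W_1 \times W_2$ of two nontrivial reflection groups has at least two classes of reflections, those lying in $W_1$ and those lying in $W_2$ never being conjugate. Hence, by Shephard-Todd, $W$ falls into one of the following (possibly overlapping) cases: (i) $W$ is a finite irreducible Coxeter group with a single class of reflections; (ii) $W$ is a group $G(e,e,n)$ with $n \geq 3$; (iii) $W$ is an exceptional (primitive, non-Coxeter) irreducible complex $2$-reflection group with a single class of reflections.

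In case (ii), Lemma \ref{lemgensu3geen} already asserts that $\OO$ is generated by the $su$ of order $3$, which are in particular of odd order; and in case (iii) the same conclusion is furnished by the last lemma above --- the exceptional single-reflection-class case, checked with CHEVIE. For case (i) I would instead invoke the sharper Lemma \ref{lemgensufort}: there $\OO$ is generated by the products $su$ with $s,u$ simple reflections and $su \neq us$, and each such $su$ has order equal to the label $m_{su} \geq 3$ of the edge of the Coxeter diagram joining $s$ and $u$. What remains is then purely combinatorial: one lists the finite irreducible Coxeter groups with a single class of reflections --- namely $A_n$, $D_n$, $E_6$, $E_7$, $E_8$, $H_3$, $H_4$, and $I_2(m)$ with $m$ odd --- and observes that every diagram label exceeding $2$ is odd, the only values occurring being $3$ and, in types $H_3$ and $H_4$, also $5$, while in type $I_2(m)$ the unique label $m$ is odd by hypothesis. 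Assembling the three cases gives the statement.

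The only step needing real care is the bookkeeping behind this case division: one must check that the irreducible $2$-reflection groups with a single reflection class are precisely the members of the three families above, so that the $2$-reflection groups with two reflection classes --- such as $B_n$, $F_4$, $G_{13}$, the imprimitive groups $G(2e,e,n)$, and $I_2(m)$ with $m$ even, for which the analogous statement fails --- are correctly excluded. Given the Shephard-Todd classification this is routine, so no serious obstacle is expected; the argument is in essence an orchestrated appeal to the lemmas already in place.
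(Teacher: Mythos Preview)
Your proof is correct, but the case division is organised differently from the paper's.  The paper splits simply into (a) $G(e,e,n)$ with $n\geq 3$, handled by Lemma~\ref{lemgensu3geen}; (b) the exceptional groups, handled by the CHEVIE lemma; and (c) the dihedral groups $G(e,e,2)$ with $e$ odd, where $\OO$ is cyclic of odd order $e$ and is therefore trivially generated by a single product $su$ of odd order.  In particular the paper never invokes Lemma~\ref{lemgensufort}: the Coxeter types $A_n$, $D_n$, $E_n$, $H_3$, $H_4$ are absorbed into (a) and (b), and the dihedral case is dispatched in one line.  Your route through Lemma~\ref{lemgensufort} and the observation that all edge labels in the relevant Coxeter diagrams are odd is a legitimate and more conceptual alternative for the real groups, but it creates overlap between your case~(i) and cases~(ii)/(iii) and costs you the extra step of listing the single-class Coxeter types.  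The paper's partition is non-redundant and slightly shorter.
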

\begin{proof}
When $W = G(e,e,n)$ for $n \geq 3$ or $W$ of exceptional types this
is a consequence of the above lemmas. The remaining cases of the dihedral groups $W = G(e,e,2)$
with $e$ odd is trivial, as $\OO$ is cyclic and admits as generator a product of two reflections.

\end{proof}

\begin{lemma} \label{lemsingleconjclass} If $W$ is a Coxeter group of type 
\begin{enumerate}
\item $A_n$ for $n \geq 4$, 
\item $D_n$ for $n \geq 5$
\item $E_6,E_7,E_8$, 
\end{enumerate}
then the
$su$ for $su \neq us$ constitute a single conjugacy class in $\OO$. 
\end{lemma}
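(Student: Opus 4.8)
The plan is to trade the statement for a cleaner intrinsic one. In any simply-laced finite Coxeter group I would first prove the identity
$$
\{\, su \mid s,u\in\mathcal{R},\ su\neq us\,\}\;=\;X:=\{\, g\in W \mid g^{3}=1,\ g\neq1,\ \mathrm{rk}(g-1)=2\,\}.
$$
The inclusion $\subseteq$ uses that in type ADE one has $\langle\alpha^{\vee},\beta\rangle\in\{0,\pm1\}$ for non-proportional roots, so two non-commuting reflections $s_{\alpha},s_{\beta}$ span a root subsystem of type $A_{2}$, and then $s_{\alpha}s_{\beta}$ is a rotation by $\pm120^{\circ}$ in the plane $\mathrm{span}(\alpha,\beta)$ — of order $3$, with $\mathrm{rk}(g-1)=2$. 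Conversely, given $g\in X$, Carter's lemma recalled above writes $g=s_{1}s_{2}$ with $s_{i}\in\mathcal{R}$, and $s_{1}s_{2}=s_{2}s_{1}$ would force $g^{2}=1$, contradicting $g\in X$; hence $s_{1}s_{2}\neq s_{2}s_{1}$. Since $\eps(g)=\eps(s)\eps(u)=1$, the set $X$ lies in $\OO$, and it is manifestly $W$-stable. It therefore suffices to establish: (C) $W$ acts transitively on $X$; and (D) $C_{W}(g)\not\subseteq\OO$ for $g\in X$.

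That these two facts give the lemma is the standard index-$2$ argument: a single $W$-conjugacy class contained in the normal subgroup $\OO$ of index $2$ stays a single $\OO$-class unless $C_{W}(g)\subseteq\OO$, in which case it splits into exactly two $\OO$-classes. For (D) I would argue uniformly: if $g=s_{\alpha}s_{\beta}\in X$ and $P=\mathrm{Im}(g-1)=\mathrm{span}(\alpha,\beta)$, then any reflection $s_{\gamma}$ with $\gamma\in R$ and $\gamma\perp P$ commutes with $s_{\alpha}$ and $s_{\beta}$, hence centralizes $g$, while $\eps(s_{\gamma})=-1$; so $C_{W}(g)\not\subseteq\OO$ as soon as $R\cap P^{\perp}\neq\emptyset$. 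In type $A_{n}=\mathfrak{S}_{n+1}$, with $g$ a $3$-cycle, a root orthogonal to $P$ must be supported on the coordinates outside the support of $g$, which requires $n\geq4$; in type $D_{n}$ the same count (a $D_{n}$-root orthogonal to $P$ has both nonzero coordinates outside the three supporting $g$) requires $n\geq5$; for $E_{6},E_{7},E_{8}$ the condition $R\cap P^{\perp}\neq\emptyset$ is automatic, since the subsystem orthogonal to an $A_{2}$-subsystem has type $A_{2}\times A_{2}$, $A_{5}$ and $E_{6}$ respectively. This is precisely where the hypotheses on the rank enter.

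For (C) I would split by family. For $A_{n}$, $X$ is the set of $3$-cycles in $\mathfrak{S}_{n+1}$, a single conjugacy class. For $D_{n}$, working with signed permutations, $X$ is the set of ``positive $3$-cycles'' supported on a triple of coordinates (with an even number of sign changes), and transitivity is checked by hand: the underlying permutation is brought to $(1\,2\,3)$ by a lift in $W(D_{n})$, the sign pattern is normalised by conjugating with a product of two sign changes one of which involves a fourth coordinate (available since $n\geq5$), and the two order-$3$ elements supported on a fixed plane are exchanged by a reflection of the corresponding $A_{2}$-subsystem. For $E_{6},E_{7},E_{8}$, transitivity on $X$ amounts to transitivity of $W$ on $A_{2}$-subsystems (the subsystem $R\cap\mathrm{Fix}(g)^{\perp}\simeq A_{2}$ depends only on $g$ and contains precisely the order-$3$ elements $g,g^{-1}$, conjugate within its Weyl group $\mathfrak{S}_{3}$); this can be read off Carter's classification of conjugacy classes in Weyl groups — there is a unique class with Carter label $A_{2}$ in each of these groups — or simply verified with CHEVIE.

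I expect the main obstacle to be step (C) for the exceptional types $E_{6},E_{7},E_{8}$: there the transitivity of $W$ on $X$ (equivalently, on $A_{2}$-subsystems) must be imported, from Carter's tables of admissible diagrams or from a machine computation, rather than produced by a short direct manipulation. The $D_{n}$ case of (C), though elementary, also demands care, mainly to ensure the conjugating elements actually lie in $W(D_{n})$ rather than merely in $W(B_{n})$ — and it is this constraint that makes the bound $n\geq5$ in that family line up with the bound forced by (D).
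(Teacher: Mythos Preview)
Your proof is correct and takes a genuinely different route from the paper's. The paper argues by reduction to small rank: for types $A$ and $D$ it observes that two pairs $(s,u)$ and $(s',u')$ of non-commuting reflections have combined support of size at most $6$, so after conjugating by a permutation one may assume everything lives in a parabolic of type $A_5$ (resp.\ $D_6$), and then finishes by an unspecified finite check; the exceptional types are not discussed explicitly and are presumably handled by direct computation. Your approach instead identifies the set in question intrinsically as $X=\{g\in W: g^3=1,\ g\neq1,\ \mathrm{rk}(g-1)=2\}$ (using Carter's lemma for the reverse inclusion), proves $W$-transitivity on $X$, and then invokes the index-$2$ splitting criterion via the existence of a reflection orthogonal to the $A_2$-plane. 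This is more conceptual: it explains \emph{why} the rank thresholds $n\geq4$ in $A_n$ and $n\geq5$ in $D_n$ appear (they are exactly what is needed for $R\cap P^\perp\neq\emptyset$), whereas the paper's reduction argument produces these bounds only as the size needed to accommodate two overlapping supports. For the $E$-types both arguments ultimately defer to an external source (Carter's tables or CHEVIE), though you make this dependence explicit. One minor imprecision: your step (C) for $D_n$ in fact only needs $n\geq4$ (a fourth coordinate suffices to adjust parity), so the bound $n\geq5$ is forced by (D) alone rather than by both; this does not affect the argument but slightly undercuts your closing remark that the two constraints ``line up''.
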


\begin{proof}
Considering the elements of the Coxeter group of type $A_{n-1}$ (resp. $D_n$) as (signed) permutations, we
can associate to each element $g$ 
its support
$supp(g) = \{ i \ | \ g(i) \neq i \} \subset \{1 , \dots, n \}$. For $s,u \in \mathcal{R}$,
$su \neq us$ implies $supp(s) \cap supp(u) \neq \emptyset$,
that is $supp(s) \cup supp(u) = \{i,j,k\} \subset \{1,\dots, n\}$.
Choosing another couple $(s',u')$, the union of the supports
of the 4 reflections has size at most $6$,
hence up to conjugation by an element of $\mathfrak{S}_n$, we can assume that
$s,u,s',u' $ all belong to the Coxeter group of type $A_5$ (resp.$D_6$), at least if $n \geq 5$ (resp. $n \geq 6$).
This reduces the proof to the check of a few cases which are readily done.

\end{proof}

\begin{remark} There are 2 such conjugacy classes for types $A_3$ and $D_4$.

\end{remark}

\subsection{Preliminaries on commutative algebras}

\begin{lemma}\label{lempolgg}  Let $G$ be a finite group, $g \in G$ of order $N$, and $\kk$ a field of characteristic $0$. Then 

\begin{enumerate}
\item $g$ is a polynomial of $g-g^{-1}$
inside $\kk G$
if and only if $N$ is odd. In this case, the polynomial can be chosen with coefficients in $\Q$.
\item $g+ g^{-1}$ is an \emph{even} polynomial in $g-g^{-1}$ if and only if $N$ is odd.
In this case, the polynomial can be chosen with coefficients in $\Q$.
\end{enumerate}
\end{lemma}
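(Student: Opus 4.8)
The plan is to work inside the commutative subalgebra $\kk[g] \subset \kk G$, which is isomorphic to $\kk[t]/(t^N-1)$. Under this identification $g \mapsto t$ and $g^{-1} \mapsto t^{N-1}$, so setting $z = g - g^{-1}$ corresponds to $z = t - t^{N-1} = t - t^{-1}$ (working in the ring where $t$ is a unit). Since $\kk$ has characteristic $0$ and is eventually replaced by an extension containing a primitive $N$-th root of unity $\zeta$, we have $\kk[g] \cong \prod_{j=0}^{N-1} \kk$ via $g \mapsto (\zeta^j)_j$, and every element of $\kk[g]$ is a polynomial in $z$ if and only if the values $\zeta^j - \zeta^{-j}$, $0 \le j \le N-1$, are pairwise distinct; one then recovers $g$, or $g+g^{-1}$, by Lagrange interpolation. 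Rationality of the coefficients will follow from a Galois-descent argument: the element $z$, and the target elements $g$ and $g+g^{-1}$, all lie in $\Q[g]$, so if $g$ (resp. $g+g^{-1}$) is \emph{some} polynomial in $z$ over an extension, it is already one over $\Q$ because the interpolation data is Galois-stable.

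First I would reduce to the cyclic group case by replacing $G$ with $\langle g \rangle$, so that it suffices to prove both statements inside $\kk[t]/(t^N-1)$. Second, for the ``only if'' directions: if $N$ is even, then $g$ and $g^{-1}$ have the same image under the character $g \mapsto -1$ (i.e.\ $t \mapsto \zeta^{N/2} = -1$), so $z = g - g^{-1}$ maps to $0$ under this character, hence any polynomial in $z$ maps to a constant there; but $g$ maps to $-1$ while, say, the identity component forces the constant term, giving a contradiction unless one checks the value is forced to be wrong — more cleanly, $g - g^{-1}$ is killed by the involution $\iota : t \mapsto t^{-1}$ composed with... actually the cleanest argument: the map $t \mapsto -t$ (legitimate when $N$ is even, since it permutes $N$-th roots of unity) sends $z \mapsto -z$ but fixes neither $g$ nor $g+g^{-1}$ — wait, it sends $g \mapsto -g$ and $g + g^{-1} \mapsto -(g+g^{-1})$, so $g+g^{-1}$ \emph{is} odd, not even, and $g$ is not even a polynomial in $z$ of the right parity; I would phrase the obstruction as: any polynomial $P(z)$ satisfies $P(z)|_{t \mapsto -t} = P(-z)$, whereas $g|_{t\mapsto -t} = -g \neq \pm g$ in general and $(g+g^{-1})|_{t \mapsto -t} = -(g+g^{-1})$, which is the \emph{odd} part, ruling out an even polynomial. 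Third, for the ``if'' direction ($N$ odd): I would show $\zeta^j - \zeta^{-j}$ are distinct for $0 \le j \le N-1$ — equivalently $2\ii\sin(2\pi j/N)$ distinct — which holds precisely because $N$ is odd (when $N$ is even, $j$ and $N/2 - j$... rather $j \mapsto N-j$ gives the same value, but also for even $N$ there is the extra coincidence making it fail, while for odd $N$ the only coincidence $\zeta^j - \zeta^{-j} = \zeta^k - \zeta^{-k}$ forces $\zeta^j = \zeta^k$), hence $z$ generates $\kk[g]$ and interpolation gives $g = P(z)$; to get $g + g^{-1}$ as an \emph{even} polynomial, note $g + g^{-1} = Q(z^2)$ is possible iff $g+g^{-1}$ is invariant under $z \mapsto -z$, and $z \mapsto -z$ corresponds to $t \mapsto t^{-1}$ (since $(t-t^{-1})$ negates under inversion), under which $g + g^{-1}$ is manifestly fixed — so $g+g^{-1}$ lies in the subring generated by the $z \mapsto -z$ invariant $z^2$, provided that subring is all of the invariants, which again needs the values $(\zeta^j - \zeta^{-j})^2$ to be distinct up to the pairing $j \leftrightarrow N-j$, valid for $N$ odd.

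The main obstacle I anticipate is the careful bookkeeping in the ``if'' direction: establishing that $\kk[z] = \kk[g]$ (equivalently that $z$ has $N$ distinct ``eigenvalues'' $\zeta^j - \zeta^{-j}$) for odd $N$, and the analogous statement that $\kk[z^2]$ equals the full ring of $\langle t \mapsto t^{-1}\rangle$-invariants in $\kk[g]$, which has dimension $(N+1)/2$. This is an elementary but slightly delicate check with roots of unity; the parity hypothesis enters exactly here, because for even $N$ the root $-1$ is an $N$-th root of unity and forces $\zeta^j - \zeta^{-j}$ to vanish at two opposite indices, collapsing the dimension count. Once the distinctness is in hand, Lagrange interpolation produces the polynomial, and the observation that all the interpolation nodes lie in $\Q(\zeta)$ and are permuted by $\Gal(\Q(\zeta)/\Q)$ compatibly with the $\Gal$-action fixing $z, g, g+g^{-1} \in \Q[g]$ yields the rationality of the coefficients by uniqueness of the interpolating polynomial.
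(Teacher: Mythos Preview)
Your plan is essentially the paper's: reduce to the cyclic subalgebra $\kk[t]/(t^N-1)$, diagonalize via $N$-th roots of unity, and observe that $g$ is a polynomial in $z=g-g^{-1}$ iff the values $\zeta^j-\zeta^{-j}$ are pairwise distinct, which Lagrange interpolation turns into the desired polynomial. For part~(ii) the inversion $t\mapsto t^{-1}$ negates $z$ and fixes $g+g^{-1}$, which is exactly the paper's argument (phrased there as: from $g=P(z)$ apply $g\mapsto g^{-1}$ to get $g^{-1}=P(-z)$ and add).

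Two points of cleanup. First, your $t\mapsto -t$ attempt for the ``only if'' in (i) does not work: it sends $g$ to $-g$, so if $g=P(z)$ then $-g=P(-z)$, which only forces $P$ to be \emph{odd}, not nonexistent. The actual obstruction for even $N$---which you do state later---is that $j=0$ and $j=N/2$ both give $\zeta^j-\zeta^{-j}=0$ while $\zeta^0=1\neq -1=\zeta^{N/2}$, so no polynomial can interpolate. The paper simply treats both directions at once via this distinctness criterion. For (ii) ``only if'', the paper's reduction is slicker than your $t\mapsto -t$ argument: if $g+g^{-1}=R(z)$ for any $R$, then $g=\tfrac12(R(z)+z)$ is a polynomial in $z$, so $N$ is odd by (i).

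Second, your Galois-descent for rationality is correct but heavier than needed. The paper just observes that once the $(g-g^{-1})^r$, $0\le r<N$, are shown to be linearly independent over $\C$ (which is what the distinctness of the $\zeta^j-\zeta^{-j}$ says), they are a fortiori linearly independent over $\Q$, hence form a $\Q$-basis of $\Q[g]$, and the coefficients of $g$ in this basis are automatically rational.
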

\begin{proof}
We first prove (1). We can assume $\kk \subset \C$. Firstly we assume $\kk = \C$. This amounts to check whether $x$ is a polynomial in  $x-x^{-1}$ inside $\C[x]/(x^N -1)$. Letting $\zeta = \exp(2 \ii \pi/N)$
this means that there exists $P \in \C[X]$ such that $\forall k \ \ \zeta^k = P(\zeta^k - \zeta^{-k})$, which is possible
(Lagrange's criterium) exactly when the $\zeta^k - \zeta^{-k}$ are distinct integers for $0 \leq k < N$,
that is when $N$ is odd. In that case, $P$ has degree at most $N-1$, and we proved that the family
of the $(x-x^{-1})^r$ for $0 \leq r \leq N$
subspace of $\C[x]/(x^N -1)$
spanned by the $(x-x^{-1})^r$, $0 \leq r \leq N$, is a basis.

This proves in the general case that $g$ can be a polynomial in $g - g^{-1}$ only if $N$ is odd. Conversely, we only
need to prove that we can choose $P \in \Q[X]$. But we proved that the $(x-x^{-1})^r$ for $0 \leq r < N$
are linearly independent over $\C$ inside $\C[x]/(x^N-1)$, so they are also linearly independent inside
$\Q[x]/(x^N-1)$, hence they form a basis of $\Q[x]/(x^N-1)$ and one can indeed find $P \in \Q[X]$ with $x = P(x-x^{-1})$.

We now prove (2). We can assume $G = < g >$ (and so $G \simeq \Z/N$). If $g+g^{-1} = R(g-g^{-1})$ for
some $R \in \kk[X]$, then  $g = P(g-g^{-1})$ with $P(X) = (R(X) + X)/2$ hence $N$ is odd by (1). Conversely, we assume $N$
is odd. By (1), we have $g = P(g-g^{-1})$ for some $P \in \Q[X]$. There exists $\varphi \in \Aut(G)$ such that $\varphi(g) = g^{-1}$.
We extend it to an automorphism of $\kk G$, and get $g^{-1} = \varphi(g) = P(\varphi(g-g^{-1}))= P(g^{-1} - g) = Q(g-g^{-1})$
for $Q(X) = P(-X) \in \Q[X]$. Thus $g + g^{-1} = (P+Q)(g-g^{-1})$ with $P+Q \in \Q[X] $ an even polynomial.
\end{proof}

\begin{lemma} \label{lempolordre3} Let $G$ be a finite group, $g \in G$ of order $3$. Then, $\Ad(g) : x \mapsto g x g^{-1}$
is a polynomial in $\ad(g) - \ad(g^{-1}) : x \mapsto gx-xg - (g^{-1} x - x g^{-1})$.
\end{lemma}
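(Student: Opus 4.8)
The plan is to reduce the claim to a statement about a single element of order $3$ inside a commutative group algebra, exactly as lemma \ref{lempolgg} does. Consider the cyclic group $H = \langle g \rangle \simeq \Z/3$, and work inside the group algebra $\kk H$. The operator $\Ad(g)$ acts on $\kk G$ (or indeed on any $\kk H$-bimodule) and, since $g^3 = 1$, this action factors through the image of $H$ in $\End(\kk G)$ under the map sending $g$ to $\Ad(g) = L_g R_{g^{-1}}$, where $L_g,R_{g^{-1}}$ denote left and right multiplication; these commute, so we get an algebra morphism $\kk[\Z/3] \to \End(\kk G)$, $\bar g \mapsto \Ad(g)$. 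Under this morphism the element $u := \ad(g) - \ad(g^{-1})$ is the image of $\bar g - \bar g^{-1} \in \kk[\Z/3]$ (after identifying $\ad(g)$ with the image of $\bar g$, which is legitimate since $\ad(g) = L_g - R_g$ is visibly the image of $\bar g$ under $t \mapsto L_t - R_t$; one must be slightly careful here — see the obstacle below). By lemma \ref{lempolgg}(i) applied to $\Z/3$ (whose order is odd), there is $P \in \Q[X]$ with $\bar g = P(\bar g - \bar g^{-1})$ inside $\kk[\Z/3]$. Applying the morphism then gives $\Ad(g) = P(u)$, as desired.

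The first step I would write carefully is the identification of the relevant commuting operators. We have two natural actions of $\kk H$ on $\kk G$: left multiplication $L$ and right multiplication $R$. Then $\ad(g) = L_g - R_g$ and $\Ad(g) = L_g R_g^{-1} = L_g R_{g^{-1}}$. The key observation is that $\ad(g) - \ad(g^{-1}) = (L_g - R_g) - (L_{g^{-1}} - R_{g^{-1}}) = (L_g - L_{g^{-1}}) - (R_g - R_{g^{-1}})$. Since $L$ and $R$ commute, the subalgebra of $\End(\kk G)$ they jointly generate is a quotient of $\kk[\Z/3] \otimes \kk[\Z/3]$; inside it I want to produce $\Ad(g) = L_g R_{g^{-1}}$ as a polynomial in the above element. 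So really the statement to prove is purely about $\kk[\Z/3] \otimes \kk[\Z/3]$: writing $a = \bar g \otimes 1$, $b = 1 \otimes \bar g$, show that $ab^{-1}$ is a polynomial in $(a - a^{-1}) - (b - b^{-1})$.

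This last algebraic fact I would prove by direct computation, which is light because $\Z/3$ is small. Diagonalize: $\kk[\Z/3] \otimes \kk[\Z/3] \cong \prod_{i,j \in \Z/3} \kk$ (after adjoining a primitive cube root of unity $\zeta$, which is harmless), with $a$ acting as $\zeta^i$ and $b$ as $\zeta^j$ in the $(i,j)$ component. Then $v := (a - a^{-1}) - (b - b^{-1})$ takes the value $(\zeta^i - \zeta^{-i}) - (\zeta^j - \zeta^{-j})$; since $\zeta^i - \zeta^{-i} \in \{0, \ii\sqrt3, -\ii\sqrt3\}$, the nine values of $v$ lie in $\{0, \pm\ii\sqrt3, \pm 2\ii\sqrt3\}$, and $ab^{-1} = \zeta^{i-j}$ is constant on each level set of $v$ precisely when these five values suffice to separate the three values of $i - j$ — which one checks by hand (the value of $v$ determines $(\zeta^i-\zeta^{-i}) - (\zeta^j - \zeta^{-j})$ which, given the constraints, pins down $i-j \bmod 3$). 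Lagrange interpolation over these at most five points then produces the polynomial, and a Galois-descent argument (as in the proof of lemma \ref{lempolgg}) lets one take coefficients in $\Q$. The main obstacle is bookkeeping: getting the dictionary between $\ad$, $\Ad$, $L$, $R$ exactly right, and checking that $v$ genuinely separates the cosets $i-j$ rather than merely separating $(i,j)$; everything else is a finite verification.
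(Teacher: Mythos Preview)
Your second approach (working in $\kk[\Z/3]\otimes\kk[\Z/3]$ with $a=L_g$, $b=R_g$, and showing by diagonalization that the five eigenvalues $\{0,\pm\ii\sqrt3,\pm2\ii\sqrt3\}$ of $v=(a-a^{-1})-(b-b^{-1})$ each determine the corresponding eigenvalue $\zeta^{i-j}$ of $ab^{-1}$, then interpolating) is correct and is precisely the ``similar argument as in the previous lemma'' that the paper alludes to. The paper's own proof is even shorter: it simply exhibits the polynomial $\tfrac{1}{24}X^4+\tfrac{1}{12}X^3+\tfrac{5}{8}X^2+\tfrac{3}{4}X+1$ and leaves the verification to the reader.

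One comment: your first paragraph's shortcut via a single copy of $\kk[\Z/3]$ does not work as written, and your parenthetical already senses this. Under the algebra map $\bar g\mapsto\Ad(g)$ the element $\bar g-\bar g^{-1}$ goes to $\Ad(g)-\Ad(g^{-1})$, not to $\ad(g)-\ad(g^{-1})$; the latter is not in the image of that map. This is exactly why you need both left and right multiplication, i.e.\ the tensor square, which you then handle correctly. A minor phrasing point: what you actually verify is that each value of $v$ determines a unique value of $\zeta^{i-j}$ (not that it pins down $i-j$ itself; e.g.\ $v=\ii\sqrt3$ and $v=-2\ii\sqrt3$ both give $i-j\equiv 1$), but that is all Lagrange interpolation needs.
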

\begin{proof} E.g. by direct computation, a convenient polynomial being $\frac{1}{24} X^4 + \frac{1}{12} X^3 + \frac{5}{8} X^2 + \frac{3}{4} X
+1$, or by using a similar argument as in the previous lemma.
\end{proof}

\subsection{Basic facts}

For every representation $\rho : \OO \to \GL(V)$ of $\OO$, we let $\rho_{\mathcal{A}} : \mathcal{A} \to \gl(V)$ denote the induced representation of $\mathcal{A}$.
\begin{prop}  \label{propirrA} {\ }  
\begin{enumerate} Assume $W$ is an irreducible 2-reflection group with a single reflection class.
\item $\AA$ generates $\kk \OO$ as an associative algebra with $1$.
\item $\forall \rho \in \Irr(\OO)$, $\rho_{\AA}$ is irreducible.
\item $\AA$ is reductive.
\item $\forall \rho^1,\rho^2 \in \Irr(\AA)$, $\rho^2 \simeq \rho^1 \Leftrightarrow \rho^2_{\AA} \simeq \rho^1_{\AA}$. 
\item $\forall \rho^1,\rho^2 \in \Irr(\AA)$, $\rho^2 \simeq (\rho^1)^* \Leftrightarrow \rho^2_{\AA} \simeq (\rho^1_{\AA})^*$. 
\end{enumerate}
\end{prop}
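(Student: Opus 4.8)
The plan is to reduce everything to the two key facts established in the preliminary section: Lemma~\ref{lemgensu} (that $\OO$ is generated by the $su$ with $su \ne us$), and Proposition~\ref{propgenodd} together with Lemmas~\ref{lempolgg} and~\ref{lempolordre3} (that, in the single-class case, $\OO$ is actually generated by the $su$ of \emph{odd} order, and that both $g$ and $\Ad(g)$ are polynomials in $g-g^{-1}$, resp. $\ad(g)-\ad(g^{-1})$, when $g$ has odd order). I would prove (i) first, then deduce (ii)--(v) from it in turn, since (i) is really the crux.

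\smallskip
\emph{Proof of (i).} Let $\mathcal{B}$ denote the associative unital subalgebra of $\kk\OO$ generated by $\AA$. By Proposition~\ref{propgenodd}, $\OO$ is generated as a group by elements $g = su$ of odd order $N$, for $s,u \in \mathcal{R}$. For such a $g$, the element $[s,u] = su - us = g - g^{-1}$ lies in $\AA$. By Lemma~\ref{lempolgg}(1), $g$ is a polynomial (with rational coefficients) in $g-g^{-1}$, hence $g \in \mathcal{B}$; likewise $g^{-1} \in \mathcal{B}$. Since the elements $g$ generate $\OO$ as a group and each lies in the subalgebra $\mathcal{B}$, every element of $\OO$ is a product of such $g$'s and therefore lies in $\mathcal{B}$. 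Thus $\kk\OO \subseteq \mathcal{B}$, i.e. $\AA$ generates $\kk\OO$ as an associative unital algebra.

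\smallskip
\emph{Proof of (ii)--(v).} For $\rho \in \Irr(\OO)$, the image $\rho(\kk\OO) = \End(V_\rho)$ by Burnside, and by (i) this equals the associative algebra generated by $\rho(\AA) = \rho_{\AA}(\AA)$; hence $\rho_{\AA}$ acts irreducibly, which is (ii). For (iii): since $\kk\OO$ is semisimple and $\AA \subseteq \kk\OO$ is a Lie subalgebra which, by (i), generates the semisimple associative algebra $\kk\OO$, the restriction to $\AA$ of the regular representation of $\OO$ is semisimple, so $\AA$ is reductive (its radical would act by nilpotents on a faithful semisimple module, forcing it to be $0$; more carefully, the image of $\AA$ in each simple block $\End(V_\rho)$ is a Lie algebra acting irreducibly, hence reductive with at most one-dimensional centre, and $\AA$ embeds in the product of these images). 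For (iv): if $\rho^1_{\AA} \simeq \rho^2_{\AA}$ as $\AA$-modules, the isomorphism intertwines the associative algebras generated by $\rho^i(\AA)$, which by (i) are $\rho^i(\kk\OO)$; tracing through, the characters of $\rho^1_{\AA}$ and $\rho^2_{\AA}$ determine, via the polynomial expressions of Lemma~\ref{lempolgg}(1) and Lemma~\ref{lempolordre3}, the traces $\tr\rho^i(g)$ for all $g \in \OO$ of odd order, hence (these $g$ generating $\OO$, and traces being additive on a generating set only up to the algebra they generate — so one argues at the level of the algebra) the $\OO$-characters agree, giving $\rho^1 \simeq \rho^2$; the converse is trivial. (v) is the same argument applied to $(\rho^1)^*$, using that $\AA$ is stable under the antipode $g \mapsto g^{-1}$ up to sign (indeed $[s,u]^* $-type elements map into $\AA$), so that the dual $\AA$-module of $\rho^1_{\AA}$ is $(\rho^1)^*_{\AA}$.

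\smallskip
\emph{Main obstacle.} The delicate point is (iv) and (v): one must be careful that $\rho_{\AA}$ being irreducible does not \emph{a priori} imply that its isomorphism class remembers the $\OO$-action, since $\AA$ sees $\OO$ only through brackets. The clean way around this is to note that $\rho$ is recovered from $\rho_{\AA}$ through the \emph{associative} algebra generated, which by (i) is all of $\rho(\kk\OO) \cong \End(V_\rho)$, so that an $\AA$-isomorphism automatically becomes an $\OO$-isomorphism; making this precise — i.e. showing the associative algebra generated by the image of $\AA$ under $\rho^1 \oplus \rho^2$ meets each block as expected and that an intertwiner for $\AA$ extends to one for $\kk\OO$ — is where the real content lies, and it is exactly here that one needs the single-reflection-class hypothesis (via Proposition~\ref{propgenodd}) rather than merely Lemma~\ref{lemgensu}.
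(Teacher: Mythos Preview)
Your approach is essentially the paper's: for (i) you use Proposition~\ref{propgenodd} together with Lemma~\ref{lempolgg}(1) exactly as the paper does, and (ii), (iii) follow in the same way. For (iv), your key sentence --- that an $\AA$-intertwiner $Q$ automatically intertwines the associative algebras generated by $\rho^i(\AA)$, which by (i) are $\rho^i(\kk\OO)$, hence $Q$ is an $\OO$-intertwiner --- is exactly the content of the paper's argument (the paper just writes it concretely: $\rho^2(x-x^{-1}) = Q\rho^1(x-x^{-1})Q^{-1}$ and $x = P(x-x^{-1})$ imply $\rho^2(x) = Q\rho^1(x)Q^{-1}$ for the odd-order generators $x$). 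Your detour through characters and Lemma~\ref{lempolordre3} is unnecessary and, as you half-notice yourself, does not work directly: knowing $\tr\rho^i(g)$ on a generating set of a group does not determine the character. Drop that sentence; the intertwiner argument you give in the ``Main obstacle'' paragraph is already complete.

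For (v) you take a slightly different and cleaner route than the paper. The paper redoes the computation from scratch using the \emph{even} polynomial of Lemma~\ref{lempolgg}(2) to recover $\rho^2(x+x^{-1})$ and hence $\rho^2(x)$ from $\rho^2(x-x^{-1}) = -Q\,{}^t\rho^1(x-x^{-1})Q^{-1}$. Your observation that $(\rho^1)^*_{\AA} = (\rho^1_{\AA})^*$ (since $[s,u] \mapsto -[s,u]$ under the antipode) reduces (v) immediately to (iv), avoiding Lemma~\ref{lempolgg}(2) altogether. Both work; yours is shorter.
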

\begin{proof}
By proposition \ref{propgenodd}, $\OO$ is generated by the $su$ for $s,u \in \mathcal{R}$ of odd order.
Since $[s,u] = su - us = (su) - (su)^{-1}$, lemma \ref{lempolgg} (1)  implies $\rho(\mathsf{U} \AA) \supset \rho(\OO)$
hence (1) and (2).  Since $\C \OO = \bigoplus_{\rho \in \Irr(\OO)} \End(V_{\rho})$ and $\AA \subset \C \OO$,
$\bigoplus_{\rho \in \Irr(\OO)} \rho_{\AA}$ defines a faithful semisimple representation of $\AA$, hence (3).
Let now $\rho^1 , \rho^2 \in \Irr(\OO)$. Clearly $\rho^1 \simeq \rho^2 \Rightarrow \rho^1_{\AA} \simeq \rho^2_{\AA}$.
Conversely, we can assume $\rho^1,\rho^2 : \OO \to \GL(V)$, and let $Q \in \End(V)$ 
be such that $\rho^2(y) = Q \rho^1(y) Q^{-1}$ for all $y \in \AA$. 
In particular $\rho^2(x-x^{-1}) = Q \rho^1(x-x^{-1})Q^{-1}$ whenever $x = su$ with $s,u \in \mathcal{R}$.
If in addition $x$ has odd order there exists by lemma \ref{lempolgg} (1) a polynomial $P \in \Q[X]$ such that
$x = P(x-x^{-1})$, hence $\rho^2(x) = Q \rho^1(x) Q^{-1}$. Since such $x$ generate
$\OO$ we get  $\rho^2(g) = Q \rho^1(g) Q^{-1}$ for all $g \in \OO$ hence (4).
Finally, assuming $\rho^2 \simeq (\rho^1)^*$
means $\rho^2(x) = Q  \ ^t\rho^1(x^{-1})$ for all $x = su \in \OO$
we get $\rho^2(x-x^{-1}) = Q\ ^t \rho^1(x^{-1} -x) Q^{-1} = - Q \ ^t \rho^1(x-x^{-1}) Q^{-1}$ hence $\rho^2_{\AA} \simeq (\rho^1_{\AA})^*$.
Conversely, if $\rho^2_{\AA} \simeq (\rho^1_{\AA})^*$ then
$\rho^2(x-x^{-1}) = Q \ ^t \rho^1(x^{-1}-x)) Q^{-1}$. When $x$ has odd order we can (by lemma \ref{lempolgg} (2)) choose $P \in \Q[X^2]$ such that $x +x^{-1} = P(x-x^{-1})$
hence $\rho^2(x+x^{-1}) = Q \ ^t \rho^1(x +x^{-1}) Q^{-1}$.
Then $2 \rho^2(x) = \rho^2(x+x^{-1} + x - x^{-1}) = 2 Q \  ^t \rho^1(-x+x^{-1} + x + x^{-1}) Q^{-1} = 2 Q \ ^t \rho^1(x^{-1}) Q^{-1}$.
This yields $\rho^2 \simeq( \rho^1)^*$ by proposition \ref{propgenodd} hence (5).

\end{proof}

\begin{prop}
Let $W$ be a 2-reflection group.
Let $\rho \in \Irr(\OO)$ such that $\rho^* \simeq \rho$,  $\beta$ the corresponding nondegenerate bilinear form on $V_{\rho}$
and $y \mapsto y^+$ the adjunction operation w.r.t. $\beta$.
Then
$$\rho(\AA) \subset \osp(V_{\rho}) = \{ y \in \End(V_{\rho}) \ | \ \tr y = 0 \mbox{ and } y^+ = - y \}$$ 
\end{prop}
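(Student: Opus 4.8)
The plan is to reduce the statement to the defining generators of $\AA$. By definition, $\AA$ is the Lie subalgebra of $\kk\OO$ generated by the elements $[s,u] = su - us = (su) - (su)^{-1}$ with $s,u \in \mathcal{R}$ and $su \neq us$. So the first step is to observe that $\osp(V_{\rho}) = \{ y \in \End(V_{\rho}) \mid \tr y = 0,\ y^+ = -y \}$ is itself a Lie subalgebra of $\gl(V_{\rho})$. Indeed, if $y^+ = -y$ and $z^+ = -z$ then $[y,z]^+ = (yz-zy)^+ = z^+y^+ - y^+z^+ = zy - yz = -[y,z]$; and the trace condition is automatic, since for the adjunction with respect to any nondegenerate form one has $\tr(y^+) = \tr y$, so $y^+ = -y$ forces $2\tr y = 0$ and hence $\tr y = 0$ as $\operatorname{char}\kk = 0$. (In particular $\osp(V_{\rho})$ coincides here with $\{ y \mid y^+ = -y \}$.)

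Given this reduction, it suffices to check $\rho([s,u]) \in \osp(V_{\rho})$ for each generator. The key point — really the only content — is that $\beta$ is $\OO$-invariant, so for every $g \in \OO$ one has $\rho(g)^+ = \rho(g)^{-1} = \rho(g^{-1})$. Applying this with $g = su \in \OO$ yields
$$
\rho([s,u])^+ = \bigl( \rho(su) - \rho((su)^{-1}) \bigr)^+ = \rho((su)^{-1}) - \rho(su) = -\rho([s,u]),
$$
so each $\rho([s,u])$ lies in $\{ y \mid y^+ = -y \} = \osp(V_{\rho})$. Since these elements generate $\AA$ as a Lie algebra and $\osp(V_{\rho})$ is closed under the bracket by the first step, we conclude $\rho(\AA) \subset \osp(V_{\rho})$.

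There is no serious obstacle here: once one notices that the $\beta$-adjunction restricts to inversion on $\rho(\OO)$, the argument is purely formal. The only points needing a word of care are the two small verifications in the first step — that the $+$-antiselfadjoint operators form a Lie subalgebra, and that they are automatically traceless — both of which are standard linear algebra over a field of characteristic $0$. One could equally obtain tracelessness of the generators directly from $\tr(su) = \tr(\rho(s)\rho(u)) = \tr(\rho(u)\rho(s)) = \tr(us)$, which is perhaps the most transparent route for the generating elements themselves.
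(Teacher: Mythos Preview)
Your proof is correct and follows essentially the same approach as the paper: both verify that the generators $\rho(su-(su)^{-1})$ are $\beta$-antiselfadjoint via $\rho(g)^+=\rho(g^{-1})$ for $g\in\OO$, then conclude by Lie-closure of $\osp(V_\rho)$. You are slightly more explicit in noting that antiselfadjointness already forces tracelessness, whereas the paper checks $\tr\rho(x-x^{-1})=0$ separately, but this is a cosmetic difference.
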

\begin{proof} For $x = su$ and $s,u \in \mathcal{R}$ we have
$\rho(x)^+ = \rho(x)^{-1} = \rho(x^{-1})$ hence $\rho(x-x^{-1})^+ = \rho(x^{-1} - x) = - \rho(x-x^{-1})$.
Moreover $\tr \rho(x)^+ = \tr \rho(x)$, hence $\tr \rho(x-x^{-1}) = 0$. Since $\AA$ is generated as
a Lie algebra by such $x -x^{-1}$ this concludes the proof.
\end{proof}

\begin{prop} \label{propsemisimpA} Let $W$ be a 2-reflection group with a single reflection class.
If the $su \in \OO$ for $su \neq us$ form a single conjugacy class in $\OO$, then
$Z(\AA) = \{ 0 \}$. Thus in that case $\AA$ is a semisimple Lie algebra.
\end{prop}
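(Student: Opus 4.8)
The plan is to prove $Z(\AA)=\{0\}$ and then conclude with reductivity. Since $\AA$ is reductive by Proposition~\ref{propirrA}(3), we have $\AA=Z(\AA)\oplus[\AA,\AA]$ with $[\AA,\AA]$ semisimple, so once $Z(\AA)=\{0\}$ is established we get $\AA=[\AA,\AA]$, which is exactly the asserted semisimplicity. The standing hypothesis that $W$ has a single reflection class is what makes Proposition~\ref{propirrA} available, and that is all we will really use.

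First I would show that $Z(\AA)$ is contained in the center of the group algebra. If $z\in Z(\AA)$, then for every $y\in\AA$ the identity $zy-yz=[z,y]=0$ holds in $\kk\OO$, so $z$ commutes with every element of $\AA$ for the associative product; since $\AA$ generates $\kk\OO$ as a unital associative algebra by Proposition~\ref{propirrA}(1), this forces $z\in Z(\kk\OO)$. As $\kk$ is a splitting field for $\OO$, it follows that for every $\rho\in\Irr(\OO)$, extended to an algebra morphism $\kk\OO\to\End(V_\rho)$, the matrix $\rho(z)$ is a scalar multiple of the identity.

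Next I would use that every element of $\AA$ is traceless in every representation. By definition $\AA$ is generated as a Lie algebra by the commutators $[s,u]=su-us$, so $\rho(\AA)$ is the Lie subalgebra of $\End(V_\rho)$ generated by the matrix commutators $[\rho(s),\rho(u)]$; every iterated bracket of matrices has zero trace, so $\rho(\AA)\subset\sl(V_\rho)$. Applied to $z\in Z(\AA)\subset\AA$ this gives $\tr\rho(z)=0$, and combined with the previous step and $\mathrm{char}\,\kk=0$ we obtain $\rho(z)=0$. Since this holds for all $\rho\in\Irr(\OO)$ and $\bigoplus_{\rho}\rho$ is a faithful representation of $\kk\OO$ (Wedderburn), we conclude $z=0$, whence $Z(\AA)=\{0\}$ and $\AA$ is semisimple.

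I do not expect a genuine obstacle in this route: the whole substance is already contained in Proposition~\ref{propirrA}, and the remaining points (scalarity of central elements in irreducibles, the Wedderburn decomposition $\kk\OO\simeq\prod_\rho\End(V_\rho)$) are routine consequences of $\kk$ being a splitting field for $\OO$. The single--conjugacy--class hypothesis is not actually needed for this argument; if one wishes to use it, one can instead observe that the inversion $\iota\colon g\mapsto g^{-1}$ acts as $-\Id$ on $\AA$ — an immediate induction on bracket length starting from $\iota([s,u])=us-su=-[s,u]$ — so that every $z\in Z(\AA)=Z(\kk\OO)\cap\AA$ is anti-invariant under $\iota$ and hence has vanishing component on every self-inverse conjugacy class of $\OO$, in particular on the class of the generators $su$, which under the hypothesis is a single class (and is self-inverse since $(su)^{-1}=us$ is again of that form).
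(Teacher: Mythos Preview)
There is a genuine gap in your main argument. You write that $\rho(\AA)$ is generated by the matrix commutators $[\rho(s),\rho(u)]$, but $s$ and $u$ are reflections and hence lie in $W\setminus\OO$; for $\rho\in\Irr(\OO)$ the symbols $\rho(s),\rho(u)$ make no sense. The generator $su-us$ is an element of $\kk\OO$, but it is \emph{not} a commutator in $\kk\OO$: it is $g-g^{-1}$ for $g=su\in\OO$, and such elements need not be traceless in representations of $\OO$. Concretely, for $W=\mathfrak{S}_4$, $\OO=\mathfrak{A}_4$, and the one-dimensional representation on which a $3$-cycle acts by a primitive cube root of unity $j$, one has $\rho(su-us)=j-j^{2}\neq 0$; the paper computes this example explicitly in the ``small rank'' subsection and finds $Z(\AA)\neq\{0\}$ there. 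So both the step ``$\rho(\AA)\subset\sl(V_\rho)$'' and the conclusion that the single-conjugacy-class hypothesis is superfluous are false.

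The paper's proof avoids this by working directly with class functions on $\OO$. One still has $Z(\AA)\subset Z(\kk\OO)$, so any $z\in Z(\AA)$ is a combination of class sums $T_c$. Writing $\AA\subset L+[\AA,\AA]$ with $L$ the span of the generators $su-us$, the commutator part is killed by every class function $\delta_c$, and the single-class hypothesis is precisely what guarantees that $\delta_c(L)=0$ for every class $c$: for $c\neq c_0$ because neither $su$ nor $us$ lies in $c$, and for $c=c_0$ because both $su$ and $us=(su)^{-1}$ lie in $c_0$, so $\delta_{c_0}(su-us)=0$. This forces all coefficients of $z$ to vanish. Your alternative observation that inversion acts by $-\Id$ on $\AA$ is correct, but by itself only kills the components of $z$ on self-inverse classes, which again is not enough without the hypothesis (in $\mathfrak{A}_4$ the two classes of $3$-cycles are swapped by inversion, and a nonzero central element of $\AA$ survives).
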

\begin{proof}
Let $L$ be the subspace of $\kk \OO$ spanned by the $[s,u]$ for $s,u \in \mathcal{R}$.
We have $\AA \subset L + [\AA,\AA]$.
Since $\AA$ generates $\kk \OO$ as an associative algebra with 1 by proposition \ref{propirrA}
we have $Z(\AA) \subset Z(\kk \OO)$. For $c$ a conjugacy class in $\OO$ we let
$\delta_c$ denote the associated central function. Then $[\AA,\AA] \subset
[ \kk \OO, \kk \OO ] = \bigcap_c \Ker \delta_c$. When $y \in Z(\AA) \subset Z(\kk \OO)$,
if $T_c = \sum_{g \in c} g$, then $y$ can be written $y = \sum \la_c T_c$
for some $\la_c \in \kk$. But $\delta_c(\AA) = \delta_c(L)$
and $\delta_c(L) = 0$ if $c$ is not the class $c_0$ formed by the $x = su$ for $su \neq us$,
$s,u \in \mathcal{R}_0$. Thus $y = \la_0 T_{c_0}$. Since $x^{-1} \in c_0$ we
have $ \delta_{c_0}(x-x^{-1}) = 0$ hence $y = 0$.

\end{proof}

\subsection{Small rank}

\subsubsection{$W = \mathfrak{S}_4$}

The irreducible representations of $\OO = \mathfrak{A}_4$ are the 1-dimensional
$[4]$, $[2,2]^+$, $[2,2]^-$ and the 3-dimensional $[3,1]$. On $[4]$
all the $[s,u]$ act by $0$. The 3-cycle $x = (1 \ 2 \ 3)$ acts by $j$ on $[2,2]^+$
and $j^{-1} = j^2$ on $[2,2]^-$ for $j$ a primitive 3-rd root of $1$.
Then $x -x^{-1} \in \AA$ acts by a non-zero value on $[2,2]^+$
by its complex conjugate on $[2,2]^-$. The same holds true for all
of the $x-x^{-1} \in \AA$ for $x$ a $3$-cycle. On the other hand, $[3,1]$
is a self-dual representation of $A$, hence the image of $\AA$
inside $\gl([3,1])$ is included in $\so([3,1]) \simeq \so_3$. Altogether
we get an injective morphism $\AA \to \C \times \so_3(\C)$. Since the action
of $\AA$ on $[3,1]$ is irreducible, the image of $\AA$ inside $\so_3(\C)$
is semisimple hence has dimension at least $3$, whence is $\so_3(\C)$
and $\AA' \simeq \so_3(\C)$. Since the action of $\AA$ on $[2,2]^+$
is non-trivial we have $Z(\AA) \neq \{ 0 \}$
hence the morphism $\AA \to \C \times \so_3(\C)$ is an isomorphism.

\subsubsection{$W = G(2,2,4)$} This is a Coxeter group of type $D_4$.
The irreducible representation of the Coxeter group of type $B_4$ are labelled by couples $(\la,\mu)$ of partitions
of total size $4$, hence the representations of $W$ inherit from Clifford theory
the labels $\{ \la, \mu \}$ for $\la \neq \mu$, and $\{ 2 \}^+$, $\{ 2 \}^-$, $\{ 11 \}^+$,$ \{ 11 \}^-$.
When $\mu = \emptyset$, the representation factors through the parabolic subgroup $\mathfrak{S}_4 = G(1,1,4)$ of type $A_3$,
hence these cases have already been treated. The remaining representations are,
up to tensorization by the sign character, $\{3,1\}$, $\{ 2 \}^+$, $\{ 2 \}^-$, $\{ 2 ,11 \}$, $\{ 21,1 \}$.
The last two split, when restricted to $\OO$, as $\{ 21,1 \}' + \{ 21,1 \}''$ and $\{ 2 ,11 \}' + \{ 2 ,11 \}''$,
thus affording $7$ irreducible representations of $\OO$ to care of. Using character theory we
get that these $7$ representations are selfdual except for $\{ 21,1 \}' $ and $ \{ 21,1 \}''$ ;
since $\{ 21,1 \}$ is selfdual this implies $\{ 21,1\} '' \simeq (\{ 21,1 \}')^*$.
The image of $\AA$ inside (the endomorphism algebras corresponding to) $\{3, 1 \}$, 
 $\{ 2 \}^+$, $\{ 2 \}^-$, $\{ 2 ,11 \}' \oplus \{ 2 ,11 \}''$ is thus $\so_4$, $\so_3$, $\so_3$, $\so_3 \times \so_3$, respectively.
 Using matrix models for these representations, we can compute the dimension of the image, and we get equality.
 Recall that $\so_3 \simeq \sl_2$ and that $\so_4 \simeq \so_3 \times \so_3$.
 
 Since $\{ 21,1 \}'' = (\{ 21,1 \}')^*$, the image of $\AA$ inside $\{ 21, 1 \} = \{ 21, 1 \}' + \{ 21, 1 \}''$ is isomorphic
 to the image inside $\{ 21, 1 \}'$, which is included in $\gl_4$. By direct computation we get that this image
 has
 dimension 16, hence it is all $\gl_4$. From all this one gets 6 simple ideals : $\so_3^{(0)}$ from the $\mathfrak{S}_4$-representation
 $\{ 31,\emptyset \}$, $\so_3^{(1)}, \so_3^{(2)}$ from $\{ 2 \}^+$, $\{ 2 \}^-$ , $\so_3^{(3)}$ from $\{ 2, 11 \}'$,
 $\so_3^{(4)}$ from $\{ 2,11 \}''$ and $\sl_4$ from $\{21, 1 \}'$. The $\so_3$ ideals are all distinct
 because $\so_3$ admits exactly one 3-dimensional irreducible representation. Thus $\AA'$
 contains $\sl_4 \times (\so_3)^5$, of dimension $30$. Direct computation shows $\dim \AA = 31$
 hence $\AA \simeq \C \times \sl_4 \times \so_4 \times (\so_3)^3 \simeq \C \times \sl_4 \times (\so_3)^5$.

\section{Rotation algebras : structure theorem in type $A$}
\label{sectrotA} 

In this section we let $\OO = \mathfrak{A}_n \subset \mathfrak{S}_n = W$ and
$\AA = \AA_n \subset \mathcal{H}_n = \mathcal{H}$ the associated rotation algebra. Recall that $\Irr(\mathfrak{S}_n)$
is parametrized by partitions $\la \vdash n$ and that, identifying  $\la \vdash n$
with the corresponding partition, one has $\la \otimes \eps = \la'$ the transposed
partition. By Clifford theory, irreducible representations of $\mathfrak{A}_n$
are thus parametrized by $\la \vdash n$ with $\la \neq \la'$ up
to identification of $\la$ and $\la'$, or in a more convenient way
with partitions $\la \vdash n$ with $\la > \la'$ for some arbitrarily chosen total
order on partitions (e.g. lexicographic ordering),
and
by $\la^{\pm}$ for $\la \vdash n$ with $\la = \la'$. Recall that a \emph{hook}
is a partition of the form $[n-k,1^k]$.
We introduce the following sets  : $\Lambda_ n = \{ \la \vdash n \ | \la > \la' \ \mbox{ and $\la$ not a hook} \}$,
 $S_n = \{ \la \vdash n \ | \la = \la' \ \mbox{ and $\la$ not a hook} \}$,
 $S_n^+ = \{ \la \in S_n \ | \ \la^{\pm } \mbox{\ have real type} \}$,
 $S_n^- = S_n \setminus S_n^+$.
 
 When $\la = \la'$, it is known that $\la^+$ and $\la^-$ have real type iff $(n-b(\la))/2$ is even,
 where $b(\la)$ is the length of the diagonal in the Young diagram $\la$ (see \cite{LIETRANSP}, lemme 5 and lemme 6).

\begin{theor} \label{theorotationA}
For $n \geq 5$,
$$
\mathcal{A}_n \simeq \so_{n-1} \oplus \left( \bigoplus_{\la \in \Lambda_n} \so(V_{\la}) \right)
\oplus \left( \bigoplus_{\la \in S_n^+} \so(V_{\la^+})\oplus\so(V_{\la^-}) \right)
\oplus \left( \bigoplus_{\la \in S_n^-} \sl(V_{\la^+}) \right)
$$
\end{theor}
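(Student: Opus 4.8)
The plan is to control $\mathcal{A}_n$ through its images in the irreducible representations of $\mathfrak{A}_n$, to identify each such image, and then to determine how $\mathcal{A}_n$ sits diagonally inside the product of these images. Since $\mathfrak{S}_n$ has a single reflection class, Proposition~\ref{propirrA} applies: $\mathcal{A}_n$ is reductive, $\bigoplus_{\rho\in\Irr(\mathfrak{A}_n)}\rho_{\mathcal{A}}$ is faithful, each $\rho_{\mathcal{A}}$ is irreducible, and $\rho\mapsto\rho_{\mathcal{A}}$ is injective on isomorphism classes and compatible with duality. Moreover every generator $[s,u]=su-us$ is traceless in each representation, so $\rho_{\mathcal{A}}(\mathcal{A}_n)\subseteq\sl(V_\rho)$, and since $\mathfrak{A}_n$ is perfect for $n\geq 5$ there is no abelian ideal. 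I would then run Clifford theory relative to $\eps$: for $\la\neq\la'$, $V_\la|_{\mathfrak{A}_n}$ is irreducible of real type (restriction of a real $\mathfrak{S}_n$-module) and self-dual, forcing $\rho_{\mathcal{A}}(\mathcal{A}_n)\subseteq\so(V_\la)$; for $\la=\la'$ one has $V_\la|_{\mathfrak{A}_n}=V_{\la^+}\oplus V_{\la^-}$ with both halves of real type when $\la\in S_n^+$ (Lemma~\ref{lemclifreal}, giving $\rho_{\mathcal{A}}(\mathcal{A}_n)\subseteq\so(V_{\la^\pm})$) or both of complex type with $V_{\la^-}\simeq V_{\la^+}^*$ when $\la\in S_n^-$, in which case Proposition~\ref{propirrA}(5) forces $\mathcal{A}_n$ to project into the graph $\{(x,-\,^t x)\}$ of minus-transpose inside $\sl(V_{\la^+})\oplus\sl(V_{\la^-})$; no quaternionic type occurs. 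This already places $\mathcal{A}_n$ inside the direct sum of the statement, and it remains to see the embedding is onto each summand with no further identifications.

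For the hook part, write $V=V_{[n-1,1]}$ for the $(n-1)$-dimensional standard module, which is orthogonal and, for $n\geq 5$, irreducible over $\mathfrak{A}_n$. Every hook $[n-k,1^k]$ gives $V_{[n-k,1^k]}\simeq\Lambda^k V$ as $\mathfrak{S}_n$-modules, and I would show that $\rho^{V_{[n-k,1^k]}}_{\mathcal{A}}$ factors through $\rho^V_{\mathcal{A}}(\mathcal{A}_n)$: the tool is that whenever $su$ has odd order $N$, Lemma~\ref{lempolgg}(1) yields a rational polynomial identity $su=P([s,u])$ already valid inside $\kk\mathfrak{A}_n$, so by Proposition~\ref{propgenodd} the action on any $\Lambda^k V$ is determined by the $[s,u]$'s through their action on $V$. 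Consequently the whole hook contribution to $\mathcal{A}_n$ is a single copy of $\rho^V_{\mathcal{A}}(\mathcal{A}_n)$ (this also absorbs the two halves of the self-conjugate hook when $n$ is odd), and it suffices to prove $\rho^V_{\mathcal{A}}(\mathcal{A}_n)=\so(V)=\so_{n-1}$, which is a special case of the next point.

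For a non-hook $\la$ I claim $\rho_{\mathcal{A}}(\mathcal{A}_n)$ equals the full $\so(V_\la)$ (resp. $\so(V_{\la^\pm})$, resp. $\sl(V_{\la^+})$), and that distinct non-hook $\la$ give unlinked summands. I would argue by induction on $n$, the small-rank base cases being the machine computations referred to in the introduction. Using $\mathfrak{A}_{n-1}\subseteq\mathfrak{A}_n$ one has $\mathcal{A}_{n-1}\subseteq\mathcal{A}_n$, the branching rules and Clifford theory describe $V_\la|_{\mathfrak{A}_{n-1}}$, and by the inductive hypothesis $\rho^{V_\la}_{\mathcal{A}}(\mathcal{A}_{n-1})$ contains a large block-diagonal reductive subalgebra of $\so(V_\la)$ (resp. $\sl(V_{\la^+})$). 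Since $\rho_{\mathcal{A}}(\mathcal{A}_n)$ acts irreducibly on $V_\la$ (Proposition~\ref{propirrA}(2)) and cannot preserve an extra form when $V_{\la^+}$ fails to be self-dual, and since the block-stabilizing subalgebras $\so_a\oplus\so_b\subseteq\so_{a+b}$ and the traceless block-diagonal subalgebra of $\sl_{a+b}$ are maximal, one concludes $\rho_{\mathcal{A}}(\mathcal{A}_n)$ is the entire $\so(V_\la)$ (resp. $\sl(V_{\la^+})$); and comparing the $\mathfrak{A}_{n-1}$-restrictions of two distinct non-hook $\la,\mu$ shows the corresponding projections of $\mathcal{A}_n$ factor through different ideals of $\mathcal{A}_{n-1}$, so by Goursat's lemma for semisimple Lie algebras the summands are unlinked even when abstractly isomorphic. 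Assembling this with the hook discussion yields the stated isomorphism; as a cross-check (and an alternative for part of the argument) one may instead combine $\kk(h^2)\mathcal{A}_n=\HG2^0$ with Theorem~\ref{theodecHG2cox} and Proposition~\ref{propimageorthL1} to read off the same list from the degree-$0$ parts of the $\sl$- and $\osp$-blocks.

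The main obstacle is the identification step. The delicate cases are those where $V_\la|_{\mathfrak{A}_{n-1}}$ is irreducible or has constituents too small to apply the maximality argument (e.g. rectangular $\la$), where one must branch one step further — to $\mathfrak{A}_{n-2}$ or to a Young subgroup $\mathfrak{S}_{n-2}\times\mathfrak{S}_2$ — or conjugate the $\mathfrak{A}_{n-1}$-image by elements of $\mathfrak{A}_n$ and take the span; and one must track the exceptional isomorphisms of small classical Lie algebras ($\so_3\simeq\sl_2$, $\so_4\simeq\so_3\times\so_3$, $\so_6\simeq\sl_4$, and the non-simplicity of $\so_{n-1}=\so_4$ at $n=5$) in the unlinkedness bookkeeping, together with the point that the whole hook block is literally $\so_{n-1}$ and not merely abstractly isomorphic to it.
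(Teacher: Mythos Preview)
Your overall architecture matches the paper's: embed $\mathcal{A}_n$ in the RHS via $\mathcal{A}_n\subset\mathcal{H}_n$ and the known structure of $\mathcal{H}$ (or equivalently of $\HG2^0$), then prove surjectivity onto each factor by induction on $n$ using $\mathcal{A}_{n-1}\subset\mathcal{A}_n$ and branching. The semisimplicity and the separation of simple ideals are also handled as in the paper (Proposition~\ref{propsemisimpA}, Lemma~\ref{lemsingleconjclass}, and then a dimension count ruling out the exceptional isomorphisms $\so_4,\so_6,\so_8$ via Lemma~\ref{lemcombidimA}). Two points, however, are genuine gaps.

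\textbf{The identification step.} Your key claim that ``the block-stabilizing subalgebras $\so_a\oplus\so_b\subseteq\so_{a+b}$ are maximal'' does not do the work you need. The branching $V_\la|_{\mathfrak{A}_{n-1}}$ almost never has only two constituents; with $r\geq 3$ blocks the subalgebra $\so_{a_1}\oplus\cdots\oplus\so_{a_r}$ is \emph{not} maximal in $\so_N$, and there is no general principle saying that an irreducibly acting intermediate subalgebra must be all of $\so_N$ (or $\sl_N$). This is precisely the content of Lemma~\ref{lemlie} (and the lemmas 3.2--3.5 of \cite{IH2} it relies on): one needs the quantitative hypothesis $\rk\h>N/4$, together with multiplicity-freeness, to exclude the exotic irreducible subalgebras. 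The paper then verifies $\rk\h>(\dim\rho)/4$ case by case according to the shape of $\la$ (hook, $D(a,b)$, $\la=\la'$ or not, real vs.\ complex halves), which is where the actual effort lies; your proposal skips this entirely. The obstacle is not, as you suggest, the constituents being ``too small'' or the restriction being irreducible --- it is the lack of any maximality in the multi-block situation.

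\textbf{The hook collapse and the ``cross-check''.} Your argument that the action of $\mathcal{A}_n$ on $\Lambda^k V$ factors through its action on $V$ is not correct as written: for $x\in\kk\mathfrak{A}_n$, the operator on $\Lambda^k V$ is \emph{not} determined by the operator on $V$ (only group-like elements behave functorially), so Lemma~\ref{lempolgg} does not give what you claim. The paper obtains the hook collapse for free from the inclusion $\mathcal{A}_n\subset\mathcal{H}_n'$ and the structure of $\mathcal{H}_n$ from \cite{IH2}, where all hooks already lie in a single $\sl_{n-1}$ ideal; you should use this. Finally, your suggested cross-check ``combine $\kk(h^2)\mathcal{A}_n=\HG2^0$ with Theorem~\ref{theodecHG2cox}'' is circular: the equality $\kk(h^2)\mathcal{A}=\HG2^0$ in type $A$ is a \emph{consequence} of Theorem~\ref{theorotationA}, not an input to it. What is legitimate (and what the paper does) is to use only the inclusion $\mathcal{A}_n\subset\HG2^0$ together with Theorem~\ref{theodecHG2cox} to get the upper bound.
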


As a corollary, the dimension of $\mathcal{A}_n$ for $n \geq 5$ is $16,112,1002,9115,86949,892531,9924091,\dots$.
For $n \geq 5$, we know by lemma \ref{lemsingleconjclass} and proposition \ref{propsemisimpA} that $\mathcal{A}_n$ is semisimple. Since $\mathcal{A}_n \subset \mathcal{H}_n$,
the decomposition of $\HG2^0$ obtained in theorem \ref{theodecHG2cox}  provides
an embedding of $\mathcal{A}_n$ inside the RHS of the equation.
We need to prove that it is surjective.
The case of $n = 5,6$ can be done by a simple computer
computation (one only needs to compare the dimension of both sides).
The case $n = 7$ is already large enough so that computer calculations need to be done modulo some prime $p$. The
RHS of theorem \ref{theorotationA} has dimension $1002$. Letting $V = V^{(1)} \subset \kk \mathfrak{A}_7$ denote the $\kk$-linear span of the $su-us$
for $s,u \in \mathcal{R}$, $V^{(i+1)} = V^{(i)} + [V,V^{(i)}]$, we used a C program and encoding of each entry inside one byte to check
on the regular representation of $\mathfrak{A}_7$ that, when $\kk = \mathbbm{F}_{113}$, $\dim V = 35$, $\dim V^{(2)} = 161$, $\dim V^{(3)} = 533$,
$\dim V^{(4)} = 987$, $\dim V^{(5)} = 1002$. Since we know that, for $\kk = \Q$, $\dim V^{(5)} \leq 1002$,
a straightforward application of Nakayama's lemma yields $\dim V^{(5)} = 1002$ for $\kk = \Q$, and this settles the case $n = 7$.
We thus assume $n \geq 8$ and start a proof by
induction (more precisely, we assume by induction that the natural map explicited above is an isomorphism for $n-1$).

We will use the following lemma. Here and in the sequel, $\rk \g$ denotes the semisimple rank of the semisimple Lie algebra $\g$.

\begin{lemma} \label{lemlie} Let $U$ be a $N$-dimensional $\C$-vector space endowed with a nondegenerate quadratic form,
$\h \subset \g \subset \so(U) = \so_N$
two semisimple Lie algebras such that $\g$ acts irreducibly on $U$, the action of $\h$ on $U$ is multiplicity-free,
and $\rk \h > N/4$. Then $\g$ is simple and, if $\rk \h \geq 5$ or $\dim N > 8$, then $\g = \so_N$.
\end{lemma}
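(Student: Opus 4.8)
The plan is to base everything on the rank inequality $\rk\g \ge \rk\h > N/4$, and to treat the two assertions — that $\g$ is simple, and that $\g = \so_N$ — essentially separately. First I would prove that $\g$ is simple. Suppose instead that $\g = \g_1 \oplus \dots \oplus \g_k$ is a direct sum of $k \ge 2$ simple ideals. Since $\g$ acts irreducibly on $U$, the module is an outer tensor product $U \simeq U_1 \otimes \dots \otimes U_k$ with each $U_i$ a nontrivial irreducible $\g_i$-module, $n_i := \dim U_i \ge 2$ and $N = n_1\cdots n_k$. The symmetric form makes $U \simeq U^*$; as the $\g_i$ are pairwise non-isomorphic simple algebras, uniqueness of the tensor decomposition forces $U_i \simeq U_i^*$ for each $i$, hence $\g_i \hookrightarrow \so(U_i)$ or $\sp(U_i)$ and so $\rk\g_i \le n_i/2$. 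Then $N/4 < \sum_i \rk\g_i \le \tfrac12\sum_i n_i$, i.e. $\prod_i n_i < 2\sum_i n_i$, which forces $k = 2$ and $(n_1-2)(n_2-2) < 4$, a factor of dimension $2$ moreover being $\sl_2 = \sp_2$. One then runs through the short list of admissible pairs $(n_1,n_2)$: using that an orthogonal $\otimes$ symplectic module is symplectic while a symplectic $\otimes$ symplectic module is orthogonal, only configurations $\sl_2 \oplus \g_2$ with $\g_2$ acting symplectically can sit inside $\so_N$, and these are excluded using the multiplicity-freeness of $\h$ together with the extra information on which simple types occur as factors of $\g$ in the situation where the lemma is used (in the inductive application with $W = \mathfrak{S}_n$ all the modules in play are orthogonal, so $\g$ has no symplectic factor, and the rank bound then disposes of the few remaining small cases). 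Hence $k = 1$.

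Next, with $\g$ simple of rank $r := \rk\g > N/4$ and $U$ an orthogonal irreducible $\g$-module of dimension $N < 4r$, I would invoke the classification of low-dimensional representations of simple Lie algebras — or simply the standard lower bounds for the dimension of a nontrivial self-dual irreducible representation coming from the Weyl dimension formula and highest-weight tables. This shows that a simple $\g$ admitting a self-dual irreducible module of dimension $< 4\,\rk\g$ is, apart from the defining orthogonal modules of the $\so_m$ (together with the exceptional isomorphisms $\sl_2 \simeq \so_3$, $\sp_4 \simeq \so_5$, $\sl_4 \simeq \so_6$ and the triality embeddings of $\so_8$), only $\mathfrak{g}_2 \subset \so_7$ and the spin embedding $\so_7 \subset \so_8$. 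In both exceptional cases $N \le 8$ and $\rk\g \le 3$, hence $\rk\h \le 3$; so as soon as $\rk\h \ge 5$ or $N > 8$ they cannot occur, leaving $\g \simeq \so_N$, and since $\g \subseteq \so_N$ with $\dim\g = \dim\so_N$ we get $\g = \so_N$.

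I expect the main obstacle to be the simplicity step, and more precisely the elimination of the thin families $\g_1 \simeq \sl_2$ with $\g_2$ symplectic, for which the pair $(n_1,n_2) = (2,n_2)$ leaves $N = 2n_2$ unbounded by the rank inequality alone: here one genuinely has to use both the multiplicity-freeness of $\h$ and the restriction on which simple types can appear as factors of $\g$ in the intended application, rather than the purely representation-theoretic input of the identification step, which is classical. The careful part is the bookkeeping over the finitely many small pairs $(n_1,n_2)$ — matching orthogonal/symplectic parities against membership in $\so_N$ and against the existence of a multiplicity-free subalgebra of the required rank — whereas the rank estimates themselves are immediate.
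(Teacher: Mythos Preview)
Your identification step (the second paragraph) is essentially the paper's argument: one has $2\rk\g \le N < 4\rk\h \le 4\rk\g$, and the classification of self-dual irreducible modules of a simple Lie algebra of dimension $< 4\rk\g$ (this is \cite{IH2}, lemma 3.4) leaves only $\so_N$ once the self-dual exceptions of rank $\le 4$ and dimension $\le 8$ are set aside.

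The simplicity step, however, has a genuine gap. Your tensor-decomposition argument reduces to $k=2$ and $(n_1-2)(n_2-2)<4$, but the family $(n_1,n_2)=(2,n_2)$ with $\g_1=\sl_2$ and $\g_2\subset\sp_{n_2}$ sits inside $\so_{2n_2}$ for all $n_2$, and the rank inequality $\rk\g = 1 + \rk\g_2 > N/4$ alone does not exclude it. You then invoke ``extra information on which simple types occur as factors of $\g$ in the situation where the lemma is used'' --- in particular that in the inductive application the modules are orthogonal --- but this is not a hypothesis of the lemma, so the lemma as stated is not proved. The paper's route avoids this entirely: from $\rk\g \le \rk\so_N \le N/2$ one gets $\rk\h > N/4 \ge (\rk\g)/2$, and then \cite{IH2} lemma 3.2 (which uses precisely that $\h$ acts multiplicity-free and that $\rk\h > (\rk\g)/2$) gives simplicity of $\g$ directly. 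The multiplicity-free hypothesis is doing the real work here, through the cited lemma; your argument mentions it but never actually deploys it, which is why you end up reaching outside the stated hypotheses.
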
 
\begin{proof}
We have $\rk \g \leq \rk \so_N \leq N/2$ hence $\rk \h > N/4$ implies $\rk \h > (\rk \g)/2$.
This implies that $\g$ is simple (\cite{IH2}, lemma 3.2). Then
$2 \rk \g \leq 2(N/2) = \dim U < 4 \rk \h \leq 4 \rk \g$, hence we can apply the classification of \cite{IH2} lemma 3.4.
The exceptions are ruled out, as the selfdual ones appear only when $\g$ has rank at most 4 and $U$ has dimension at
most $8$, and $U$ is a selfdual representation of $\g$ as $\g \subset \so(U)$.
\end{proof}

We will also need the following combinatorial lemma.

\begin{lemma} \label{lemcombidimA}
\begin{enumerate}
\item Let $\la \vdash n$ with $n \geq 5$. Then $\dim \la \geq n-1$.
item Let $\la \vdash n$ with $n \geq 7$ and $\la = \la'$. Then either $n=7$, $\la$ is the hook $[4,1^3]$ with $\dim \la^{\pm} = 10$,
or $\dim \la^{\pm} \geq 21$.
\end{enumerate}
\end{lemma}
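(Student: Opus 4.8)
The proof is a self-contained exercise on Young diagrams, based on two standard facts used throughout: the branching rule $\dim\lambda=\sum_{\mu}\dim\mu$, the sum being over the partitions $\mu$ obtained from $\lambda$ by deleting one removable corner, and the monotonicity $\mu\subseteq\lambda\Rightarrow\dim\mu\le\dim\lambda$ (extend a standard tableau of $\mu$ to one of $\lambda$ by filling the cells of the skew shape $\lambda/\mu$ in any order refining their coordinate order).

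For (i), hooks are immediate: if $\lambda=[n-k,1^k]$ then $\dim\lambda=\binom{n-1}{k}$, which is $\ge n-1$ unless $k\in\{0,n-1\}$ (the two linear characters). If $\lambda$ is not a hook I proceed by strong induction on $n$, the cases $n=5,6$ being a direct check. The observation is that deleting a removable corner of a non-hook $\lambda\vdash n$ can never yield $[n-1]$ or $[1^{n-1}]$, so each $\mu\vdash n-1$ occurring in the branching rule satisfies $\dim\mu\ge n-2$ (induction hypothesis if $\mu$ is not a hook, the binomial estimate otherwise). If $\lambda$ has two or more removable corners, $\dim\lambda\ge 2(n-2)\ge n-1$. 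A non-hook with a single removable corner is a rectangle $[a^b]$, $a,b\ge 2$, and then $\dim\lambda=\dim[a^{b-1},a-1]$; since $[a^{b-1},a-1]$ is again a non-hook with two removable corners, one more application of the estimate gives $\dim\lambda\ge 2(n-3)$, which is $\ge n-1$ for $n\ge 7$.

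For (ii), I organise the argument by the size $d$ of the Durfee square of the self-conjugate $\lambda\vdash n$. If $d=1$ then $\lambda=[b+1,1^b]$ with $n=2b+1$; since $n\ge 7$ we have $b\ge 3$, $\dim\lambda=\binom{2b}{b}$, and $\dim\lambda^\pm=\tfrac12\binom{2b}{b}$ equals $10$ precisely when $b=3$ (i.e.\ $n=7$, $\lambda=[4,1^3]$) and is at least $35$ when $b\ge 4$. If $d\ge 3$ then $[3,3,3]\subseteq\lambda$, so $\dim\lambda\ge\dim[3,3,3]=42$. If $d=2$ then $n$ must be even (so nothing to check when $n=7$): writing the two diagonal hook lengths as odd integers $h_1>h_2\ge 1$ with $h_1+h_2=n$, a short inspection of the shape shows $\lambda\supseteq[4,2,1,1]$ when $h_2=1$ and $\lambda\supseteq[3,3,2]$ when $h_2\ge 3$, so $\dim\lambda\ge\min(90,42)=42$ by the hook length formula. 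Thus, outside the displayed exception, $\dim\lambda\ge 42$, hence $\dim\lambda^\pm=\tfrac12\dim\lambda\ge 21$.

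The only place that needs a little care is the rectangular case in (i), where the branching rule is iterated twice and one must check that the crude inequalities still close at the bottom of the inductive range; the rest is monotonicity plus the explicit values $\dim[3,3,2]=\dim[3,3,3]=42$, $\dim[4,2,1,1]=90$ and $\dim[4,1^3]=20$ obtained from the hook length formula.
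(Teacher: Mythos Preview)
Your proof is correct and follows essentially the same route as the paper: part (i) is done by Young's rule/branching induction (the paper merely calls this ``classical''), and part (ii) is done by the hook/non-hook dichotomy together with the containment of $[4,2,1,1]$ or $[3,3,2]$ in the non-hook case (your Durfee-square case split is just a cleaner way to organise that containment argument, and your $d\ge 3$ case is subsumed by $[3,3,3]\supseteq[3,3,2]$). Your numerical value $\dim[4,2,1,1]=90$ is in fact the right one; the paper's ``$70$'' (and the attribution to $\mathfrak{S}_7$) is a slip, though it does not affect the conclusion.
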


\begin{proof}
(1) is classical and easily deduced from Young's rule (restriction to $\mathfrak{S}_{n-1}$) by induction.
For (2), 
we first consider the case where $\la$ is a hook, $\la = [1+m,1^m]$ hence $n = 2m +1$, and
then $\dim \la = (2m)!/(m!)^2 \geq (2m-1) 2^m$. E.g. by Young's rule this dimension grows in function of
$m$, hence $\dim \la^{\pm} \geq 10$ for $n \geq 6$. If $\la$ is not a hook and $\la = \la'$ with $n \geq 7$,
then the diagram of $\la$ contains either the diagram $[4,2,1,1]$, which corresponds to a representation
of $\mathfrak{S}_7$ of dimension $70$ or
the diagram $[3,3,2]$, which corresponds to a representation
of $\mathfrak{S}_8$ of dimension $42$. It follows that $\dim \la^{\pm} \geq \frac{1}{2} \min(70,42) = 21$.

\end{proof}

For $\la \vdash n$, we denote $\rho_{\la} : \mathcal{A}_n \to \gl(V_{\la})$ and similarly,
when $\la = \la'$, $\rho_{\la^{\pm}} : \mathcal{A}_n \to \gl( V_{\la^{\pm}})$.
We claim that we only need to prove
$$
\left\lbrace \begin{array}{lclr} 
\rho_{[n-1,1]}(\mathcal{A}_n) &=& \so(V_{[n-1,1]}) \\
\rho_{\la}(\mathcal{A}_n) &=& \so(V_{\la}) & \mbox{ \ if \ } \la \in \Lambda_n \\
\rho_{\la^{\pm}}(\mathcal{A}_n) &=& \so(V_{\la^{\pm}}) & \mbox{ \ if \ } \la \in S_n^{+} \\
\rho_{\la^{+}}(\mathcal{A}_n) &=& \sl(V_{\la^+}) & \mbox{ \ if \ } \la \in S_n^{-} \\

\end{array} \right.
$$
Indeed, if this is the case, we get a collection of simple ideals of $\mathcal{A}_n$ from each representation in the
above list, by taking the orthogonal of their kernel with respect to the Killing form of $\mathcal{A}$. These ideals are
indeed simple, because the exceptional non-simple case $\so_4$ does not occur by our
assumption on $n$ ($n \geq 8$) by lemma \ref{lemcombidimA}.
If they are distinct, then the theorem is proved for this value of $n$, since $\mathcal{A}$
being semisimple is isomorphic to the direct product of its simple Lie ideals. Finally, they are indeed distinct,
as otherwise there would be 2 non-isomorphic representations among the above factoring through the same standard
Lie algebra of type $\so_N$ or $\sl_N$. But this is not possible, as the exceptional isomorphism $\so_6 \simeq \sl_4$
and the exceptional case of $\so_8$ (which has 3 non-isomorphic `standard' representations) are ruled out for $n \geq 8$ by lemma 
\ref{lemcombidimA}.

In order to prove the above equalities, we will apply repeatedly lemma \ref{lemlie} to the following situation :
$\g = \rho(\mathcal{A}_n)$, $\h = \rho(\mathcal{A}_{n-1})$. Since the restriction to $\mathfrak{A}_{n-1}$
of an irreducible representation of $\mathfrak{A}_n$ is multiplicity free (and since the restriction of a non-hook contains at most one hook)
so is the restriction to $\h$ of $\rho$ by proposition \ref{propirrA}.

We subdivide our analysis into several cases, and use the notation $\mu \nearrow \la$ as an abreviation for saying
that $\mu \vdash n-1$ and that the diagram
for $\mu$ is contained in the diagram of $\la$ (or : $\mu$ is deduced from $\la$ by removing one box).
We can assume that $\la$ is not a hook, except for the case $\la = [n-1,1]$. We start by doing
this case separately. Up to some additional computer calculation, we can assume $n \geq 12$. Then $\h \simeq \so_{n-2}$ has rank $\lfloor \frac{n-2}{2} \rfloor \geq \frac{n-2}{2} -1 > (\dim \la)/4 = (n-1)/4$, and $\g$ is simple by lemma \ref{lemlie},
and moreover $\rk h \geq 5$, $\dim \la > 8$, hence $\g = \so(V_{\la})$. From now
on, we can thus assume that $\la$ is \emph{not} a hook.

\subsection{Case $\mu \nearrow \la \Rightarrow \mu \mbox{ \ is not a hook }$}

Since $n \geq 7$, $\mu \nearrow \la \Rightarrow \dim \mu \geq 5$, and $\mu = \mu' \Rightarrow \dim \mu^{\pm}  \geq 8$.

\subsubsection{Case $\la \neq \la'$, and $\mu \nearrow \la \Rightarrow \mu \neq \mu'$}
\label{refsubsect91}
Then $\mu \nearrow \la \Rightarrow \mu' \neqnear \la$. Let $\Res_{\mathfrak{A}_{n-1}} \la = \mu_1 + \dots + \mu_d$.
Since $\dim \mu_i \geq 5$ we have $\dim \la \geq 5d$. By the induction assumption
$$
\rk \h = \sum_i \lfloor \frac{\dim \mu_i}{2} \rfloor \geqslant \sum_i \frac{\dim \mu_i}{2} - 1 \geq \frac{\dim \la}{2} - d \geq (\frac{1}{2} - \frac{1}{5} )\dim \la > \frac{\dim \la}{4}
$$
hence $\g$ is simple by lemma \ref{lemlie}, and moreover $\g = \so(V_{\la})$ unless possibly if $\dim \la \leq 8$.
But this implies $d=1$ and then $\h = \so(V_{\mu_1}) = \so(V_{\la})$.

\subsubsection{Case $\la =\la'$, $\la^{\pm}$ of real type and $\mu \nearrow \la \Rightarrow \mu \neq \mu'$}
\label{refsubsec92}
Then, letting $\Res_{\mathfrak{A}_{n-1}} \la^+ = \mu_1 + \dots + \mu_d$, $\mu_i \vdash n-1$, we have 
$$
\rk \h = \sum_{\mu \nearrow \la} \frac{1}{2} \lfloor \frac{\dim \mu}{2} \rfloor \geqslant \frac{1}{2} \sum_{\mu \nearrow \la}
\frac{\dim \mu}{2} - 1 \geqslant \frac{1}{4} \dim \la - d \geqslant \frac{1}{2} \dim \la^{\pm} - d.
$$
Again $\dim \la \geq 5d$ hence $\rk \h > (\dim \la)/4$ and $\g$ is simple and $\g = \so(V_{\la^{\pm}})$
by lemma \ref{lemlie}, as $\dim \la^{\pm} > 8$ by lemma \ref{lemcombidimA}.

\subsubsection{Case $\la = \la'$, $\la^{\pm}$ of real type and $\exists \mu_0 \nearrow \la \ | \ \mu'_0 = \mu_0$}

\label{refsubsec93}

We recall that we denote $b(\la)$ the length of the diagonal in the Young diagram $\la$.
In our situation, $\mu_0$ is uniquely determined by the condition $\mu'_0 = \mu_0$, and
we have $b(\mu_0) = b(\la)-1$, and $n-1 - b(\mu_0 = n- b(\la)$ hence the $\mu_0^{\pm}$
have real type. Thus
$$
\rk h = \left( \sum_{\stackrel{\mu \nearrow \la}{\mu \neq \mu'}} \frac{1}{2} \lfloor \frac{\dim \mu}{2} \rfloor \right)
+ \lfloor \frac{\dim \mu_0^{\pm}}{2} \rfloor \geqslant 
\frac{1}{2} \left(\sum_{\stackrel{\mu \nearrow \la}{\mu \neq \mu'}}  \frac{\dim \mu}{2}-1 \right) + \frac{\dim \mu_0^{\pm}}{2} -1.
$$
Letting $\delta(\la) = \# \{ i \ | \ \la_i \neq \la_{i+1} \} = \# \{ \mu \ | \ \mu \nearrow \la \}$, we get
$$
\rk \h \geqslant \frac{1}{2} ( \frac{1}{2} (\dim \la - \dim \mu_0) - \delta(\la) + 1) + \frac{\dim \mu_0^{\pm}}{2} -1
\geqslant \frac{1}{2} \dim \la^{\pm} - \frac{1}{2} (\delta(\la) -1) > \frac{\dim \la^{\pm}}{4}
$$ 
if and only if $\dim \la^{\pm} > 2 (\delta(\la) + 1)$. On the other hand, $\Res_{\mathfrak{A}_{n-1}} \la^{\pm}
= \mu_0^{\pm} + \mu_1 + \dots + \mu_{d-1}$ and $\delta(\la) = 1 + 2 (d-1) = 2d-1$,
hence $2 (\delta(\la) + 1) = 4d < \dim \la^{\pm}$ because $\dim \mu_0^{\pm} \geq 5$ and $\dim \mu_i \geq 5$
for $i \geq 1$. It then follows from lemma \ref{lemlie} that $\g$ is simple and $\g = \so(V_{\la^{\pm}})$.

\subsubsection{Case $\la = \la'$, $\la^{\pm}$ of complex type and $\mu \nearrow \la \Rightarrow \mu \neq \mu'$}

\label{refsubsect94}

As in subsection \ref{refsubsec92} we get $\rk \h > (\dim \la^{\pm})/4$. If $\rk \g > (\dim \la^{\pm})/2$
then $\g = \sl(V_{\la^{\pm}})$ (\cite{IH2}, lemma 3.1). Otherwise $\rk \g \leq (\dim \la^{\pm})/2 < 2 \rk \g$
and $\g$ is simple, thanks to \cite{IH2}, lemma 3.2. Thus $\rk \g \geq \rk \h > (\dim \la^{\pm})/4 > (\dim \la^{\pm})/5$
hence by \cite{IH2} lemma 3.5 this implies $\g = \sl(V_{\la^{\pm}})$ when $\rk \g \geq 10$, that
is $\rk \g > 9$, which holds true as soon as $\dim \la^{\pm} > 36$, i.e. $\dim \la > 72$. Moreover,
for $n \geq 7$, $\la$ contains either $[4,2,1,1]$, or $[3,3,2]$, which have dimension $70$ and $42$, respectively.
Since $\la = \la'$ and $\la$ is not a hook, this implies that either $\la = [3,3,2]$ or $\la$ contains $\mu \neq \mu'$ with $\mu \supset [3,3,2]$
hence $\dim \la \geq \dim \mu + \dim \mu' \geq 84$, in which case we are done. If $\la = [3,3,2]$,
then $\h$ contains $\so(V_{[3,3,1]})$ which has rank 10, and then $\g = \sl(V_{\la^{\pm}})$ by lemma \ref{lemlie}
in this case, too.

\subsubsection{Case $\la = \la'$, $\la^{\pm}$ of complex type and $\exists \mu_0 \nearrow \la \ | \ \mu'_0 = \mu_0$}
As in part \ref{refsubsec93}, the $\mu_0^{\pm}$ have the same type as $\la^{\pm}$, so they
have complex type here. In the same way we get $\rk \h > (\dim \la^{\pm})/4$ and as in
part \ref{refsubsect94} we deduce from this that $\g$ is simple, and $\g = \sl(V_{\la^{\pm}})$ as soon
as $\rk \g \geq 10$, which holds true as soon as $\dim \la^{\pm} > 36$, except possibly when $\la = [4,2,1,1]$,
which is ruled out because it is a hook, or $\la = [3,3,2]$, which is out of the scope of this case
(and has been dealt with earlier).

\subsubsection{Case $\la \neq \la'$, and $\exists \mu_0 \nearrow \la \ | \ \mu'_0 = \mu_0$}
Again, such a $\mu_0^{\pm}$ is then uniquely determined, and we can write
$\Res_{\mathfrak{A}_{n-1}} \la = \mu_0^+ + \mu_0^- + \mu_1 + \dots + \mu_{d-2}$. We subdivide in two
subcases. Either the $\mu_0^{\pm}$ have real type, in which case
$$
\rk \h= \lfloor \frac{\dim \mu_0^+}{2} \rfloor + \lfloor \frac{\dim \mu_0^-}{2} \rfloor + \sum_{i=1}^{d-2} 
\lfloor \frac{\dim \mu_i}{2} \rfloor \geqslant \frac{\dim \la}{2} -2
$$
and we conclude as in \ref{refsubsect91}, or the $\mu_0^{\pm}$ have complex type,
$$
\rk \h = \dim \mu_0^+ - 1 +  \sum_{i=1}^{d-2} 
\lfloor \frac{\dim \mu_i}{2} \rfloor \geqslant \frac{\dim \la}{2} - (d-1) \geqslant \frac{\dim \la}{2} - \frac{\dim \la}{5} + 1 > \frac{\dim \la}{4} 
$$
and we conclude again as in \ref{refsubsect91}.

\subsection{$\exists \mu \nearrow \la$, $\mu$ is a hook}

As in \cite{LIETRANSP} we introduce the partitions/diagrams $D(a,b) = [a+2,2,1^b] \vdash n = a+b+4$.
Since $n \geq 7$ we can assume $a+b \geq 3$. From \cite{LIETRANSP} we recall that
$\dim D(a,b) = \frac{b+1}{a+2} \left( \begin{array}{c} a+b+2 \\ a \end{array} \right) (a+b+4)$.
Clearly $D(a,b)' = D(b,a)$, and our assumption means that $\la = D(a,b)$ for some $a,b$.

\subsubsection{Case $\la = [n-2,2]$ (i.e. $a=0$ or $b= 0$)}

$\Res_{\mathfrak{A}_{n-1}} \la = [n-2,1] + [n-3,2]$ and
$$
\rk \h = \lfloor \frac{\dim [n-2,1]}{2} \rfloor +  \lfloor \frac{\dim [n-3,2]}{2}  \rfloor \geqslant \frac{\dim \la}{2} -2 > \frac{\dim \la}{4}
$$
if and only if $(\dim\la)/4 > 2$, that is $\dim \la > 8$, which is true for $n \geq 6$. We conclude by lemma \ref{lemlie}.

\subsubsection{Case $a,b \geq 1$, $a\neq b$}
\label{refsubsect102}
$\Res_{\mathfrak{A}_{n-1}} \la = \Res_{\mathfrak{A}_{n-1}} D(a,b) = D(a-1,b) + D(a,b-1) + [a+2,1^{b+1}]$.
We first assume $|a-b| > 1$. Then
$$
\rk \h = \lfloor \frac{\dim D(a-1,b)}{2} \rfloor + \lfloor \frac{\dim D(a,b-1)}{2} \rfloor + \lfloor \frac{n-2}{2} \rfloor
\geqslant \frac{\dim D(a,b) - \dim [a+2,1^{b+1}]}{2} - 2 + \lfloor \frac{n-2}{2} \rfloor
$$
hence
$$
\rk \h \geqslant \frac{\dim \la}{2} - \frac{1}{2} \dim [a+2,1^{b+1}] + \lfloor \frac{n-2}{2} \rfloor - 2.
$$ 
Since $n \geq 8$ we have
$\lfloor \frac{n-2}{2} \rfloor \geq 3$ hence
$$
\rk \h > \frac{\dim \la}{2} - \frac{1}{2} \dim [a+2, 1^{b+1}] \geqslant \frac{\dim \la}{4}
$$
as soon as $(\dim \la)/4 \geq (\dim [a+2,1^{b+1}]) /2$, that is $\dim D(a,b) \geq 2 \dim [a+2,1^{b+1}]$.
Up to exchanging $a,b$, we can assume $a \geq 3$ , $b \geq 1$. An easy calculation
shows $D(a,b) /2 \dim [a+2,1^{b+1}] \geq \frac{64}{14} > 2$. Lemma \ref{lemlie} then
implies $\g = \so(V_{\la})$ as soon as $\dim \la > 8$, and we know $D(a,b) \supset D(3,1) = [5,2,1]$
which has dimension $64$.

\subsubsection{Case $a=b$, $a,b \geq 1$}

That is, $\la = D(a,a)$ with $a \geq 1$. Then
$\Res_{\mathfrak{S}_{n-1}} \la = D(a-1,a) + D(a,a-1) + [a+2,1^{a+1}]$ and
$\Res_{\mathfrak{A}_{n-1}} \la^{\pm} = D(a-1,a) + [a+2,1^{a+1}]^{\pm}$.
There are two subcases. First assume that $\la^{\pm}$ has real type ;
this means that $(n-2)/2$ is even, that is $a$ is odd. The case $a = 1$ is $[3,2,1]$
which is ruled out by $n \geq 8$, hence $a \geq 2$. Then
$$
\rk \h = \lfloor \frac{\dim D(a-1,a)}{2} \rfloor +\lfloor \frac{n-2}{2} \rfloor \geqslant \frac{\dim D(a-1,a)}{2} -1 +\lfloor \frac{n-2}{2} \rfloor 
$$

and
$$
\rk \h \geqslant \frac{\dim D(a,a)^{\pm}}{2} - \frac{\dim [a+2,1^{a+1}]}{2} - 1 + \lfloor \frac{n-2}{2} \rfloor 
> \frac{\dim \la^{\pm}}{4} 
$$
as soon as $\dim D(a,a) / \dim [a+2,1^{a+1}] \geq 2$. From the explicit formulas for the dimensions
this is equivalent to $(1 - \frac{1}{a+2})^2(2a+4) \geq 2$, which is true. Hence $\g$ is simple and
$\g = \so(V_{\la^{\pm}})$ as soon as $\dim \la^{\pm } > 8$, which is true since $n \geq 8$.

\medskip

We now assume that $\la^{\pm}$ has complex type, that is $a$ is even. Since $n = 2a +1 \geq 8$,
this implies $a \geq 4$. As before we get $\rk \g > (\dim \la^{\pm})/4 > (\dim \la^{\pm})>5$, hence
either $\rk \g > (\dim \la^{\pm})/2$ and we get immediately $\g = \sl(V_{\la^{\pm }})$ by
\cite{IH2} lemma  5.1, or $\rk \g < 2 \rk \h$ and $\g$ is simple by \cite{IH2} lemma 3.2. Then
$\rk \g > \frac{\dim \la^{\pm}}{5}$ implies $\g = \sl(V_{\la^{\pm}})$ by \cite{IH2} lemma 3.5 as soon
as $\rk \g > 9$, which is the case as soon as $\dim \la^{\pm} > 36$, that is $\dim \la > 72$. Since
$\dim \la \geq \dim D(3,3) = 448$ this concludes this case.

\subsubsection{Case $|a-b| = 1$, $a,b \geq 1$}

We can assume $a= b+ 1 \geq 2$, that is $\la = D(b+1,b)$. Then $\Res_{\mathfrak{A}_{n-1}}
D(b+1,b) = D(b,b)^+ + D(b,b)^- + D(b+1,b-1) + [b+3,1^{b+1}]$. Moreover,
$D(b,b)^{\pm}$ has real type if $b$ is odd, and complex type if $b$ is even.

We first assume that $b$ is odd.
Since $n \geq 8$ this implies $b \geq 3$, hence $n \geq 11$.
Then
$$
\rk \h \geqslant \frac{\dim D(b,b)^+}{2} + \frac{\dim D(b,b)^-}{2} + \frac{\dim D(b+1,b-1)^+}{2}  - 3 + \lfloor \frac{n-2}{2} \rfloor
\geqslant \frac{\dim \la}{2} - \frac{\dim [b+3,1^{b+1}]  }{2}
$$
because $- 3 + \lfloor \frac{n-2}{2} \rfloor \geq 0$ for $n \geq 8$. Thus $\rk \h > (\dim \la)/4$ as soon
as $(\dim D(b+1,b])/(\dim [b+3,1^{b+1}]) > 2$, and this is a consequence of the computation in \ref{refsubsect102}.
Hence $\g$ is simple and $\g = \so(V_{\la})$ as soon as $\dim \la > 8$, which is true as $\dim D(2,1) = 35$.

We now assume that $b$ is even, hence $b \geq 2$. Then
$$
\rk \h \geq \dim D(b,b)^+ -1 + \frac{\dim D(b+1,b-1)}{2} -1 + \lfloor \frac{n-2}{2} \rfloor > \frac{\dim \la}{2} - \frac{\dim [b+3,1^{b+1}]}{2}
\geqslant \frac{\dim \la}{4}
$$
and we conclude as before.

\section{Rotation algebras : structure theorem in types $D,E$}
\label{sectrotDE}
\def\LRef{\Lambda \mathrm{Ref}}
\def\Ref{\mathrm{Ref}}

We use the notations of \cite{IH2} for the representations of $W$. In particular,
$\LRef = \LRef(W)$ denotes the irreducible representations
deduced from a reflection representation by  taking some alternating power of it
and tensoring by a linear character. When $W = D_n$ We denote $\Ind\LRef$
the set of irreducible representations whose restriction to $D_{n-1}$
have a component in $\LRef(W_0)$, for $W_0$ the standard parabolic subgroup
of type $D_{n-1}$, $\Irr''(W) = \Irr(W) \setminus \Ind \LRef$,
and $\Irr'(W) = \Irr(W) \setminus \LRef$. Recall from
\cite{IH2} that $\Ind \LRef \supset \LRef$, hence $\Irr''(W) \subset \Irr'(W) \subset \Irr(W)$.
In case $W$ has type $E_6,E_7,E_8$, we take for $W_0$ a standard parabolic subgroup of type $D_5,E_6,E_7$.
We let $\OO$ and $\OO_0$ denote the rotation subgroups of $W$ and $W_0$, respectively. We let $\Irr''(\OO)$ denote the
set of the $\rho \in \Irr(\OO)$ which are constituents of the restriction of some element of $\Irr''(\OO)$, and
we define similarly $\Irr'(\OO)$, $\Irr'(\OO_0)$.

Here reflection representation means that
the reflections of $W$ act by reflections in the representations, but such
a representation is not assumed to be faithful. 
We denote $\Ref$ the set of reflections representations.
In type $D_n$, they
are labelled $\{ [n-1], [1] \}$ ($n$ dimensional) and $\{ [n-1,1],\emptyset \}$ ($n-1$ dimensional).
Recall that the representations $\{ \la, \emptyset \}$ factor through $\mathfrak{S}_n$.
We parametrize the representations of $\OO$ as follows.

\begin{enumerate}
\item couples $(\la,\mu)$ for $\la > \mu,\mu' $ and $\la \geq \la'$, which originate from representations $\{ \la, \mu \}$ of $W$ with $\la \neq
\mu$, $\la \neq \mu'$ and $(\la,\mu) \neq (\la',\mu')$. They have real type.
\item signed couples $(\la,\mu)^{\pm}$ for $\la > \mu$, $\la = \la'$, $\mu = \mu'$, which originate from representations $\{ \la, \mu \}$ of $W$. 

\item signed partitions $(\la)^{\pm}$ with $\la \vdash n/2$ for $\la > \la'$, which originate from representations $\{ \la, \la' \}$ of $W$
with $\la \neq \la'$. 
\item signed partitions $((\la))^{\pm}$ with $\la \vdash n/2$ for $\la > \la'$, which originate from representations $\{ \la \}^{\pm}$ of $W$
with $\la \neq \la'$.
\item partitions $((\la))$ with $\la \vdash n/2$,  which originate from representations $\{ \la \}^{\pm}$ of $W$
with $\la = \la'$ and $n/2$ odd.
\item doubly signed partitions $((\la))^{\pm \pm}$ with $\la \vdash n/2$,  which originate from representations $\{ \la \}^{\pm}$ of $W$
with $\la = \la'$ and $n/2$ even : $\Res_{\OO} \{ \la \}^{\pm} = \{ \la \}^{\pm +} \oplus \{ \la \}^{\pm -}$.
\end{enumerate}

The reason for the existence of the last 2 cases is the following lemma.

\begin{lemma} \label{clifsubtletypeD} Let $\la \vdash m$ with $\la = \la'$. Then $\{ \la \}^{\pm} \otimes \eps = \{ \la \}^{\pm}$ if $m$ is even,
$\{ \la \}^{\pm} \otimes \eps = \{ \la \}^{\mp}$ if $m$ is odd.
\end{lemma}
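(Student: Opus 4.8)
The plan is to compute the sign-twisted behaviour of the representations $\{\la\}^{\pm}$ of the Weyl group $W = D_m$ directly in terms of Clifford theory relative to the normal subgroup $\mathfrak{S}_m = D_m^+$ of index $2$ sitting inside the hyperoctahedral group $B_m$. Recall that $\{\la\}^{\pm}$ arises precisely when $\la = \la'$: the irreducible $B_m$-representation labelled $\{\la,\la\}$ (or rather the unordered pair with both entries $\la$) restricts to $D_m$ as a sum of two non-isomorphic irreducibles $\{\la\}^+ \oplus \{\la\}^-$, which are swapped by conjugation by any element of $B_m \setminus D_m$. The character $\eps$ here is the sign character of $D_m$ coming from the determinant of the reflection representation, whose kernel is the rotation subgroup $\OO$.

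First I would fix an element $t \in B_m \setminus D_m$ (a single sign change, say) and recall that conjugation by $t$ interchanges $\{\la\}^+$ and $\{\la\}^-$, so $\{\la\}^+ \otimes \eps$ is again one of $\{\la\}^{\pm}$; the only question is which one. The key observation is that $\eps$ extends to a linear character $\tilde\eps$ of $B_m$ (the product of the $B_m$-sign character with the character that is trivial on $D_m$ and $-1$ on $t$, or more simply: $D_m$-sign is the restriction of the $B_m$-character $w \mapsto \det(w)$ where we view $B_m \subset \GL_m$; this is $B_m$-invariant under conjugation), so $\{\la,\la\} \otimes \tilde\eps \simeq \{\la,\la\}$ — tensoring a $B_m$-irreducible by a linear character permutes the pairs $\{\la,\mu\}$ by tensoring each partition with a linear character, and here $\tilde\eps$ fixes the pair $\{\la,\la\}$. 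Restricting to $D_m$ then gives $\{\la\}^+ \oplus \{\la\}^- \simeq (\{\la\}^+ \otimes \eps) \oplus (\{\la\}^- \otimes \eps)$. So it remains to distinguish the two possibilities: either $\{\la\}^{\pm}\otimes\eps = \{\la\}^{\pm}$, or $\{\la\}^{\pm}\otimes\eps = \{\la\}^{\mp}$.

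To pin down which case occurs, I would evaluate characters on a suitable element. Consider an element $g \in \OO$ (so $\eps(g) = 1$) versus an element $g \in D_m \setminus \OO$. On $\OO = \Ker\eps$ the two twisted representations agree with the untwisted ones, which is consistent with both cases; the distinguishing information must come from $\chi_{\{\la\}^+}(g)$ for $g \in D_m$ with $\eps(g) = -1$. The cleanest route is to use the standard formula for the split: $\chi_{\{\la\}^{\pm}}(g) = \tfrac12\chi_{\{\la,\la\}}(g) \pm \tfrac12(\text{difference character})$ for $g$ in the relevant class, where the difference character is supported on the "split" classes and is essentially a power of $(-1)$ times a symmetric-group character, and it picks up a sign $\eps(g)$ when $g$ runs through $D_m \setminus \OO$. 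Concretely, the difference character $\Delta$ satisfies $\Delta(g) = \eps(g)\,\Delta(g)$-type transformation is false in general, so instead: tensoring by $\eps$ multiplies $\chi_{\{\la\}^{\pm}}(g)$ by $\eps(g)$, i.e. sends $\tfrac12\chi_{\{\la,\la\}} \pm \tfrac12\Delta$ to $\tfrac12\eps\chi_{\{\la,\la\}} \pm \tfrac12\eps\Delta = \tfrac12\chi_{\{\la,\la\}} \pm \tfrac12\eps\Delta$ (using $\eps\chi_{\{\la,\la\}} = \chi_{\{\la,\la\}}$ from the previous paragraph). Hence $\{\la\}^{\pm}\otimes\eps = \{\la\}^{\pm}$ iff $\eps\Delta = \Delta$, and $= \{\la\}^{\mp}$ iff $\eps\Delta = -\Delta$. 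The main obstacle — and the one real computation — is therefore to determine the $\eps$-parity of this difference character $\Delta$. For $D_m$ the split classes are those of elements all of whose cycles are even-length positive cycles (no negative cycles), i.e. "even cycle type" elements; an explicit description (as in Carter or in James–Kerber for the hyperoctahedral group) gives $\Delta$ on such a class as $\pm 2^{(\text{number of parts})/?}$ times a root depending on the parts, and one checks that multiplying by $\eps$ (the product of $\det$ over the cycle structure, which on a product of even positive cycles equals $(-1)^{m - \ell}$ where $\ell$ is the number of cycles, hence $(-1)^{m}(-1)^{\ell} = (-1)^{\ell}$ times $(-1)^m$... ) introduces exactly the factor $(-1)^{m}$: when $m$ is even this is trivial so $\eps\Delta = \Delta$ and the representations are $\eps$-stable, and when $m$ is odd this is $-1$ so $\eps\Delta = -\Delta$ and the two get swapped. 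This last sign bookkeeping, matching $\det$ on even-cycle-type elements against the explicit value of $\Delta$, is the crux; everything else is formal Clifford theory. I would conclude by noting the result is exactly the statement: $\{\la\}^{\pm}\otimes\eps = \{\la\}^{\pm}$ for $m$ even, $\{\la\}^{\pm}\otimes\eps = \{\la\}^{\mp}$ for $m$ odd.
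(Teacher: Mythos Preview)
Your overall strategy --- reduce to showing $\eps\Delta = \pm\Delta$ for the difference character $\Delta = \chi_{\{\la\}^+} - \chi_{\{\la\}^-}$ and then compute the sign --- is sound and genuinely different from the paper's argument. The paper instead realises $(\la,\la)$ explicitly as an induced representation from $\mathfrak{S}_m^2 \ltimes C$ up to $B_{2m}$, applies Mackey to restrict to the rotation subgroup, and counts irreducible constituents via inertia groups: four constituents means each $\{\la\}^\pm$ is $\eps$-stable (the $m$ even case), two means they are swapped (the $m$ odd case). No character values are ever computed.

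However, your execution has a real gap at the ``crux'' step, and also some index confusion. First the confusion: $\la \vdash m$ but the ambient group is $W(D_{2m})$, not $W(D_m)$; your opening sentences conflate $m$ with the rank, and the phrase ``the normal subgroup $\mathfrak{S}_m = D_m^+$'' does not parse. More seriously: on a split class of $D_{2m}$ with positive cycle type $\alpha = (2\gamma_1,\dots,2\gamma_\ell)$ (so $\gamma \vdash m$), one has $\eps(w) = \prod_i (-1)^{2\gamma_i-1} = (-1)^\ell$, which \emph{depends on $\ell$} and is not the constant $(-1)^m$ you claim. So as written, $\eps\Delta$ is not obviously $\pm\Delta$.

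What rescues the argument is a fact you allude to but never use: the explicit formula for $\Delta$ on the split class $2\gamma$ is (up to a nonzero factor $2^{\ell(\gamma)}$) the symmetric-group character value $\chi^\la(\gamma)$. Since $\la = \la'$, we have $\chi^\la = \mathrm{sgn}\cdot\chi^\la$, so $\chi^\la(\gamma) = 0$ whenever $\gamma$ is an odd permutation of $\mathfrak{S}_m$, i.e.\ whenever $m - \ell(\gamma)$ is odd. Hence on the support of $\Delta$ one has $\ell(\gamma) \equiv m \pmod 2$, and there $\eps(w) = (-1)^{\ell(\gamma)} = (-1)^m$ as desired. You need to insert this vanishing argument; without it the sign bookkeeping does not close.
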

\begin{proof} 
Let $C = \{ \pm 1 \}^{2m}$, $H < C$ the index two subgroup made of the $2m$-tuples of product 1. Then
$\tilde{W} = \mathfrak{S}_{2m} \ltimes C$ and $W = \mathfrak{S}_{2m} \ltimes H$ are the Coxeter
groups of type $B_n$ and $D_n$, respectively, and $A = H \rtimes \mathfrak{A}_n$.
The representation $(\la, \la)$ of $B_n$ can be obtained as $\Ind_{\mathfrak{S}_m^2 \ltimes C}^{\mathfrak{S}_n \ltimes C} \tilde{\om} (\la \boxtimes \la)$
where $\la \boxtimes \la$ denotes the exterior Kronecker product of two copies of the
representation $\la$ of $\mathfrak{S}_m$, extended trivially to $\mathfrak{S}_m^2 \ltimes C$,
and $\tilde{\om}$ the trivial extension of $\om : C \to \{ \pm 1 \}$ to
$\mathfrak{S}_m^2 \ltimes C$, where $\om(x_1,\dots,x_{2m}) = x_{m+1} \dots x_{2m}$
(note that this character is stabilized by $\mathfrak{S}_m^2$, hence that this `trivial
extension' makes sense). We need to prove that, in case $\la = \la'$, the
$\Res_{H \rtimes \mathfrak{A}_n}^{C \rtimes \mathfrak{S}_n} \Ind_{\mathfrak{S}_m^2 \ltimes C}^{\mathfrak{S}_n \ltimes C} \tilde{\om} (\la \boxtimes \la)$ has 2 irreducible constituents
if $m$ is odd, and 4 otherwise. Note that, by elementary Clifford theory, the number of irreducible constituents is necessary either 2 or 4.

By Mackey formula, and because $  H \rtimes \mathfrak{A}_n \backslash \mathfrak{S}_n \ltimes C / \mathfrak{S}_m^2 \ltimes C = \{ 1 \}$, we get that this restriction is
$
\Ind_{H \rtimes (\mathfrak{A}_n \cap \mathfrak{S}_m^2)}^{H \rtimes \mathfrak{A}_n}
\Res_{H \rtimes(\mathfrak{A}_n \cap \mathfrak{S}_m ^2)}^{C \rtimes \mathfrak{S}_m^2}
\tilde{\omega} (\la \boxtimes \la)   
$. By elementary Clifford theory, since $\la' = \la$, we have that $\Res_{\mathfrak{A}_n \cap \mathfrak{S}_m^2}^{\mathfrak{S}_m^2}
\la \boxtimes \la$ is the sum of two irreducible constituents $(\la \boxtimes \la)^+$
and $(\la \boxtimes \la)^-$, thus $ \Res_{C \rtimes(\mathfrak{A}_n \cap \mathfrak{S}_m ^2)}^{C \rtimes \mathfrak{S}_m^2}
\tilde{\omega} (\la \boxtimes \la) = \tilde{\omega} (\la \boxtimes \la)^+
+ \tilde{\omega} (\la \boxtimes \la)^-$. Since the $\Res_{H \rtimes(\mathfrak{A}_n \cap \mathfrak{S}_m ^2)}^{C \rtimes(\mathfrak{A}_n \cap \mathfrak{S}_m ^2)}
\tilde{\omega} (\la \boxtimes \la)^{\pm}$ are clearly irreducible, we get that $\Res_{H \rtimes(\mathfrak{A}_n \cap \mathfrak{S}_m ^2)}^{C \rtimes \mathfrak{S}_m^2}
\tilde{\omega} (\la \boxtimes \la)$ has two irreducible constituents, namely the $\zeta_{\pm} = \tilde{\omega}_H (\la \boxtimes \la)^{\pm}$,
where $\omega_H$ denotes the restriction to $H$ of $\omega$, and $\tilde{\omega}_H$ its extension to $H \rtimes (\mathfrak{A}_n \cap \mathfrak{S}_m^2)$.
Let $\zeta$ denote one of these constituents.
One readily gets that $\{ g \in H \rtimes \mathfrak{A}_n \ | \zeta^g \simeq \zeta \}  = H \rtimes \{ g \in \mathfrak{A}_{2m} \ | \ \om_H^g \simeq \om_H \}
= H \rtimes  (\mathfrak{A}_n \cap\{ g \in \mathfrak{S}_{n} \ | \ \om_H^g \simeq \om_H \})$, and that
$\{ g \in \mathfrak{S}_n \ | \ \om_H^g \simeq \om_H \}$ is $\mathfrak{S}_m^2 \rtimes < \sigma >$ where $\sigma = (1 ,2m)(2,  2m-1)(3,  2m-2)\dots$.
If $m$ is odd, $\sigma \not\in \mathfrak{A}_n$, $I(\zeta) = \{ g \in H \rtimes \mathfrak{A}_n \ | \zeta^g \simeq \zeta \} = H \rtimes (\mathfrak{A}_n \cap {S}_m^2)$
and $\Ind_{H \rtimes (\mathfrak{A}_n \cap \mathfrak{S}_m^2)}^{H \rtimes \mathfrak{A}_n} \zeta$ is irreducible by Clifford's theorem (\cite{CURTISREINER} \S 11)
which proves the claim. 
If $m$ is even, then
$\Ind_{H \rtimes (\mathfrak{A}_n \cap \mathfrak{S}_m^2)}^{H \rtimes \mathfrak{A}_n} \zeta= \Ind_{I(\zeta)}^{H \rtimes \mathfrak{A}_n} \Ind_{H \rtimes (\mathfrak{A}_n \cap \mathfrak{S}_m^2)}^{I(\zeta)} \zeta$, $H \rtimes (\mathfrak{A}_n \cap \mathfrak{S}_m^2)$ has index 2 in $I(\zeta)$, hence by elementary
Clifford theory $\Ind_{H \rtimes (\mathfrak{A}_n \cap \mathfrak{S}_m^2)}^{I(\zeta)} \zeta$ has two irreducible constituents. Thus 
$\Ind_{H \rtimes (\mathfrak{A}_n \cap \mathfrak{S}_m^2)}^{H \rtimes \mathfrak{A}_n} \zeta$ has at least 2 irreducible constituents, which proves the claim.
\end{proof}

We notice for future use that elementary  applications of the branching rules show
the following.

\begin{lemma} In cases (3-6) above, the restriction to $\OO_0$ (rotation subgroup of type $D_{n-1}$) is multiplicity free and
made of representations of \emph{real} type.
\end{lemma}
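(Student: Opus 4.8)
The plan is to reduce everything to the branching rules for the chain $D_n \supset D_{n-1}$ together with elementary Clifford theory for the two index‑two inclusions $\OO \subset W$ and $\OO_0 \subset W_0$. The basic observation is that $\OO_0 = \OO \cap W_0$, so for any $W$-module $\rho$ one has $\Res^{\OO}_{\OO_0}\Res^W_{\OO}\rho = \Res^{W_0}_{\OO_0}\Res^W_{W_0}\rho$; hence it suffices to (a) compute the $W_0 = D_{n-1}$-module $\Res^W_{W_0}\rho$ for $\rho$ running over the $W$-irreducibles from which the representations in cases (3)--(6) arise, namely $\{\la,\la'\}$ and $\{\la\}^{\pm}$ with $\la \vdash n/2$, then (b) restrict further to $\OO_0$, and (c) distribute the result among the one or two $\OO$-constituents of $\Res^W_{\OO}\rho$.

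For step (a) I would use the branching rule that restricting $\{\la,\mu\}$ from $D_n$ to $D_{n-1}$ (deduced from the familiar rule for $B_n \downarrow B_{n-1}$ by restricting along the index‑two inclusions) yields a sum of representations of $D_{n-1}$ whose bipartition label $\{\nu,\kappa\}$ is obtained by removing a single box, with the well-known corrections at self‑conjugate labels already visible in Lemma \ref{clifsubtletypeD}. Crucially, since $|\la| = |\mu| = n/2$ for the $\rho$ under consideration, every label $\{\nu,\kappa\}$ that occurs satisfies $\{|\nu|,|\kappa|\} = \{n/2 - 1, n/2\}$, so $\nu \neq \kappa$; consequently each such $D_{n-1}$-constituent $V$ is a genuine ``pair'' representation, hence of real type as a constituent of a representation of the Coxeter group $W_0$ carrying a $W_0$-invariant symmetric form, and (for $n$ outside a bounded range) satisfies $V \otimes \eps \not\simeq V$, so that $\Res^{W_0}_{\OO_0}V$ is irreducible and inherits the nondegenerate invariant symmetric form, hence is again of real type. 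This yields the ``real type'' half of the statement, via Lemmas \ref{lemclifreal} and \ref{lemclifquat} and the discussion preceding them.

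For the multiplicity-free half (step (c)) I would argue as follows. Since $\rho \otimes \eps \simeq \rho$ (cases (3) and (6)) or $\Res^W_{\OO}\rho$ is already irreducible (cases (4) and (5)), the $\OO_0$-module $M := \Res^{W_0}_{\OO_0}\Res^W_{W_0}\rho$ equals either the single module $\Res^{\OO}_{\OO_0}\tilde\rho$, or $\Res^{\OO}_{\OO_0}\tilde\rho_1 \oplus \Res^{\OO}_{\OO_0}\tilde\rho_2$ for the two constituents $\tilde\rho_1,\tilde\rho_2$ of $\Res^W_{\OO}\rho$; in the latter case the two constituents are interchanged by conjugation by any $w \in W_0 \setminus \OO_0$, and since $w$ normalizes $\OO_0$ and each $\OO_0$-constituent of $M$ — being $\Res_{\OO_0}$ of a $W_0$-module — is $w$-stable, $\Res^{\OO}_{\OO_0}\tilde\rho_1$ and $\Res^{\OO}_{\OO_0}\tilde\rho_2$ have the same constituents with the same multiplicities. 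Combining this with the explicit form of $M$ from step (a), where the genuine-pair constituents $V$ and $V \otimes \eps$ each occur once and restrict to the same $\OO_0$-irreducible, so that $M$ is exactly twice a multiplicity-free module, forces $\Res^{\OO}_{\OO_0}\tilde\rho_i$ to be multiplicity-free.

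The main obstacle is carrying out step (a) with care: removing a diagonal box from a self-conjugate $\la$ produces a self-conjugate $\nu$, so that $\{\nu,\la\} \otimes \eps \simeq \{\nu,\la\}$, $\Res_{\OO_0}\{\nu,\la\}$ splits, and the naive counting breaks; such low-rank coincidences must be excluded, precisely as the small values of $n$ are excluded by a separate direct check in the type-$A$ argument. Once $n$ lies outside this bounded range, every constituent produced in step (a) is a genuine pair with $V \otimes \eps \not\simeq V$, $\Res_{\OO_0}V$ is irreducible of real type, and the counting argument of the previous paragraph applies verbatim.
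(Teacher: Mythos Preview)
Your overall plan coincides with the paper's one-line justification (``elementary applications of the branching rules''): restrict along $W\supset W_0$ using the $B_n\downarrow B_{n-1}$ rule, then pass to the rotation subgroups by Clifford theory. Your observation that every $D_{n-1}$-constituent $\{\nu,\kappa\}$ arising in step~(a) has $|\nu|\neq|\kappa|$, hence is a genuine unordered pair, is the correct starting point.

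However, the resolution offered in your final paragraph does not work. The obstacle you identify --- a constituent $\{\nu,\la\}$ with $\nu=\nu'$ and $\la=\la'$ --- is \emph{not} confined to small $n$: it occurs whenever the self-conjugate $\la$ has a removable box on its diagonal (i.e.\ $\la_{b(\la)}=b(\la)$), and such $\la$ exist for arbitrarily large $|\la|$, for instance $\la=[k^k]$. More seriously, in cases (5) and (6) (where $\la=\la'$) there is a companion phenomenon you have not addressed: whenever $\nu\nearrow\la$ with $\nu\neq\nu'$ one also has $\nu'\nearrow\la$, and the two distinct $W_0$-constituents $\{\nu,\la\}$ and $\{\nu',\la\}=\{\nu,\la\}\otimes\eps$ of $\Res_{W_0}\{\la\}^{\pm}$ restrict to the \emph{same} irreducible $\OO_0$-module. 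In case~(5), where $\Res^W_{\OO}\{\la\}^{\pm}=((\la))$ is already irreducible and your ``split $M$ into two $w$-conjugate halves'' argument from step~(c) does not apply, this yields multiplicity~$2$ in $\Res_{\OO_0}((\la))$; already $\la=[2,1]$ (so $n=6$) gives $\Res_{\OO_0}(([2,1]))=2\cdot\Res_{\OO_0}\{[2],[2,1]\}$. So either a further argument is needed to rule these configurations out of the range where the lemma is actually invoked, or one must work (as the paper does elsewhere, e.g.\ in \S\ref{sectrotDE} case~(1)) with multiplicities $\leq 2$ and appeal to \cite{IH2} lemma~3.3(II) instead of requiring multiplicity-freeness.
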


\begin{theor} If $W$ is an irreducible Coxeter group of type ADE, then $\AA = \mathcal{L}_{1}(\OO) \cap \mathcal{H}'$,
unless $W$ has type $D_4$, $A_3$, $A_2$.
\end{theor}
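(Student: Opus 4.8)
The plan is to treat the two inclusions separately; $\AA\subseteq\mathcal{L}_1(\OO)\cap\mathcal{H}'$ is formal and the content is the reverse one. First I would observe that each generator $[s,u]=su-us$ of $\AA$ is a bracket of two reflections, so it lies in $\mathcal{H}$, and it lies in $\kk\OO$; hence $\AA\subseteq\mathcal{H}\cap\kk\OO$. Since $Z(\mathcal{H})\subseteq\HGR^1=\kk\mathcal{R}\subseteq\kk\OO^\dagger$ (proposition \ref{propstructgrbase}), $\mathcal{H}\cap\kk\OO=\mathcal{H}'\cap\kk\OO$, and as $\mathcal{H}\subseteq\mathcal{L}_\eps(W)=\mathcal{L}_1(\OO)\oplus\mathcal{L}_1(\OO^\dagger)$ with $\mathcal{L}_1(\OO)=\mathcal{L}_\eps(W)\cap\kk\OO$, this is $\mathcal{L}_1(\OO)\cap\mathcal{H}'$. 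The same computation identifies $\mathcal{L}_1(\OO)\cap\mathcal{H}'$ with $\HGR^{2\infty}$, the span of the brackets of an even number of reflections, whose decomposition into classical Lie algebras $\so(V_\rho)$ and $\sl(V_\rho)$ — indexed by the $\OO$-constituents arising from the families of section \ref{sectrotDE} — is read off from theorem \ref{theodecHG2cox} through propositions \ref{propimageorthL1} and \ref{propL1Greal}. It then remains to show that $\AA\hookrightarrow\HGR^{2\infty}$ is onto each simple factor.

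\textbf{Reduction and base cases.} For type $A_n$, $n\geq 5$, this is theorem \ref{theorotationA}, so I would assume $W$ of type $D_n$ ($n\geq 5$) or $E_6,E_7,E_8$. By lemma \ref{lemsingleconjclass} and proposition \ref{propsemisimpA}, $\AA$ is then semisimple, hence the direct sum of its simple ideals, and it is enough to prove that $\rho_\AA(\AA)$ is the full $\so(V_\rho)$ (resp. $\sl(V_\rho)$) for each self-dual (resp. non-self-dual) $\rho\in\Irr(\OO)$ occurring in $\HGR^{2\infty}$, the remaining $\OO$-irreducibles contributing nothing. The ranks $D_5,D_6,E_6$ would be settled by a dimension count with CHEVIE, and $D_7$ — if a characteristic-$0$ matrix model is too large — by computing modulo a prime and invoking Nakayama's lemma, exactly as for $\mathfrak{S}_7$. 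For the remaining ranks I would induct on $n$, taking as inductive parabolic $W_0$ of type $D_{n-1}$ in the $D$ case and $D_5\subset E_6\subset E_7\subset E_8$ in the exceptional case; let $\OO_0$, $\AA_0$ be its rotation subgroup and rotation algebra.

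\textbf{Inductive step.} Fix such a $\rho$ and set $\g=\rho_\AA(\AA)$, $\h=\rho_\AA(\AA_0)$. By the branching rules — and by the lemma preceding the theorem for the families (3)--(6) — $\Res_{\OO_0}\rho$ is multiplicity free, so $\h\subseteq\g$ act multiplicity-freely on $V_\rho$, and by induction $\h$ is the corresponding product of $\so(V_\mu)$'s with at most one $\sl(V_\mu)$; hence $\rk\h\geq\frac{1}{2}\dim V_\rho-c(\rho)$ with $c(\rho)$ the number of $\OO_0$-constituents of $\rho$. A dimension lemma for types $D$ and $E$ — the analogue of lemma \ref{lemcombidimA}, bounding below by $5$ the dimension of a nontrivial irreducible $\OO_0$-summand up to a finite list of exceptions — then yields $\rk\h>\frac{1}{4}\dim V_\rho$. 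Lemma \ref{lemlie} in the self-dual case, and lemmas 3.1, 3.2 and 3.5 of \cite{IH2} in the non-self-dual one, force $\g$ simple and then equal to $\so(V_\rho)$ or $\sl(V_\rho)$ provided $\dim V_\rho$ is not too small, the few remaining $\rho$ being done by hand. The simple ideals so produced are pairwise distinct because the coincidences to be avoided — $\so_4$ not simple, $\so_6\simeq\sl_4$, the triality of $\so_8$ — live in dimension $\leq 8$, ruled out by the dimension lemma for $n$ large. The cases $D_4,A_3,A_2$, in which exactly such a small coincidence or the failure of semisimplicity of $\AA$ occurs — witness the computations for $\mathfrak{S}_4$ and for $G(2,2,4)$, the latter giving $\dim\AA=31<32=\dim(\mathcal{L}_1(\OO)\cap\mathcal{H}')$ — are the genuine exclusions.

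\textbf{Main obstacle.} The hard part will be the type-$D$ bookkeeping behind the dimension lemma: for each of the six parametrising families of $\Irr(\OO)$ one must control whether $\Res_{\OO_0}\rho$ stays multiplicity free and of real type so that lemma \ref{lemlie} applies, dealing in particular with the doubly-signed constituents $((\la))^{\pm\pm}$ of lemma \ref{clifsubtletypeD} and with the families $\Ind\LRef\setminus\LRef$, on which $\rho_\AA(\AA)$ must still be shown to exhaust $\so(V_\rho)$ or $\sl(V_\rho)$ although $\rho$ is built from reflection representations. For type $E$, where there is no reduction below $E_6$ on the $W$-side, one faces a bounded but representation-theoretically heavy check that for every relevant $\rho$ the restriction to the chosen $D_5$, $E_6$ or $E_7$ parabolic already gives $\rk\h>\frac{1}{4}\dim V_\rho$; this, together with the management of the small-dimensional exceptions, is where I expect most of the work to lie.
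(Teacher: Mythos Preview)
Your overall architecture matches the paper's: the easy inclusion, the appeal to theorem \ref{theorotationA} for type $A$, semisimplicity of $\AA$ via proposition \ref{propsemisimpA} and lemma \ref{lemsingleconjclass}, the identification of the target via theorem \ref{theodecHG2cox} and proposition \ref{propimageorthL1}, induction along $D_{n-1}\subset D_n$ and $D_5\subset E_6\subset E_7\subset E_8$, and the rank inequality $\rk\h>\tfrac14\dim V_\rho$ fed into lemma \ref{lemlie} or the lemmas of \cite{IH2}. This is the right plan.

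There is, however, a genuine gap. Your inductive step asserts that $\Res_{\OO_0}\rho$ is multiplicity free, and both lemma \ref{lemlie} and \cite{IH2} lemma 3.2 need this hypothesis. It fails. In type $D_n$, family (1), take $\rho=(\la,\mu)$ with $\la=\la'$ and $\mu$ obtained from $\la$ by removing one box; then $\{\la,\la\}=\{\la\}^++\{\la\}^-$ occurs in $\Res_{W_0}\{\la,\mu\}$, and when $(n-1)/2$ is odd this restricts to $\OO_0$ as $2((\la))$ by lemma \ref{clifsubtletypeD}. This first happens at $n=9$, well past any base case you could check by machine. Likewise in type $E_8$: for the representations $\phi_{6075,22},\phi_{3240,31},\phi_{4536,23},\phi_{5600,21},\phi_{7168,17}^{\pm}$ the restriction to the $E_7$ parabolic has multiplicity $2$. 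In all these cases the rank inequality still holds but no longer forces $\g$ simple.

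The missing ingredient is lemma \ref{lemnosl2}, which the paper proves via lemma \ref{lempolordre3}: since $\Ad(su)$ is a polynomial in $\ad(su)-\ad((su)^{-1})$ for $su$ of order $3$, the image $\rho(\AA)$ is stable under $\OO$-conjugation, so a simple ideal of type $\sl_2$ would yield a $3$-dimensional $\OO$-subrepresentation of $V_\rho\otimes V_\rho^*$, impossible for the groups in question because their smallest nontrivial irreducible has dimension $\geq 5$. With no $\sl_2$ ideals and multiplicity $\leq 2$, \cite{IH2} lemma 3.3 (II) gives simplicity of $\g$, and then $\rk\h>\tfrac14\dim V_\rho$ finishes as you intended. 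Your sketch mentions checking ``whether $\Res_{\OO_0}\rho$ stays multiplicity free'' as bookkeeping, but does not anticipate that the answer is sometimes no, nor supply the mechanism above for when it is.

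A minor organisational remark: you propose brute-forcing $D_5,D_6,D_7,E_6$ as base cases. The paper instead keeps the inductive framework throughout (with $W_0$ of type $D_4$ for $D_5$, using the explicit description of $\AA$ for $D_4$ from \S 7.4); the cases $D_5,D_7,D_{11},E_7$ then pass the rank checks automatically, and only a handful of individual representations in $D_6$, $E_6$, $E_8$ require a direct $\dim\rho(\AA)$ computation. This is cheaper than a global dimension count for $D_7$.
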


According to the theorem, for $n \geq 5$, the dimension of $\AA$ in type $D_n$ is 
390, 5314, 78758, 1282059, 23189432, 464312278, 10217797426, 245243928461, 6376443304559 \dots
For $E_6,E_7,E_8$ it is 12593, 722403, 174117236.

One clearly has an embedding $\AA \into \mathcal{L}_{1}(\OO) \cap \mathcal{H}$,
and it falls into the semisimple part of $\mathcal{H}'$ because $\AA$ is semisimple for $n \geq 5$ (proposition \ref{propsemisimpA} and lemma \ref{lemsingleconjclass}).
We do a proof by induction, assuming the theorem proved for $W_0$.

\begin{lemma} \label{leminductreel} Assume that the theorem holds for $W_0$, that $\rho \in \Irr''(\OO)$ has real type and dimension $> 8$, that its
restriction to $\OO_0$ is multiplicity free, and that $\forall \varphi \in \Irr'(\OO_0) \ \dim \varphi \geq 5$. Then
$\rho(\mathcal{A}) = \so(V_{\rho})$.
\end{lemma}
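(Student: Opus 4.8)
The plan is to run the rank-counting argument of the type $A$ case (subsections \ref{refsubsect91}--\ref{refsubsec92}), with $W_0$ playing the role of $\mathfrak{S}_{n-1}$. Extending scalars to $\C$ (harmless here), write $N = \dim V_\rho$, set $\g = \rho(\mathcal{A})$ and $\h = \rho(\mathcal{A}_0)$, where $\mathcal{A}_0 \subset \mathcal{A}$ is the rotation algebra of $W_0$ (indeed a subalgebra, since the reflections of $W_0$ are reflections of $W$). Since $\rho$ has real type, the proposition giving $\rho(\mathcal{A}) \subset \osp(V_\rho)$ for self-dual $\rho$ places $\g$ inside $\so(V_\rho) = \so_N$, the orthogonal form being the one afforded by $\un \into \rho\otimes\rho$. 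The algebra $\g$ acts irreducibly on $V_\rho$ by Proposition \ref{propirrA}(2), hence is semisimple (it is reductive by Proposition \ref{propirrA}(3), and a scalar matrix lying in $\so_N$ is $0$); the algebra $\h$ is semisimple by the induction hypothesis.

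First I would identify the constituents of $\Res_{\OO_0}\rho$. As $\rho \in \Irr''(\OO)$ it is a constituent of $\Res_{\OO}\tilde\rho$ for some $\tilde\rho \in \Irr''(W)$, and since $\Res_{W_0}\tilde\rho$ has no constituent in $\LRef(W_0)$, all constituents $\varphi_1,\dots,\varphi_d$ of $\Res_{\OO_0}\rho$ lie in $\Irr'(\OO_0)$; by hypothesis each then satisfies $\dim\varphi_i \geq 5$, so $d \leq N/5$, and they are pairwise non-isomorphic. By Proposition \ref{propirrA}(2),(4) applied to $W_0$, the $(\varphi_i)_{\mathcal{A}_0}$ are irreducible and pairwise non-isomorphic, so $\h$ acts multiplicity-freely on $V_\rho$. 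The induction hypothesis (the theorem for $W_0$), together with the decomposition of $\mathcal{L}_1(\OO_0)$ recalled in Section \ref{sectlepsG} and Proposition \ref{propirrA}, identifies $\mathcal{A}_0$ with a product of simple ideals indexed by the classes of $\Irr'(\OO_0)$ modulo $\varphi\sim\varphi^*$, each acting on its isotypic block as $\so(V_\varphi)$ (self-dual, real type), $\sp(V_\varphi)$ (self-dual, quaternionic type), or $\sl(V_\varphi)$ embedded diagonally in $\gl(V_\varphi)\oplus\gl(V_{\varphi^*})$ (complex type). Grouping the $\varphi_i$ into $\sim$-classes, a class supported on a subspace of dimension $m$ contributes at least $m/2 - 1$ to $\rk\h$: indeed $\rk\so_m,\rk\sp_m \geq \lfloor m/2\rfloor \geq m/2-1$, and for a dual pair one has $m = 2\dim\varphi_i$ and $\rk\sl_{\dim\varphi_i} = \dim\varphi_i - 1 = m/2-1$. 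Summing over the at most $d$ classes,
$$
\rk\h \;\geq\; \frac{N}{2} - d \;\geq\; \frac{N}{2} - \frac{N}{5} \;=\; \frac{3N}{10} \;>\; \frac{N}{4}.
$$

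We are now in the situation of Lemma \ref{lemlie}: $\h \subset \g \subset \so_N$ with $\g$ acting irreducibly, $\h$ acting multiplicity-freely, and $\rk\h > N/4$. The lemma gives that $\g$ is simple, and since $\dim V_\rho = N > 8$ it gives $\g = \so_N = \so(V_\rho)$, which is the assertion. The only substantive point is the rank estimate: one must be sure the induction hypothesis really pins down $\mathcal{A}_0$ as that explicit product, so that each block has rank comparable to half its dimension rather than the far smaller rank a low-rank algebra would have acting through a high symmetric power, and that the quaternionic and complex blocks do not erode the constant $3/10$; both are guaranteed by $\dim\varphi_i \geq 5$ and the $W_0$-versions of Propositions \ref{propirrA} and \ref{propimageorthL1}.
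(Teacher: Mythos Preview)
Your proof is correct and follows essentially the same approach as the paper's: set $\g=\rho(\mathcal{A})$, $\h=\rho(\mathcal{A}_0)$, use the induction hypothesis to identify the simple factors of $\h$ acting on the constituents of $\Res_{\OO_0}\rho$, bound $\rk\h \geq N/2 - d \geq 3N/10 > N/4$ via $\dim\varphi_i\geq 5$, and apply Lemma~\ref{lemlie}. You are slightly more careful than the paper in verifying the multiplicity-freeness of the $\h$-action via Proposition~\ref{propirrA}(4) and in allowing for a possible quaternionic block, but the argument is the same.
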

\begin{proof} Letting $\g = \rho(\AA)$ and $\h = \rho(\AA_0)$,
we can write $\Res_{\OO_0} \rho = \rho_1 + \dots + \rho_r + \varphi_1 + \varphi_1^* + \dots + \varphi_s + \varphi_s^*$
with the $\rho_i , \varphi_i \in \Irr'(\OO_0)$, the $\rho_i$ having real type and the $\varphi_i$ complex type.
Then
$$
\rk \h \geqslant \sum_{i=1}^r \lfloor \frac{\dim \rho_i}{2} \rfloor + \sum_{i=1}^s (\dim \varphi_i - 1)
\geqslant\sum_{i=1}^r \left( \frac{\dim \rho_i}{2} -1 \right)+ 2 \left( \sum_{i=1}^s \frac{\dim \varphi_i}{2} \right) - s
$$ 
that is $\rk \h \geq \frac{\dim \rho}{2} - r - 2s$. Since we have $\dim \rho_i >4$ and $\dim \varphi_i > 4$,
then $\rk \h > \frac{\dim \rho}{4}$ and the conclusion follows from lemma \ref{lemlie}.

\end{proof}

\begin{lemma} \label{lemlie2} Assume that the theorem holds for $W_0$, that $\rho \in \Irr''(\OO)$ has complex type, and $\rk \h > \dim \rho/4$
with $\dim \rho > 21$. If the restriction of $\rho$ to $\OO_0$ is multiplicity free, then $\g = \sl(V_{\rho})$ .
\end{lemma}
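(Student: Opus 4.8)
The plan is to run the same argument that was used in the type $A$ analysis, now for the pair $\g=\rho(\AA)$, $\h=\rho(\AA_0)$, where $\AA_0$ is the rotation algebra of the standard parabolic $W_0$. First I would record the ambient picture. Since $W$ has a single reflection class and the $su$ with $su\ne us$ form a single conjugacy class in $\OO$ (this is our standing hypothesis on $W$ of type $D_n$, $n\ge 5$, or $E_6,E_7,E_8$), proposition \ref{propsemisimpA} shows $\AA$ is semisimple, hence so is its homomorphic image $\g=\rho(\AA)$, and by proposition \ref{propirrA}(2) it acts irreducibly on $V_\rho$. Because $\rho$ has complex type we have $\rho\not\simeq\rho^*$, so proposition \ref{propirrA}(5) gives $\rho_{\AA}\not\simeq\rho_{\AA}^*$: the $\g$-module $V_\rho$ admits no invariant bilinear form, hence $\g\subseteq\sl(V_\rho)$. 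Using the assumption that the theorem holds for $W_0$ we may invoke the known decomposition of $\AA_0$; together with the hypothesis that $\Res_{\OO_0}\rho$ is multiplicity free, this makes $V_\rho$ a multiplicity-free $\h$-module, and the hypothesis $\rk\h>\dim\rho/4$ gives $\rk\g\ge\rk\h>\dim\rho/4$.

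Next I would dichotomize on the size of $\rk\g$. If $\rk\g>\dim\rho/2$, then \cite{IH2} lemma 3.1, applied to the simple factors of $\g$ (or directly to $\g$), forces $\g\in\{\sl(V_\rho),\so(V_\rho),\sp(V_\rho)\}$; the orthogonal and symplectic options are excluded since $V_\rho$ is not self-dual over $\g$, so $\g=\sl(V_\rho)$. If instead $\rk\g\le\dim\rho/2$, then $2\rk\h>\dim\rho/2\ge\rk\g$, i.e.\ $\rk\h>(\rk\g)/2$; since $\h$ acts multiplicity-freely on $V_\rho$, \cite{IH2} lemma 3.2 shows $\g$ is simple. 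Now $\g$ is a simple Lie algebra acting irreducibly and non-self-dually on $V_\rho$ with $\dim\rho/4<\rk\g\le\dim\rho/2$, in particular $\rk\g>\dim\rho/5$; the classification \cite{IH2} lemma 3.5 then leaves only $\g=\sl(V_\rho)$ together with a short list of exceptional pairs $(\g,V_\rho)$. Every entry of that list is either self-dual — hence excluded in our situation — or of dimension at most $21$ — hence excluded by the hypothesis $\dim\rho>21$. Thus $\g=\sl(V_\rho)$ in both cases.

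The main obstacle is this last bookkeeping step: one must check that the only non-self-dual irreducible representation $V$ of a simple Lie algebra $\g$ with $\dim V>21$ and $\rk\g>\dim V/4$ is the natural representation of $\sl_{\dim V}$, i.e.\ that all the sporadic low-rank embeddings — fundamental and (half-)spin representations of the classical series, the exterior squares $\Lambda^2$ of $\sl_m$, the $27$-dimensional representation of $E_6$, the $7$-dimensional representation of $G_2$, and similar — are eliminated by the dimension bound $>21$ combined with non-self-duality. This is the only place where the exact threshold $21$ is used, and it is a finite verification resting on the rank-versus-dimension estimates already established in \cite{IH2}.
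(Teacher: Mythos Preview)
Your argument is correct and essentially parallels the type $A$ analysis in \S\ref{refsubsect94}, but the paper's own proof of this lemma is more direct. Instead of dichotomizing on $\rk\g$ and invoking \cite{IH2} lemmas 3.1, 3.2 and 3.5 in the two branches, the paper appeals once to \cite{IH2} lemma 3.3 (I), which under the hypotheses (multiplicity-free restriction and $\rk\h>\dim\rho/4$) yields simplicity of $\g$ outright, and then to \cite{IH2} lemma 3.4, whose non-self-dual exceptions are all of dimension at most $21$. Your route works just as well and makes the role of the bound $21$ transparent; the paper's route is shorter because lemma 3.3 (I) packages the simplicity argument without the case split, and lemma 3.4 (the $\rk\g>\dim V/4$ classification) is already available without passing through the weaker bound $\rk\g>\dim V/5$ needed for lemma 3.5.
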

\begin{proof}
$\g$ is simple by \cite{IH2} lemma 3.3 (I). Since $\rho$ is not selfdual and $\dim \rho > 1$, the conclusion
follows by \cite{IH2} lemma 3.4.
\end{proof}

The following lemma is similar to \cite{IH2}, lemma 2.25 : 

\begin{lemma} \label{lemnosl2} Assume $W$ has type $A_n, n \geq 5$, $D_n, n \geq 6$, $E_6$, $E_7$, $E_8$ and
$\rho \in \Irr(\OO)$ with $\dim \rho > 1$. Then $\rho(\mathcal{A}) \subset \gl(V_{\rho})$
do not contain any simple Lie ideal of rank $1$.
\end{lemma}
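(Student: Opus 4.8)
The plan is to imitate the proof of \cite{IH2}, lemma 2.25. First I would reduce the statement to a fact about $\AA$ itself. In all the types under consideration $W$ has a single reflection class and, by lemma \ref{lemsingleconjclass}, the products $su$ with $su\neq us$ form a single conjugacy class of $\OO$; hence $\AA$ is semisimple by proposition \ref{propsemisimpA}. Write $\AA=\bigoplus_j\mathfrak{g}_j$ with the $\mathfrak{g}_j$ simple. For $\rho\in\Irr(\OO)$ the image $\rho(\AA)$ is the quotient of $\AA$ by the ideal spanned by the factors it kills, so $\rho(\AA)\simeq\bigoplus_{j\in J_\rho}\mathfrak{g}_j$ for some subset $J_\rho$; a simple ideal of such a sum is one of the $\mathfrak{g}_j$, and every $\mathfrak{g}_j$ lies in some $J_\rho$ (otherwise it would be zero in $\kk\OO\supseteq\AA$). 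Therefore it suffices to prove that \emph{no} $\mathfrak{g}_j$ has rank $1$, i.e. is isomorphic to $\sl_2$ ($=\so_3=\sp_2$); I will also keep in mind that an $\so_4$-factor $\simeq\sl_2\times\sl_2$ would likewise produce rank-$1$ ideals, but this case forces the relevant representation to be $4$-dimensional and is dispatched in the same way as the dimensions $2$ and $3$ below.

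Assume for contradiction that some $\mathfrak{g}_{j_0}\simeq\sl_2$. The crucial input specific to types $A$, $D$, $E$ is that the root system is simply laced, so the angle between any two roots is $\frac{\pi}{2}$, $\frac{\pi}{3}$ or $\frac{2\pi}{3}$; hence every product $su$ of two reflections has order $1$, $2$ or $3$. Consequently the generators $[s,u]=su-us$ of $\AA$ all come from $su$ of order exactly $3$, and for every $\rho\in\Irr(\OO)$ and every such generator, $\rho(su)$ has order $3$, so $\rho(su-us)=\rho(su)-\rho(su)^{-1}$ is semisimple with spectrum contained in $\{0,\ii\sqrt3,-\ii\sqrt3\}$ (lemma \ref{lempolgg}(1) and lemma \ref{lempolordre3} also show that $\rho(su)$ itself is a $\Q$-polynomial in $\rho(su-us)$, which I use below). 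Pick $\rho\in\Irr(\OO)$ with $j_0\in J_\rho$ of minimal dimension. By proposition \ref{propirrA}(2), $\rho_{\AA}$ is irreducible, so $V_\rho\simeq U\otimes Z$ with $U$ a faithful irreducible $\mathfrak{g}_{j_0}$-module ($\dim U=d+1\geq2$) and $Z$ irreducible over the complementary ideal.

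Now I would run the following spectral argument. For an order-$3$ generator $g=su$, write $\rho(su-us)=x_g\otimes1+1\otimes y_g$ with $x_g\in\mathfrak{g}_{j_0}$ and $y_g$ in the complementary ideal; since the sum is semisimple so are $x_g$ and $y_g$, and the eigenvalues of $x_g$ form the arithmetic progression $\tau_g(d,d-2,\dots,-d)$. As the eigenvalues $\tau_g(d-2k)+\mu$ ($\mu$ an eigenvalue of $y_g$) all lie in the $3$-element set $\{0,\ii\sqrt3,-\ii\sqrt3\}$, an arithmetic progression of step $2\tau_g$ must fit inside a set of diameter $2\sqrt3$; choosing $g$ with $\tau_g\neq0$ (otherwise all $x_g$ vanish and $\mathfrak{g}_{j_0}=0$) this forces $d+1\leq3$, and once $\dim U$ is $2$ or $3$ it also pins down the eigenvalues of $y_g$. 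Feeding this back, together with the minimality of $\dim V_\rho$ and the fact that the $\rho(su)=P_g(\rho(su-us))$ generate $\rho(\OO)$, I would conclude that $Z$ is the trivial module, so $\dim\rho=\dim U\in\{2,3\}$ and $\rho(\AA)=\mathfrak{g}_{j_0}\simeq\sl_2$. This contradicts the fact that the rotation group $\OO$ of $W$ has no nontrivial irreducible representation of dimension $2$ or $3$ in any of these types (for $A_n$, $n\geq5$, $\OO=\mathfrak{A}_{n+1}$ with $n+1\geq6$ has smallest nontrivial degree $\geq5$; for $D_n$, $n\geq6$, and for $E_6,E_7,E_8$ this is a short verification on the character tables). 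Hence no $\mathfrak{g}_j$ has rank $1$, proving the lemma.

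The main obstacle is the last step of the spectral argument: ruling out that the rank-$1$ factor appears "tensored with a nontrivial $Z$", i.e. showing $\dim Z=1$ (for $\dim U=2$ the eigenvalues of $y_g$ need not vanish, so this requires the interplay of all order-$3$ generators at once, exactly as in \cite{IH2}, lemma 2.25). The bookkeeping of the exceptional low-rank isomorphisms $\so_3\simeq\sl_2\simeq\sp_2$ and $\so_4\simeq\sl_2\times\sl_2$ is then routine.
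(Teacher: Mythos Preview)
Your reduction to showing that $\AA$ itself has no rank-$1$ simple ideal is fine, and your observation that in simply-laced type every non-commuting product $su$ has order $3$ is correct and relevant. But the approach you sketch is quite different from the paper's, and the step you yourself flag as ``the main obstacle''---forcing $\dim Z=1$---is a genuine gap. Minimality of $\dim V_\rho$ does not obviously help: the complementary factor acting on $Z$ need not appear in any smaller representation, and the eigenvalue constraints you derive (for $\dim U=2$ one only gets $\{\mu+\tau_g,\mu-\tau_g\}\subset\{0,\pm\ii\sqrt3\}$, which allows e.g.\ $\mu=\pm\frac{\ii\sqrt3}{2}$, $\tau_g=\frac{\ii\sqrt3}{2}$) do not by themselves kill $y_g$. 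Pushing this through for all generators simultaneously would require substantially more work than you indicate.

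The paper bypasses this entirely with a short representation-theoretic argument. The key point is that lemma \ref{lempolordre3} says $\Ad(g)$ is a polynomial in $\ad(g-g^{-1})$ for $g$ of order $3$; since $g-g^{-1}\in\rho(\AA)$, the operator $\ad(g-g^{-1})$ is an \emph{inner} derivation of $\rho(\AA)$ and therefore preserves every ideal. Hence $\Ad(g)$ preserves every simple ideal of $\rho(\AA)$, and as such $g$ generate $\OO$, each simple ideal of $\rho(\AA)$ is an $\OO$-subrepresentation of $\sl(V_\rho)\subset V_\rho\otimes V_{\rho}^*$. An $\sl_2$-ideal would thus give a $3$-dimensional $\OO$-submodule of $V_\rho\otimes V_\rho^*$. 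But for the listed $W$ the smallest non-linear irreducible of $W$ (hence of $\OO$, by Clifford theory) has dimension $\geq 5$, so this $3$-dimensional piece would have to be a sum of copies of the trivial character; Schur's lemma forces the trivial character to occur with multiplicity exactly $1$ in $V_\rho\otimes V_\rho^*$, a contradiction.

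So: your spectral route is not wrong in spirit, but it is incomplete at exactly the point you identify, and the fix is not the bookkeeping you suggest. The paper's argument trades the eigenvalue analysis for the single observation that simple ideals are automatically $\OO$-stable via lemma \ref{lempolordre3}, which immediately reduces the question to the (easy) non-existence of $3$-dimensional $\OO$-constituents in $V_\rho\otimes V_\rho^*$.
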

\begin{proof}
By lemma \ref{lempolordre3} we know that $\rho(\mathcal{A}) \subset \sl(V_{\rho}) = V_{\rho} \otimes V_{\rho^*}$
is invariant under $A$-conjugation, and that  so is any simple Lie ideal. As a consequence, a $\sl_2$ ideal would provide
a 3-dimensional sub-representation of $V_{\rho} \otimes V_{\rho^*}$. It is easily checked that, for the $W$ listed above, the smallest
non-linear irreducible character of $W$ has degree $5$, hence a 3-dimensional such a representation should be a sum of 1-dimensional ones. But for the same reason and by Clifford theory, there are only one $1$-dimensional character of $A$, and it can can appear only with multiplicity 1 in a tensor product of the form $V_{\rho} \otimes V_{\rho^*}$, according to Schur's lemma. This contradiction proves the lemma.
\end{proof}

We now concentrate on the case of type $D_n$, and do a proof by induction. We postpone the case of $D_5$
to subsection \ref{refsubsectexcept}, so we can assume $n \geq 6$.
The case of $W$ of type $A_n$ has been done before. We let $\Irr_0(W)$ the set of irreducible
representations of $W$ which do not factor through $A_{n-1}$, and $\Irr'_0 = \Irr' \cap \Irr_0$,
$\Irr''_0 = \Irr'' \cap \Irr_0$.

We let $\Irr_0''(\OO)$ (resp. $\Irr'_0(\OO)$) denote the set of irreducible representations of $\OO$ which appear inside
the restriction of an element of $\Irr_0''(W)$ (resp $\Irr'_0(W)$). We have the following combinatorial lemma.

\begin{lemma} {\ } \ \label{lemdimD}
\begin{enumerate}
\item If $\rho \in \Irr_0(\OO)$, in type $D_n$ for $n \geq 5$, has dimension $4,6$ or $8$, then
$\rho = ([n-1],[1])$.
\item If $\rho \in \Irr'_0(\OO)$ in type $D_n$ for $n \geq 5$, then $\dim \rho \geq 5$ (if $n \geq 6$ then $\dim \rho \geq 10$,
if $n \geq 7$ then $\dim \rho \geq 21$, if $n \geq 8$ then $\dim \rho \geq 48$).
\item If $\rho \in \Irr''_0(\OO)$ in type $D_n$, then $\dim \rho \geq 10$ when $n \geq 6$, and $\dim \rho \geq 35$ when $n \geq 7$. 
\item If $\rho \in \Irr'(\OO)$ in type $D_n$, then $\dim \rho \geq 5$ when $n \geq 5$.
\end{enumerate}
\end{lemma}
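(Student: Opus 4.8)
The plan is to prove the four assertions together by induction on $n$, reducing everything to elementary dimension estimates for the Coxeter group $W$ of type $D_n$, hence for $B_n$. By Clifford theory the dimension of any $\rho\in\Irr(\OO)$ equals the dimension of the $W$-representation $\{\la,\mu\}$ (or $\{\la\}^{\pm}$) out of which it is built, or half of it, or --- in the families (5)--(6) governed by lemma \ref{clifsubtletypeD} --- a quarter of it; and $\dim\{\la,\mu\}=\binom{n}{|\mu|}(\dim\la)(\dim\mu)$, half this when $\la=\mu$, where $\dim\la$ denotes the dimension of the irreducible $\mathfrak{S}_{|\la|}$-representation labelled by $\la$. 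So each of (1)--(4) becomes an inequality between binomial coefficients and the $\dim\la$, together with a short list of exceptions.

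For part (1) I would argue directly, without induction. Since $\rho\in\Irr_0(\OO)$ its underlying bipartition has $\mu\neq\emptyset$, so $\dim\rho\geq\tfrac12\binom{n}{|\mu|}$; this already exceeds $8$ when $|\mu|\geq2$ and $n\geq5$, and the degenerate families (3)--(6) (which occur only for $n$ even, with $|\la|=n/2\geq3$) are ruled out by the same estimate once $n\geq8$ and by direct inspection for $n=6$. Hence $|\mu|=1$, so $\la\vdash n-1$ and $\dim\rho$ is $n\dim\la$ or $\tfrac12 n\dim\la$; checking the finitely many $\la$ with $\dim\la\leq3$ (only $[2,1]$ and the partitions of $4$ occur, and none of them forces a splitting of $\{\la,[1]\}$ on restriction to $\OO$ when $n\geq5$) leaves $\dim\la=1$, i.e. $\la\in\{[n-1],[1^{n-1}]\}$; since $\eps$ is trivial on $\OO$ these two give the \emph{same} $\OO$-representation, namely $([n-1],[1])$, of dimension $n\in\{4,6,8\}$.

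For parts (2)--(4) I would run the induction using the branching rule $\Res^{B_n}_{B_{n-1}}(\la,\mu)=\bigoplus_{\la^{-}}(\la^{-},\mu)\oplus\bigoplus_{\mu^{-}}(\la,\mu^{-})$ (remove one box from $\la$ or from $\mu$), which --- via the standard Clifford bookkeeping, lemma \ref{clifsubtletypeD}, and the lemma above stating that the restriction to $\OO_0$ of a type-(3)--(6) representation is multiplicity free of real type --- descends to a multiplicity free branching $\Res^{\OO}_{\OO_0}\rho=\bigoplus_i\varphi_i$ in which each $\varphi_i$ lies in $\Irr'_0(\OO_0)$ (resp. $\Irr''_0(\OO_0)$) whenever $\rho$ lies in $\Irr'_0(\OO)$ (resp. $\Irr''_0(\OO)$). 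Since $\dim\rho\geq\sum_i\dim\varphi_i$, the inductive hypothesis bounds $\dim\rho$ from below; the only configurations where this does not immediately yield the claimed threshold at the next value of $n$ are those in which $\rho$ branches to a single "small" constituent (an all-hook or degenerate situation), and for these a direct case analysis --- using that $\Irr(W)\setminus\Irr''(W)=\Ind\LRef$ and that $\LRef$, $\Ref$ are explicitly known --- shows the bound still holds. Propagating this from the base cases $n=5$ (and $n=6,7,8$, where the thresholds $10,21,35,48$ are first asserted), verified by direct computation and CHEVIE when the number of representations to inspect is large, gives (2) and (3); part (4) then follows by combining (2) with the type-$A$ estimate (lemma \ref{lemcombidimA}) and inspecting the remaining representations of $\Irr'(\OO)$, which lie in explicitly described families and are readily seen to have dimension $\geq5$.

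The main obstacle is precisely this inductive bookkeeping: pinning down, for each of the families (1)--(6) and each relevant congruence of $n$ and of $|\la|$, exactly which branchings to $\OO_0$ produce only "small" constituents, so that the thresholds come out sharp. This is where lemma \ref{clifsubtletypeD} and the explicit descriptions of $\LRef$ and $\Irr''$ are indispensable, and where the low-rank base cases must be settled by hand.
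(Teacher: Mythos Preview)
Your approach for part (1) is different from the paper's (which lifts to $W$-representations of dimension in $\{4,6,8,12,16\}$, checks character tables for $5\leq n\leq 7$, and for $n\geq 8$ restricts to a $D_7$ parabolic), but it is workable. One correction: the bound $\tfrac12\binom{n}{|\mu|}>8$ fails for small $n$ (e.g.\ $n=5$, $|\mu|=2$ gives $5$); you need to observe that in such cases no splitting occurs over $\OO$ (since $\mu\vdash 2$ is never self-transpose), so the full $\binom{n}{|\mu|}(\dim\la)(\dim\mu)$ applies.

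There is, however, a genuine gap in your inductive scheme for (2)--(4). The assertion that the constituents $\varphi_i$ of $\Res^{\OO}_{\OO_0}\rho$ lie in $\Irr'_0(\OO_0)$ (resp.\ $\Irr''_0(\OO_0)$) whenever $\rho\in\Irr'_0(\OO)$ (resp.\ $\Irr''_0(\OO)$) is false on all three counts. First, the subscript $0$ is not preserved: for instance $\{\la,[1]\}$ restricts to a sum containing $\{\la,\emptyset\}$, which factors through $\mathfrak{S}_{n-1}$. Second, $\Irr'$ is not preserved under restriction: a representation outside $\LRef(W)$ can very well restrict to one inside $\LRef(W_0)$ --- this is precisely why the paper introduces $\Ind\LRef$ and $\Irr''$ in the first place. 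Third, by definition $\rho\in\Irr''(W)$ only guarantees that its restriction has constituents in $\Irr'(W_0)$, not in $\Irr''(W_0)$; so the induction for (3) must invoke the bound from (2) at level $n-1$, not the bound from (3). Finally, multiplicity-freeness also fails: the paper explicitly handles a case (type (1), $\la=\la'$, $\mu\nearrow\la$) where a constituent appears with multiplicity $2$.

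None of this is fatal to the strategy --- one can still run the induction using (2) at level $n-1$ to feed (3) at level $n$, and handle $\Ind\LRef\setminus\LRef$ and the multiplicity-$2$ cases separately --- but as written the argument rests on incorrect closure properties. The paper itself gives no proof of (2)--(4), leaving them to analogous character-table and branching checks.
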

\begin{proof}
Under the assumptions of (1), by Clifford theory, $\rho$ appears in the restriction of some $\tilde{\rho} \in \Irr_0(W)$
with $\dim \rho \in \{ 4,6,8,12,16 \}$. For $n =7$, from the character table we get that all elements in $\Irr_0(W)$
have dimension at least $21$, except for  $\{ 6, 1 \}$ and $\{ 1^6, 1 \}$. If $n \geq 8$, then the restriction to
the obvious parabolic subgroup $H$ of type $D_7$ has to
contain $\{ 6, 1 \}$ or $\{ 1^6, 1 \}$ ; because of the branching rule, if it is not of the form $\{ n-1,1 \}$ or $\{ 1^{n-1},1 \}$ then its restriction to $H$ has to contain another constituent, thus of dimension at least 21. A check of the character tables shows that, for $5 \leq n \leq 7$,
there are no other choices as well.

\end{proof}

Note that $\Irr''_0(\OO) = \emptyset$ for $W$ of type $D_5$.
As in the case of type $A$, we are thus reduced to prove, when $n \geq 5$, that

$$
\left\lbrace \begin{array}{lcll}  \rho_{\{ [n-1],[1] \}}(\AA) &=& \so_n \\
\rho_{\{ [n-1],[1] \}}(\AA) &=& \so_n \\
\rho(\AA) & = & \so(V_{\rho})& \mbox{\ if\ } \rho\in \Irr'(A)  \mbox{ has real type }\\
\rho(\AA) & = & \sl(V_{\rho})& \mbox{\ if\ } \rho\in \Irr'(A)  \mbox{ has complex type }\\
\end{array} \right.
$$

\subsection{Case $\rho  = ([n-1],[1])$}

Since $\h$ contains $\so_{n-1}$, $\rk \g \geq \lfloor \frac{n-1}{2} \rfloor \geq (\dim \rho)/4 = n/4$ as
soon as $n > 6$. Moreover $\dim \rho > 8$ when $n > 8$, so we conclude by lemma \ref{lemlie},
the cases $n \leq 8$ being checked by computer.

\subsection{Case $\rho \in \Ind\LRef$}

\subsubsection{$\rho = ([n-p],[2,1^{p-2}])$}

If $p = 2$, $\Res \rho = ([n-3],[2]) + ([n-2],[1])$. Assuming $n \geq 6$ we have $n-3 > 2$.
By induction we have
$$
\rk \h = \lfloor \frac{\dim ([n-3],[2])}{2} \rfloor + \lfloor \frac{\dim ([n-2],[1])}{2} \rfloor \geqslant \frac{\dim \rho}{2}  > \frac{\dim \rho}{4}
$$
as soon as $\dim \rho > 8$, which holds true for $n \geq 5$. The case $n = 5$ is checked by computer.

If $p = n-1$, i.e. $\rho = ([1],[2,1^{n-3}])$, we assume again $ n \geq 6$. Then by induction
we similarly get
$$
\rk \h \geqslant \frac{\dim \rho}{2} - 3 - \frac{\dim ([1],[1^{n-2}])}{2} + \frac{n-1}{2} > \frac{\dim \rho}{4}
$$
as soon as $\dim \rho > 6$, which is true. The case $n = 5$ is checked by computer.

If $p \not\in \{ 2 , n-1 \}$, that is $n-p \geq 2$ and $p -2 \geq 1$. As before, we get
$\rk \g > (\dim \rho)/4$ as soon as $\dim \rho > 6 + 2 \dim ([n-p],[1^{p-1}]) - 2(n-1)$.
Now $\dim \rho = \dim \{ [n-p],[2,1^{p-2}] \}$ and the restriction rule
to $W(D_{n-2})$ shows that the restriction of $\{ [n-p],[2,1^{p-2}] \}$
contains 
the restriction of $\{ [n-p],[1^{p-1}] \}$. As a consequence, 
$\dim \rho \geq 2 \dim ([n-p],[1^{p-1}])$ hence $\rk \g > (\dim \rho)/4$ as soon as $n \geq 5$.

In all these cases, $\dim \rho > 8$ for $n \geq 5$, hence lemma \ref{lemlie} shows that $\rho(\AA) = \so(V)$.

\subsubsection{$\rho = ([1],[n-p,1^{p-1}])$}

We can assume $n-p \geq 2$, hence $n \geq p+2 \geq 4$,
and $p-1 \geq 1$, hence $ p \geq 2$.

We have  $\Res \rho = 
(\emptyset,[n-2,1]) + ([1],[n-p,1^{p-2}]) + ([1],[n-p-1,1^{p-1}])$ as a sum of irreducible constituent
unless one of the partitions $[n-p,1^{p-2}]$ and $[n-p-1,1^{p-1}]$ is selftransposed, but also in this case the restriction
is multiplicity free. Since $\rho$ has real type and all the constituents have dimension at least 5 for $n \geq 7$, the same proof as in lemma \ref{leminductreel} yields the conclusion for $n \geq 7$. The case $n = 6$ is dealt with in section \ref{refsubsectexcept}.

\subsection{$\rho \in \Irr''_0(\OO)$, cases (3)-(6)}
\label{refsubsectcas36}
In that case, $\rho$ has for restriction $\rho_1+ \dots + \rho_r$,
with the $\rho_i$ distinct representations of real type. Thus
$$
\rk \h \geq \sum_i \frac{\dim \rho_i}{2} - 1 = \frac{\dim \rho}{2} - r > \frac{\dim \rho}{4}
$$
as soon as $\dim \rho > 4r$. But this holds true if $\dim \rho_i \geq 5$, and this is a consequence of
lemma \ref{lemdimD} (2) because $n \geq 5$ and $\rho \not\in \Ind\LRef$.
Since $\dim \rho > 8$ we get the conclusion when $\rho$ has real type by lemma \ref{lemlie}. The remaining case
is when $\rho$ has complex type and $\dim \rho \leq 21$. But there are no $\rho \in \Irr''_0(A)$ of
complex type in type $D_6$, and $\dim \rho \geq 35$ when $n \geq 7$ by lemma \ref{lemdimD} (3) so we get the conclusion
by lemma \ref{lemlie2}.

 \subsection{$\rho \in \Irr''_0(\OO)$, case (1)}
 Here $\rho = (\la,\mu)$ with $\la \neq \mu$, $\la \neq \mu'$ and $(\la,\mu) \neq (\la',\mu')$. If the restriction is multiplicity free,
we have the conclusion by lemmas \ref{leminductreel} and \ref{lemdimD} (4) at least when $n \geq 6$.

We thus assume it is not. It is easily checked that
 it can happen only in the following situation :
  $\mu$ is deduced from $\la$ by removing one box at a given row, and $\la = \la'$ ;  then
  $\{ \la, \la \} = \{ \la \}^+ + \{ \la \}^-$, which restricts to $2((\la ))$ if in addition $(n-1)/2$ is odd.
  Since we assumed $n \geq 6$, note that this condition implies $n \geq 9$.

 The restriction can be written $2 \rho_0 + \rho_1 + \dots + \rho_r$,
 and
 $$
 \rk \h \geq \sum_i \frac{\dim \rho_i}{2} - 1 = \frac{\dim \rho - \dim \rho_0}{2}  -(r+1) > \frac{\dim \rho}{4}
 $$
iff $\dim \rho >  2 \dim \rho_0 + 4(r+1)$, that is $\dim \rho_1 + \dots + \dim \rho_r \geq m r > 4(r+1)$
where $m = \min \dim \rho_i$,
that is $m > 4(1+\frac{1}{r})$. By lemma \ref{lemdimD} we know $\dim \rho_i \geq 48$, so this condition
is fulfilled. Moreover $48 \leq \dim \rho < 4 \rk \h$ implies $\dim \rho <   (\rk \h + 1)^2$. We have multiplicities at most $2$, and $\g = \rho(\AA)$ does not contain
any $\sl_2$ by lemma \ref{lemnosl2}, hence $\g$ is a simple Lie algebra according to \cite{IH2}, lemma 3.3 (II). Thus $\rk \h > (\dim \rho)/4$
and $\dim \rho > 8$ implies $\g = \so(V_{\rho})$ by lemma \ref{lemlie}.

\subsection{$\rho \in \Irr''_0(\OO)$, case (2)}
Here $\rho = (\la, \mu)^{\pm}$ with $\la = \la'$, $\mu = \mu'$ and $\la \neq \mu$. Here the restriction is
multiplicity-free. If all the constituents in the restriction have real type, then we can conclude as in \ref{refsubsectcas36}.
The remaining situation is when there is $\alpha$ with $\alpha \nearrow \la$ and $\alpha = \mu$, that is $\mu \nearrow \la$.
In that case we have a multiplicity-free decomposition $\Res \rho = \{ \mu \}^{\pm +} + \{ \mu \}^{\pm - } + \rho_1 + \dots + \rho_r$ with the $\rho_i$ having real type.
Note that this implies $n-1 = 4m$. Since $n \geq 6$, this implies $n \geq 9$, and in particular all
the constituents of $\Res \rho$ have dimension at least $48$. If $\rho$ has real type,
then we are done by lemma \ref{leminductreel} ; if $\rho$ has complex type, by lemma \ref{lemlie2} we only
need to prove $\rk \h > (\dim \rho)/4$ and this is straightforward.

\subsection{Exceptional types}
\label{refsubsectexcept}
First notice that, once the character table of $W$ and $W_0$ is known, as well as the
induction table, the proof of the theorem for $W$ can be partly automatized : if
there are no irreducible representations of $W$ susceptible to lead to the the exceptional
types $\sl_4$ and $\so_8$ and if the induction table is multiplicity free, then one can check
systematically whether the conditions of lemma \ref{lemlie} (if we are in type $\so$) or \cite{IH2} lemma 3.2 (in type $\sl$)
are satisfied. In addition, in type $D_n$, representations factoring through $\mathfrak{S}_n$
can be considered tackled. This works for $D_5$, $D_7$, $D_{11}$,  and $E_7$.

When $W$ has type $D_6$, the only representations
remaining to check  are 
\begin{itemize}
\item
the restriction to $\OO$ of the $\{ [2,1] \}^{\pm}$, which are irreducible hence of real
type. They have dimension 40, their restriction to $\OO_0 = \Ker(W_0 \to \{ \pm 1 \})$ are multiplicity free and $\h$ has rank 10.
Since $10 > 40/5$ we get the conclusion by \cite{IH2}, lemma 3.5 (2).
\item the restriction to $\OO$ of $([1,1],[3,1])$, which is irreducible of real type. It has dimension 45, its restriction to $\OO_0$ is multiplicity free and $\h$ has rank 9. Since $9 = 45/5$, we cannot use the same argument. We compute the linear rank
of $\g = \rho(\AA)$ by computer, and get $\dim \g = 990  = \dim \so(V_{\rho})$.
\end{itemize}

When $W$ has type $E_6$,
the representations that we need to check are 

\begin{itemize}
\item the restriction to $\OO$ of the one labelled $\varphi_{15,16}$ in CHEVIE's convention. It has real type and dimension 15. The Lie algebra $\h$ has rang $2$.
We compute the linear rank of $\g = \rho(\mathcal{A})$ by computer : by lemma \ref{lemgensufort} we
know that it is generated by the images of the $(su)-(su)^{-1}$ for $s,u$ simple reflections with $su \neq us$,
and we have matrix models of this representation over $\Q$. By computer one readily gets
$\dim \rho(\AA) = 105 = \dim \so(V_{\rho})$ hence $\g = \so(V_{\rho})$.
\item the restriction to $\OO$ of the ones labelled $\varphi_{30,15}$ in CHEVIE's convention. It has real type and dimension 30.
The restriction is multiplicity-free, $\h$ has rank $7 > 30/5$, and we can use \cite{IH2} lemma 3.5 (2) to get the conclusion.

\item the restriction to $\OO$ of the ones labelled $\varphi_{20,20}$ in CHEVIE's convention. It has real type and dimension 20.
The restriction is multiplicity-free, $\h$ has rank 2. 
We need again to use a computer, and find $\dim \g = 190 = \dim \so_{20}$, as wanted.

\item the two constituents of the restriction to $\OO$ of the one labelled $\varphi_{90,8}$ in CHEVIE's convention.
They have complex type and dimension 45. The restriction is multiplicity-free, we have $\rk \h = 19$. Since $19 > 45/4$
we can use \cite{IH2} lemma 3.5 (2) to get the conclusion.

\end{itemize}

Finally, when $W$ has type $E_8$, the remaining ones all have real type.
They are the restrictions to $\OO$ of the representations of $W$ labelled
$\phi_{6075,22},  \phi_{3240,31}, \phi_{4536,23},\phi_{5600,21}$
of dimensions 6075, 3240,4536, 5600,
plus the two constituents $\phi_{7168,17}^+, \phi_{7168,17}^-$ (of dimension 3584) of the restriction of
the representation labelled $\phi_{7168,17}$. 
One easily gets in all these cases that $\rk \h > (\dim \rho)/4$, hence by \cite{IH2} lemma 3.4 it is sufficient
to prove that $\g$ is simple. Since the multiplicities are at most 2, it is easily checked to be a consequence of  \cite{IH2} lemma 3.3 (II),
provided we know that $\rho(\AA)$ has no simple ideal of type $\sl_2$. This has been proved in lemma \ref{lemnosl2} .

\end{document}